\newcommand{\rem}[1]{}
\newcommand{\g}{{\text{R}}}
\newcommand{\rot}{{\text{R}}}
\newcommand{\angvel}{{\Omega}}
\newcommand{\Q}{{ \mathcal{X}  }}
\newcommand{\Qt}{\mathcal{J}}
\newcommand{\Qti}[1]{{ \mathcal{J}_{ {#1}}  }}
\newcommand{\Qtr}{\mathcal{Q}}
\newcommand{\qtr}{{ q  }}
\newcommand{\qtri}[1]{{ q_{ {#1}}  }}
\newcommand{\config}{{ \mathcal{X} }}
\newcommand{\SE}{\text{SE}}
\newcommand{\SO}{\text{SO}}
\newcommand{\ue}{\mathrm{e}}
\newcommand{\R}{\mathds{R}}
\newcommand{\M}{\text{M}}
\providecommand{\norm}[1]{\lVert#1\rVert}
\newcommand{\br}{\mathbf{r}}
\newcommand{\bs}{\mathbf{s}}
\newcommand{\be}{\boldsymbol{e}}
\newtheorem{theorem}{Theorem}[section]
\newtheorem{definition}{Definition}[section]
\newtheorem{remark}{Remark}[section]
\newtheorem{lemma}{Lemma}[section]
\newtheorem{corollary}{Corollary}[section]
\title{Hill regions of charged three-body systems}
\author{Igor Hoveijn, 
 Holger Waalkens and Mohammad Zaman\\[2ex]
Bernoulli Institute for Mathematics, Computer Science and Artificial Intelligence\\ University of Groningen\\ PO Box 407\\ 
9700 AK Groningen, The Netherlands}
\begin{document}
\maketitle

\abstract{
For charged three-body systems, we discuss  the configurations and orientations that are admissible  for given values of the conserved total energy and angular momentum. 
The admissible configurations and orientations are discussed on a configuration space that is reduced by the translational, rotational  and dilation symmetries of charged three-body systems.  We consider the examples of the  charged three-body systems given by the helium atom (two electrons and a nucleus) and the compound of two electrons and one positron.  For comparison, the well known example of the Newtonian gravitational three-body system is discussed for the scheme presented in this paper first.
}

%\date{\today}

\vspace*{1cm}

%\noindent
%PACS numbers:
%45.50 Jf, % Few and many-body systems
%45.20.Jj	%Lagrangian and Hamiltonian mechanics
%02.40.-k \\
\noindent
AMS classification numbers: ...
\noindent

%%%%%%%%%%%%%%%%%%%%%%%
\section{Introduction}

%$X$, ${\mathcal C}$, ${\mathcal J}$ , ${\mathcal X}$, ${\mathcal Q}$, ${\mathcal G}$ , $Q_{\text{\sf trd},1}$ $q$, $q^{\text{\sf trd}}$ $\Qtrdi{1}$ $\Qtrd$

\rem{
A system is called closed if  all forces are 
internal, i.e. they only depend on the mutual distances between the bodies and are directed parallel to the mutual difference vectors of positions. 
Such systems are conservative \cite{Arnold78}. If  $\mathbf{x}_{i}$, $i=1,\ldots, N$, denote the position vectors of the $N$ particles   of masses $m_i$, $i=1,\ldots,N$,  in $\R^3$ and $\mathbf{p}_i$ are the conjugate momenta, then the Hamiltonian is of the form
\begin{equation}
H(\mathbf{x},\mathbf{p}) = \sum_{i=1}^N \frac12 m_i \mathbf{p}_i \cdot \mathbf{p}_i + V(\mathbf{x}_1,\ldots,\mathbf{x}_N) = \frac12  \mathbf{p} \,\M^{-1} \,\mathbf{p} +V(\mathbf{x}) ,
\end{equation}
where $\mathbf{x}=(\mathbf{x}_1,\ldots, \mathbf{x}_N)$ and $\mathbf{p} = (\mathbf{p}_1,\ldots,\mathbf{p}_N)  $ are considered as vectors in $\R^{3N}$ and $\M$ is the $3N\times 3N$ diagonal matrix with diagonal $(m_1,m_1,m_1,m_2,m_2,m_2,\ldots, m_N,m_N,m_N)$. 
The phase space of an $N$-body system is the cotangent bundle $T^*\Q$ over the configuration space  $\Q=\R^{3N}$.
 } % end rem
 
 This is the third in a series of three papers where we study charged three-body systems. 
Following \cite{HoveijnWaalkensZaman2019} we define $N$-body systems as so-called natural Hamiltonian systems
 formed by  $N$ point masses with position vectors $\mathbf{x}_i\in \R^3$  and masses $m_i>0$, $i=1,\ldots,N$, which interact via a potential $V$ that is invariant under the action of the 
  special Euclidean group $\SE(3)=\SO(3) \rtimes \R^3$,
  %which is the semi-direct product of the group of rotations $\SO(3)$ and the group of translations $\R^3$, 
  i.e.  $V(\rot\, (\mathbf{x}_1+a),\ldots,\rot \,(\mathbf{x}_N+a))=V(\mathbf{x}_1,\ldots,\mathbf{x}_N)$ for all $\rot \in \SO(3)$ and $a\in \R^3$.

 More precisely, we define $N$-body systems and their subclass of charged $N$-body systems as follows.
 \begin{definition}\label{def:NBodySystem}
For $M = T^*\config$ where $\config = \R^{3N}-\Delta_c$ with $\Delta_c$ being a closed, possibly empty subset of $\R^{3N}$ (the collision set), we call the Hamiltonian system $(M,\omega,H)$, with $\omega$ denoting the standard symplectic form, an $N$-body system with masses $m_i>0$ of the $i$-th body, $i=1,\ldots,N$, if $H$ is of the form kinetic plus potential energy
$$
H = T + V,
$$
where the kinetic energy is $T(p) = \sum_i \frac{1}{2m_i} p_i^2$ and the potential $V$ is $\SE(3)$ invariant. We will refer to the system that has potential 
\begin{equation}\label{eq:potential}
V(\mathbf{x}) = \sum_{i<j} -\frac{\gamma_{ij}}{r_{ij}(\mathbf{x})}
\end{equation}
with $r_{ij} = \norm{\mathbf{x}_i-\mathbf{x}_j}$ and $\gamma_{ij}\in\R, \quad 1\le i < j \le N,$ as the \emph{charged $N$-body system}. In this case the collision set is given by 
\begin{equation}
\Delta_c  = \{ \mathbf{x}=(\mathbf{x}_1,\ldots, \mathbf{x}_N) \in \R^{3N} \,: \, \mathbf{x}_i = \mathbf{x}_j \text{ for some } 1\le i < j \le N \}\,.
\end{equation}
\end{definition}
We note that this definition of charged $N$-body systems covers besides Coulomb interactions also gravitational interactions and mixtures of these. Depending on the context we will often say  \emph{particle}  instead of \emph{body}.

 In the first paper  we studied the map of integrals, i.e. the map from the phase space $M = T^*\config$ to the conserved quantities resulting from the Euclidian symmetries, and the critical points of this map \cite{HoveijnWaalkensZaman2019}. We in particular showed that as opposed to the gravitational case,  the more general charged  $N$-body systems can have relative equilibria that are not related to central configurations. We will see an example of such a relative equilibrium 
 in the present paper. The topology of the integral manifolds can change at critical points \cite{mccord1998integral}.  As the integral manifolds can also be non-compact they can also change topology at critical points at infinity \cite{mccord1998integral}.
We studied such critical points at infinity for charged three body systems in the second paper \cite{CPI2021}. 
 
 The present paper concerns the Hill regions of charged $N$-body systems systems. 
 The notion Hill region alludes to the work of Hill on the accessible region in configuration space of the circular restricted three-body problem for a given value of the energy (or Jacobi constant). For unrestricted  $N$-body systems, one first needs to define what is meant by Hill region. To this end it seems natural to first reduce the Euclidian symmetries of $N$-body systems. 
%Before we can say what we mean by a Hill region for charged three-body systems we recall the reduction of the translational and rotational 
%symmetries  of  general  $N$-body system. 
We will see that the  phase space of  the reduced system no longer has the structure of a cotangent bundle over a configuration space, and hence the definition of the Hill region as the projection of an energy surface to configuration space like in the restricted case
is at first not possible. Reducing in addition to the symplectic Euclidean symmetries the non-symplectic dilation (scaling) symmetry of charged three-body systems we will define their Hill regions in terms of the admissible \emph{shapes} and \emph{orientations} for given values of the integrals.
Here shapes will refer to equivalence classes of configurations modulo similarity transformations and orientations will refer to the directions of the conserved angular momentum in a body fixed frame of the three-body system.

\rem{ 
For charged $N$-body systems, the potential $V$ is singular at configurations where the position vectors of two or more particles coincide. Taking out these collisions the configuration space becomes $\Q \backslash \Delta_c$, where 
\begin{equation}
\Delta_c  = \{  \mathbf{x}=(\mathbf{x}_1,\ldots, \mathbf{x}_N) \in \R^{3N} \,: \, \mathbf{x}_i = \mathbf{x}_j \text{ for some } 1\le i < j \le N \}
\end{equation}
is the \emph{collision set}. The phase space is then the cotangent bundle $T^*(\Q \backslash \Delta_c)$ which we identify with 
 $(\R^{3N}\backslash \Delta_c) \times \R^{3N}$.
} % end rem

This paper is organized as follows. In Sec.~\ref{charged_sec:Nbodyreduction} we represent the symplectic reduction of the Euclidian symmetries of $N$-body systems following to a large extent \cite{LittlejohnReinsch97}. 
In Sec.~\ref{charged_sec:triatomic} we make the reduction concrete for three-body systems. 
What we mean by  shapes and orientations will be defined in Sec.~\ref{charged_sec:shapespace}. 
Based on theses notions we will define  
Hill regions in Sec.~\ref{charged_sec:Hillregions}.
In Sec.~\ref{charged_sec:relative_equilibria} we discuss the critical points where the Hill regions change.  In Sec.~\ref{charged_sec:Examples} we discuss three examples of charged three-body systems which includes the well known Newtonian case. 
A short discussion and an outlook are given in Sec.~\ref{sec:conclusions}.

%%%%%%%%%%%%%%%%%%%%%%
\section{Reduction of  $N$-body systems}
\label{charged_sec:Nbodyreduction}

As a first step towards defining what we mean by Hill region we reduce the $\SE(3)$ symmetry of a charged three-body system. This reduction can be accomplished  symplectically as we will review for general $N$-body systems in this section. We will  follow to a large extent  \cite{LittlejohnReinsch97}.

The special Euclidian group $\SE(3)=\SO(3) \rtimes \R^3$ defines a symplectic group action on the phase space $M = T^*\config$ of an $N$-body system as defined in 
Def.~\ref{def:NBodySystem} according to 
$(\mathbf{x},\mathbf{p}) \mapsto ((\g\,(\mathbf{x}_1 + \mathbf{a} ) ,\ldots, \g\,(\mathbf{x}_N)+\mathbf{a} ),( \g\, \mathbf{p}_1,\ldots,\g\, \mathbf{p}_N)) $ for 
any $\g \in \SO(3)$ and $\mathbf{a}\in \R^3$ where we think of the $\g$ as $3\times3$ orthogonal matrices with unit determinant. The Hamiltonian function in Def.~\ref{def:NBodySystem}  is invariant under this action.
\rem{
Such systems are symmetric under the Euclidian group $\SE(3)$ which is the semi-direct product of the additive group of translations
$(\R^3$ which  for each $\mathbf{a}\in \R^3$ acts as $(\mathbf{x},\mathbf{p}) \mapsto ((\mathbf{x}_1 + \mathbf{a} ,\ldots, \mathbf{x}_N + \mathbf{a})    ,\mathbf{p})$
and the group of rotations $\SO(3)$ which for $\g\in \SO(3)$ acts as $(\mathbf{x},\mathbf{p}) \mapsto ((\g\,\mathbf{x}_1,\ldots, \g\,\mathbf{x}_N),( \g\, \mathbf{p}_1,\ldots,\g\, \mathbf{p}_N)) $. Here we think of the elements $\g$ of $\SO(3)$ as $3\times3$ orthogonal matrices with unit determinant.   
}
The  $\SE(3)$ symmetry can be reduced successively by reducing the translational symmetry $\R^3$ first and the rotational symmetry $\SO(3)$ afterwards.
The symmetry of  translations is easily reduced by choosing Jacobi
vectors $\mathbf{s}_{i}\in\R^3, i=1,...,N-1$ (see, e.g.,  \cite{cornille2003advanced} and Sec.~\ref{charged_sec:triatomic} for a concrete definition in the case of $N=3$). Together with the position vector of the center of mass of the $N$-body system  the Jacobi vectors uniquely determine
the positions of the $N$ bodies  in space.  Due to the absence of external forces, the center of mass of an $N$-body system is moving with a constant velocity and we can choose an inertial frame of reference which has the center of mass at its origin. We view the $\mathbf{s}_{i}, i=1,...,N-1$, to be the coordinate vectors of the Jacobi vectors with respect to this center of mass frame. If we now ignore the trivial position of the center of mass then we can view the space of the Jacobi vectors  $\Qt = \R^{3(N-1)}$ as the translation reduced configuration space. Taking out the collision set this space becomes $\Qt \backslash \Delta_c$ where $\Delta_c$ now is the collision set in terms of the Jacobi vectors (see Sec.~\ref{charged_sec:triatomic} for an example). If we moreover ignore the trivial constant momentum of the center of mass we have reduced the phase space to 
$T^*\Qt \cong (\R^{3(N-1)} \backslash \Delta_c ) \times \R^{3(N-1)}$.

%We note that 
%different choices of Jacobi vectors can be parametrized by the \emph{kinematic
%group} which is thoroughly studied in \cite{MitchellLittlejohn2000}. 

The metric associated with expressing the kinetic energy $T$ in terms of the velocities corresponding to the Jacobi vectors is diagonal.
We will choose the Jacobi  vectors to be mass-weighted so that
the metric becomes  Euclidean, i.e. the kinetic energy assumes the form

\begin{equation}
T= %\frac{1}{2} \sum^{N}_{i=1} m_i \, \mathbf{\dot{x}}^{2}_i=
\frac{1}{2} \sum^{N-1}_{i=1} \mathbf{\dot{s}}^{2}_i\,.
\end{equation}

\noindent
To address the rotational symmetry we consider a body fixed coordinate frame which is a frame that is related 
to the center of mass frame by a rotation.  Let us denote the coordinate vectors of the Jacobi vectors in the body fixed frame
by $\mathbf{r_i}, i=1,..,N-1$. We then have 
$\mathbf{s}_{i}=\g \, \mathbf{r}_{i}$ for some rotation matrix
$\g \in \SO(3)$. 
If we identify all configurations that are related by a rotation about the center of mass we get the so called  \emph{internal space} which we denote by $\Qtr$ and which is formally given by the quotient
$\mathbb{R}^{3(N-1)}/\SO(3)$.
 As the rotation group  $\SO(3)$ has dimension 3  (for example Euler angles or
Cayley-Klein parameters are coordinates on $\SO(3)$) the internal space $\Qtr$ has dimension $3(N-2)$.
Collinear configurations of the $N$ particles in $\R^3$ lead to singularities  in the reduction. Away from collinear configurations
 the internal space has a smooth structure (see Sec.~\ref{charged_sec:triatomic} for more details).  The coordinates on the internal space are called 
\emph{internal coordinates}. We will denote the internal space coordinates vectors by $\qtr$ and their components by $\qtri{\mu}$ with the convention that Greek indices run from 1 to $3(N-2)$. We note that the internal space and the internal coordinates are sometimes also referred to as shape space and shape space coordinates, respectively. We however reserve the term `shape'  for definitions that we make below.

We remark that specifying a body-fixed frame can be phrased in the language of \emph{gauge theory} \cite{LittlejohnReinsch97}. One specific  choice gives rise to one specific gauge.  
The reduced  Hamiltonian function on the phase space reduced by translations and rotations 
is again the sum of the kinetic energy and the potential energy.  
The reduced potential is then a function of the internal coordinates only. The reduced kinetic energy 
is a function of the internal space coordinates and their conjugate momenta and the rotational degrees of freedom. 
The reduction can be phrased in such a way  that the dependence on the gauge becomes apparent (see also \cite{CiftciWaalkens11,CiftciWaalkensBroer2014}). 
To see this we make the following definitions. 

Let $\g\in \SO(3)$ denote the rotation from the center of mass frame to the body frame and $\mathbf{L}$ denote
the angular momentum with respect to the center of mass which in terms of the mass weighted Jacobi vectors is given by

\begin{equation}
\mathbf{L}=\sum^{N-1}_{i=1}\mathbf{s}_{i}\times \mathbf{\dot{s}}_{i}.
\end{equation}

\noindent
Then the body velocities and body angular momentum are defined, respectively, by

\begin{equation}
\mathbf{\dot{r}}_{i}=\g^{T}\mathbf{\dot{s}}_{i},
\end{equation}
and
\begin{equation} \label{charged_eq:def_J}
\mathbf{J}=\g^{T}\mathbf{L} \,.
\end{equation}%

The moment of
inertia tensor $\mathbf{M} (q)$ of the $N$-body system is the tensor with components

\begin{equation} \label{charged_eq:def_moment_intertia_tensor}
 \mathbf{M}_{ij}(q) =  \sum^{N-1}_{k=1}(\mathbf{r}_{k}^{2}\delta _{ij}-r_{ki}r_{kj}) \,,
\end{equation}
where $\mathbf{r}_{k}=(r_{k1},r_{k2},r_{k3})$ in body coordinates. 
With the so called \emph{gauge potential}

\begin{equation}\label{charged_eq:def_gauge_potential}
\mathbf{A}_{\mu }(q)=\mathbf{M}^{-1}(q)\cdot \sum_{i=1}^{N-1} \left( \mathbf{r}_{i}\times \frac{\partial 
\mathbf{r}_{i}}{\partial q_{\mu }}\right)
\end{equation}

\noindent
and the metric

\begin{equation}\label{charged_eq:def_metric_g}
g_{\mu \nu } (q)=  \sum_{i=1}^{N-1}  \frac{\partial \mathbf{r}_{i }(q)} {\partial q_{\mu }} \cdot
\frac{\partial \mathbf{r}_{i }(q)}{\partial q_{\nu }}-\mathbf{A}_{\mu }(q) \cdot \mathbf{M}(q)\cdot \mathbf{A}%
_{\nu } (q)
\end{equation}

\noindent
the kinetic energy becomes

\begin{eqnarray*}
K=\frac{1}{2}(\angvel +\mathbf{A}_{\mu }\dot{q}_{\mu })\cdot \mathbf{M}\cdot
(\angvel +\mathbf{A}_{\nu }\dot{q}_{\nu })\mathbf{+}\frac{1}{2}g_{\mu \nu }%
\dot{q}_{\mu }\dot{q}_{\nu }\,,
\end{eqnarray*}

\noindent
where here and in the following we use the Einstein convention of summation over repeated indices.
Here $\angvel $ is the angular velocity which is obtained by viewing the rotation $R$ relating the space fixed center of mass  frame to the body frame to be time dependent and then using the so$(3)$ Lie algebra isomorphism
between the skew-symmetric matrix $R^{T}\dot{R}$ and the three-component vector $\angvel $ given by

\begin{equation}
\left(
\begin{array}{ccc}
0 & -\angvel_3 & \angvel_2 \\ 
\angvel_3 & 0 & -\angvel_1 \\ 
-\angvel_2 & \angvel_1 & 0
\end{array}
\right) \mapsto
\left(
\begin{array}{ccc}
\angvel_1 \\ 
\angvel_2  \\ 
\angvel_3 
\end{array}
\right)\,. \label{charged_eq:def_R}
\end{equation}

\noindent
By
using the equation

\begin{equation}
\mathbf{J}=\frac{\partial K}{\partial \angvel}=\mathbf{M}(\angvel +\mathbf{A}_{\mu }\dot{q}_{\mu }),
\end{equation}

\noindent
the conjugate momenta of the internal space coordinates are obtained to be

\begin{equation}
p_{\mu }=\frac{\partial K}{\partial \dot{q}_{\mu }}=g_{\mu \nu }\dot{q}_{\nu }+\mathbf{J}\cdot \mathbf{A}_{\mu }\,.
\end{equation}

\begin{definition}
The reduced \emph{ro-vibrational Hamiltonian} is defined  as
%5
%%
\begin{equation}  \label{charged_Hamiltonian}
H(q,p,\mathbf{J})=\frac{1}{2}\mathbf{J}\cdot \mathbf{M}^{-1}\cdot \mathbf{J+}\frac{1}{2}g^{\mu \nu
}(p_{\mu }-\mathbf{J}\cdot \mathbf{A}_{\mu })(p_{\nu }-\mathbf{J}\cdot 
\mathbf{A}_{\nu })
+V(q_{1},...,q_{3N-6})\, ,
\end{equation}
where in order to keep the notation reasonably short we omitted the argument $q$ for $\mathbf{M}$,  $\mathbf{A}_{\mu }$ and $g^{\mu \nu}$ even though these are functions of the internal coordinates  (see Equations~\eqref{charged_eq:def_moment_intertia_tensor}
\eqref{charged_eq:def_gauge_potential}
\eqref{charged_eq:def_metric_g}).
The first term on the right hand side of \eqref{charged_Hamiltonian} is called the \emph{rotational} or \emph{centrifugal kinetic energy} and the second term is called the  \emph{vibrational kinetic energy}. 
\end{definition}

The equations of motion are given by 

\begin{equation}\label{charged_Equations_motion_old}
\dot{q}_{\mu }=\partial H/\partial p_{\mu },\ \ \ \dot{p}_{\mu }=-\partial
H/\partial q_{\mu },  \ \ \  \dot{\mathbf{J}}  = \mathbf{J} \times \nabla_\mathbf{J} H , % \frac{\partial H}{\partial \mathbf{J}} 
\end{equation}

\noindent
where $\nabla_\mathbf{J} H = ( \partial_{J_1} H,   \partial_{J_2} H, \partial_{J_3} H  )$, $\mathbf{J}=(J_{1},J_{2},J_{3})$ and  $\mu =1,...,3N-6$ \cite{KozinRobertsTennyson00}. 
The equations of motion can be considered to describe a rigid body (the last term) whose time-dependent moment of inertia is determined by the internal ccordinates which together with their conjugate momenta change according to the first two sets of equations.
\rem{
The last equation is equivalent to \cite{KozinRobertsTennyson99}

\begin{equation}
\dot{J}_{k}=\left\{ J_{k},H\right\}, 
\end{equation}
where $k=1,2,3$. 
} % end rem
The magnitude  $r=\left\Vert \mathbf{J}\right\Vert$ of the body angular momentum $ \mathbf{J}$ is a constant of
motion.  
Away from collinear configurations, the phase space of the reduced system  with a magnitude of the total angular momentum equal to $r$ then has the structure of a product space given by the product of the angular momentum sphere 
\begin{equation}
S_{r }^{2} = \{             \mathbf{J} \in \R^3 \,|\, J^2_{1}+J^2_{2}+J^2_{3} =  r^2 \}
\end{equation}
 and the cotangent bundle over the internal space, $T^* \Qtr$, with coordinates $(q_{\mu},p_{\mu})$, $\mu=1,\ldots,3N-6$.
As the angular momentum sphere is two-dimensional, the reduced system can be viewed to have 
$1$ rotational degree of freedom and $3N-6$ vibrational degrees of freedom. The rotational and vibrational degrees of freedom  are coupled via the gauge potentials (see Eq.~\eqref{charged_Hamiltonian}) which  give rise to Coriolis terms in the equations of motion.

%%%%%%%%%%%%%%%%%%%%%%
\section{Reduction of three-body systems}
\label{charged_sec:triatomic}

In the following we make the reduction described in the previous section more concrete for the case of three-body systems.
Consider a system of three bodies with masses $m_1$, $m_2$ and $m_3$ and position vectors
$\mathbf{x}_{1},\mathbf{x}_{1},\mathbf{x}_{1} \in \mathbb{R}^3$.
For the potential \eqref{eq:potential}, we write
\begin{equation}\label{eq:potential3b}
V(\mathbf{x}) =  -\frac{\alpha_3}{r_{12}(\mathbf{x})} -  \frac{\alpha_2}{r_{13}(\mathbf{x})} - \frac{\alpha_1}{r_{23}(\mathbf{x})} 
\end{equation} 
with $\alpha_k\in \R$ and $r_{ij} = \norm{\mathbf{x}_i-\mathbf{x}_j}$, $i,j,k=1,2,3$.
We define  mass-weighted Jacobi vectors according to

\begin{eqnarray*}
\mathbf{s}_1&=&\sqrt{\mu_1}(\mathbf{x}_{1}-\mathbf{x}_{3}),\\
\mathbf{s}_2&=&\sqrt{\mu_2} 
(\mathbf{x}_{2}-\frac{m_{1}\mathbf{x}_{1}+m_{3}\mathbf{x}_{3}}{m_{1}+m_{3}}),
\end{eqnarray*}%

\noindent
where

\begin{equation} \label{charged_eq:def_reduced_masses}
 \mu_1=\frac{m_{1}m_{3}}{m_{1}+m_{3}}, \ \ \ \mu_2=\frac{m_{2}(m_{1}+m_{3})}{m_{1}+m_{2}+m_{3}}
\end{equation}
are the reduced masses of the two-body systems with masses $m_1$ and $m_3$ and $m_2$ and $m_1+m_3$, respectively 
(see Fig. \ref{charged_fig:conf}). 

As mentioned in Sec.~\ref{charged_sec:Nbodyreduction}
we can view the space $\Qt= \R^3 \times \R^3$ of Jacobi vectors $\mathbf{s}_1$ and $\mathbf{s}_2$ as the translation reduced configuration space.
Viewing the Jacobi vectors as column vectors of $3\times 2$ matrices we can identify the configuration space with the space of $3\times 2$ matrices $\R^{3\times 2}$.
The translation reduced configuration space $\Qt$ can then be viewed as the disjoint union \cite{IwaiYamaoka05}
\begin{equation}\label{charged_eq:Qunion}
\Qt = \Qti{0} \cup \Qti{1} \cup \Qti{2}  \,, 
\end{equation}
where for $k=0,1,2$,
\begin{equation}
\Qti{k} := \{ A \in \R^{3\times 2} \, : \, \text{rank} A = k \} \,.
\end{equation}
Here $\Qti{2}$ contains the non-collinear configurations, $\Qti{1}$ contains the collinear configurations, and $\Qti{0}$ the triple collision (which is the centre of mass located at the origin). 
We note that for systems with more than three particles, $\Qt$ in \eqref{charged_eq:Qunion} also contains the union with $\Qti{3}$ (defined in an analogous way).
For three particles,  $\Qti{3}$ is empty. 
Moreover, $\Qti{2}$ is a smooth manifold whose boundary is formed by $\Qti{0}\cup \Qti{1}$, i.e. $\partial \Qti{2}= \Qti{0}\cup \Qti{1}$.

The collision set is given by
\begin{equation}
\Delta_c = \{ (\mathbf{s}_1, \mathbf{s}_2) \in \R^3\times\R^3 \, : \,  \mathbf{s}_1 = 0, \,  \mathbf{s}_2 = \frac{\sqrt{\mu_2}}{\sqrt{\mu_1}}    \frac{m_1}{m_1+m_3} \mathbf{s}_1, \text{ or } \mathbf{s}_2 = \frac{\sqrt{\mu_2} }{\sqrt{\mu_1} }\frac{m_3}{m_1+m_3}\mathbf{s}_1\},
\end{equation}
where the conditions defining the set correspond to collisions of particles 1 and 3, 2 and 3, and 1 and 2, in this order. The collision set is contained in the boundary of $\Qti{2}$, i.e. $\Delta_c\subset \partial \Qti{2}$. Let
\begin{equation}
\Delta_{c,k} :=  \Delta\cap \Qti{k}
\end{equation}
for $k=0,1,2$.
Then $\Delta_{c,2}$  is empty, and $\Delta$ equals the disjoint union of 
$\Delta_{c,1}$ and $\Delta_{c,0}$, where $\Delta_{c,1}$ contains the double collisions which are no triple collisions and $\Delta_{c,0}$ contains the triple collision.

The rotation group $\SO(3)$ acts on $\Qt$ according to 
\begin{equation}\label{charged_eq:def_SO(3)_action}
(\mathbf{s}_1,\mathbf{s}_2) \mapsto (\g\, \mathbf{s}_1,\g\, \mathbf{s}_2), \quad \g \in \SO(3).
\end{equation}
The internal space  $\Qtr$ is then the quotient space $\Qt /\SO(3)$ which consists of the equivalence classes of configurations that can be mapped to one another via a rotation $\g\in \SO(3)$.
Let $\pi:\Qt \to \Qt /\SO(3)$ denote the quotient map.
For $(\mathbf{s}_1,\mathbf{s}_2) \in \Qt$,  let $\mathcal{O}_{(\mathbf{s}_1,\mathbf{s}_2) } := \{   (\g\,\mathbf{s}_1,\g\mathbf{s}_2)\, : \, \g \in \SO(3)\}$ be the $\SO(3)$ orbit through 
$(\mathbf{s}_1,\mathbf{s}_2)$ and
$\mathcal{G}_{(\mathbf{s}_1,\mathbf{s}_2) } :=  \{   \g \in \SO(3)   \, : \,   (\g\,\mathbf{s}_1,\g\,\mathbf{s}_2) =  (\mathbf{s}_1,\mathbf{s}_2) \} $ the isotropy group at $(\mathbf{s}_1,\mathbf{s}_2)$. Then
\begin{equation}
\mathcal{G}_{(\mathbf{s}_1,\mathbf{s}_2) } = \left\{ 
\begin{array}{cl}
\{ e \} & \text{ for } (\mathbf{s}_1,\mathbf{s}_2) \in \Qti{2},\\
\SO(2)  & \text{ for } (\mathbf{s}_1,\mathbf{s}_2) \in \Qti{1},\\
 \SO(3)  & \text{ for } (\mathbf{s}_1,\mathbf{s}_2) \in \Qti{0},
\end{array}
\right.
\end{equation}
and
\begin{equation}
\mathcal{O}_{(\mathbf{s}_1,\mathbf{s}_2) } = \left\{ 
\begin{array}{cl}
\SO(3) & \text{ for } (\mathbf{s}_1,\mathbf{s}_2) \in \Qti{2},\\
S^2  & \text{ for } (\mathbf{s}_1,\mathbf{s}_2) \in \Qti{1},\\
 \{ 0 \}  & \text{ for } (\mathbf{s}_1,\mathbf{s}_2) \in \Qti{0}.
\end{array}
\right.
\end{equation}
The configuration space can hence be viewed to be stratified into three strata defined via the orbit type, i.e. $\Qt=\Qti{2} \cup \Qti{1} \cup \Qti{0}$, and the projection is similarly stratified according to 
\begin{equation}
\Qti{2} \to \Qti{2}/\SO(3),\quad \Qti{1} \to \Qti{1}/\SO(3),\text{ and } \Qti{0} \to \Qti{0}/\SO(3).
\end{equation}

\begin{figure}
\begin{center}
\includegraphics[angle=0,width=7cm]{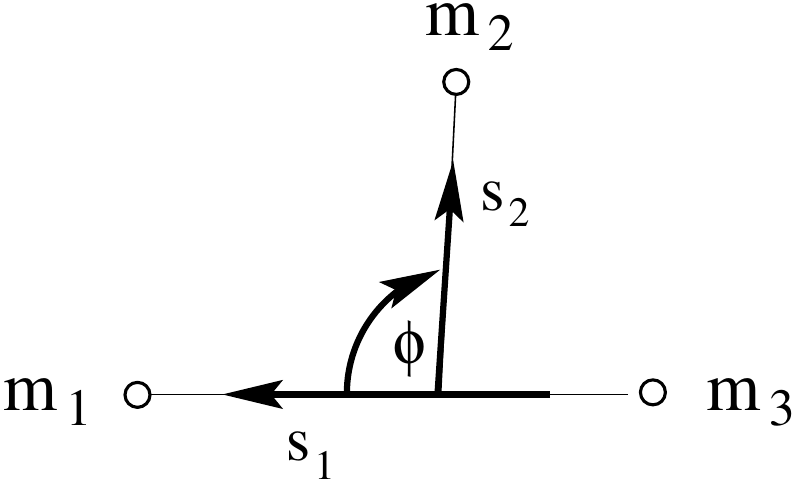}
\end{center}
\caption{\label{charged_fig:conf}
Directions of the mass weighted Jacobi vectors $\mathbf{s}_{1}$ and $\mathbf{s}_{2}$. The vector  $\mathbf{s}_{2}$ has its tail at the centre of mass of the particles 1 and 3.  The Jacobi coordinates $\rho_1$ and $\rho_2$ are the lengths of the vectors $\bs_1$ and $\bs_2$, respectively,  and $\phi$ is the angle between the two vectors. 
}
\end{figure}

As coordinates on the internal space we can take
the \emph{Jacobi coordinates} $(\rho_1,\rho_2,\phi)$  defined as
\begin{equation*}      \label{charged_eq:def_rhophi}        
\rho_1=\|\mathbf{s}_{1}\|\,,\quad \rho_2=\|\mathbf{s}_{2}\|\,, \quad \mathbf{s}_{1}\cdot\mathbf{s}_{2}=\rho_1\, \rho_2\, \cos\phi\,,
\end{equation*}
where $0\leq\phi\leq\pi$ (see Fig.~\ref{charged_fig:conf}).

Let $\{ \be_1, \be_2, \be_3 \}$ be the standard basis in $\R^3$. Then we can define a section $\sigma:\Qt/\SO(3) \to \Qt$ as 
\begin{equation}  \label{charged_eq:xxy_gauge_section}
\sigma(\rho_1,\rho_2,\phi) = (\br_1,\br_2) := ( \rho_1 \be_1,  \rho_2 \cos \phi\, \be_1 + \rho_2 \sin \phi\, \be_2).
\end{equation}
This section is called the $xxy$-gauge in  \cite{LittlejohnReinsch97} as it corresponds to the choice of a body frame where two bodies (bodies 1 and 3 in our case) are located on the $x$ axis
and the third body (body 2) is contained in the $xy$ plane.
The mass weighted Jacobi vectors $(\bs_1,\bs_2)$  of the three-body system are then given by
\begin{equation}
(\bs_1,\bs_2) = (\g\,\br_1,\g\,\br_2) =  ( \rho_1 \g\, \be_1, \rho_2 \cos \phi\, \g \,\be_1 + \rho_2 \sin \phi \,\g \,\be_2)
\end{equation}
for some $\g\in \SO(3)$.

\rem{
\begin{figure}
\begin{center}
\includegraphics[angle=0,width=7cm]{Body-fixed_xfig}
\end{center}
\caption{\label{charged_fig:xxy}
The $xxy$-gauge.
}
\end{figure}
} % end rem

Collinear configurations are given in terms of  the Jacobi coordinates by either of the equalities $\phi=0$, $\phi=\pi$ and $\rho_2=0$.  
For $\rho_1=0$ (in which case $\phi$ is not defined), particles 1 and 3 collide. For $\phi=0$ combined with 
$\rho_2 = \frac{m_3}{m_1 + m_3} \frac{\sqrt{\mu_2}}{\sqrt{\mu_1}} \rho_1$, particles 1 and 2 collide. For $\phi=\pi$ combined with 
$\rho_2 = \frac{m_1}{m_1 + m_3} \frac{\sqrt{\mu_2}}{\sqrt{\mu_1}} \rho_1$, particles 1 and 3 collide. At the triple collision $\rho_1=\rho_2=0$.

Besides the Jacobi coordinates another natural choice of coordinates is given by  the inter particle distances $r_{12}$, $r_{13}$ and $r_{23}$ which besides being nonnegative need to satisfy the triangle inequality $ r_{12}+r_{13} \ge r_{23} $ and its cyclic permutations. Collinearity is given by equality in either of the triangle inequalites. 
Double collisions between two particles are obviously given by the corresponding distance being zero. At the triple collision all distances are vanishing. 

As will become clear below for the discussion of the Hill regions, it is useful to introduce yet another coordinate system. To this end we first define
\begin{equation}\label{charged_eq:Jacobi2w}
(w_1,w_2,w_3) = (\rho_1^{2}-\rho_2^{2}, 2\, \rho_1 \, \rho_2\cos\phi, 2\, \rho_1 \, \rho_2\, \sin\phi )\,,
\end{equation}
where $w_1,w_2\in \R$ and $w_3\ge 0$. Equation~\eqref{charged_eq:Jacobi2w} shows that the Jacobi coordinates are confocal parabolic coordinates in the space of the coordinates $(w_1,w_2,w_3)$.  The coordinate $w_3$ is twice the area of the parallelogram spanned by the Jacobi vectors $\mathbf{s}_{1}$ and $\mathbf{s}_{2}$. This implies that collinear configurations are contained in the plane $w_3=0$.  This plane hence also contains the collisions. As double collisions of particles of particles 1 and 3 have $\rho_1=0$ we see from \eqref{charged_eq:Jacobi2w} that these are located on the negative $w_1$ axis. Double collisions of 
particles 1 and 2 occur on the line in the plane $w_3=0$ where

\begin{equation} \label{charged_eq:angle_collision_12}
\frac{w_2}{w_1}= 2\frac{\sqrt{m_1 m_2 m_3} \, \sqrt{m_1 + m_2 + m_3} }{m_1(m_1+m_2+m_3) -m_2 m_3 }.
\end{equation}
Similarly collisions of particles 2 and 3 occur in this plane at

\begin{equation}\label{charged_eq:angle_collision_23}
\frac{w_2}{w_1} = -2\frac{\sqrt{m_1 m_2 m_3}\, \sqrt{m_1 + m_2 + m_3} }{m_3(m_1 + m_2 + m_3) - m_1 m_2}.
\end{equation}
The triple collision is located at the origin $w_1=w_2=w_3=0$.

The final coordinate system we are considering is then given by 
spherical coordinates  in the  $(w_1,w_2,w_3)$ coordinate space which give the Dragt's coordinates $(\omega,\chi,\psi)$ defined as (see \cite{LittlejohnReinsch97} and the references therein)
\begin{equation}
(w_1,w_2,w_3) = (\omega\cos\chi\cos\psi, \omega\cos\chi\sin\psi ,\omega\sin\chi)\,,
\end{equation}
where $\omega\ge 0$, $0\le \chi\le \pi/2$ and  $0\le\psi\le 2\pi$.  
Note that $\chi$ is the latitude, not the colatitude, and
 $\omega = \rho_1^2 + \rho_2^2$.

For completeness, we also give the expressions for the inter particle distances in terms of Dragt's coordinates:
\begin{equation}
\begin{split}
r_{12} &= \frac{1}{\sqrt{2\mu_1}} \sqrt{\omega + \omega \cos \chi \cos \psi} \,, \\
r_{13} &= \sqrt{ \frac{\mu_1}{2m_1^2}  (\omega + \omega \cos \chi \cos \psi) + \frac{1}{2\mu_2} (\omega -\omega  \cos \chi \cos \psi ) -\frac{\sqrt{\mu_1}}{m_1 \sqrt{\mu_2}} 
\omega \cos \chi \sin \psi}\,, \\
r_{23} &= \sqrt{ \frac{\mu_1}{2m_3^2}  (\omega + \omega \cos \chi \cos \psi) + \frac{1}{2\mu_2} (\omega -\omega  \cos \chi \cos \psi ) +\frac{\sqrt{\mu_1}}{m_3 \sqrt{\mu_2}} 
\omega \cos \chi \sin \psi}\,.
\end{split}
\end{equation}

%%%%% to be included again later
%\rem{
\begin{figure}
\begin{center}
\includegraphics[angle=0,width=8cm]{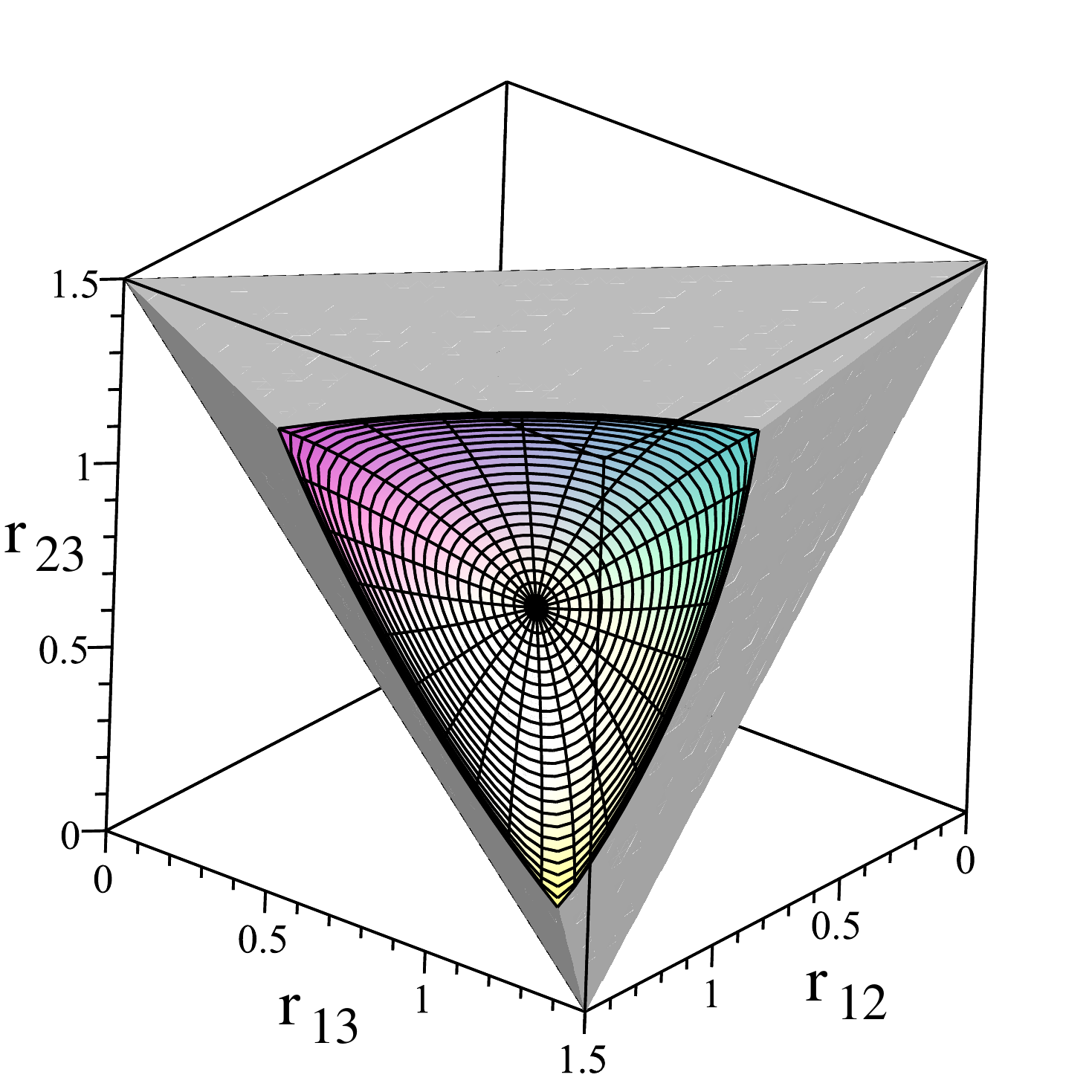}
\end{center}
\caption{\label{charged_fig:Dragtscoordinates}
The coordinate surface $\omega=1$ of the Dragt's coordinates in the space of the inter particle distances $r_{12}$, $r_{13}$ and $r_{23}$. This surface can be viewed as the shape  space $\tilde{\Qtr}$ defined in  \eqref{charged_def_dilredshapespace}.
}
\end{figure}
%} % end rem

By choosing the internal coordinates as the Jacobi coordinates ($\rho_1,\rho_2,\phi$) 
the inertia tensor becomes \cite{LittlejohnReinsch97}

\begin{equation} \label{eq:M_Jacobi}
\mathbf{M}=\left[
\begin{array}{ccc} 
\rho_{2}^{2} \sin^2 \phi & - \rho_{2}^{2} \sin \phi \cos \phi & 0 \\ 
-\rho_{2}^{2} \sin \phi \cos \phi & \rho_{1}^{2}+ \rho_{2}^{2} \cos^2  \phi & 0 \\ 
0 & 0 & \rho_{1}^{2}+ \rho_{2}^{2} 
\end{array}
\right]\,.
\end{equation}
Its eigenvalues give the  principal  moments of inertia
\begin{equation}\label{eq:M12_Jacobi}
M_{1/2} = \frac12 \big( \rho_1^2 + \rho_2^2 \mp \sqrt{\rho_1^4 + \rho_2^4 + 2\cos(2\phi) \rho_1^2 \rho_2^2 } \,\big)
\end{equation}
and $M_3 =  \rho_1^2 + \rho_2^2 $.
The metric and the gauge potential become
\begin{equation}
 \left[ g_{\mu\nu}\right] =\left[ 
\begin{array}{ccc}
1 & 0 & 0 \\ 
0 & 1 & 0 \\ 
0 & 0 & \frac{\rho_{1}^{2}\rho_{2}^{2}}{\rho_{1}^{2}+\rho_{2}^{2}}%
\end{array}%
\right]
\end{equation}

and

\begin{equation}
 \mathbf{A}_{\rho_1 }=\mathbf{A}_{\rho_2}=(0,0,0),\ \ \ \mathbf{A}_{\phi }=(0,0,\frac{\rho_{2}^{2}}{\rho_{1}^{2}+\rho_{2}^{2}} ).
\end{equation}

\noindent
In Dragt's coordinates
the inertia and metric tensors are diagonal   \cite{LittlejohnReinsch97}:

\begin{equation} \label{charged_eq:inertiaDragt}
\mathbf{M} =\left[ 
\begin{array}{ccc}
\omega \sin ^{2}\frac{\chi }{2} & 0 & 0 \\ 
0 & \omega \cos ^{2}\frac{\chi }{2} & 0 \\ 
0 & 0 & \omega %
\end{array}%
\right]
\end{equation}
and
\begin{equation}
\left[ g_{\mu\nu}\right] =\frac{1}{4}\, \left[ 
\begin{array}{ccc}
\frac{1}{\omega} & 0 & 0 \\ 
0 & \omega & 0 \\ 
0 & 0 & \omega \cos^2\chi%
\end{array}%
\right]\,,
\end{equation}
In particular the principal moments of inertia  $M_1=\omega \sin ^{2}\frac{\chi }{2} $, $M_2=\omega \cos ^{2}\frac{\chi }{2} $ and $M_3=\omega $ are ordered by magnitude on the diagonal of $\mathbf{M} $ (note that $0\le \chi\le \pi/2$).

\noindent
The gauge potential becomes in this case

\begin{equation}
\mathbf{A}_{\omega }=\mathbf{A}_{\chi }=(0,0,0),\ \ \ \mathbf{A}_{\psi }=(0,0,-\frac{1}{2}\sin \chi ).
\end{equation}%

\rem{
\noindent
Hence, if the coordinates (\ref{charged_Coordinates}) are used then
after relabeling $u=q_{0},v=p_{0},\omega =q_{1},\chi =q_{2},\psi =q_{3}$, the reduced  Hamiltonian becomes

\begin{eqnarray*} \label{charged_eq:DragtsCoordinates1}
H(q,p)&=&\frac{(r^{2}-p_{0}^{2})\cos ^{2}q_{0}}{2q_{1}\sin ^{2}\frac{q_{2}}{2}}+\frac{(r^{2}-p_{0}^{2})\sin ^{2}q_{0}}{2q_{1}\cos ^{2}\frac{q_{2}}{2}}
+\frac{p_{0}^{2}}{2q_{1}}+2q_{1}p_{1}^{2}
+\frac{2p_{2}^{2}}{q_{1}}\\
&&+\frac{(2p_{3}+ p_{0}\sin{q_{2}})^2}{2q_{1}\cos^2 q_{2}}
+V(q_{1},q_{2},q_{3})\,,
\end{eqnarray*}

\noindent
or if the coordinates (\ref{charged_Coordinates2}) are used one has

\begin{eqnarray*} \label{charged_eq:DragtsCoordinates2}
H(q,p)&=&\frac{p_{0}^{2}}{2q_{1}\sin^{2}\frac{q_{2}}{2}}+\frac{(r^{2}-p_{0}^{2})\sin^{2}q_{0}}{2q_{1}\cos^{2}\frac{q_{2}}{2}}+\frac{(r^{2}-p_{0}^{2})\cos^{2}q_{0}}{2q_{1}}+2q_{1}p_{1}^{2}+\frac{2p_{2}^{2}}{q_{1}}\\
&&+\frac{(2p_{3}+\sqrt{r^{2}-p_{0}^{2}}\cos q_{0}\sin{q_{2}})^2}{2q_{1}\cos^{2}q_{2}}+V(q_{1},q_{2},q_{3}).
\end{eqnarray*}
} % end rem

\rem{
\noindent
If we use coordinates (\ref{charged_Coordinates}) 
after relabeling $u=q_{0},v=p_{0},\rho_1 =q_{1},\rho_2=q_{2},\phi =q_{3}$, the Hamiltonian becomes

\begin{eqnarray*}
 H&=&\frac{1}{2} \{ \frac{q_{1}^{2}+q_{2}^{2} \cos^2  q_{3}}{q_{1}^{2}q_{2}^{2} \sin^2  q_{3}}(r^2-p_0^2)\cos^2 q_0
+\frac{2 \cos q_{3}}{q_{1}^{2} \sin q_{3}}(r^2-p_0^2)\cos q_0\sin q_0\\
&+&\frac{1}{q_{1}^{2}}(r^2-p_0^2)\sin^2 q_0+\frac{1}{q_{1}^{2} 
+q_{2}^{2}}p_{0}^{2}+p_{1}^{2}+p_{2}^{2}
+\frac{q_1^2+q_2^2}{q_1^2 q_2^2}(p_{3}-\frac{q_{2}^{2}}{q_{1}^{2}+q_{2}^{2}}p_{0})^{2} \} \\
&+&V(q_1,q_2,q_3),
\end{eqnarray*}

\noindent
If the coordinates (\ref{charged_Coordinates2}) are 
are chosen, then the Hamiltonian becomes

\begin{eqnarray*}
 H&=&\frac{1}{2} \{ \frac{q_{1}^{2}+q_{2}^{2} \cos^2  q_{3}}{q_{1}^{2}q_{2}^{2} \sin^2  q_{3}}p_{0}^{2}
+\frac{2 \cos q_{3}}{q_{1}^{2} \sin q_{3}}p_{0}\sqrt{r^2-p_0^2}\sin q_0\\
&+&\frac{1}{q_{1}^{2}}(r^2-p_0^2)\sin^2 q_0+\frac{1}{q_{1}^{2} 
+q_{2}^{2}}(r^2-p_0^2)\cos^2 q_0+p_{1}^{2}+p_{2}^{2}\\
&+&\frac{q_1^2+q_2^2}{q_1^2 q_2^2}(p_{3}-\frac{q_{2}^{2}}{q_{1}^{2}+q_{2}^{2}}\sqrt{r^2-p_0^2}\cos q_0)^{2} \}+V(q_1,q_2,q_3).
\end{eqnarray*}

} % end rem

\begin{remark} \label{charged_remark:moments_inertia}
The first and second principal moments of inertia add up to the third (see, e.g., \eqref{charged_eq:inertiaDragt}):
\begin{equation}
M_1 + M_2 = M_3\,.
\end{equation}
It follows from our choice of the $xxy$-gauge that
the principal moment of inertia $M_3$ corresponds to the axis of rotation that contains the centre of mass and is perpendicular to the plane in which  the three bodies are lying.
In the celestial mechanics literature $M_3$ is simply referred to as \emph{moment of inertia} (see, e.g., \cite{Moeckel}), and it is common to use the symbol $I$ for it. In Jacobi and Dragt's coordinates, respectively, it is then given by
\begin{equation} \label{charged_eq:def_moment_I}
I := M_3= \frac12 \text{tr} \,M =  \rho_1^2 + \rho_2^2  =  \omega.
\end{equation}
This will play an important role in Sec.~\ref{charged_sec:shapespace}.
\end{remark}

\begin{remark}\label{lemma:M_tilde_singular}
The moment of inertia tensor is a smooth function on the  internal space  $\Qtr$. However,
whereas the third principal moment of inertia $M_3$ is a smooth function on the  internal space  $\Qtr$, 
the first and the second principal moments of inertia $M_1$ and $M_2$ are continuous on $\Qtr$  and smooth only at points $q \in  \Qtr$  where  $M_1(q) \ne M_2(q) $. 
This can be seen from using Jacobi coordinates $\rho_1$, $\rho_2$ and $\phi$ which as opposed to Dragt's coordinates are smooth coordinates on the interior of the internal space (i.e. away from collinear configurations). 
The principal moments of inertia $M_1$ and $M_2$  are not differentiable when the square root in \eqref{eq:M12_Jacobi} vanishes which happens only for $\rho_1=\rho_2$ together with $\phi=\pi/2$, i.e. when $M_1=M_2$.  
In quantum physics such singular points of parameter families of Hermitian operators are referred to as \emph{diabolic points} and we follow this terminology. The diabolic points of the moment of inertia tensor have $(w_1,w_2,w_3)=(0,0,w)$ and the angle $\chi$  of Dragt's coordinates is not defined. In fact it is not possible to have smooth coordinates on the internal space in terms of which the moment of inertial tensor is diagonal.
\end{remark}

\begin{remark}\label{remark:equal_moments_inertia}
Collinear configurations (which include double collisions) are given in Jacobi coordinates by $\phi=0$, $\phi=\pi$, $\rho_2=0$ or $\rho_1=0$ where the latter always corresponds to a double collision, and in Dragt's coordinates by $\chi=0$, i.e. $w_3=0$, and hence $w_1^2+w_2^2=\omega^2$. Here $M_1=0$ and $M_2=M_3=\rho_2^2=\omega$.
\end{remark}

%%%%%%%%%%%%%%%%%%%%%%

\rem{
\section{Charged three-body systems}
\label{charged_sec:charged3body}

% G = 6.67384(80) \times 10^{-11} \ \mbox{m}^3 \ \mbox{kg}^{-1} \ \mbox{s}^{-2} = 6.67384(80) \times 10^{-11} \ {\rm N}\, {\rm (m/kg)^2}

In this paper we are considering 3-body systems with Hamiltonians  of the form
\begin{equation}\label{charged_eq:Hamiltonian_charged3bp}
H (\mathbf{x} , \mathbf{p}) = \frac12 p^T \,\M^{-1} \,p +V(\mathbf{x}),
\end{equation}
where $V=V_{\text{Newton}}+V_{\text{Coulomb}}$ with
\begin{eqnarray}
V_{\text{Newton}}(\mathbf{x}) =  -\sum_{1\le i<j\le 3} G\frac{m_i m_j}{\norm{{\mathbf{x}}_i-\mathbf{x}_j}},\\
V_{\text{Coulomb}}(\mathbf{x}) = \sum_{1\le i<j\le 3} \frac{1}{4\pi \epsilon_0} \frac{Q_i Q_j}{\norm{\mathbf{x}_i-\mathbf{x}_j}}\,.
\end{eqnarray}
Here $G$ is the gravitational constant, $\frac{1}{4\pi \epsilon_0}$ is the Coulomb force constant, and $Q_i$ is the charge of the $i$th particle, $i=1,2,3$.

We will use  atomic units which are defined as follows: 

\begin{itemize}
\item unit of length: $a_0=5.291772192(17)\times 10^{-11} \,\text{m}$ (which is called `bohr' ), 

\item unit of mass: $m_e=9.10938291(40)\times10^{-31}\, \text{kg}$ (electron mass),  

\item unit of time: $\hbar/E_h=2.418884326505(16)\times10^{-17}\, \text{s}$ (where $\hbar$ is Planck's constant divided by $2\pi$ and $E_h$ is a unit of energy called `hartree'), 

\item unit of charge: $e=1.602176565(35)\times 10^{-19}\, \text{C}$ (elementary charge).

\end{itemize}

In these units 
\begin{itemize}

\item the Coulomb force constant $\frac{1}{4\pi \epsilon_0}$ has the value 1,

\item the gravitational constant $G$ has the value $2.400446611\times 10^{-43}$.
\end{itemize}

This means that for  charged 3-body system (of reasonable mass), the gravitational interaction can safely be neglected. We still  the gravitational interaction as we will also consider the example of a gravitational 3-body system (without charges) for illustration. 
} % end rem

%%%%%%%%%%%%%%%%%%%%%%
\section{Dilation symmetry and the shape-orientation space}
\label{charged_sec:shapespace}

As the next step towards our definition of a Hill region 
we reduce the non-symplectic scaling symmetry of charged three-body systems. After having reduced the translational and rotational symmetries in Secs.~\ref{charged_sec:Nbodyreduction} and \ref{charged_sec:triatomic} the
reduction of this scaling will lead to our notion of the shape and orientation of a charged three-body system which will then enter our definition of a  Hill region in Sec.~\ref{charged_sec:Hillregions}.

The potential $V$ in \eqref{eq:potential3b} and the inertia tensor $\mathbf{M}$ with the components given in \eqref{charged_eq:def_moment_intertia_tensor}
are homogenous functions of the distances between the particles of degree $-1$ and $2$, respectively. We make this more formal by defining an $\R$ action as follows.
\begin{definition}
The \emph{dilation transformation} is the $\R$ action on the translation-reduced configuration space  $\Qt $ defined as
\begin{equation} \label{charged_eq:def_dilation}
d : \R \times \Qt  \to \Qt , \quad (l,(\mathbf{s}_1,\mathbf{s}_2)) \mapsto (\ue^l \mathbf{s}_1, \ue^l \mathbf{s}_2).
\end{equation}
We write 
\begin{equation} \label{charged_eq:def_d_lambda}
d_\lambda (\mathbf{s}_1,\mathbf{s}_2) = (\lambda \mathbf{s}_1,\lambda \mathbf{s}_2)\,,
\end{equation}
where $\lambda = \ue^l>0$. 
\end{definition}

The dilation transformation defines a singular line bundle $\Qt \to \Qt/ \R$. The only point with nontrivial isotropy is the triple collision point. The bundle
$(\Qti{2} \cup \Qti{1}) \to (\Qti{2} \cup \Qti{1}) / \R$ is smooth. 
The dilation $\R$ action commutes with the $\SO(3)$ action in \eqref{charged_eq:def_SO(3)_action}, i.e. $d_\lambda (\g\, \mathbf{s}_1,\g\, \mathbf{s}_2) =   (\lambda \g\, \mathbf{s}_1,\lambda \g\, \mathbf{s}_2) = (\g \lambda \mathbf{s}_1,\g \lambda \mathbf{s}_2) $ for all $\lambda>0$ and $\g\in \SO(3)$. 
As we can identify the internal space $\Qtr=\Qt /\SO(3)$ with the (image of the) section given by the $xxy$-gauge the dilation transformation also 
`induces' a map on the internal space $\Qtr $. From \eqref{charged_eq:xxy_gauge_section} we get
\begin{equation}
d_\lambda  (\br_1,\br_2) = (\lambda  \rho_1 \be_1, \lambda \rho_2 \cos \phi\, \be_1 + \lambda \rho_2 \sin \phi\, \be_2),
\end{equation}
i.e. in Jacobi the induced dilation map $\Qtr \to \Qtr $, $q\mapsto d_\lambda (q)$ reads

\begin{equation}
d_\lambda (\rho_1,\rho_2,\phi) = (\lambda \rho_1, \lambda \rho_2, \phi).
\end{equation}
In terms of Dragt's coordinates the map becomes

\begin{equation}
d_\lambda (\omega,\chi,\psi) = (\lambda^2 \omega, \chi, \psi ).
\end{equation}
In order to avoid a cumbersome notation we here use the same symbol for the induced map as in \eqref{charged_eq:def_d_lambda}.

Homogeneity of $V$ and $\mathbf{M}$ now means that
\begin{equation} \label{charged_eq:homogeneities}
V(d_\lambda (q)) = \lambda^{-1} V(q) \quad \text{ and } \quad  \mathbf{M} (d_\lambda (q)) = \lambda^2 \mathbf{M}(q)  \,.
\end{equation}
For the moment of inertia $I$ defined in \eqref{charged_eq:def_moment_I}, we similarly have 
\begin{equation}\label{charged_eq:hom_I}
I(d_\lambda (q)) = \lambda^2 I(q).
\end{equation}

In fact the (induced) dilation map also defines a line bundle $\Qtr \to \Qtr / \R$. 
For this bundle, we can construct a section by noting that except for the triple collision point we can find for each point $q\in \Qtr$  
 a $\lambda>0$ such that for the moment of inertia defined in \eqref{charged_eq:def_moment_I} we have $I(d_\lambda(q))=1$. We can then identify the quotient space 
 $ \Qtr  / \R$ with this section.

\begin{definition}\label{charged_def:shap_space}
The \emph{shape space} is the dilation reduced interior of the internal space given by
\begin{equation}\label{charged_def_dilredshapespace}
\tilde{\Qtr } := \{ q \in \Qtr^{\mathrm{o}}  \,:\,  I(q)= 1  \}.
\end{equation} 
The points in  the shape space are referred to as \emph{shapes} and denoted by $\tilde{q}$.
\end{definition}

\rem{
\begin{equation}\label{charged_def_dilredshapespace}
\tilde{Q} := \{ (\mathbf{s}_1, \mathbf{s}_2) \in Q \,:\,  \|\mathbf{s}_{1}\|^2 +  \|\mathbf{s}_{2}\|^2 = 1  \}/\SO(3) 
\end{equation} 
the \emph{shape space}.  Note that in this definition we made use of the commutativity of the rotations and dilations.
} % end rem

\noindent
%Note that the condition  $ I(q)= 1$ excludes the triple collision. 
The boundary of the shape space (which is not part of the shape space) consists of collinear configurations. We excluded collinear configurations in the definition of the shape space to avoid difficulties resulting from singularities in the reduction of the rotational symmetry.

We can identify the shape space $\tilde{\Qtr}$ with the upper hemisphere of the unit sphere in the coordinate space $(w_1,w_2,w_3)$  on which we can use $(w_1,w_2)$ or Dragt's coordinates $(\chi,\psi)$ as coordinates. Note however that the latter are singular at the pole $(w_1,w_2,w_3)=(0,0,1)$. For comparison, $\tilde{\Qtr }$ is shown in the space of inter particle distances in Fig.~\ref{charged_fig:Dragtscoordinates}. The orbits of the dilation $\R$ action in the internal spaces with coordinates  $(w_1,w_2,w_3)$ and $(d_{12},d_{13},d_{23})$ are straight line rays emanating from (but not including) the origin which corresponds to the triple collision point. In view of the definition of the shape space according to \eqref{charged_def_dilredshapespace} we can view the shape space to form a submanifold of the internal space and this way also can define the action of the dilation transformation $d_\lambda$ on points $\tilde{q}\in \tilde{\Qtr }$. For   
$q=\tilde{q}$ with $q\in \Qtr$, we set $d_\lambda(\tilde{q}) := d_\lambda(q)$.
Let
\begin{equation} \label{charged_eq:def_VM_tilde}
\tilde{V} := \left. V \right|_{\tilde{Q}} 
 \quad \text{ and }\quad
\mathbf{\tilde{M}}  := \left. \mathbf{M} \right|_{\tilde{Q}}    .
\end{equation}
\rem{
\begin{equation}
\mathbf{\tilde{M}}  :=  \frac{1}{I} \mathbf{M}\quad \text{ and }\quad
\tilde{V} := \sqrt{I}  \, V.
\end{equation}
} % end rem
Then for $\tilde{q}\in \tilde{Q}$, 
\begin{equation} \label{charged_eq:hom_VM_tilde}
V(d_\lambda(\tilde{q})) = 
\lambda^{-1}  \tilde{V} (\tilde{q} ) 
 \quad \text{ and }\quad
\mathbf{M} ( d_\lambda(\tilde{q})   ) =  \lambda^{2} \mathbf{\tilde{M}} (\tilde{q}) .
\end{equation}

\rem{
Then $\mathbf{\tilde{M}} $ and $\tilde{V} $ are constant on the orbits of the dilation $\R$ action. In terms of the Dragt's coordinates this means that 
$\mathbf{\tilde{M}}$ and $\tilde{V} $ no longer depend on $\omega$ and are functions of $\chi$  and $\psi$ only.
} % end rem

Let $\tilde{M}_k$, $k=1,2,3$, be the dilation reduced principal moments of inertia, i.e. the eigenvalues of $\mathbf{\tilde{M}}$.
Then by construction $\tilde{M}_3$ is constant 1 on the  the shape space $\tilde{Q}$.
The graphs of $\tilde{M}_1$ and $\tilde{M}_2$  over the shape space considered as the unit disk in the $(w_1,w_2)$-plane are shown in  Fig.~\ref{charged_fig:principal_moments}.  Inline with Remark~\ref{lemma:M_tilde_singular}, we see that the graphs are not smooth  at  the diabolic point
$w_1=w_2=0$ where  $\tilde{M}_1 = \tilde{M}_2$. Note the rotational symmetry in Fig.~\ref{charged_fig:principal_moments} resulting from the independence of the principal moments of inertia in \eqref{charged_eq:inertiaDragt} from the angle $\psi$.  This means that there are loops in the internal space and shape space along which the principal moments are constant.

\begin{figure}
\begin{center}
\includegraphics[angle=0,width=7cm]{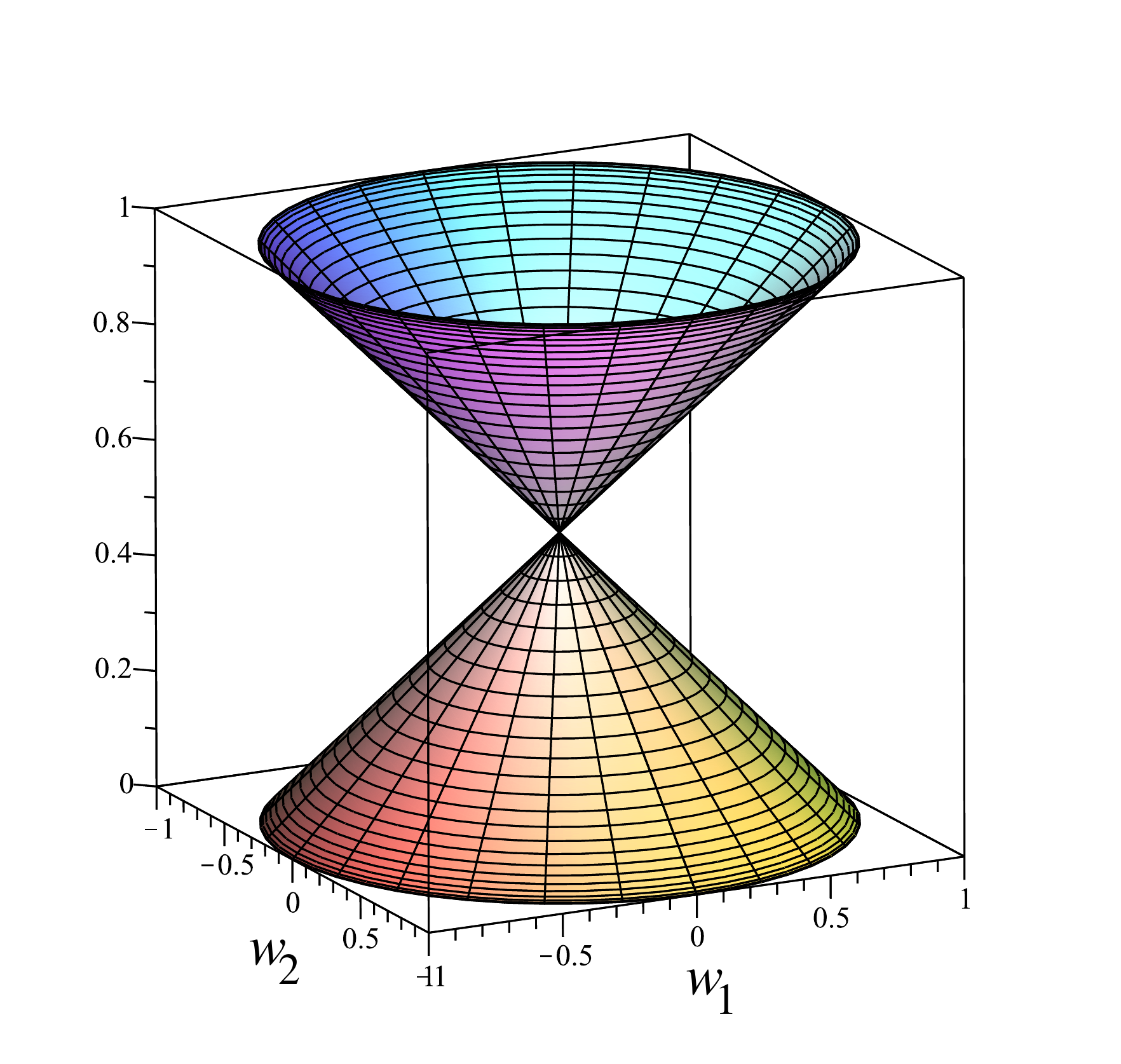}
\end{center}
\caption{\label{charged_fig:principal_moments}
Graphs of the principal moments of inertia $\tilde{M}_1$ (lower cone) and $\tilde{M}_2$ (upper cone) over the shape space given by the unit disk in the $(w_1,w_2)$-plane showing the diabolic point in Remark~\ref{lemma:M_tilde_singular}.
}
\end{figure}

In addition to the shape space we define the normalised body angular momenta

\begin{equation}
\mathbf{\tilde{J}}  = \frac{1}{\|  \mathbf{J}  \| } \mathbf{J} 
\end{equation}
and the normalised angular momentum sphere

\begin{eqnarray}
S^2_1 &=& \{   \mathbf{\tilde{J}} \in\R^3 \,:\,    \tilde{J}_1^2 +     \tilde{J}_2^2 +   \tilde{J}_3^2  = 1 \}.
\end{eqnarray}

Whereas the shape space determines the shape formed by the $3$ bodies the normalised angular momentum sphere $S^2_1$ contains information about the orientation of the $3$-body system in  space. In fact, a point on  $S^2_1$ specifies the direction of the conserved space fixed angular momentum $\mathbf{L}$ in the body frame. This fixes the orientation of the three-body system in space up to the angle of rotation of the three-body system about the axis through the center of mass with the direction of the angular momentum 
$\mathbf{L}$.

\rem{
{\bf In fact, from \eqref{charged_eq:def_J}  we have $\mathbf{\tilde{J}}= \frac{1}{r}R^T \mathbf{L}$ . . .   say more about how $S^2_1$ contains information about the orientation of the 3-body system (either here or in Sec.~\ref{charged_sec:Hillregions}) }
} % end rem
We make the following definition.

\begin{definition} The \emph{shape-orientation space} is the product
\begin{equation}
\tilde{Q} \times S_1^2.
\end{equation}
Points in this product are referred to as \emph{shape-orientation points} and denoted as $(\tilde{q}, \mathbf{\tilde{J}} )$.
\end{definition}
%%

%%%%%%%%%%%%%%%%%%%%%%%%%%%%%%%

\section{Hill regions}
\label{charged_sec:Hillregions}

For a given energy $E$, the Hill region of a Hamiltonian system whose phase space is a cotangent bundle over configuration space is in general defined as the projection of the energy surface to configuration space. As the angular momentum is conserved for $N$-body systems it is useful to not only consider the projection of the energy surface but the projection of the integral manifold where both the energy and the angular momentum are fixed. Furthermore it is useful to consider the projection of the integral manifold to a space reduced by the symmetries of translations and rotations and, for charged $N$-body systems, also by dilations. The reduced phase space is then however no longer a cotangent bundle (see Sec.~\ref{charged_sec:Nbodyreduction}) and it needs to be defined what a Hill region should be in this case. We do this as follows.

\begin{definition}\label{charged_def:Hillregion}
For given value $E$ of the energy and magnitude $r>0$ of the angular momentum (i.e. $r=\Vert \mathbf{L} \Vert >0$), we say that a shape-orientation point $(\tilde{q},\mathbf{\tilde{J}})$ 
is in the Hill region if for the Hamiltonian function $H(q,p,\mathbf{J})$ in \eqref{charged_Hamiltonian},  there  exists
a $\lambda>0$ and $p$ such that 
\begin{equation}\label{charged_eq:energy_equation_def_Hill}
H(d_\lambda(\tilde{q}),p,r\mathbf{\tilde{J}}) = E.
\end{equation} 
\rem{
\begin{equation} \label{charged_eq:def_energy_Hill}
r^2\frac{1}{2}\mathbf{\tilde{J}}\cdot \mathbf{M}^{-1}\cdot \mathbf{\tilde{J}} + \frac{1}{2}g^{\mu \nu
}(p_{\mu }-\mathbf{\tilde{J}}\cdot \mathbf{A}_{\mu })(p_{\nu }-\mathbf{J}\cdot 
\mathbf{A}_{\nu })
+V( d_\lambda (\tilde{q} ) )  =E\,.
\end{equation}
} % end rem
\end{definition}
For computations, it is useful to have the following characterisation. 
\begin{lemma}\label{charged_lemma:Hill}
For given values of the energy $E=H$ and magnitude of the angular momentum $r=\Vert \mathbf{L} \Vert >0$, a shape-orientation point $(\tilde{q},\mathbf{\tilde{J}})$ 
is in the Hill region if and only if there exists a $\lambda>0$ such that

\begin{equation}\label{charged_eq:F(lambda)}
F(\lambda) :=  \lambda^2\, E  -  r^2 \frac{1}{2}\mathbf{\tilde{J}}\cdot [\mathbf{\tilde{M}} (\tilde{q})]^{-1}\cdot \mathbf{\tilde{J}}  - \lambda \, \tilde{V}(\tilde{q}) \ge 0 ,
\end{equation}
where $ \tilde{V}$ and $\mathbf{\tilde{M}}$ are the restrictions of $V$ and $\mathbf{M}$ to the shape space (see \eqref{charged_eq:def_VM_tilde}).
\end{lemma}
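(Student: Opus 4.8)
The plan is to eliminate the momentum variable $p$ from the defining condition of Definition~\ref{charged_def:Hillregion}, and then to translate the resulting $p$-free inequality into the polynomial condition $F(\lambda)\ge 0$ using the homogeneity relations \eqref{charged_eq:hom_VM_tilde}. The key structural fact about the reduced Hamiltonian \eqref{charged_Hamiltonian} is that $p$ enters only through the vibrational kinetic energy $\frac{1}{2}g^{\mu\nu}(p_\mu-\mathbf{J}\cdot\mathbf{A}_\mu)(p_\nu-\mathbf{J}\cdot\mathbf{A}_\nu)$, whereas the rotational term $\frac{1}{2}\mathbf{J}\cdot\mathbf{M}^{-1}\cdot\mathbf{J}$ and the potential $V$ do not depend on $p$.

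First I would fix $\lambda>0$ and set $\mathbf{J}=r\mathbf{\tilde{J}}$, regarding $H(d_\lambda(\tilde{q}),\,\cdot\,,r\mathbf{\tilde{J}})$ as a function of $p$ alone. Writing $c_\mu := r\,\mathbf{\tilde{J}}\cdot\mathbf{A}_\mu$, the vibrational term is the quadratic form $Q(p)=\frac{1}{2}g^{\mu\nu}(p_\mu-c_\mu)(p_\nu-c_\nu)$. Because the shape $\tilde{q}$ lies in the interior $\Qtr^{\mathrm{o}}$ of the internal space --- collinear configurations being excluded from the shape space by Definition~\ref{charged_def:shap_space} --- the metric $[g_{\mu\nu}]$ and hence its inverse $[g^{\mu\nu}]$ are positive definite at $\tilde{q}$. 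Thus $Q$ is a positive-definite quadratic form in $p-c$, so as $p$ ranges over the whole momentum space $Q(p)$ attains its minimum $0$ and takes every value in $[0,\infty)$. Writing $H(d_\lambda(\tilde{q}),p,r\mathbf{\tilde{J}})=Q(p)+U_{\mathrm{eff}}(\lambda)$ with
\begin{equation*}
U_{\mathrm{eff}}(\lambda):=\tfrac{r^2}{2}\,\mathbf{\tilde{J}}\cdot[\mathbf{M}(d_\lambda(\tilde{q}))]^{-1}\cdot\mathbf{\tilde{J}}+V(d_\lambda(\tilde{q})),
\end{equation*}
it follows that there exists $p$ with $H=E$ if and only if $U_{\mathrm{eff}}(\lambda)\le E$. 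This is the only step with genuine content, and it is precisely where the exclusion of collinear shapes is used: I need positive definiteness of the metric to guarantee that the range of $Q$ is all of $[0,\infty)$.

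Next I would insert the homogeneity relations \eqref{charged_eq:hom_VM_tilde}, namely $V(d_\lambda(\tilde{q}))=\lambda^{-1}\tilde{V}(\tilde{q})$ and $\mathbf{M}(d_\lambda(\tilde{q}))=\lambda^2\mathbf{\tilde{M}}(\tilde{q})$, so that $[\mathbf{M}(d_\lambda(\tilde{q}))]^{-1}=\lambda^{-2}[\mathbf{\tilde{M}}(\tilde{q})]^{-1}$. This gives
\begin{equation*}
U_{\mathrm{eff}}(\lambda)=\lambda^{-2}\,\tfrac{r^2}{2}\,\mathbf{\tilde{J}}\cdot[\mathbf{\tilde{M}}(\tilde{q})]^{-1}\cdot\mathbf{\tilde{J}}+\lambda^{-1}\tilde{V}(\tilde{q}).
\end{equation*}
Multiplying the inequality $U_{\mathrm{eff}}(\lambda)\le E$ by $\lambda^2>0$ (which preserves its direction) and rearranging yields exactly $F(\lambda)\ge 0$. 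Finally I would assemble the chain of equivalences: $(\tilde{q},\mathbf{\tilde{J}})$ is in the Hill region iff there exist $\lambda>0$ and $p$ solving \eqref{charged_eq:energy_equation_def_Hill}, iff there is $\lambda>0$ with $U_{\mathrm{eff}}(\lambda)\le E$, iff there is $\lambda>0$ with $F(\lambda)\ge 0$. Since every link is itself an equivalence, both directions of the lemma follow simultaneously; the remaining work is only the bookkeeping of the homogeneity exponents $-1$ and $2$ together with the sign-safe multiplication by the positive quantity $\lambda^2$.
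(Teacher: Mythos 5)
Your proposal is correct and follows essentially the same route as the paper's own proof: eliminate $p$ by observing that the vibrational kinetic energy is a nonnegative quadratic form (so \eqref{charged_eq:energy_equation_def_Hill} is solvable for $p$ iff the rotational-plus-potential part is $\le E$), then apply the homogeneity relations \eqref{charged_eq:hom_VM_tilde} and multiply by $\lambda^2$. Your write-up is in fact slightly more careful than the paper's, since you make explicit both that positive definiteness of $g^{\mu\nu}$ (valid because collinear shapes are excluded) guarantees the quadratic form ranges over all of $[0,\infty)$ --- which is what the ``if'' direction really needs --- and not merely that it is nonnegative.
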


\begin{proof}
As the metric $g^{\mu \nu}$ is positive definite the vibrational kinetic energy (the second term in the Hamiltonian in  \eqref{charged_Hamiltonian}) is nonnegative, we can satisfy the 
energy equation \eqref{charged_eq:energy_equation_def_Hill} for some
$\lambda>0$ and $p$ if and only if
\begin{equation} \label{charged_eq:Hill_general}
r^2 \frac{1}{2}\mathbf{\tilde{J}}\cdot [ \mathbf{M}(d_\lambda(\tilde{q}))]^{-1} \cdot \mathbf{\tilde{J}}  + V(d_\lambda(\tilde{q})) \le E.
\end{equation}
By  \eqref{charged_eq:hom_VM_tilde} this is equivalent to 
\begin{equation}\label{charged_eq:proof_F(lambda)_inequal}
r^2 \lambda^{-2} \frac{1}{2} \mathbf{\tilde{J}}\cdot [ \mathbf{\tilde{M}}(\tilde{q})]^{-1} \cdot \mathbf{\tilde{J}}  + \lambda^{-1} \tilde{V}(\tilde{q}) \le E.
\end{equation}
Now the inequality $F(\lambda)\ge 0$ in  \eqref{charged_eq:F(lambda)} is obtained from multiplying \eqref{charged_eq:proof_F(lambda)_inequal} by $\lambda^2$ and reordering terms.
\end{proof}

From Lemma~\ref{charged_lemma:Hill} we see that in order to decide whether a shape orientation point $(\tilde{q},\mathbf{J})$ is in the Hill region we need to study the polynomial $F(\lambda)$ with its coefficients being fixed by the given $(\tilde{q},\mathbf{J})$. 
Let 
\begin{equation} \label{charged_eq:def_E_R}
E_R= r^2 \frac{1}{2}\mathbf{\tilde{J}}\cdot \mathbf{\tilde{M}}^{-1}\cdot \mathbf{\tilde{J}}  
\end{equation}
denote the rotational kinetic energy. For $r>0$, $E_R$ is strictly positive.
The discriminant of $F(\lambda)$ is then  

\begin{equation} \label{charged_eq:discrimF}
\Delta= 4E E_R +\tilde{V}^2.
\end{equation}
The equation of vanishing discriminant, $\Delta=0$, defines a double cone in the space $(E,\tilde{V},E_R)$, see Fig.~\ref{charged_fig:lambda_cases}a, of which only the cone which has $E_R\ge 0$ is relevant. 
For a fixed value of $E_R$, the zero discriminant defines a parabola in the $(E,\tilde{V})$ plane (see Fig.~\ref{charged_fig:lambda_cases}b). For $E_R\to 0$, the parabolas collapse to the negative $E_R$-axis. 
For $E_R\to \infty$, the parabolas approach the $\tilde{V}$-axis. For fixed $E_R>0$, the parabola together with the coordinate axes  divides the $(E,\tilde{V})$-plane into the six regions  I, IIa, IIb, IIIa, IIIb and  IV marked in Fig.~\ref{charged_fig:lambda_cases}b. 
In the first quadrant of the $(E,\tilde{V})$-plane (region I), $F$ has real roots of which one is positive and the other is negative and $F$ has a minimum that is attained at a positive value of $\lambda$. 
The second quadrant consists of regions IIa and IIb. In region IIa, $F$ has two negative real roots and a maximum attained at a negative value. In region IIb the roots are complex but $F$ still has a maximum attained at a negative value. 
The third quadrant consists of regions IIIa and IIIb where in region IIIb  the roots are again complex and $F$ still has a maximum which is however now attained at a positive $\lambda$. In region IIIa there are two positive real  roots and $F$ has a maximum that is attained for a positive $\lambda$. 
In region IV in the fourth quadrant $F$ has two real roots of which one is negative and one is positive and $F$ has a minimum that is attained at a negative value of $\lambda$. 

We deduce 
\begin{theorem} \label{charged_thm:Hillregionfirst}
Let   
\begin{equation}
\lambda_\pm :=  %\frac{\tilde{V} }{2E} \pm \sqrt{\frac{\tilde{V}^2}{4E^2} + \frac{r^2 \mathbf{\tilde{J}}\cdot \mathbf{\tilde{M}}^{-1}\cdot \mathbf{\tilde{J}}}{2E} } = 
 \frac{\tilde{V} }{2E} \pm \sqrt{\frac{\Delta}{4E^2}  }
\end{equation}
be the roots of $F(\lambda)$ defined in \eqref{charged_eq:F(lambda)} and $r>0$.  Then we have:
\begin{enumerate}
\item For energies $E>0$, every shape-orientation point belongs to the Hill region.  
\item For energies $E<0$, a shape-orientation point  with  $\tilde{V}>0$ is for no orientation in the Hill region.  
\item For energies $E<0$, a shape-orientation point  with $\tilde{V}<0$ is in the Hill region if and only if the roots $\lambda_\pm$ are real.  
\end{enumerate}
\end{theorem}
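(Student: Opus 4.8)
The plan is to invoke Lemma~\ref{charged_lemma:Hill}, which reduces membership in the Hill region to the existence of some $\lambda>0$ with $F(\lambda)\ge 0$, and then to run an elementary sign analysis of the quadratic $F(\lambda)=E\lambda^2-\tilde{V}(\tilde{q})\,\lambda-E_R$. Its concavity is governed by the leading coefficient $E$, and its value at the origin is the constant term $F(0)=-E_R$. The structural fact I would use first is that, for $r>0$, the rotational kinetic energy $E_R$ in \eqref{charged_eq:def_E_R} is strictly positive for \emph{every} orientation $\mathbf{\tilde{J}}\in S^2_1$, since $\mathbf{\tilde{M}}(\tilde{q})$ is positive definite on the interior shape space; hence $F(0)<0$ always, and the sign of $\tilde{V}$ (which depends only on the shape $\tilde{q}$, not on the orientation) determines the position of the vertex $\lambda_*=\tilde{V}/(2E)$ relative to the admissible half-line $\lambda>0$.

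For statement~1 I would simply note that when $E>0$ the function $F$ is convex with $F(\lambda)\to+\infty$ as $\lambda\to+\infty$, so $F(\lambda)\ge 0$ for all sufficiently large $\lambda>0$; thus every shape-orientation point lies in the Hill region regardless of $\tilde{V}$ and $E_R$.

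For statements~2 and~3 I would fix $E<0$, so that $F$ is concave with global maximum $F(\lambda_*)=-\Delta/(4E)$, where $\Delta$ is the discriminant in \eqref{charged_eq:discrimF}. If $\tilde{V}>0$, then $\lambda_*<0$, so $F$ is strictly decreasing on $\lambda>0$ and bounded there by $F(0)=-E_R<0$; no admissible $\lambda$ exists, and because the argument uses only $E_R>0$ it applies to every orientation, which is statement~2. If $\tilde{V}<0$, then $\lambda_*>0$, so the supremum of $F$ over $\lambda>0$ equals the global maximum $-\Delta/(4E)$; as $E<0$ this is $\ge 0$ exactly when $\Delta\ge 0$, i.e.\ exactly when the roots $\lambda_\pm$ are real, which is statement~3. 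I would also record that in this regime $\lambda_+\lambda_-=-E_R/E>0$ and $\lambda_++\lambda_-=\tilde{V}/E>0$, so both roots are positive and consistent with $\lambda_*>0$.

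Since these are all one-line computations, the only genuine point of care is the quantifier: the criterion is existence of a \emph{positive} $\lambda$, not merely a real root of $F$. The mild obstacle is therefore the bookkeeping of ensuring, in each sign regime, that the maximiser of $F$ (or a point where $F\ge 0$) actually lies in $\lambda>0$; the sum- and product-of-roots relations above dispatch this cleanly, and the positivity of $E_R$ for all $\mathbf{\tilde{J}}$ is what upgrades statement~2 from a single orientation to all orientations.
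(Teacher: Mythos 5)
Your proof is correct and follows essentially the same route as the paper: both invoke Lemma~\ref{charged_lemma:Hill} and then carry out an elementary sign analysis of the quadratic $F(\lambda)=E\lambda^{2}-\tilde{V}\lambda-E_R$, using that $F(0)=-E_R<0$ for $r>0$, that the sign of $E$ fixes the concavity, and that the vertex sits at $\tilde{V}/(2E)$ with maximum value $-\Delta/(4E)$. The paper merely packages this same case analysis geometrically, as the six regions I, IIa, IIb, IIIa, IIIb, IV of the $(E,\tilde{V})$-plane cut out by the discriminant parabola (Fig.~\ref{charged_fig:lambda_cases}), whereas you organize it per statement and make explicit via the sum and product of roots that the real roots in case 3 are positive --- a quantifier detail the paper leaves implicit.
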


It follows from Theorem~\ref{charged_thm:Hillregionfirst} that case 3 where $E,\tilde{V}<0$ requires more attention. For $E,\tilde{V}<0$, the condition to have real roots, $\Delta\ge 0$, is 
equivalent to
\begin{equation} \label{charged_eq:Hill_cond_dil_reduced}
\frac{\tilde{V}}{ 2 \sqrt{ \frac12 \mathbf{\tilde{J}}\cdot \mathbf{\tilde{M}}^{-1}\cdot \mathbf{\tilde{J}} }} \le - \sqrt{ -  E  r^2     }. % =: - \sqrt{\nu }\,.
\end{equation}
\rem{
or equivalently 
\begin{equation}
\frac{\tilde{V}}{ 2 \sqrt{ \tilde{E}_R}} \le - \sqrt{ \nu    } \,,
\end{equation}
where $ \tilde{E}_R =   \frac12 \mathbf{\tilde{J}}\cdot \mathbf{\tilde{M}}^{-1}\cdot \mathbf{\tilde{J}} $ is the `normalised' rotational energy  and $\nu:= -Er^2$.
} % end rem
\begin{figure}
\begin{center}
\raisebox{5cm}{a)}
\includegraphics[angle=0,width=7cm]{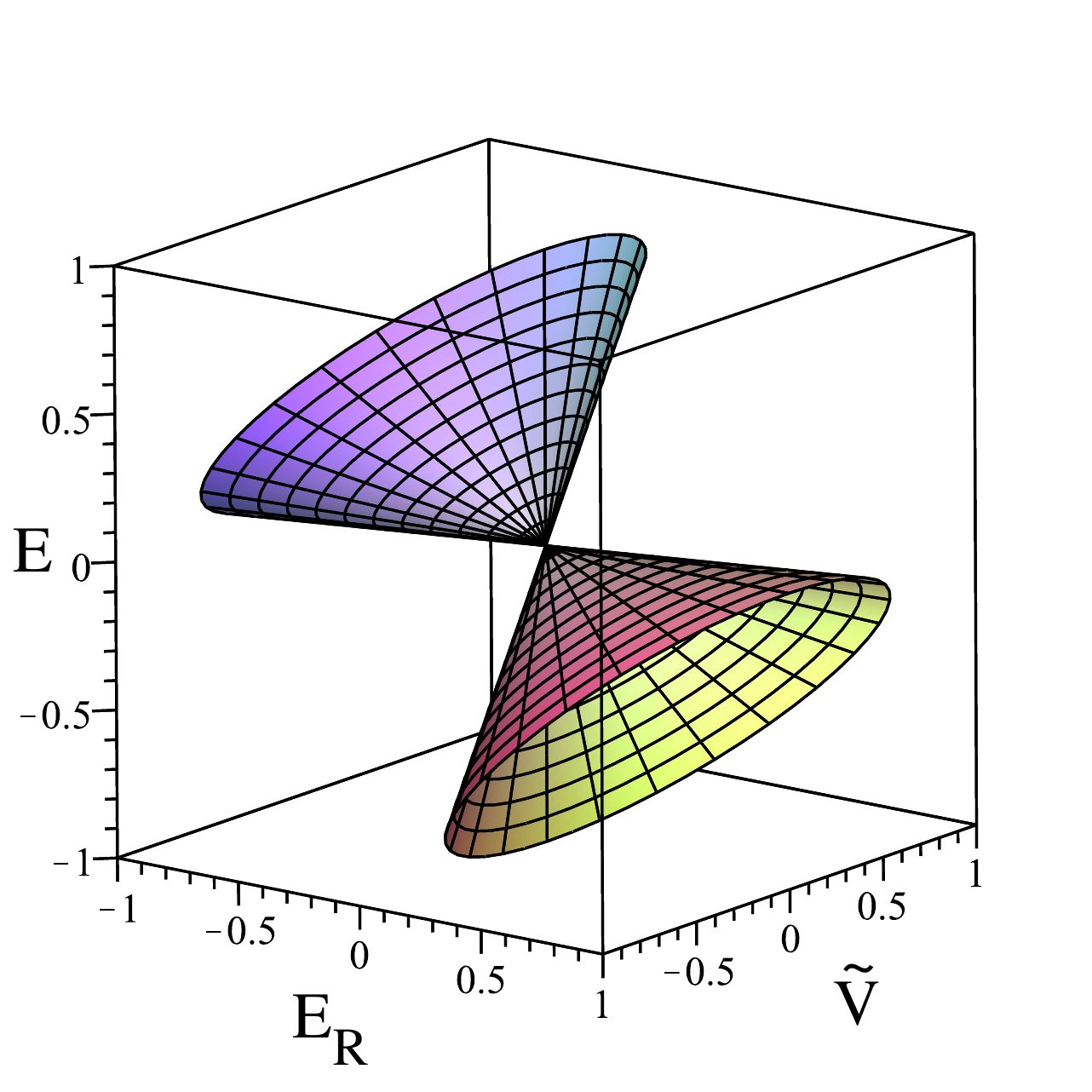}
\raisebox{5cm}{b)}
\raisebox{1cm}{
\includegraphics[angle=0,width=5cm]{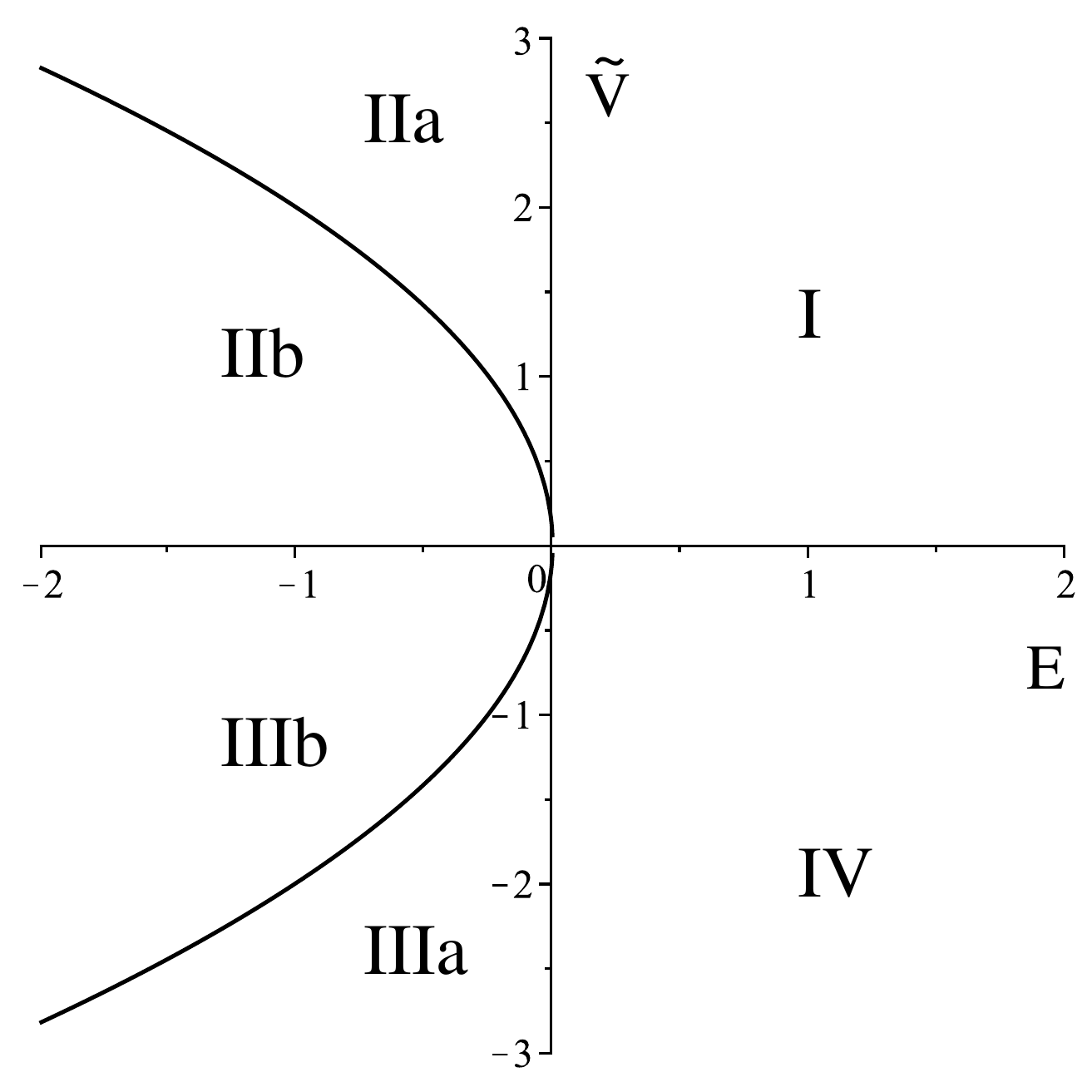}}\\[1.5ex]
\raisebox{3.8cm}{I}
\includegraphics[angle=0,width=4cm]{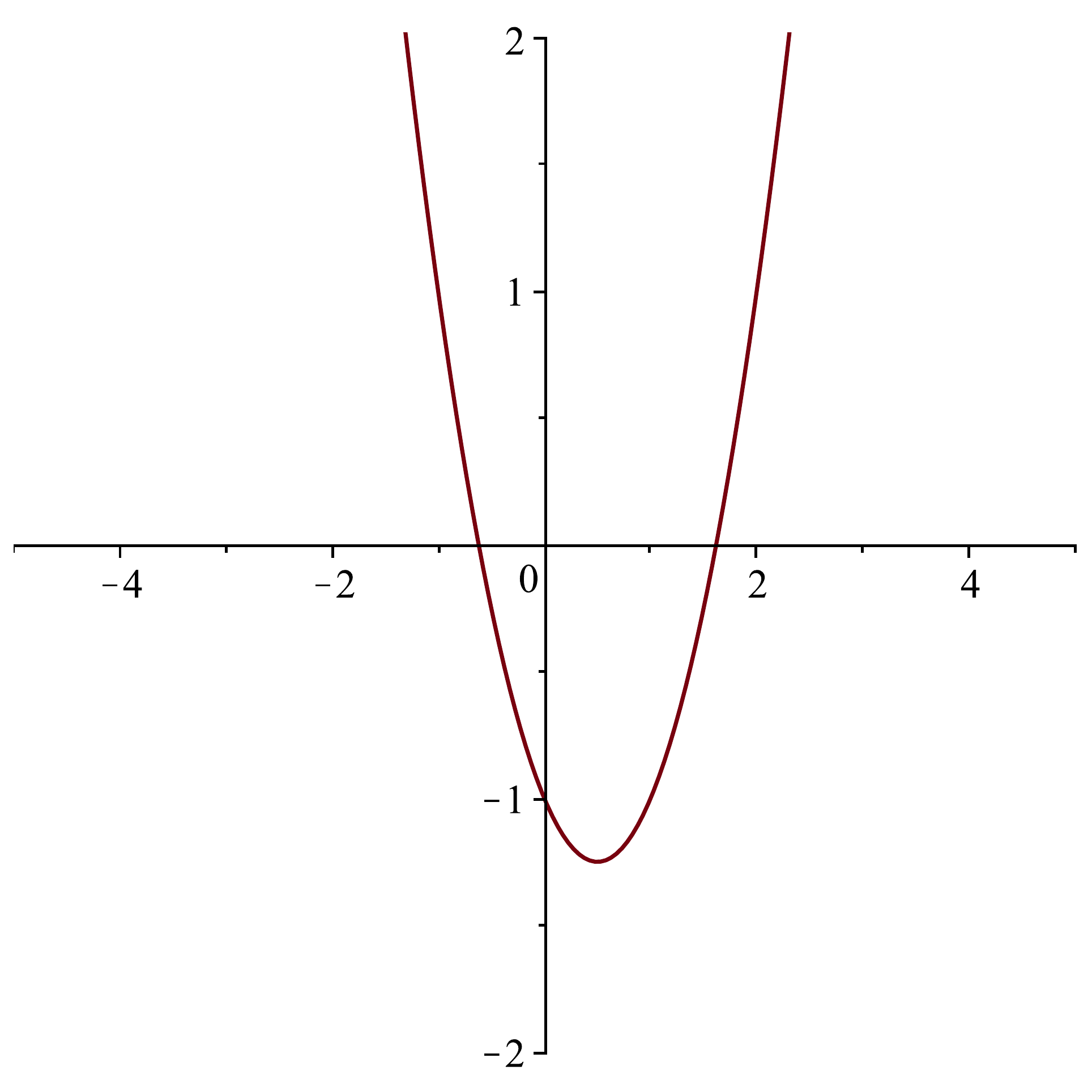}
\raisebox{3.8cm}{IIa}
\includegraphics[angle=0,width=4cm]{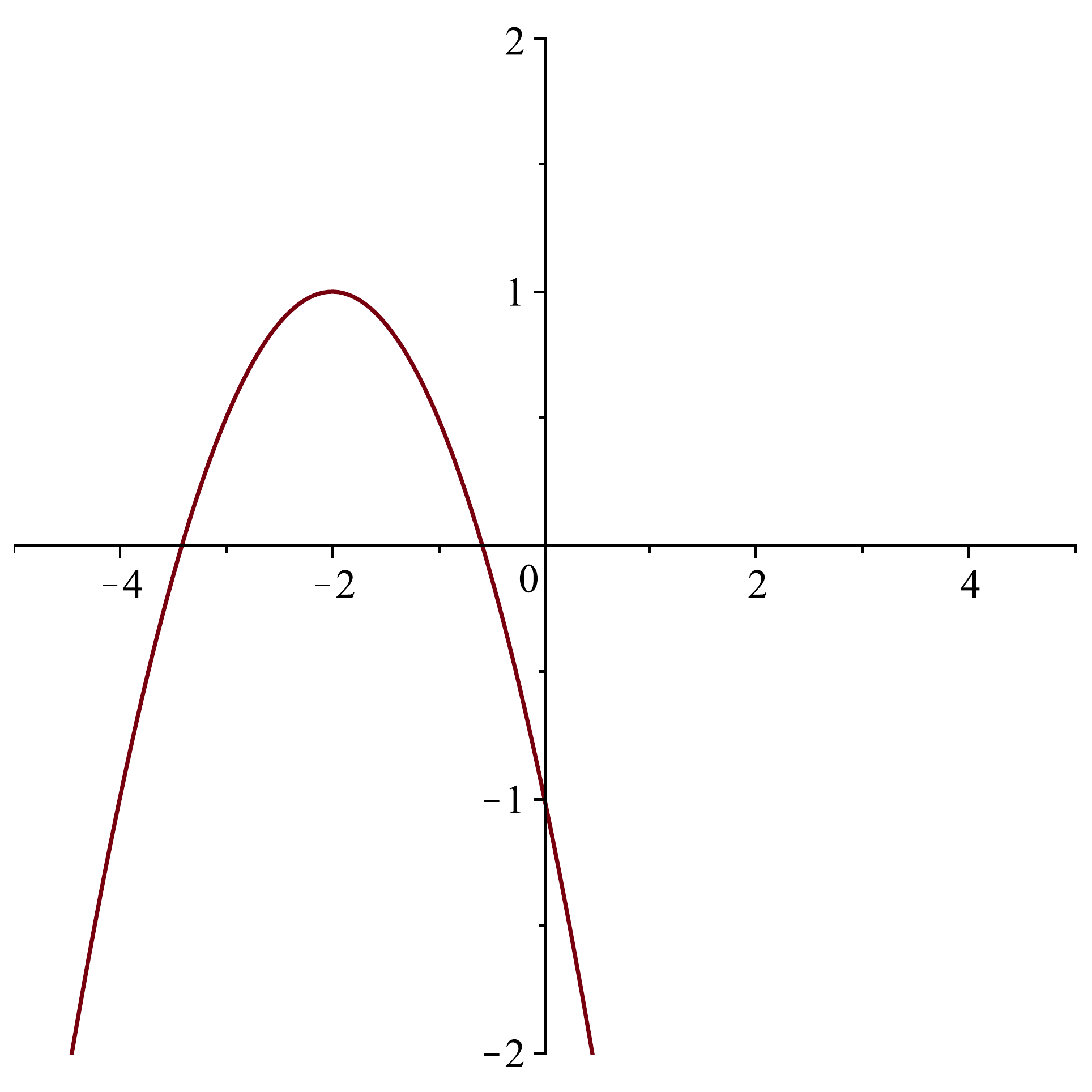}
\raisebox{3.8cm}{IIb}
\includegraphics[angle=0,width=4cm]{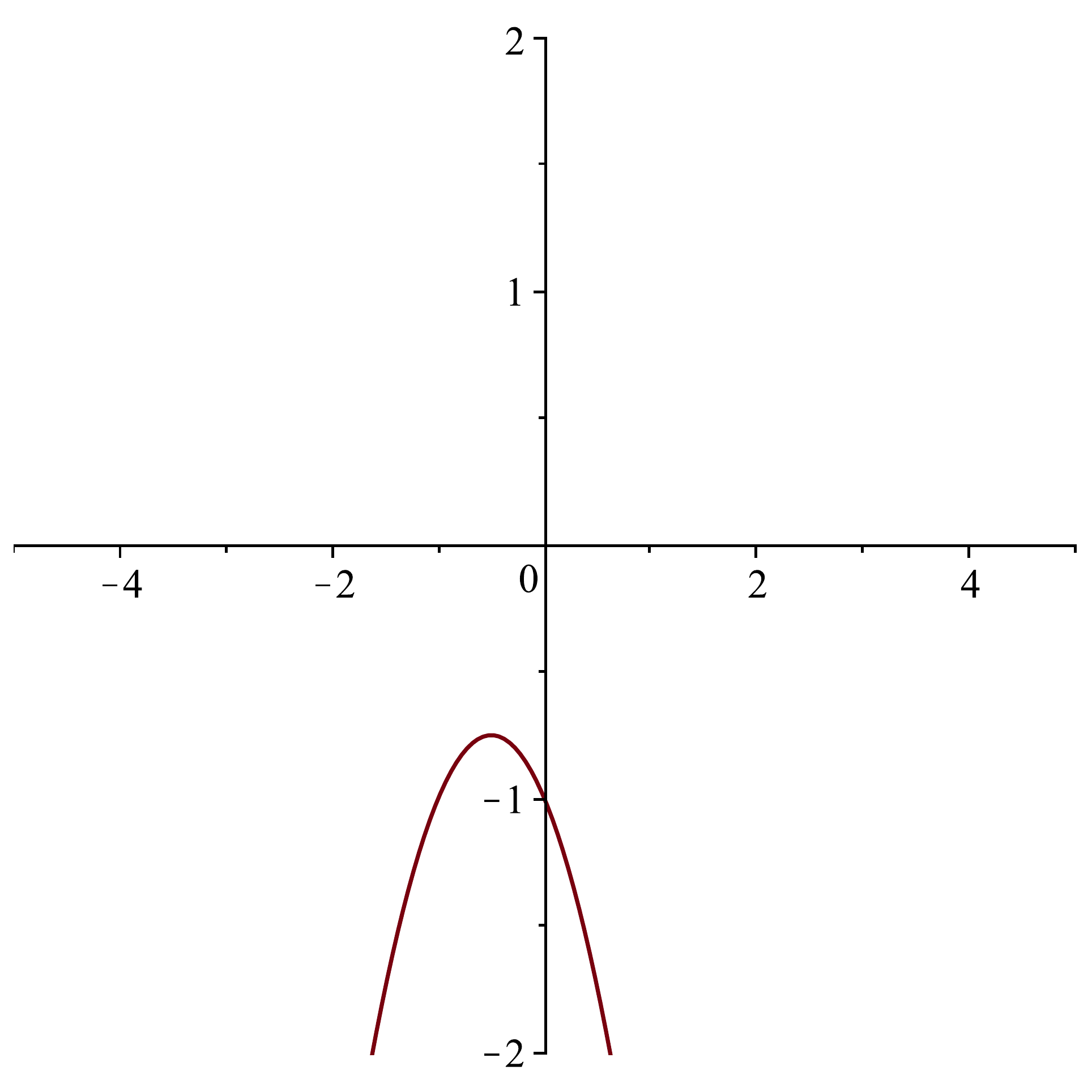}\\
\raisebox{3.8cm}{IIIb}
\includegraphics[angle=0,width=4cm]{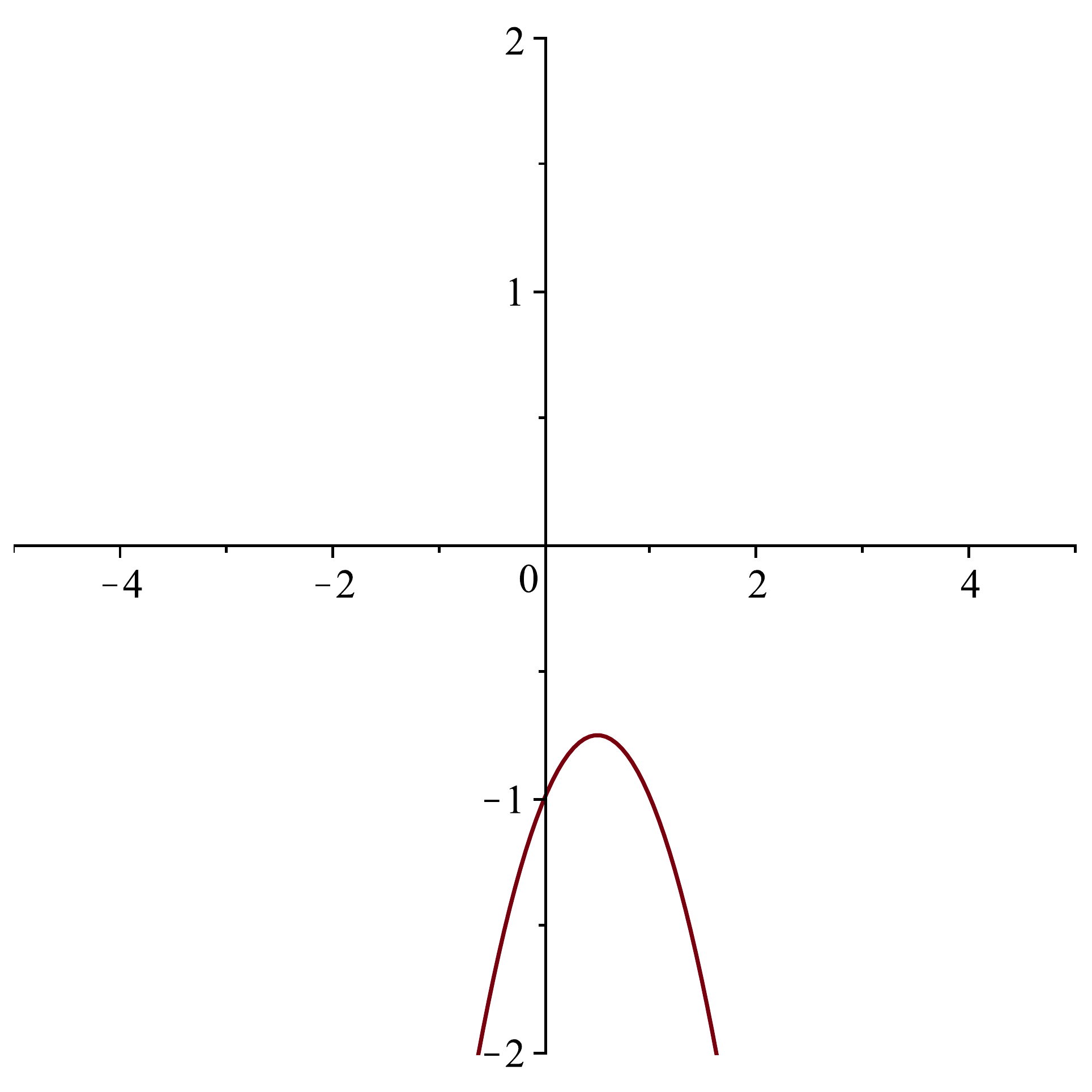}
\raisebox{3.8cm}{IIIa}
\includegraphics[angle=0,width=4cm]{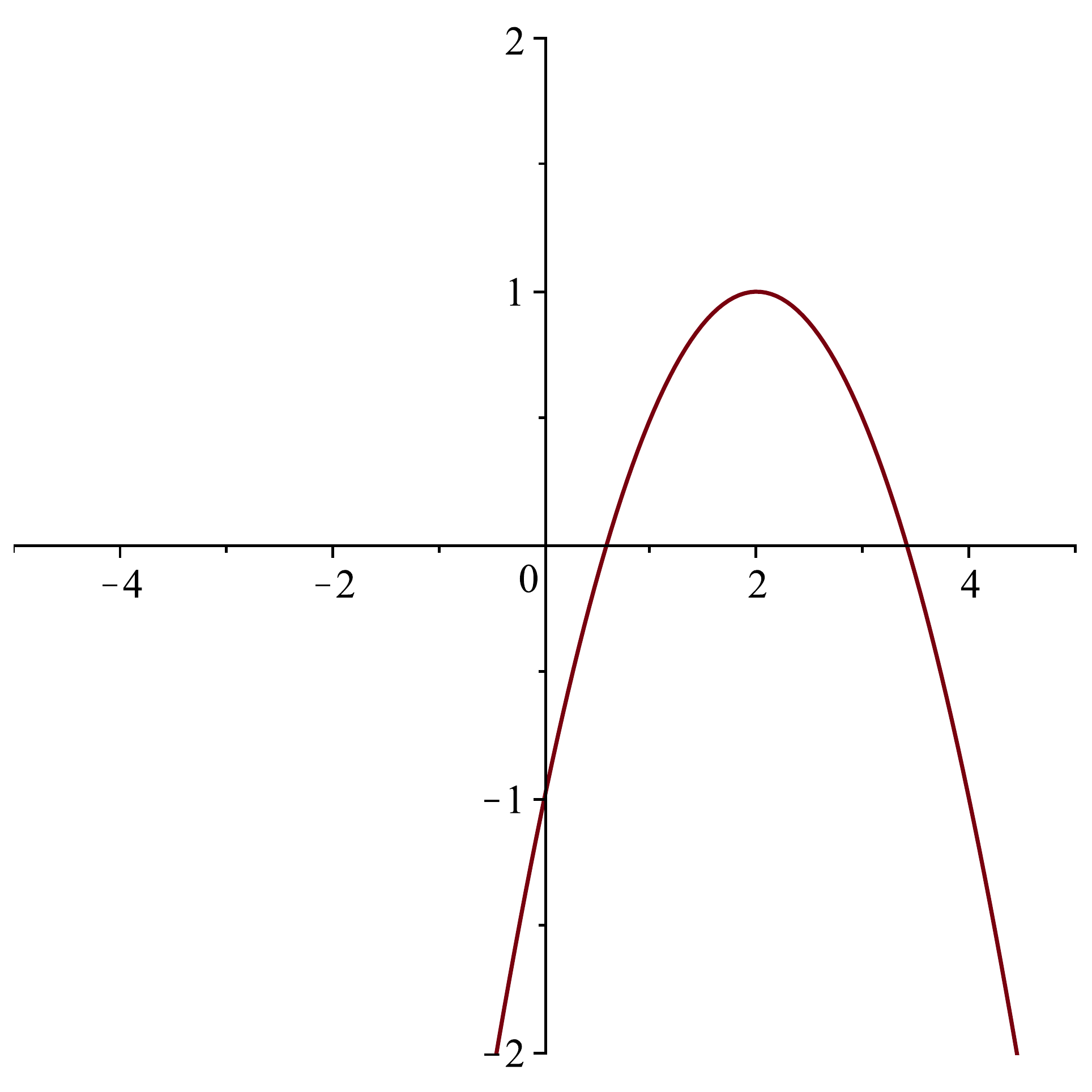}
\raisebox{3.8cm}{IV}
\includegraphics[angle=0,width=4cm]{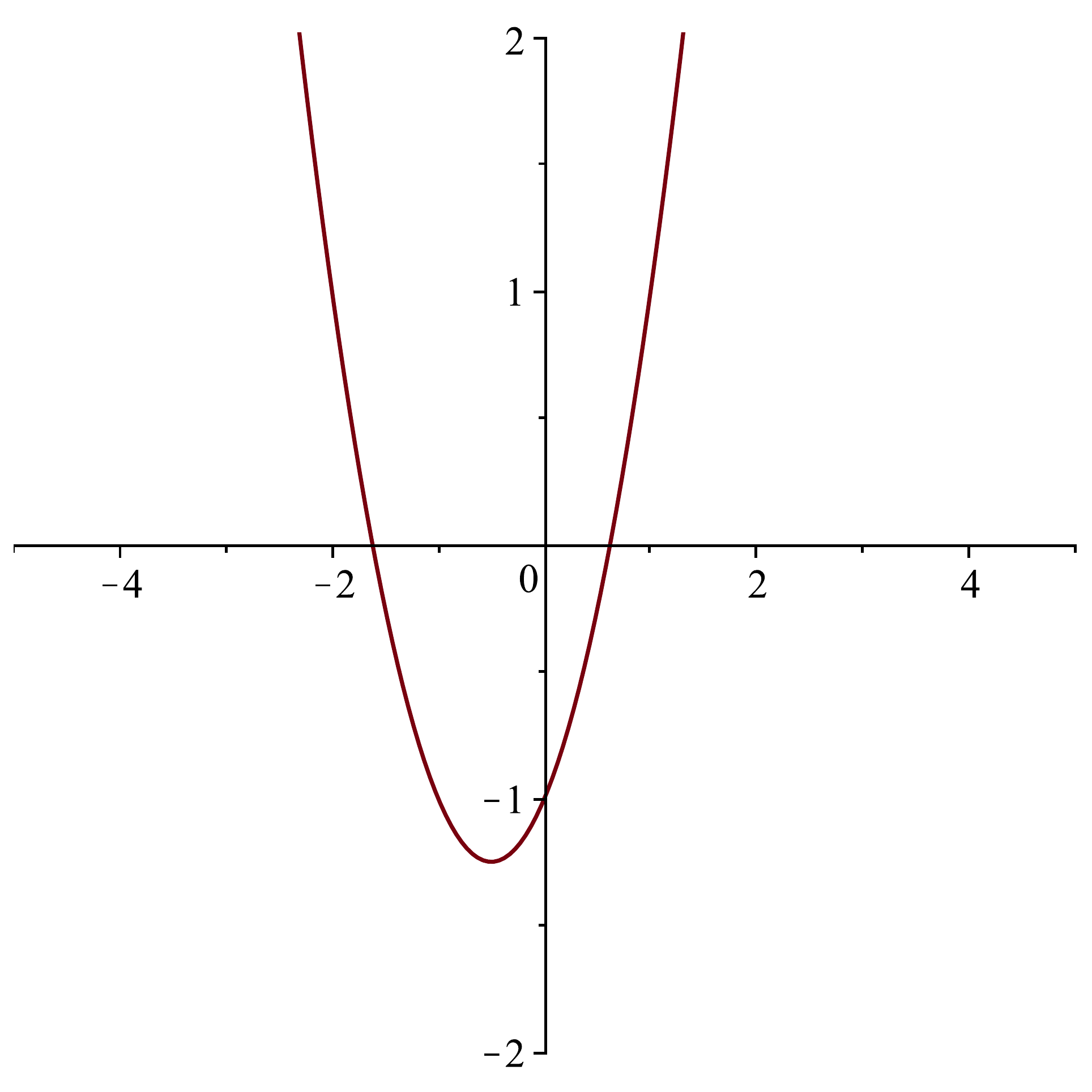}
\end{center}
\caption{\label{charged_fig:lambda_cases}
Surface of vanishing discriminant of $F(\lambda)$ defined in \eqref{charged_eq:F(lambda)}  in the space $(E,\tilde{V},E_R)$ where $E_R= r^2 \frac{1}{2}\mathbf{\tilde{J}}\cdot \mathbf{\tilde{M}}^{-1}\cdot \mathbf{\tilde{J}} $  is the rotational energy (a). In panel (b) a cut for fixed $E_R=1$ in (a) is shown. Together with the coordinate axes it divides the $(E,\tilde{V})$ plane into six regions I, IIa, IIb, IIIa, IIIb and IV. The remaining panels show the graphs of  
$F:\lambda\mapsto F(\lambda)$ for values of $(E,\tilde{V})$ in these different regions.
}
\end{figure}
With the left hand side of \eqref{charged_eq:Hill_cond_dil_reduced} we can define the function 
\begin{equation}\label{charged_eq:def_bif_function_Hill}
\tilde{\Qtr } \times S_1^2 \to \R , \quad (\tilde{q},\mathbf{\tilde{J}}) \mapsto \frac{\tilde{V}(\tilde{q})}{ 2 \sqrt{ \frac12 \mathbf{\tilde{J}}\cdot (\mathbf{\tilde{M}}(\tilde{q}))^{-1}\cdot \mathbf{\tilde{J}} }} .
\end{equation}
The Hill region for given $E<0$ and $r>0$ is then then the sublevel set  of this function in  $\tilde{Q}\times S_1^2$ for the value $- \sqrt{ -  E  r^2     }$.
To study the bifurcations of the Hill region we need to study the critical points of
the function \eqref{charged_eq:def_bif_function_Hill}.  The critical values can be expressed in terms of 

\begin{equation} \label{eq:def_nu}
\nu:=- E r^2
\end{equation}
which we can hence view as the bifurcation parameter.

\rem{
Following Remark~\ref{lemma:M_tilde_singular}  the moment of inertia tensor  $ \mathbf{\tilde{M}} (\tilde{q}) $ has a diabolic point at shapes $\tilde{q}$ where the first and the second principal moments of inertia are equal. 
 Critical points of the function \eqref{charged_eq:def_bif_function_Hill}  first diabolic points. 
 
 First consider shapes for which the principal moments of inertia are mutually different?
 Then consider the case where the first two are equal.
 What are the corresponding critical $\mathbf{\tilde{J}}$?
}

Finding the critical points of the function defined in \eqref{charged_eq:def_bif_function_Hill} simplifies to some extent. 
As $\tilde{V}$ does not depend on $\mathbf{\tilde{J}}$ the $\mathbf{\tilde{J}}$ components of the critical points of the function in 
\eqref{charged_eq:def_bif_function_Hill} 
are given by 
the critical points of the function
\begin{equation}\label{charged_eq:defGEuler}
S_1^2 \to \R,\quad 
\mathbf{\tilde{J}} \mapsto % G(\mathbf{\tilde{J}}) := 
\frac{1}{2}\mathbf{\tilde{J}}\cdot [\mathbf{\tilde{M}} (\tilde{q})]^{-1}\cdot \mathbf{\tilde{J}} 
\end{equation}
for fixed shape $\tilde{q}\in \tilde{\Qtr }$. 
The critical points   of \eqref{charged_eq:defGEuler}  on $S_1^2$ are $\mathbf{\tilde{J}}$ where the gradients of  $\frac{1}{2}\mathbf{\tilde{J}}\cdot [\mathbf{\tilde{M}} (\tilde{q})]^{-1}\cdot \mathbf{\tilde{J}} $ and the function defining the constraint  $ \mathbf{\tilde{J}}  \cdot \mathbf{\tilde{J}} =1 $ with respect to  $\mathbf{\tilde{J}}$ are parallel. This means that  $\mathbf{\tilde{J}}$  is an eigenvector of $\mathbf{\tilde{M}} (\tilde{q})$. 

\rem{
Now one would like proceed by solving this condition for $\tilde{q}$ and plug this into $ \frac{\tilde{V}(\tilde{q})}{ 2 \sqrt{ \frac12 \mathbf{\tilde{J}}\cdot (\mathbf{\tilde{M}}(\tilde{q}))^{-1}\cdot \mathbf{\tilde{J}} }}$ and find the critical values of 

\begin{equation}\label{charged_eq:def_VtildesqrtMk}
\tilde{q} \mapsto  \sqrt{\tilde{M}_k (\tilde{q}) } \, \tilde{V} (\tilde{q}) , \quad k =1,2,3, 
\end{equation}

By the Implicit Function Theorem this fails at the points where the derivative with respect  $\mathbf{\tilde{J}}$
} % end rem

A given shape $\tilde{q}$ can be considered to define a rigid body with moment of inertia tensor $\mathbf{\tilde{M}}$ (where the trace of $\mathbf{\tilde{M}}$ is equal to $1/2$).
%with principal moments of inertia given by the diagonal elements in \eqref{charged_eq:inertiaDragt}.  
Apart from the shape corresponding to the  diabolic point 
the shapes $\tilde{q}$  give mutually different  principal moments and the corresponding rigid body is asymmetric. 
%As the  principal moments of inertia are given by $ \sin\frac{\chi }{2} $, $ \cos\frac{\chi }{2} $, and $1$, we see that 
%the principal moments are different for $\chi\ne \pi/2$ 
Let us at first focus on this case. We will come back to the diabolic point below.
For shape coordinates for which the moment of inertia tensor is diagonal like Dragt's coordinates,
the critical values of $\mathbf{\tilde{J}}$ on the normalised angular momentum sphere $S^2_1$ are then
given by $ \mathbf{\tilde{J}} $ equal to $(\pm 1, 0,0)$,  $(0, \pm 1, 0)$ or  $(0,0,\pm 1)$. 
The critical (non-diabolic) shapes $\tilde{q}$ are then critical points of the functions
\begin{equation}\label{charged_eq:def_VtildesqrtMk}
\tilde{q} \mapsto  \sqrt{\tilde{M}_k (\tilde{q}) } \, \tilde{V} (\tilde{q}) , \quad k =1,2,3, 
\end{equation}
where the $\tilde{M}_k(\tilde{q})$ are the dilation reduced principal moments of inertia.
If we use Dragt's coordinates $(\chi,\psi)$ as shape coordinates then the critical shape-orientations points $(\tilde{q} ,\mathbf{\tilde{J}})$ 
are given by  the critical points of the functions
\begin{equation} \label{charged_eq:ciriticalfunctions}
(\chi,\psi) \mapsto \left\{ 
\begin{array}{ccc}
\tilde{V}   \sin\frac{\chi }{2} & \text{for} & \mathbf{\tilde{J}} = (\pm 1,0,0) \\
\tilde{V}   \cos\frac{\chi }{2} & \text{for} & \mathbf{\tilde{J}} = (0,\pm 1,0) \\
\tilde{V}    & \text{for} & \mathbf{\tilde{J}} = (0,0,\pm 1) \\
\end{array}
\right.
\end{equation}
(see \eqref{charged_eq:inertiaDragt}).

\rem{
If  $\tilde{q}$ equals the diabolic point then the first and the second principal moments of inertia are equal and the corresponding rigid body is symmetric. Let us use Jacobi coordinates $(\rho_1,\rho_2,\phi)$ to analyse this case (recall that the Jacobi coordinates are, as opposed to Dragt's coordinates, smooth at the diabolic points, see Remark~\ref{lemma:M_tilde_singular}).
At the diabolic point $\rho_1=\rho_2=1/\sqrt{2}$ and $\phi=\pi/2$. The moment of inertia tensor $\mathbf{\tilde{M}} (\tilde{q})$ is diagonal with diagonal elements $[1/\sqrt{2},1/\sqrt{2},1/2]$ (see \eqref{eq:M_Jacobi}). The function \eqref{charged_eq:defGEuler} becomes
$$
S_1^2 \to \R,\quad 
\mathbf{\tilde{J}} = ( {\tilde{J}}_1, {\tilde{J}}_2, {\tilde{J}}_3)
\mapsto % G(\mathbf{\tilde{J}}) := \frac{1}{2}\mathbf{\tilde{J}}\cdot [\mathbf{\tilde{M}} (\tilde{q})]^{-1}\cdot \mathbf{\tilde{J}}
(   {\tilde{J}}_1^2 + {\tilde{J}}_2^2  )/\sqrt{2} + {\tilde{J}}_3^2/2.
$$
It has critical points $(0,0,\pm 1)$ and the critical circle $ {\tilde{J}}_3=0$, ${\tilde{J}}_1^2 + {\tilde{J}}_2^2=1$.
} % end rem

In the examples in Sec.~\ref{charged_sec:Examples} we will visualise the Hill regions defined according to Definition~\ref{charged_def:Hillregion} in terms of their projection to the shape space  
$\tilde{\Qtr }$.
These projections can be understood in terms of the contours of the functions \eqref{charged_eq:def_VtildesqrtMk} (or \eqref{charged_eq:ciriticalfunctions} when we use Dragt's coordinates) on $\tilde{\Qtr }$ as follows.  Consider a fixed negative value of the energy $E<0$, a fixed value of the magnitude of the angular momentum $r>0$ and a given shape  $\tilde{q} \in \tilde{\Qtr }$.  Let us rewrite the condition for a shape-orientation $(\tilde{q},\mathbf{\tilde{J}})$ to be in the Hill region given by the inequality \eqref{charged_eq:Hill_cond_dil_reduced}  as 

 \begin{equation}\label{charged_eq:inequality_E_R}
- \frac{\tilde{V}(\tilde{q})}{ 2      \sqrt{ -  E  r^2  }  } \ge     \tilde{E}_R (\mathbf{\tilde{J}}), 
\end{equation}
where 

\begin{equation}
 \tilde{E}_R (\mathbf{\tilde{J}}) := 
\frac{1}{2} \mathbf{\tilde{J}}\cdot [ \mathbf{\tilde{M}}(\tilde{q})]^{-1} \cdot \mathbf{\tilde{J}}  
\end{equation}
is the  `normalised'  rotational energy which gives the rotational energy of a rigid body with moment of inertia tensor $\mathbf{\tilde{M}}(\tilde{q})$ rotating with a total angular momentum of unit magnitude. In Fig.~\ref{charged_fig:Euler} we show the level sets of $ \tilde{E}_R$ of this rigid body on the unit angular momentum sphere $S_1^2$ for the case of distinct principal moments of inertia. 
On $S_1^2$ the function $  \tilde{E}_R$ has the three critical values 
\begin{equation}
 \tilde{E}_{R\,3} := \frac{1} {2 \tilde{M}_3 (\tilde{q}) } <  \tilde{E}_{R\,2} :=  \frac{1} {2 \tilde{M}_2 (\tilde{q}) } <  \tilde{E}_{R\,1} :=  \frac{1} {2 \tilde{M}_1 (\tilde{q}) }
\end{equation}
corresponding to a minimum, a saddle and a maximum, respectively. If $- \tilde{V}(\tilde{q}) / ( 2      \sqrt{ -  E  r^2  }  ) < \tilde{E}_{R\,3}  $ then the inequality \eqref{charged_eq:inequality_E_R} is not satisfied for any point $\mathbf{\tilde{J}} \in S_1^2$.  If $ \tilde{E}_{R\,3} <- \tilde{V} (\tilde{q}) / ( 2      \sqrt{ -  E  r^2  }  ) < \tilde{E}_{R\,2} $ then   the inequality \eqref{charged_eq:inequality_E_R} is satisfied for points $\mathbf{\tilde{J}} \in S_1^2$ in the `caps' that are given by (closed) disk neighbourhoods  of the points $\mathbf{\tilde{J}} = (0,0,\pm1)$ (see Fig.~\ref{charged_fig:Euler}). 
 If $ \tilde{E}_{R\,2} <- \tilde{V} (\tilde{q}) / ( 2      \sqrt{ -  E  r^2  }  ) < \tilde{E}_{R\,1} $ then   the inequality \eqref{charged_eq:inequality_E_R} is satisfied for orientations  $\mathbf{\tilde{J}} \in S_1^2$ in the `ring' that is obtained from excluding two (open) disk neighbourhoods near the poles $\mathbf{\tilde{J}} = (\pm1,0,0)$ on the unit sphere $\mathbf{\tilde{J}} \in S_1^2$ . 
If $ \tilde{E}_{R\,1} <- \tilde{V} (\tilde{q}) / ( 2      \sqrt{ -  E  r^2  }  ) $ then the inequality \eqref{charged_eq:inequality_E_R} is satisfied for any orientation $\mathbf{\tilde{J}} \in S_1^2$.  The latter two cases are also illustrated in Fig.~\ref{charged_fig:Euler}. To illustrate the projection of the Hill region for energy $E$ and magnitude $r$ of the angular momentum  to the shape space we will colour 
each shape $\tilde{q}\in \tilde{\Qtr } $ if the sublevel set of $ \tilde{E}_R (\mathbf{\tilde{J}})$ on $S_1^2$    for the value $- \frac{\tilde{V}(\tilde{q})}{ 2      \sqrt{ -  E  r^2  }  }$ is empty in dark grey, two caps in blue, a ring in red and the full sphere in green.

%%%
% Euler top; temporarily excluded because of the large file sizes
%%%
%\rem{
 \begin{figure}
\begin{center}
\includegraphics[angle=0,width=12cm]{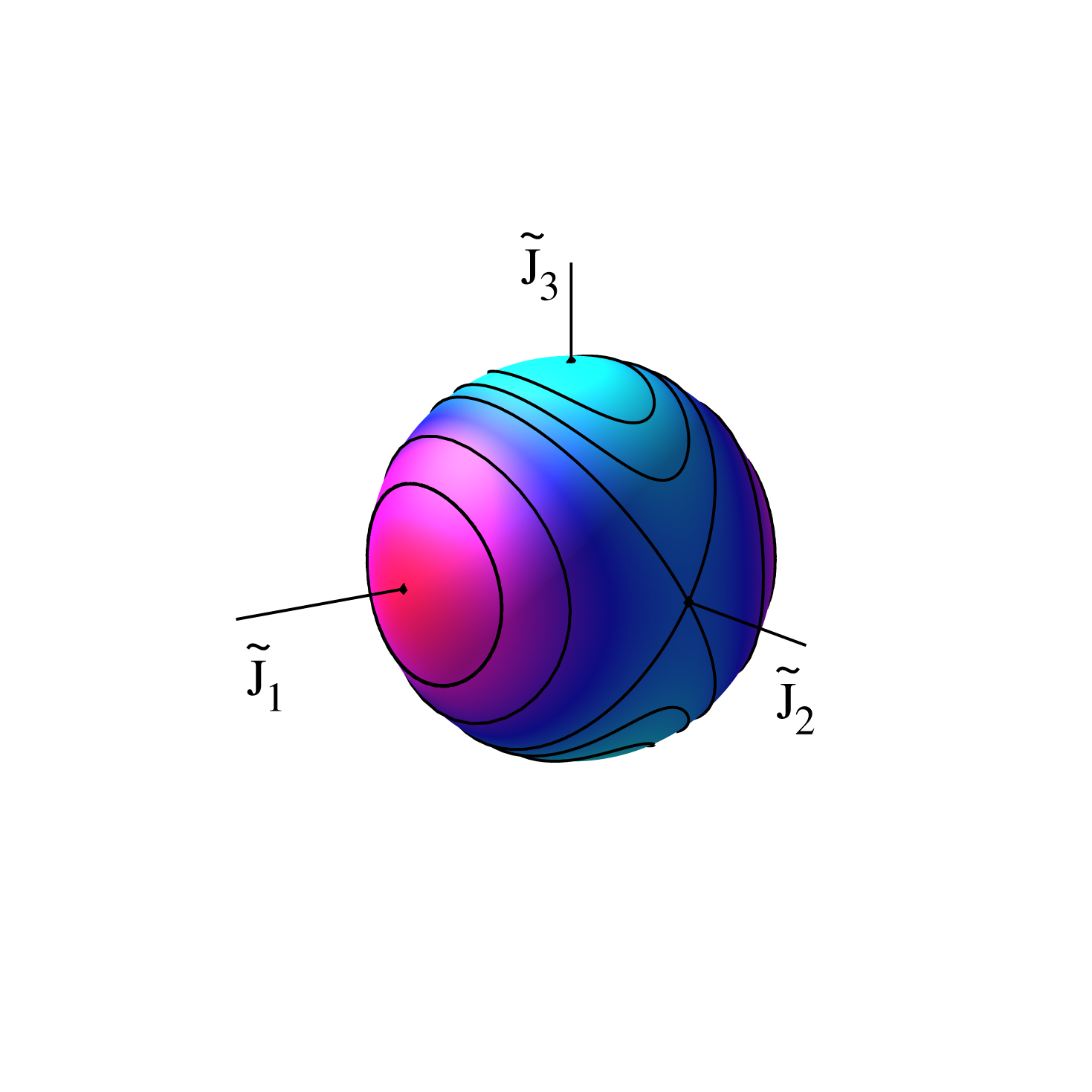}\\
\hspace*{1.8cm}
\includegraphics[angle=0,width=3cm]{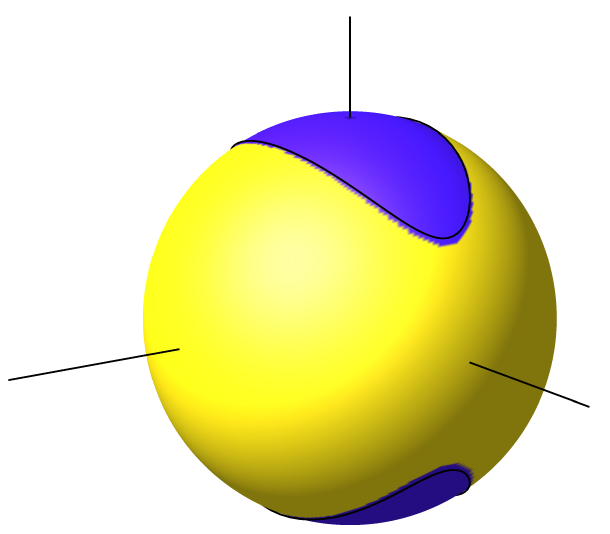}
\includegraphics[angle=0,width=3cm]{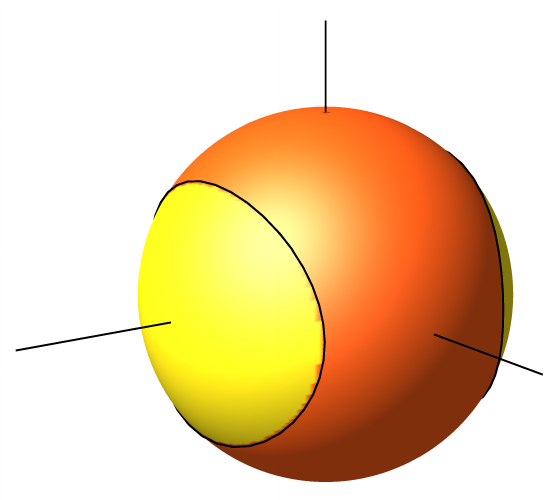}
\includegraphics[angle=0,width=3cm]{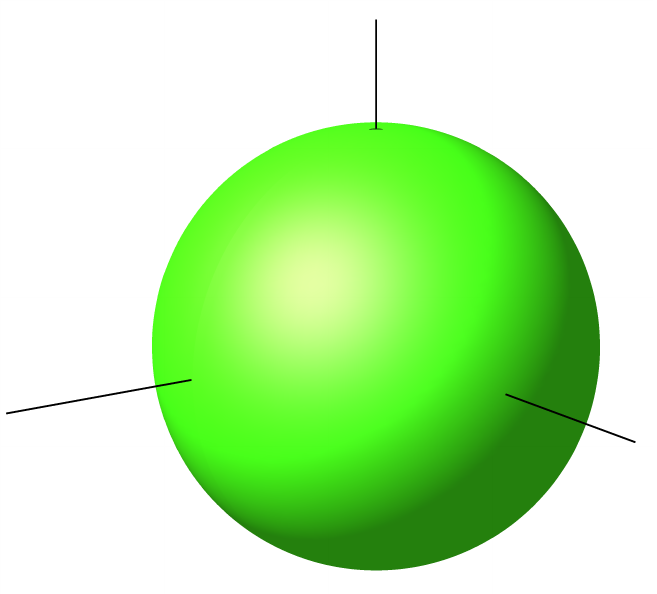}\\
\includegraphics[angle=0,width=14cm]{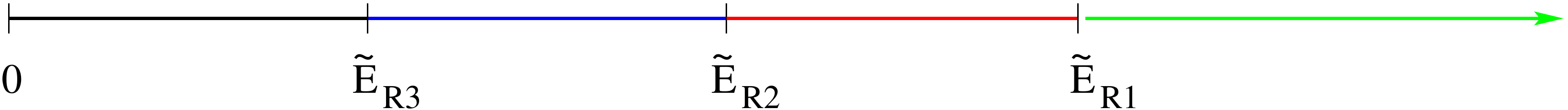}\\
\end{center}
\caption{\label{charged_fig:Euler}
Top panel: level sets of the function defined in \eqref{charged_eq:defGEuler}  on the normalised angular momentum sphere $S_1^2$ for a fixed shape corresponding to an asymmetric Euler top (with principal moments of inertia $1/3$, $2/3$ and $1$). The function value increases as the colour goes from light blue to red.
Lower panel: dependence of the accessible region on the normalised angular momentum sphere  
 on the value of $- \tilde{V} / ( 2      \sqrt{ -  E  r^2  }  )$ (see the discussion at the end of Sec.~\ref{charged_sec:Hillregions}).
}
\end{figure}
%} % end rem 

Let us now come back to the diabolic point, and let us reconsider our approach to compute the critical points of \eqref{charged_eq:def_bif_function_Hill} outlined above.
The critical points are elements of $\tilde{\Qtr } \times S_1^2$. We first determined the components  $ \mathbf{\tilde{J}}\in  S_1^2$ of the critical points and plugged these in  \eqref{charged_eq:def_bif_function_Hill} to define the functions \eqref{charged_eq:def_VtildesqrtMk} whose critical points gave us the components $\tilde{q}\in \tilde{\Qtr }$ of \eqref{charged_eq:def_bif_function_Hill}. Here we made use of the fact that we can write the critical $ \mathbf{\tilde{J}}\in  S_1^2$ as functions of $\tilde{q}\in \tilde{\Qtr }$. 
This tacit use of the Implicit Function Theorem breaks down at the diabolic point where the moments of inertia tensor is degenerate. This is also reflected by the functions \eqref{charged_eq:def_VtildesqrtMk} not being differentiable at the diabolic point for $k=1,2$. We expect to see signatures of this singularity also in the visualization of the Hill regions in terms of their projection to the shape space when the contours of the function \eqref{charged_eq:def_bif_function_Hill} reach $(\tilde{q},\mathbf{\tilde{J}})\in\tilde{\Qtr } \times S_1^2 $ where $(\tilde{q}$ is the diabolic point and $\mathbf{\tilde{J}}\in S_1^2$ is an eigenvector of $\mathbf{\tilde{M}}(\tilde{q})$
 in the two-dimensional eigenspace for the eigenvalue $\tilde{M}_1=\tilde{M}_2$.  As this point is not a proper critical point of  \eqref{charged_eq:def_bif_function_Hill} 
 we will refer to it as \emph{pseudo critical point}.

%%%%%%%%%%%%%%%%%%%%%%
\section{Critical points and bifurcations of the Hill region}
\label{charged_sec:relative_equilibria}

In the two preceding papers \cite{HoveijnWaalkensZaman2019} and \cite{CPI2021} we have studied critical points of the map of integrals for charged three-body systems. The integral manifolds, i.e. the joint level sets of the integrals, are expected to change their topology at the corresponding critical values. 
Due to the non-compact nature of the integral manifolds  we  had to distinguish between (finite) critical points given by relative equilibria and critical points at infinity. 
%The finite critical points are given by relative equilibria and in the first paper  \cite{HoveijnWaalkensZaman2019}  we studied how these are related to central configurations.
A natural question that arises is how the critical points of the map of integrals are related to the critical points discussed in the previous section.
It is to be expected that bifurcations of the integral manifolds also manifest themselves in bifurcations of the Hill regions. Special attention require however critical points of the map of integrals that correspond to points on the boundary of the shape space as we explicitly excluded the boundary from the definition of the shape space and hence of the definition from the Hill regions (see Definitions~\ref{charged_def:shap_space}  and \ref{charged_def:Hillregion}, respectively). 

In particular the projections of relative equilibria to configuration space can be collinear or non-collinear. The reduction of the $\SO(3)$ symmetry underlying the definition of the Hill region in this paper
is singular at collinear configurations even though the corresponding relative equilibria do not have isotropy. The reason for the latter is  that at these relative equilibria the momenta are not collinear with the position vectors of the bodies.
The singularity of the reduction is due to the fact that the reduction is built on a cotangental lift of the reduction of the configuration space $\SO(3)$ symmetry. 
There should be a reduction not making use of the cotangent bundle property of the unreduced problem that does not have this problem. Whereas the reduction used in this paper leads a splitting of the rotational and internal degrees of freedom (which however inevitably remain coupled \cite{Guichardet1984}) and preserves the cotangent bundle structure of the phase space for the internal degrees of freedom this is probably no longer the case for a reduction for which the collinear relative equilibria are not singular. These properties of the reduction used in this paper are however in our opinion crucial for a meaningful definition of the Hill region. 

So we cannot expect that relative equilibria that project to collinear configurations in configuration space are related to critical points discussed in the previous section. The same however holds for critical points at infinity. For three bodies,  a critical point at infinity involves one co-rotating pair of two bodies with the third body at rest at infinity (see Sec.~\ref{sec:critptsinfty} below). After scaling  such a configuration by the dilation transformation  such that the polar moment of inertia $I$ becomes $1$ the co-rotating pair appears to collide. The corresponding shapes are hence also contained in the boundary of the shape space.

The only question that remains is then how the critical points in Sec.~\ref{charged_sec:Hillregions} are related to non-collinear relative equilibria. 
The latter are the stationary solutions of the reduced equations of motion \eqref{charged_Equations_motion_old}.
They satisfy the equations \cite{KozinRobertsTennyson00}

\begin{eqnarray}
\mathbf{J}\times \left( \mathbf{M}^{-1}\cdot \mathbf{J}\right)  &=&0,   \label{charged_eq:rel_equ_1} \\ 
p_{\mu } &=&\mathbf{J}\cdot \mathbf{A}_{\mu },  \label{charged_eq:rel_equ_2} \\ 
\frac{\partial }{\partial q_{\mu }}(\frac12 \mathbf{J}\cdot \mathbf{M}^{-1}\cdot \mathbf{J}+
V(q)) &=&0. \label{charged_eq:rel_equ_3}
\end{eqnarray}
Equation~\eqref{charged_eq:rel_equ_1}  implies that at relative equilibria the body angular momentum $\mathbf{J}$ is an eigenvector of the moment of inertia tensor, i.e. 
$\mathbf{J}$ is parallel to a principal axis. This means that
the three-body system is rotating about one of its principal axes.  
Using this fact the internal coordinates of relative equilibria can be found from the critical points of the \emph{effective potential}

\begin{equation}\label{charged_eq:def_V_eff}
V_{\text{eff}}(q)  :=  \frac{1}{2} \mathbf{J}\cdot [\mathbf{M} (q)]^{-1}\cdot \mathbf{J} + V(q)\,,
\end{equation}
where  $\mathbf{J}$ is a fixed vector of a given modulus $r$ parallel to a chosen principal axis.

For rotations about the first, second or third principal axis where $\mathbf{J}=(\pm r,0,0)$, $\mathbf{J}=(0,\pm r,0)$ or $\mathbf{J}=(0,0,\pm r)$, respectively, the effective potential becomes
\begin{equation}\label{charged_eq:def_V_eff_principal}
V_{\text{eff}}(q)  =  \frac{1}{2} \frac{r^2}{M_k(q)} + V(q)\,, \quad k=1,2,3.
\end{equation}
The critical internal coordinates satisfy
\begin{equation} \label{charged_eq:stationary_effective_potential}
\frac{\partial V} { \partial q_\mu} =  \frac12 \frac{r^2}{M_k^2}  \frac{\partial M_k} { \partial q_\mu} .
\end{equation}
Filling in the critical internal coordinates into \eqref{charged_eq:rel_equ_2} then gives the momenta of the relative equilibria.

\begin{theorem} \label{charged_theorem_rel_equilibria}
The critical points of the function  \eqref{charged_eq:def_bif_function_Hill} are 
%diabolic points (see Remark~\ref{lemma:M_tilde_singular})
%where $ \tilde{M}_1=\tilde{M}_2$ and hence points where $\tilde{M}_1$ and  $\tilde{M}_2$ are singular (see Remark~\ref{lemma:M_tilde_singular}) or 
shape-orientation points of non-collinear relative equilibria.
\end{theorem}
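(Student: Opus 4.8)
The plan is to match two characterisations that have already been assembled in the paper: on one side the critical points of the function in \eqref{charged_eq:def_bif_function_Hill}, and on the other the non-collinear relative equilibria described by \eqref{charged_eq:rel_equ_1}--\eqref{charged_eq:rel_equ_3}. From the discussion preceding the theorem, a critical point of \eqref{charged_eq:def_bif_function_Hill} is a pair $(\tilde{q},\mathbf{\tilde{J}})$ for which $\mathbf{\tilde{J}}$ is an eigenvector of $\mathbf{\tilde{M}}(\tilde{q})$ and $\tilde{q}$ is a critical point of one of the functions $\sqrt{\tilde{M}_k}\,\tilde{V}$ in \eqref{charged_eq:def_VtildesqrtMk}. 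The relative equilibria, in turn, have $\mathbf{J}$ parallel to a principal axis by \eqref{charged_eq:rel_equ_1} and internal coordinates that are critical points of the effective potential $V_{\text{eff}}$ in \eqref{charged_eq:def_V_eff_principal}. I would show that these two descriptions coincide after the dilation reduction, with \eqref{charged_eq:rel_equ_2} merely supplying the momenta that lift a shape-orientation point to a full relative equilibrium.

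First I would dispose of the orientation part. Since $\mathbf{M}(d_\lambda(\tilde{q}))=\lambda^2\,\mathbf{\tilde{M}}(\tilde{q})$ by \eqref{charged_eq:hom_VM_tilde}, the tensors $\mathbf{M}$ and $\mathbf{\tilde{M}}$ differ only by the positive scalar $\lambda^2$ along each dilation orbit and hence share their eigenvectors. Therefore ``$\mathbf{\tilde{J}}$ is an eigenvector of $\mathbf{\tilde{M}}(\tilde{q})$'' is literally the same condition as ``$\mathbf{J}=r\mathbf{\tilde{J}}$ is parallel to a principal axis'', which is \eqref{charged_eq:rel_equ_1}. This identifies the $\mathbf{\tilde{J}}$-component of a Hill critical point with the principal-axis direction of a relative equilibrium.

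The substance lies in the shape part. I would parametrise the internal space $\Qtr$ by $(\lambda,\tilde{q})$ through $q=d_\lambda(\tilde{q})$ with $\tilde{q}\in\tilde{\Qtr}$ and $\lambda>0$ (so that $\lambda=\sqrt{I(q)}$, recall \eqref{charged_eq:def_moment_I} and \eqref{charged_eq:hom_I}), and insert the homogeneities $M_k=\lambda^2\tilde{M}_k(\tilde{q})$ and $V=\lambda^{-1}\tilde{V}(\tilde{q})$ from \eqref{charged_eq:hom_VM_tilde} into \eqref{charged_eq:def_V_eff_principal} to obtain
\begin{equation}
V_{\text{eff}}(d_\lambda(\tilde{q})) = \frac{r^2}{2\lambda^2\,\tilde{M}_k(\tilde{q})} + \frac{\tilde{V}(\tilde{q})}{\lambda}.
\end{equation}
The critical-point equations of $V_{\text{eff}}$ split into a dilation equation $\partial_\lambda V_{\text{eff}}=0$ and the shape equations $\partial_{\tilde{q}_\mu}V_{\text{eff}}=0$. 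The dilation equation gives $\lambda=-r^2/(\tilde{M}_k\tilde{V})$, which is positive precisely when $\tilde{V}<0$ (the regime of bound relative equilibria). Substituting this $\lambda$ into the shape equations and clearing the common factor reduces them to
\begin{equation}
\frac{1}{2\tilde{M}_k}\frac{\partial\tilde{M}_k}{\partial\tilde{q}_\mu} + \frac{1}{\tilde{V}}\frac{\partial\tilde{V}}{\partial\tilde{q}_\mu} = 0,
\end{equation}
which is exactly $\partial_{\tilde{q}_\mu}\log\bigl(\sqrt{\tilde{M}_k}\,\tilde{V}\bigr)=0$, i.e. the critical-point condition for \eqref{charged_eq:def_VtildesqrtMk}. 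Thus a shape $\tilde{q}$ is critical for $\sqrt{\tilde{M}_k}\,\tilde{V}$ if and only if $d_\lambda(\tilde{q})$ with the above $\lambda$ is a critical point of $V_{\text{eff}}$, i.e. a relative equilibrium.

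The main obstacle is the bookkeeping in this dilation split: one must check that the $\partial_\lambda$ and $\partial_{\tilde{q}_\mu}$ equations decouple cleanly and that eliminating $\lambda$ turns the gradient of $V_{\text{eff}}$ into a positive multiple of the logarithmic gradient of $\sqrt{\tilde{M}_k}\,\tilde{V}$, so that no critical points are spuriously created or lost. Two caveats then need explicit mention. First, the diabolic shape must be excluded: there $\tilde{M}_1=\tilde{M}_2$, the assignment $\mathbf{\tilde{J}}\mapsto\tilde{q}$ is not single-valued, and $\sqrt{\tilde{M}_k}\,\tilde{V}$ fails to be differentiable for $k=1,2$ (Remark~\ref{lemma:M_tilde_singular}); this is precisely the pseudo-critical point already flagged, and it is not a genuine critical point of \eqref{charged_eq:def_bif_function_Hill}. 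Second, to complete the lift from shape-orientation point to relative equilibrium I would read off the conjugate momenta from \eqref{charged_eq:rel_equ_2}, $p_\mu=\mathbf{J}\cdot\mathbf{A}_\mu$, which is automatically consistent and imposes no further constraint. Non-collinearity holds because both the shape space $\tilde{\Qtr}$ and the relative equilibria considered here lie in the interior, away from the collinear boundary.
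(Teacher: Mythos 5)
Your proposal is correct, and its skeleton coincides with the paper's: both treat the orientation component via the eigenvector condition already established at the end of Sec.~\ref{charged_sec:Hillregions}, both complete the shape coordinates by the dilation parameter $\lambda$, and both use the homogeneity relations \eqref{charged_eq:hom_VM_tilde} to compare the stationarity equations on the two sides. The genuine difference lies in how the dilation direction is closed off. The paper differentiates \eqref{charged_eq:def_bif_function_Hill} to obtain \eqref{charged_eq: stationary_Hill}, observes that its equivalence with the relative-equilibrium equations \eqref{charged_eq:rel_equ_3} (both in the shape directions and in the $\lambda$-direction) forces the virial identity \eqref{charged_eq:virial_cond}, and then establishes that identity by invoking the classical Virial Theorem, with the remark that time averages are trivial at a relative equilibrium. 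You instead start from the effective potential \eqref{charged_eq:def_V_eff_principal}, whose critical points characterise the relative equilibria, and solve $\partial_\lambda V_{\text{eff}}=0$ explicitly for $\lambda=-r^2/(\tilde{M}_k\tilde{V})$; substituting into the shape equations then yields exactly the criticality condition for $\sqrt{\tilde{M}_k}\,\tilde{V}$ in \eqref{charged_eq:def_VtildesqrtMk}. This is the same virial identity, but derived by direct computation rather than by appeal to Clausius: your route is self-contained, handles both implications symmetrically (nothing is created or lost in eliminating $\lambda$, since $\lambda$ is uniquely determined by $\tilde{q}$), and makes explicit the sign constraint $\tilde{V}<0$ that the paper leaves implicit. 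A side benefit is that your explicit $\lambda$ immediately reproduces the critical value $\nu=\frac12\tilde{M}_k\tilde{V}^2$ of \eqref{eq:critical_nu_relequil}. What the paper's formulation buys in exchange is the conceptual identification of the compatibility condition as a physical statement — the balance of rotational kinetic and potential energy at a relative equilibrium — which motivates the appearance of the Virial Theorem in this context.
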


\begin{proof}
At the end of Sec.~\ref{charged_sec:Hillregions} we have seen that the $\mathbf{\tilde{J}}$ components of the critical shape-orientations points of \eqref{charged_eq:def_bif_function_Hill}
are eigenvectors of the shape dependent moment inertia tensor $\mathbf{\tilde{M}}$.  The shape components satisfy
%$\sqrt{ \tilde{M}_k } \,  \tilde{V} $, $k=1,2,3$, defined in \eqref{charged_eq:def_VtildesqrtMk}, i.e. 
%%
%%
%\begin{equation}
%0 = \frac{\partial}{ \partial \tilde{q}_\mu}
%\sqrt{ \tilde{M}_i } \,  \tilde{V}% =   \frac{\partial}{ \partial \tilde{q}_\mu} \sqrt{ M_i}  \,V  
%= \frac12 \frac{\tilde{V}}{\sqrt{\tilde{M}_k}}   \frac{\partial \tilde{M}_k} { \partial \tilde{q}_\mu} +   \sqrt{\tilde{M}_k} \,  \frac{\partial \tilde{V}} { \partial \tilde{q}_\mu} ,\quad \mu=1,2,
%\end{equation}

\begin{equation}
0 = \frac{\partial}{ \partial \tilde{q}_\mu}
\frac{ \tilde{V}}{\sqrt{\mathbf{\tilde{J}} \cdot [\mathbf{\tilde{M}} (\tilde{q})]^{-1} \cdot \mathbf{\tilde{J}}} }
= 
\frac{  \partial \tilde{V}/ \partial \tilde{q}_\mu}{\sqrt{\mathbf{\tilde{J}} \cdot [\mathbf{\tilde{M}} (\tilde{q})]^{-1} \cdot \mathbf{\tilde{J}}} }  -   
\frac{1}{2} \tilde{V} 
 \frac{\mathbf{\tilde{J}} \cdot  \frac{\partial}{ \partial \tilde{q}_\mu}  [\mathbf{\tilde{M}} (\tilde{q})]^{-1} \cdot \mathbf{\tilde{J}}}{\big(\mathbf{\tilde{J}} \cdot [\mathbf{\tilde{M}} (\tilde{q})]^{-1} \cdot \mathbf{\tilde{J}}\big)^{3/2}}    ,\quad \mu=1,2,
\end{equation}
or 

\begin{equation} \label{charged_eq: stationary_Hill}
\frac{\partial \tilde{V}} { \partial \tilde{q}_\mu}  -\frac{1}{2} \tilde{V} 
 \frac{\mathbf{\tilde{J}} \cdot  \frac{\partial}{ \partial \tilde{q}_\mu}  [\mathbf{\tilde{M}} (\tilde{q})]^{-1} \cdot \mathbf{\tilde{J}}}{\mathbf{\tilde{J}} \cdot [\mathbf{\tilde{M}} (\tilde{q})]^{-1} \cdot \mathbf{\tilde{J}}} =0  \,\quad \mu=1,2.
\end{equation}
We complete the shape space coordinates $\tilde{q}_\mu$, $\mu=1,2$, to coordinates on the internal space $\Qtr $ by adding the coordinate $\tilde{q}_3 = \lambda$ and noting that for each $q$ in $\Qtr$, there exist a $\lambda>0$ and $\tilde{q}\in \tilde{\Qtr}$ such that $d_\lambda(\tilde{q})=q$.

Dividing  \eqref{charged_eq: stationary_Hill} by $\lambda$ and
using the homogeneity in  \eqref{charged_eq:hom_VM_tilde}  and  $ \mathbf{J}=r  \mathbf{\tilde{J}}$  we see that 
the equivalence of \eqref{charged_eq:rel_equ_3} and \eqref{charged_eq: stationary_Hill} requires for $\mu=1,2$ that

\begin{equation}\label{charged_eq:virial_cond}
 -V  =  \mathbf{J} \cdot [\mathbf{M} (q)]^{-1} \cdot \mathbf{J}   .
\end{equation}
For $\mu=3$, i.e. $q_3=\lambda$,  we have  by using again the homogeneity in  \eqref{charged_eq:hom_VM_tilde}

\begin{equation} \label{charged_eq:virial_cond_1}
\frac{\partial V} { \partial q_3} = \frac{\partial V} { \partial \lambda} =  \frac{\partial} { \partial \lambda}  \frac{1}{\lambda} \tilde{V}  = -\frac{1}{\lambda^2} \tilde{V}=   -\frac{1}{\lambda} V
\end{equation}
and
\begin{equation} \label{charged_eq:virial_cond_2new}
%
 %\mathbf{M} (d_\lambda (q)) = \lambda^2 \mathbf{M}(q) 
%
\frac{\partial \mathbf{M}^{-1}} { \partial q_3} = \frac{\partial \mathbf{M}^{-1}} { \partial \lambda} =  \frac{\partial} { \partial \lambda}  \frac{1}{\lambda^2} \mathbf{\tilde{M}}^{-1}  = 
-\frac{2}{\lambda^3} \mathbf{\tilde{M}}^{-1} =
- \frac{2}{\lambda} \mathbf{M}^{-1} \,.
\end{equation}
Using \eqref{charged_eq:virial_cond_1}  and \eqref{charged_eq:virial_cond_2new} in  \eqref{charged_eq:rel_equ_3} we see that \eqref{charged_eq:rel_equ_3}  for $\mu=3$ to be satisfied requires again \eqref{charged_eq:virial_cond}.

\rem{
\begin{equation} \label{charged_eq:virial_cond_2}
 \frac12 \frac{r^2}{M_k^2}  \frac{\partial M_k} { \partial \lambda} =  \frac12 \frac{r^2}{M_k^2}  \frac{\partial } { \partial \lambda} \lambda^2 \tilde{M}_k
 =  \frac12 \frac{r^2}{M_k^2}   2 \lambda  \tilde{M}_k  
 =  \frac{r^2}{M_k^2}    \frac{1}{\lambda}  M_k .
\end{equation}
Equality of \eqref{charged_eq:virial_cond_1} and\eqref{charged_eq:virial_cond_2} is again equivalent to \eqref{charged_eq:virial_cond}.

} % end rem

It hence remains to be shown that \eqref{charged_eq:virial_cond} holds.   
The right hand side of  \eqref{charged_eq:virial_cond} can be identified with twice the kinetic energy of a relative equilibrium (use \eqref{charged_eq:rel_equ_2} in \eqref{charged_Hamiltonian} to see that the vibrational kinetic energy is vanishing at a relative equilibrium). Like the potential the kinetic energy is stationary at a relative equilibrium. The equality then follows from the 
Virial Theorem which says that for homogeneous potential of degree $-1$, twice  the time average of the kinetic energy equals minus the time average of the potential energy \cite{Clausius1870}. For relative equilibria, the time average is trivial as it involves averaging constant quantities.

\end{proof}

\rem{
\begin{corollary}
Relative equilibria occur in one-parameter families. 
\end{corollary}
} % end rem

\rem{

\begin{theorem} \label{charged_theorem_rel_equilibria}
The critical value  $\nu=-Er^2$ of a relative equilibrium involving rotation about the $k$th principal axis, $k=1,2,3$,  is given by
\begin{equation} \label{eq:critical_nu_relequil}
\nu = \frac12  \tilde{M}_k (\tilde{q})  \, \tilde{V}^2 (\tilde{q})  = \frac12  M_k ({q})  \, {V}^2 ({q}) ,
\end{equation}
where $\tilde{M}_k(\tilde{q})$ and  $ \tilde{V} (\tilde{q})$  are the dilation reduced principal moment of inertia and potential energy, respectively, of the  shape $\tilde{q}$ corresponding to the dilation scaled internal space configuration $q$ of the relative equillibrium. 
\end{theorem}

\begin{proof}
From \eqref{charged_eq:rel_equ_2} we see that the vibrational kinetic energy in \eqref{charged_Hamiltonian} is vanishing for a relative equilibrium. 
For a relative equilibrium involving rotation about the $k$th principal axis with magnitude $r$ of the angular momentum, the ro-vibrational energy \eqref{charged_Hamiltonian} leads to the energy equation
\[
E = \frac12 \frac{r^2}{M_k(q) } + V(q).
\]
By the Virial Theorem we have that the kinetic energy equals $-V(q)/2$ and hence $E=V/2$. Plugging this into the energy equation above gives
\[
r^2= - M_k(q)V(q)
\]
and hence
\[
\nu = \frac12  M_k(q)V^2(q).
\]
Equation~\eqref{eq:critical_nu_relequil} then follows from the homogeneity of $M_k$ and $V$.  
\end{proof}
} % end rem

%---------------------------------------------------------------------------------------------------

Even though our approach does not allow for a discussion of the manifestation of all critical points of the map of integrals in the bifurcation of the Hill regions for the reasons mentioned above, we in the following determine for all of them the corresponding value of the bifurcation parameter $\nu$ defined in \eqref{eq:def_nu}.  The reason is that in the examples in 
Sec.~\ref{charged_sec:Examples}  we will observe bifurcations of the Hill regions also due to collinear relative equilibria and critical points at infinitiy, i.e. bifurcations of the Hill regions due to events associated with the boundary of the shape space. We will use that, like for non-collinear relative equilibria, the corresponding $\nu$ can be computed from
\begin{equation} \label{eq:critical_nu_relequil}
\nu = \frac12  \tilde{M}_k (\tilde{q})  \, \tilde{V}^2 (\tilde{q})  = \frac12  M_k ({q})  \, {V}^2 ({q}) \,
\end{equation}
where $\tilde{q}$ is the dilation scaled internal  configuration $q$ of the relative equilibrium or critical point at infinity. In fact the formula also holds for the value $\nu$ of the pseudo critical point due to the diabolic point of the moment of inertia tensor. The reason is that in any of these cases bifurcations occur when the contours of the function \eqref{charged_eq:def_bif_function_Hill}  sweep over the corresponding shape-orientation point be it in (the interior of) $\tilde{\Qtr } \times S_1^2$ or on the boundary  $\partial\tilde{\Qtr } \times S_1^2$ (where in the latter case the continuous extension of \eqref{charged_eq:def_bif_function_Hill} needs to be considered). The second equality in 
\eqref{eq:critical_nu_relequil} follows from the homogeneity of $V$ and $\mathbf{M}$, see \eqref{charged_eq:homogeneities}.

%---------------------------------------------------------------------------------------------------

\subsection{Non-collinear relative equilibria}

Finding relative equilibria is not easy. In the gravitational case they are all related to central configurations. For three bodies interacting via gravitational forces, the Lagrange equilateral triangle is the only non-collinear central configuration. As shown in \cite{HoveijnWaalkensZaman2019}, there can in general be relative equilibria that do not project to central configurations for charged three-body systems different from gravitational three-body systems. An example is the Langmuir orbit discussed below.

\subsubsection{Lagrange type relative equilibria}
\label{sec:Lagrange}

Let $\alpha_1=Gm_2m_3$,   $\alpha_2=Gm_1 m_3$ and  $\alpha_3=Gm_1m_2$, where $G$ is the gravitational constant.
The distances to the center of mass in a Lagrange equilateral triangle of side length $a$ %, see Fig.~\ref{charged_fig:LagrangeTriangle}, 
are \cite{Danby1992}
\begin{eqnarray}
d_1 = \frac{a}{m_1+m_2+m_3} \sqrt{m_2^2 + m_2m_3 + m_3^2}\,, \\
d_2 = \frac{a}{m_1+m_2+m_3} \sqrt{m_1^2 + m_1m_3 + m_3^2}\,, \\
d_3 = \frac{a}{m_1+m_2+m_3} \sqrt{m_1^2 + m_1m_2 + m_2^2}\,.
\end{eqnarray}
The moment of inertia for rotations about the axis containing the center of mass and perpendicular to the equilateral triangle is then given by
$$
m_1 d_1^2 + m_2 d_2^2 + m_3 d_3^2 = \frac{a^2}{m_1+m_2+m_3} (m_1m_2 + m_2 m_3+ m_1m_3) 
$$
which together with the gravitational potential energy $V=-G(m_1 m_2+ m_2 m_3 + m_1 m_3)/a$  leads to the critical value 
\begin{equation} \label{eq:Lagrange_critical_nu_general}
\nu_{\text{Lagrange}} = \frac{G^2}{2} \frac{(m_1m_2 + m_2 m_3 + m_1 m_3)^3}{m_1+m_2+m_3}.
\end{equation}

We expect that Lagrange-type relative equilibria also exist more generally if the $\alpha_i$ in \eqref{eq:potential3b}
are all positive but not necessarily  products of masses and that the bodies do in general not form an equilateral triangle in this case. However, when the interaction between the bodies is due to Coulomb forces then the force between at least two bodies is repulsive and there is no non-collinear relative equilibrium that projects to a central configuration as shown in \cite{HoveijnWaalkensZaman2019}. We will not consider the case of all  $\alpha_i$ being positive in more detail in this paper. 

\subsubsection{Langmuir orbit}
\label{sec:Langmuir}

 An example of a relative equilibrium whose projection to configuration space is not a central configuration is the \emph{Lagnmuir orbit}. This
 is a periodic orbit studied by Irving Langmuir for the quantization of the helium atom in 1920 \cite{Langmuir1920}. 
%The candidate for the non-collinear relative equilibria leading to the critical values   $\nu_3$ for the compound of two electrons and one positron and  $\nu_4$ for the helium atom in Secs.~\ref{charged_sec:electronpositronelectron} and \ref{charged_sec:Helium}, respectively, is the 
It consists of the two electrons and the helium nucleus  moving along three coplanar concentric circles with the orbit of the helium nucleus sandwiched between the orbits of the electrons in a symmetric fashion, see Fig.~\ref{fig:Langmuir_config}. Langmuir assumed the nucleus to have infinite mass. The orbit exists however also for a finite mass and charge ratios different from the case of helium as we will see in the following.

\begin{figure}
\begin{center}
\includegraphics[angle=0,width=7cm]{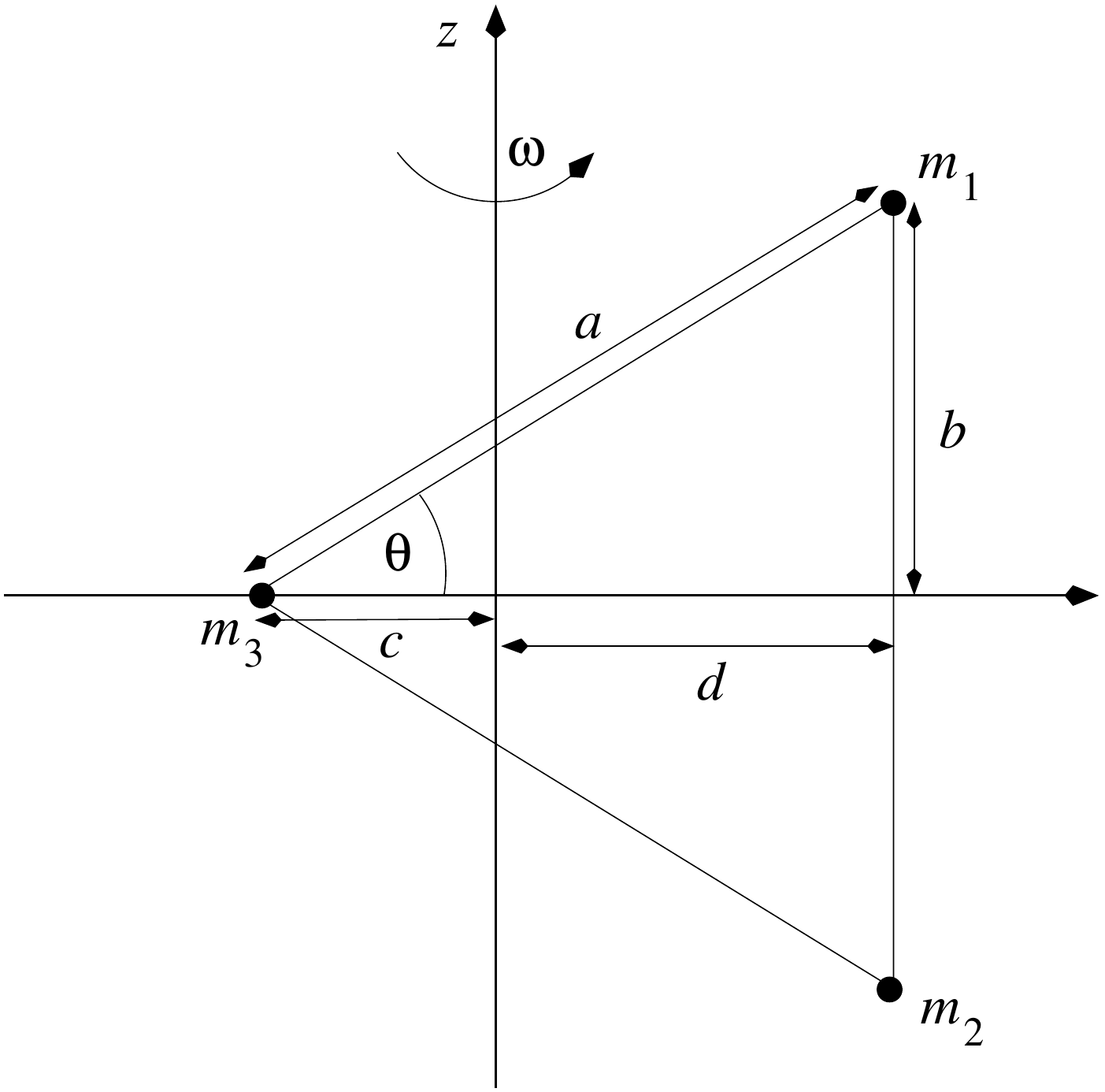}
\end{center}
\caption{\label{fig:Langmuir_config}
Configuration of the Langmuir orbit. Three bodies with  $m_1=m_2$, $\alpha_1=\alpha_2>0$ and $\alpha_3<0$ are placed at the vertices of an isosceles triangle.
}
\end{figure}

We assume that three bodies with two equal masses $m_1=m_2$ interacting such that $\alpha_1=\alpha_2>0$ and $\alpha_3<0$ rotate with angular frequency $\omega$ about the $z$-axis in the configuration shown in Fig.~\ref{fig:Langmuir_config}. This in particular implies that the center of mass is on the $z$-axis.  For the coordinates introduced in Fig.~\ref{fig:Langmuir_config},  this leads to the condition
\begin{equation}\label{eq:app_centermass}
m_3 c = 2 m_1 d. 
\end{equation}
The moment of inertia with respect to rotations about the $z$-axis is given by
\begin{equation}\label{eq:M_Langmuir}
M = 2d^2m_1 + c^2 m_3
\end{equation}
and the potential energy is
\begin{equation} \label{eq:Langmuir_pot}
V = -2\frac{\alpha_{2}}{a} -  \frac{\alpha_3}{2b}\,. 
\end{equation}

\rem{
The goal in the following  is to give a value for the bifurcation parameter $\nu=-Er^2$ in terms of the masses and the charges. With the coordinates in Fig.~\ref{} the energy and the angular momentum  are given by
\begin{equation} \label{eq:app_energy}
E=\frac{m_3}{2} \omega^2 c^2 + 2 \frac{m_1}{2} \omega^2 d^2 + 2\frac{Q_3 Q_1}{a} +  \frac{Q_1^2}{2b} 
\end{equation}
and
\begin{equation}\label{eq:app_angmom}
r = m_3 \omega c^2 + 2 m_1 \omega d^2, 
\end{equation}
respectively. 
} % end rem

In order to proceed we write down the force balances between the internal and the centrifugal forces for the individual bodies. 
For the horizontal components, we get for body 3
\begin{equation} \label{eq:app_m3_hor}
m_3 \omega^2 c = 2 \frac{\alpha_2}{a^2} \cos \theta
\end{equation}
and for body 1 (or body 2 which would give the same equation)
\begin{equation} \label{eq:app_m1_hor}
m_1 \omega^2 d =  \frac{\alpha_2}{a^2} \cos \theta.
\end{equation}
For the vertical component for body 1 (and equivalently body 2),
we get
\begin{equation} \label{eq:app_m1_ver}
 \frac{\alpha_3}{(2b)^2}   = - \frac{\alpha_2}{a^2} \sin \theta.
\end{equation}
Due to the reflection symmetry the equation for the vertical component for body 3 is trivial.

Dividing Eq.~\eqref{eq:app_m3_hor} by Eq.~\eqref{eq:app_m1_hor} gives again the center of mass condition \eqref{eq:app_centermass}. 
From \eqref{eq:app_m1_ver} we get

\[
\frac{a^2}{b^2} = -4 \frac{\alpha_2}{\alpha_3} \sin \theta.
\]
Using that $b=a\sin \theta$ we get that the angle $\theta$ is determined by
\begin{equation}\label{eq:app_theta_general}
\sin \theta = \left( - \frac{\alpha_3}{4\alpha_2} \right)^{1/3}\,.
\end{equation}
Interestingly the angle only depends on the charge ratio and not on the masses of the bodies.
For the case $\alpha_2/\alpha_3=-2$, \eqref{eq:app_theta_general} gives $\sin \theta = \frac12 $, i.e. $\theta = \pi/6=30^{\circ}$ and the three bodies hence form the vertices of an equilateral triangle.
This agrees with the result found by Langmuir for Helium for an infinite mass $m_3$ \cite{Langmuir1920}.

Using $b=a\sin \theta$ the potential energy in \eqref{eq:Langmuir_pot} becomes
\begin{equation} \label{eq:pot_Langmuir_a}
V %= 2\frac{Q_3 Q_1}{b/\sin\theta} +  \frac{Q_1^2}{2b}   %= \frac{1}{b}  (2Q_3 Q_1\sin \theta+\frac{Q_1^2}{2}). 
=  -\frac{1}{a}  (2\alpha_2   +  \frac{\alpha_3}{2\sin \theta}).  
\end{equation}

\rem{
From the Virial Theorem we know that the kinetic energy $T$ is given by $-V/2$ such that we get for the full energy $E$
\[
E =T+V= \frac12 V =\frac{1}{b} \left( Q_3Q_1\sin \theta + \frac{Q_1^2}{4} \right).
\]
In order to write the bifurcation parameter $\nu=- E r^2 $ as function of masses and charges only, we need to write $r^2$ as a product of $b$ and a factor depending on masses and charges only. Solving \eqref{eq:app_m3_hor} for $\omega^2$, we get
\[
r^2 =  (m_3 c^2 + 2m_1 d^2)^2 \omega^2= (m_3 c^2 + 2m_1 d^2)^2 \left(  -2\frac{Q_3Q_1}{m_3ca^2} \cos \theta \right) 
\]

... and hence
\[
\nu %= - \left( Q_3Q_1\sin \theta + \frac{Q_1^2}{4} \right) \left(m_3 \frac{c^2}{a^2} + 2m_1 \frac{d^2}{a^2} \right)^2 \left(  -2\frac{Q_3 Q_1}{m_3} \frac{a}{c}\frac{a}{b} \cos \theta \right) \,.
= - \left( \alpha_2\sin \theta + \frac{\alpha_3}{4} \right) \left(m_3 \frac{c^2}{a^2} + 2m_1 \frac{d^2}{a^2} \right)^2 \left(  -2\frac{\alpha_2}{m_3} \frac{a}{c}\frac{a}{b} \cos \theta \right) \,.
\]
} % end rem

In order to derive an expression for the bifurcation parameter $\nu=\frac12 MV^2$  and given $V$ in \eqref{eq:pot_Langmuir_a} we proceed by expressing the moment of inertia in \eqref{eq:M_Langmuir} in terms of $a$ and charges and masses. This can be achieved by using $(c+d)/a=\cos \theta$ and removing $d$ or $c$, respectively, via \eqref{eq:app_centermass} which gives
$$
\frac{c}{a} = \frac{2m_1}{2m_1+m_3}\cos \theta \text{ and } \frac{d}{a} = \frac{m_3}{2m_1+m_3}\cos \theta.
$$
Plugging this in and simplifying 
we finally get
\begin{equation}\label{eq:app_nu_general}
\nu_{\text{Langmuir}} = % \frac12 \mu \frac{\cos^4 \theta}{\sin \theta} Q_3Q_1^2 (4Q_3 \sin \theta +Q_1),
\frac12 \mu \frac{\cos^4 \theta}{\sin \theta} (4\alpha_2^2 \sin \theta +\alpha_2\alpha_3),
\end{equation}
where $\mu$ is the reduced mass 
$$
\mu = \frac{2m_1m_3}{2m_1+m_3}
$$ 
and $\theta$ is determined by \eqref{eq:app_theta_general}.

Without giving further details we note that it can be shown that the Langmuir orbit exists also for asymmetric mass and charge ratios.

%---------------------------------------------------------------------------------------------------

\subsection{Non-collinear pseudo critical point due to the diabolic point of the  inertia tensor}
\label{sec:diabolic}

Following the discussion at the end of Sec.~\ref{charged_sec:Hillregions} a pseudo critical point arises from the diabolic point of the inertia tensor. At this point 
two principal moments of inertia (the smaller ones) are equal. In this case the mass weighted Jacobi vectors are perpendicular and have equal length (see Remark~\ref{lemma:M_tilde_singular}). 
\rem{
For the distances $a$, $b$, $c$ and $d$ defined in Fig.~\ref{}, this means $a+b=\rho/\sqrt{\mu_1}$ and $m_1a=m_3b$, and $c+d=\rho/\sqrt{\mu_2}$ and $c(m_1+m_3)=d m_2$. These pairs of equations can be solved for $a,b$ and $c,d$, respectively. Pythagoras' Theorem then yields
} % end rem
It is easily shown that in this case the distances between the bodies are
\begin{equation}
\begin{split}
r_{12} = \sqrt{\frac{m_1+m_2}{m_1 m_2}} \,  \rho,  \quad
r_{23} = \sqrt{\frac{m_2+m_3}{m_2 m_3}} \,  \rho   \text{ and }
r_{13} =  \sqrt{\frac{m_1+m_3}{m_1 m_3}}\, \rho \,.
\end{split}
\end{equation}
\rem{
For the distances of the masses to the center of mass, we get
\begin{equation}
\begin{split}
d_1 &= \sqrt{\frac{m_2 +m_3}{m_1(m_1+m_2+m_3)}}  \, \rho\,,\\
d_2 &= \sqrt{\frac{m_1 +m_3}{m_2(m_1+m_2+m_3)}} \, \rho \,, \\
d_3 &= \sqrt{\frac{m_1 +m_2}{m_3(m_1+m_2+m_3)}}\, \rho  \,.
\end{split}
\end{equation}
} % end rem
The  moment of inertia corresponding to rotations about any axis containing the center of mass and in the  plane spanned by the three bodies is $\rho^2$. Using \eqref{eq:critical_nu_relequil}
we get the critical value 
\begin{equation} \label{eq:nu_diabolic}
\nu_{\text{diabolic}} = \frac12
\left( 
\alpha_1   \sqrt{ \frac{m_2m_3}{m_2 + m_3}}  +   \alpha_2  \sqrt{ \frac{m_1 m_3}{m_1 + m_3}} + \alpha_3 \sqrt{\frac{m_1+m_2}{m_1 m_2}} 
\right)^2\,.
\end{equation}

%There are no analogous singularities for the collinear case because there is only one moment of inertia in this case ...

%---------------------------------------------------------------------------------------------------

\subsection{Collinear relative equilibria}
\label{sec:collinear}

For a collinear relative equilibrium,
imagine three masses $m_1$, $m_2$ and $m_3$ at positions $0\le a \le b$ on the real axis (for other orders simply relabel the bodies). Then the center of mass is at
\[
cm = \frac{am_2+bm_3}{m_1+m_2+m_3}
\]
and the signed distances to the center of mass of the three masses are
\[
d_{1} = -cm \, , \quad  d_{2} =\frac{am_1-(b-a)m_3}{m_1+m_2+m_3} \quad \text{and} \quad d_{3} =\frac{bm_1+(b-a)m_3}{m_1+m_2+m_3}\,,
\]
respectively. Using that the distances between the bodies are $r_{12} = a$, $r_{13}=b$ and $r_{23} = b-a$ the moment of inertia $m_1 d_{1cm}^2 +  m_2 d_{2cm}^2 +  m_3 d_{3cm}^2 $  becomes
\[
M =  \frac{ m_1 m_2 a^2 + m_2 m_3 (b-a)^2 +  m_1 m_3 b^2 }{m_1+m_2+m_3} 
= \frac{ m_1 m_2  r_{12}^2 + m_2 m_3  r_{23}^2  + m_1 m_3  r_{13}^2    }{m_1+m_2+m_3} \,.
\]
Using \eqref{eq:critical_nu_relequil} the corresponding value of the bifurcation parameter becomes
\begin{equation}\label{eq:nu_collinear}
\nu_{\text{collinear}} = \frac12  \frac{ m_1 m_2  r_{12}^2 + m_2 m_3  r_{23}^2  + m_1 m_3  r_{13}^2    }{m_1+m_2+m_3} \left( \frac{\alpha_1}{r_{23}} +   \frac{\alpha_2}{r_{13}} + \frac{\alpha_3}{r_{12}}  \right)^2\,.
\end{equation}
%and $\nu_7$, $\nu_8$ and  $\nu_9$ in \eqref{eq:Euler_critical_nu} are immediately recognized to be of the form  \eqref{eq:critical_nu_relequil}.
For collinear relative equilibria that project to central configurations,  the distances $r_{12}$, $r_{13}$ and $r_{23}$  in \eqref{eq:nu_collinear}  are obtained from the roots of a  polynomial of degree five as discussed in detail in \cite{HoveijnWaalkensZaman2019}.

%---------------------------------------------------------------------------------------------------

\subsection{Critical points at infinity}
\label{sec:critptsinfty}

%It is interesting to note that also the other critical values are of the form  \eqref{eq:critical_nu_relequil}. For $\nu_2$, $\nu_3$ and $\nu_4$ which are due to critical points at infinity this is easily shown.   In fact, 

The critical points at infinity for a system of three bodies consist of one pair of co-rotating bodies interacting with an attractive force and a third body at rest located on the axis of rotation of the co-rotating pair 
with an infinite distance between the third body and the co-rotating pair. 
A straightforward computation shows that  the moment of inertia of two masses $m_1$ and $m_2$ co-rotating at a distance $r_{12}$ apart about their  center of mass is given by 
%\begin{equation}
$
\mu d^2_{12}
$
where $\mu= m_1m_2/(m_1+m_2)$ is the reduced mass. Using \eqref{eq:critical_nu_relequil} with $V=-\alpha_3/r_{12}$ and $\alpha_3>0$
gives 
\begin{equation}\label{charged_eq:nu_for_corotating_charges}
\nu_{\infty} = \frac12 \mu \alpha_3^2\,.
\end{equation}

%%%%%%%%%%%%%%%%%%%%%%
\section{Examples }
\label{charged_sec:Examples}

In the following we will give two examples of charged three-body systems interacting via Coulomb forces: a compound of two electrons and one positron, 
and  the helium atom consisting of a nucleus and two electrons.   We start  however with a gravitational three-body problem to illustrate the procedure.

%%%%%%%%%%%%%%%%%%%%%%

\subsection{Gravitational three-body problem }
\label{charged_sec:gravitational}

Let $\alpha_1=Gm_2m_3$,   $\alpha_2=Gm_1 m_3$ and  $\alpha_3=Gm_1m_2$ where $G$ is the gravitational constant.
As discussed in \cite{mccord1998integral}
there are nine critical values of the bifurcation parameter $\nu$ for the gravitational three-body problem.  
In our example we choose the masses $m_1 = 1.6$, $m_2=1.2$ and $m_3=1$. Also we set $G=1$ (which can always be achieved by a suitable scaling/choice of units). 
The numerical values of the critical bifurcation parameter can then be computed from the formulas at the end of Sec.~\ref{charged_sec:relative_equilibria} and are given by
\begin{equation}
\begin{split} 
		\nu_1&=0 \,,\quad
                      \nu_{\infty\,2}  \approx     0.3927272727 \,,\quad 
                       \nu_{\infty\,3}  \approx    0.7876923077 \,,\\
                       \nu_{\infty\,4}  &\approx  1.263908571\,,\quad
                      \nu_{\text{diabolic}\,5}  \approx     6.961348535 \,, \quad
                      \nu_{\text{Lagrange}\,6} \approx     13.83605894 \,,\\ 
                      \nu_{\text{collinear}\,7}  &\approx     18.56904438 \,,\quad
                       \nu_{\text{collinear}\,8}  \approx    19.12865697 \,,\quad 
                      \nu_{\text{collinear}\,9}  \approx     19.44296212 \,.
\end{split}
\end{equation}
We note that for the gravitional three-body problem, the diabolic point has been discussed earlier by Simo \cite{Simo1975} and Saari \cite{Saari1984,Saari1987,Saari1987b}.
We point out again that it is not a proper critical point where the Hill region nor the integral manifold changes topology (see also the discussion in \cite{mccord1998integral}).

\rem{
Ordered by magnitude they are given by $\nu_1=0$, 
\begin{equation}
\nu_{\infty\,2} = \frac{G^2}{2} \frac{(m_1 m_2)^3}{m_1+m_2},\quad 
\nu_{\infty\,3} = \frac{G^2}{2} \frac{(m_1 m_3)^3}{m_1+m_3},\quad 
\nu_{\infty\,4} = \frac{G^2}{2} \frac{(m_2 m_3)^3}{m_2+m_3},\quad 
\end{equation}
which are due to critical points at infinity (see Sec.~\ref{sec:critptsinfty}),
\begin{equation}
\nu_{\text{diabolic}\,5} = \frac{G^2}{2} \left(  m_1 m_2 \sqrt{\frac{m_1 m_2}{m_1 + m_2}} + m_1 m_3 \sqrt{\frac{m_1 m_3}{m_1 + m_3}}  + m_2 m_3 \sqrt{\frac{m_2 m_3}{m_2 + m_3}}    \right)^2,
\end{equation} 
%% note that in Mohammad's thesis there is a square missing for nu_5
corresponding to the diabolic point (see Sec.~\ref{sec:diabolic}; this critical value has been discussed earlier by Simo \cite{Simo1975} and Saari \cite{Saari1984,Saari1987,Saari1987b}),
\begin{equation} \label{eq:Lagrange_critical_nu}
\nu_{\text{Lagrange}\,6} = \frac{G^2}{2} \frac{(m_1m_2 + m_2 m_3 + m_1 m_3)^3}{m_1+m_2+m_3}
\end{equation} 
which comes from the central configuration given by the Lagrange equilateral triangle (see Sec.~\ref{sec:Lagrange})
and for $k=7,8,9$,
\begin{equation}\label{eq:Euler_critical_nu}
\nu_{\text{collinear}\,k} = \frac{G^2}{2} \frac{m_1 m_2 r_{12}^2  + m_2 m_3 r_{23}^2 +  m_1 m_3 r_{13}^2    }{m_1 + m_2 + m_3}
 \left( 
 \frac{m_1 m_2}{r_{12} } +  \frac{m_2 m_3}{r_{23}} +   \frac{m_1 m_3}{r_{13} } 
 \right)^2,
\end{equation}
where the inter body distances $r_{ij}$ are determined by the three Euler collinear central configurations (see Sec.~\ref{sec:collinear}). 
} % end rem

\rem{
 \begin{figure}
\begin{center}
\includegraphics[angle=0,width=8cm]{Figures/LagrangeTriangle}\\
\end{center}
\caption{\label{charged_fig:LagrangeTriangle}
Lagrange triangle.
}
\end{figure}
} % end rem

Following Sec.~\ref{charged_sec:shapespace} we identify the shape space $\tilde{\Qtr}$ with the upper hemisphere of the unit sphere in the $(w_1,w_2,w_3)$-space. We visualise the shape space by projecting this hemisphere to the unit disk in the $(w_1,w_2)$-plane. 
In Fig.~\ref{charged_fig:Newton_levels} we show the contours of the functions defined in \eqref{charged_eq:def_VtildesqrtMk} in this projection. 
Recall from the discussion in Sec.~\ref{charged_sec:triatomic} that the  boundary of the unit disk in the  $(w_1,w_2)$-plane corresponds to the collinear configurations.
Following \eqref{charged_eq:angle_collision_12} and \eqref{charged_eq:angle_collision_23} 
the collision of bodies 1 and 2 is located on the boundary at a polar angle of about $48^\circ$, the  collision of 2 and 3 is at a polar angle of about $-71^\circ$ and the collision of 1 and 3 is at polar angle $180^\circ$. At these points the potential function $\tilde{V}$ is $-\infty$. 
The Euler collinear configurations are located  at polar angles of approximately 
$117^\circ$ with body 1 between bodies 2 and 3,
$-121^\circ$ with body 3 between bodies 1 and 2, and
$-19^\circ$ with body 2 between bodies 1 and 3.

The critical values of $\nu$ can be identified with the following events for the contours in Fig.~\ref{charged_sec:shapespace} upon varying $\nu$:

\begin{itemize}
\item[(i)] At $\nu_{\infty\,2} $, $\nu_{\infty\,3} $ and $\nu_{\infty\,4} $ the contours of $\sqrt{\tilde{M}_1} \tilde{V}$ (see Fig.~\ref{charged_sec:shapespace}a) successively touch/detach from the boundary of the  shape space.
\item[(ii)] At $\nu_{\text{diabolic}\,5}$ the contours of  $\sqrt{\tilde{M}_1} \tilde{V}$ and $\sqrt{\tilde{M}_2} \tilde{V}$ simultaneously shrink to or emerge from, respectively, a point at the centre of the unit disk (see Figs.~\ref{charged_sec:shapespace}a and b). 
\item[(iii)] At $\nu_{\text{Lagrange}\,6}$ the contours of $\sqrt{\tilde{M}_3} \tilde{V}$ shrink to a point or emerge from a point close to but not at the centre of the unit disk  (see Fig.~\ref{charged_sec:shapespace}c).
\item[(iv)] At $\nu_{\text{collinear}\,7} $, $\nu_{\text{collinear}\,8} $ and $\nu_{\text{collinear}\,9} $ the contours of $\sqrt{\tilde{M}_1} \tilde{V}$ and $\sqrt{\tilde{M}_2} \tilde{V}$ simultaneously  touch/detach from the boundary of the  shape space.

\end{itemize}

In addition when  $\nu$ approaches $ \nu_1=0$ from above then the contour of $\sqrt{\tilde{M}_1} \tilde{V}$ converges to the boundary of the shape space.

\begin{figure}
\begin{center}
\raisebox{4cm}{a)}
\includegraphics[angle=0,width=4cm]{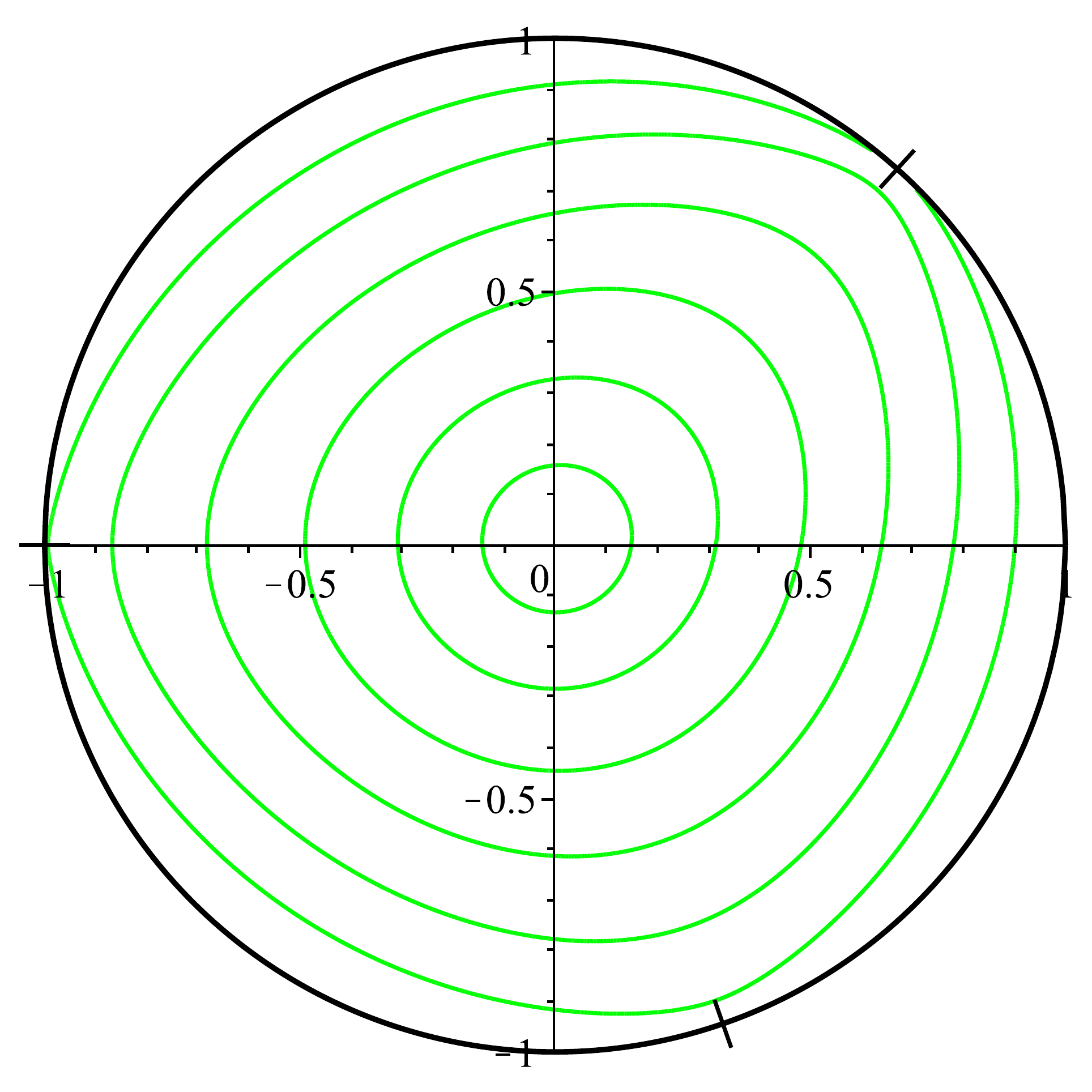}
\raisebox{4cm}{b)}
\includegraphics[angle=0,width=4cm]{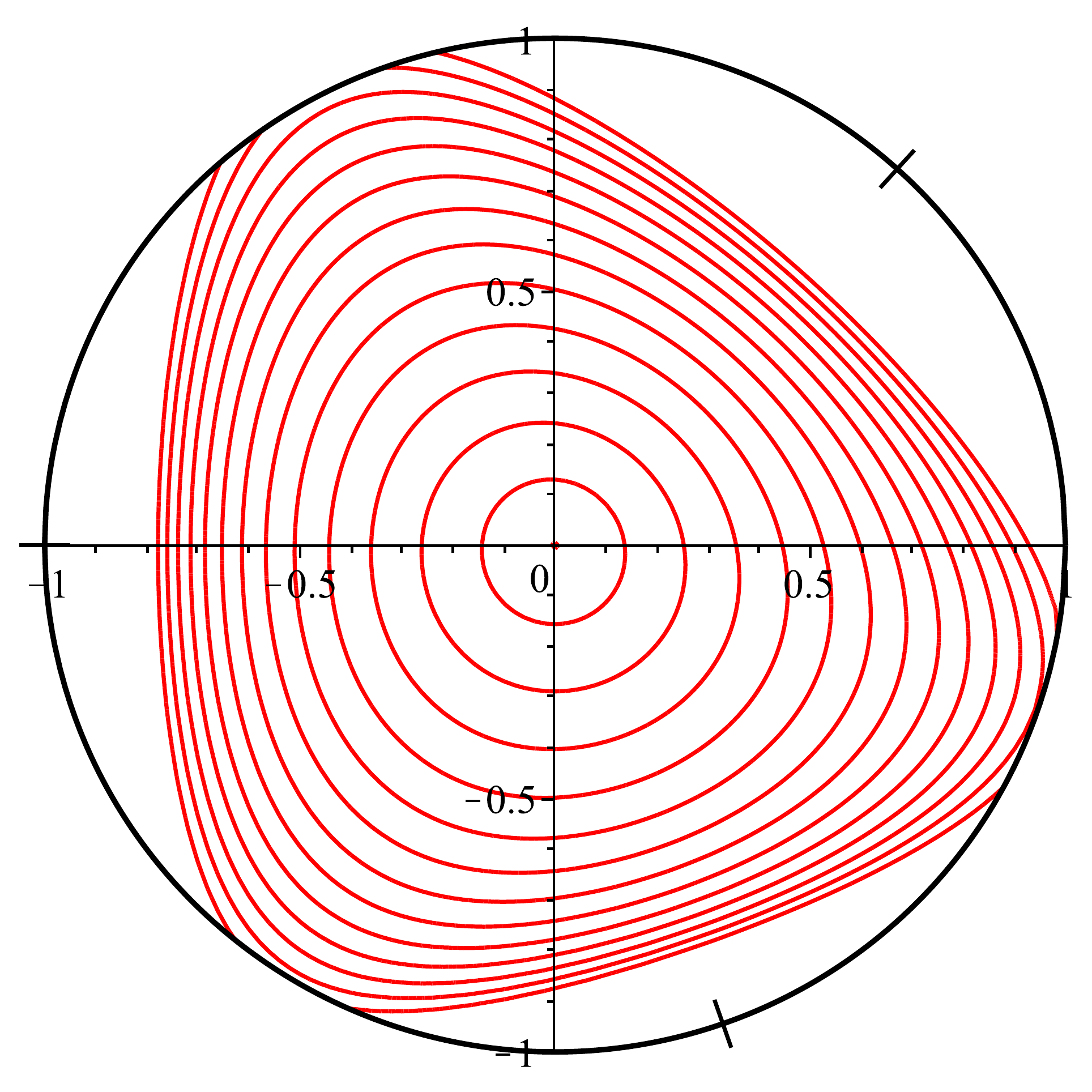}
\raisebox{4cm}{c)}
\includegraphics[angle=0,width=4cm]{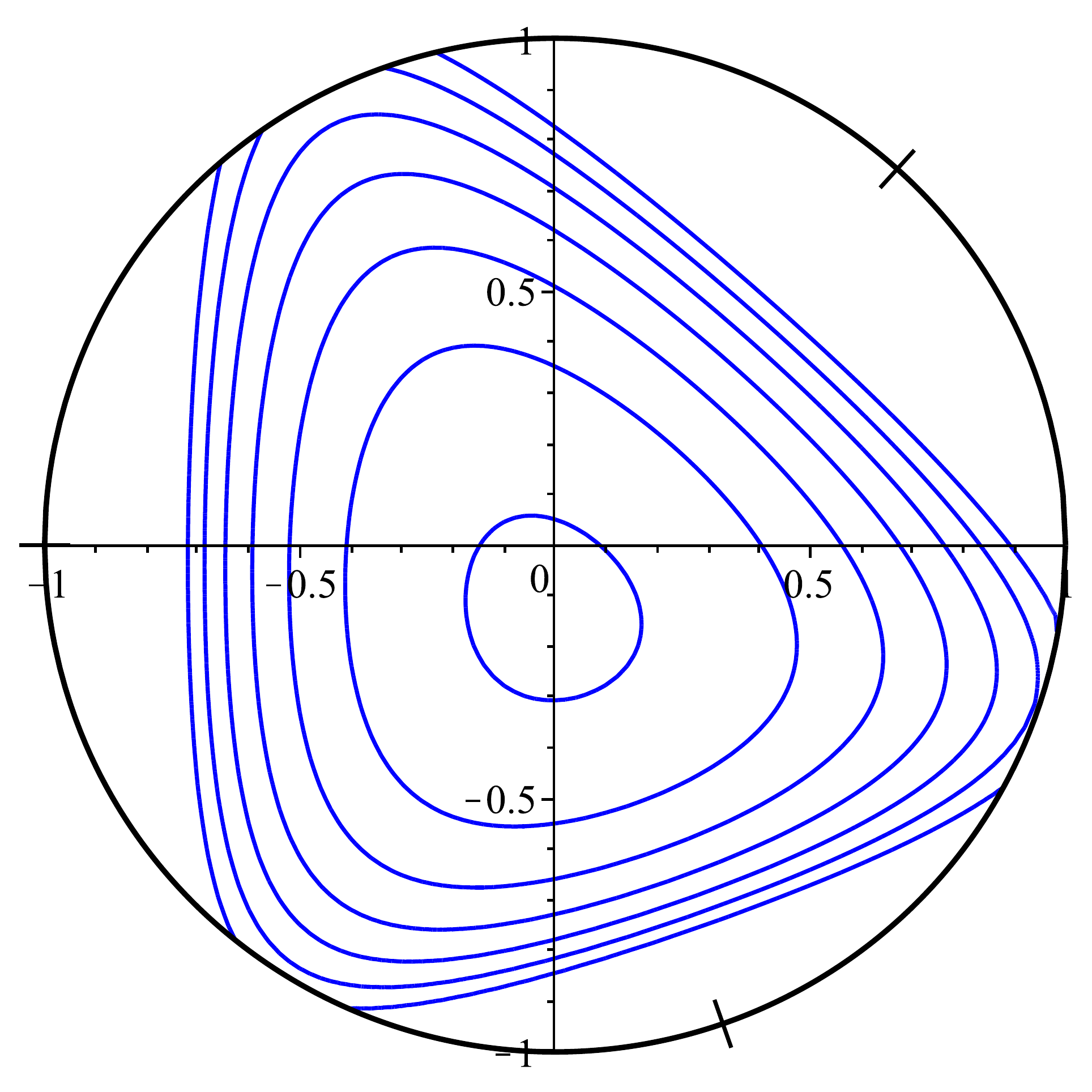}
\end{center}
\caption{\label{charged_fig:Newton_levels}
Gravitational three-body problem:
contours $-\sqrt{2n}$, $n=1,\ldots,20$, of the functions 
$  \sqrt{\tilde{M}_k  } \, \tilde{V} $ 
defined in \eqref{charged_eq:def_VtildesqrtMk} with $k=1$ (a),  $k=2$ (b) and $k=3$ (c). The contours are shown on the shape space $\tilde{\Qtr}$ represented as the projection of the upper hemisphere of the unit sphere in the $(w_1,w_2,w_3)$-space  projected  to the  unit disk in the $(w_1,w_2)$-plane. The ticks on the boundary mark the double collisions (see the main text). 
}
\end{figure}

In the different panels in Figs.~\ref{charged_fig:Newton_levelsa} %, \ref{charged_fig:Newton_levels_psi_chi} 
and \ref{charged_fig:Newton_levelsb}  we show 
the projection of the Hill region to the shape space following the colouring procedure described at the end of Sec.~\ref{charged_sec:Hillregions}
%superpositions of the contours   of the functions $\sqrt{\tilde{M} _k} \tilde{V}$, $k=1,2,3$, 
for a representative values of $\nu$. 
The pictures are obtained from the superposition of the shape space contours   of the functions $  \sqrt{\tilde{M} _k} \tilde{V}$, $k=1,2,3$ in \eqref{charged_eq:def_VtildesqrtMk} for the value of $\nu$ under consideration and colouring the regions enclosed by the contours using the colour code in Fig.~\ref{charged_fig:Euler}.   
 We start with large values of $\nu$. For $\nu>\nu_{\text{collinear}\,9} $, there is a simply connected dark grey shaded region corresponding to points for which the accessible region on the orientation sphere is empty, i.e. these shapes do not belong to the Hill region. This dark shaded region `spills  out' of the shape space at three places. Attached to the dark grey shaded region are three blue strips where the accessible regions on the orientation sphere are two caps for every point inside. Attached to these regions are the three red regions  where for each point inside this region the accessible region on the orientation sphere is a ring. Each of these red regions contains one double collision on its boundary.  For large values of $\nu$ (i.e. low energy and/or large values of the magnitude of the angular momentum) the accessible region on the shape space consists of three disconnected components where either component corresponds to one pair of bodies that are close to each other  and the third body cannot close to this pair. 

 \begin{figure}
\begin{center}
\raisebox{4cm}{a)}
\includegraphics[angle=0,width=4cm]{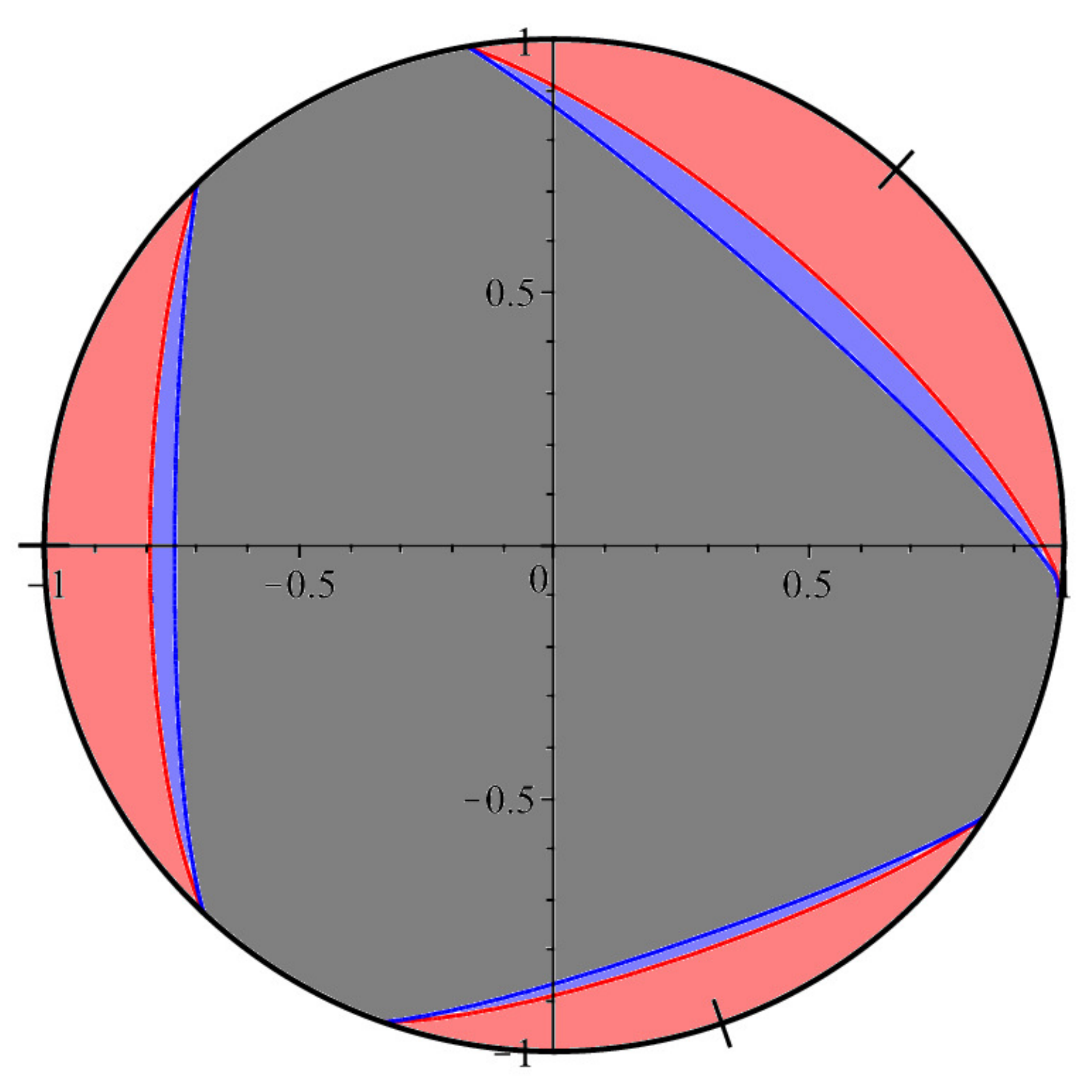} % 
\raisebox{4cm}{b)}
\includegraphics[angle=0,width=4cm]{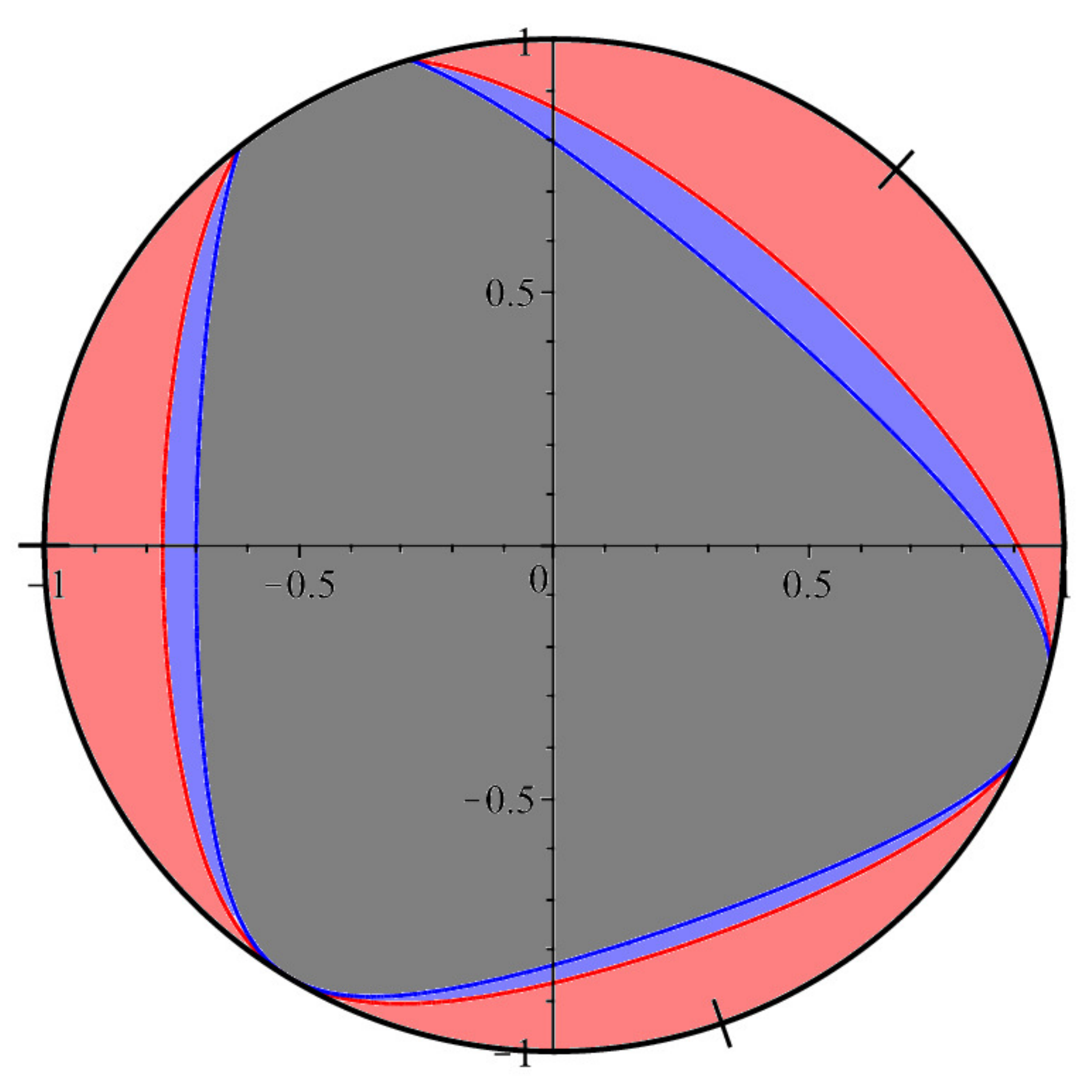}% 
\raisebox{4cm}{c)}
\includegraphics[angle=0,width=4cm]{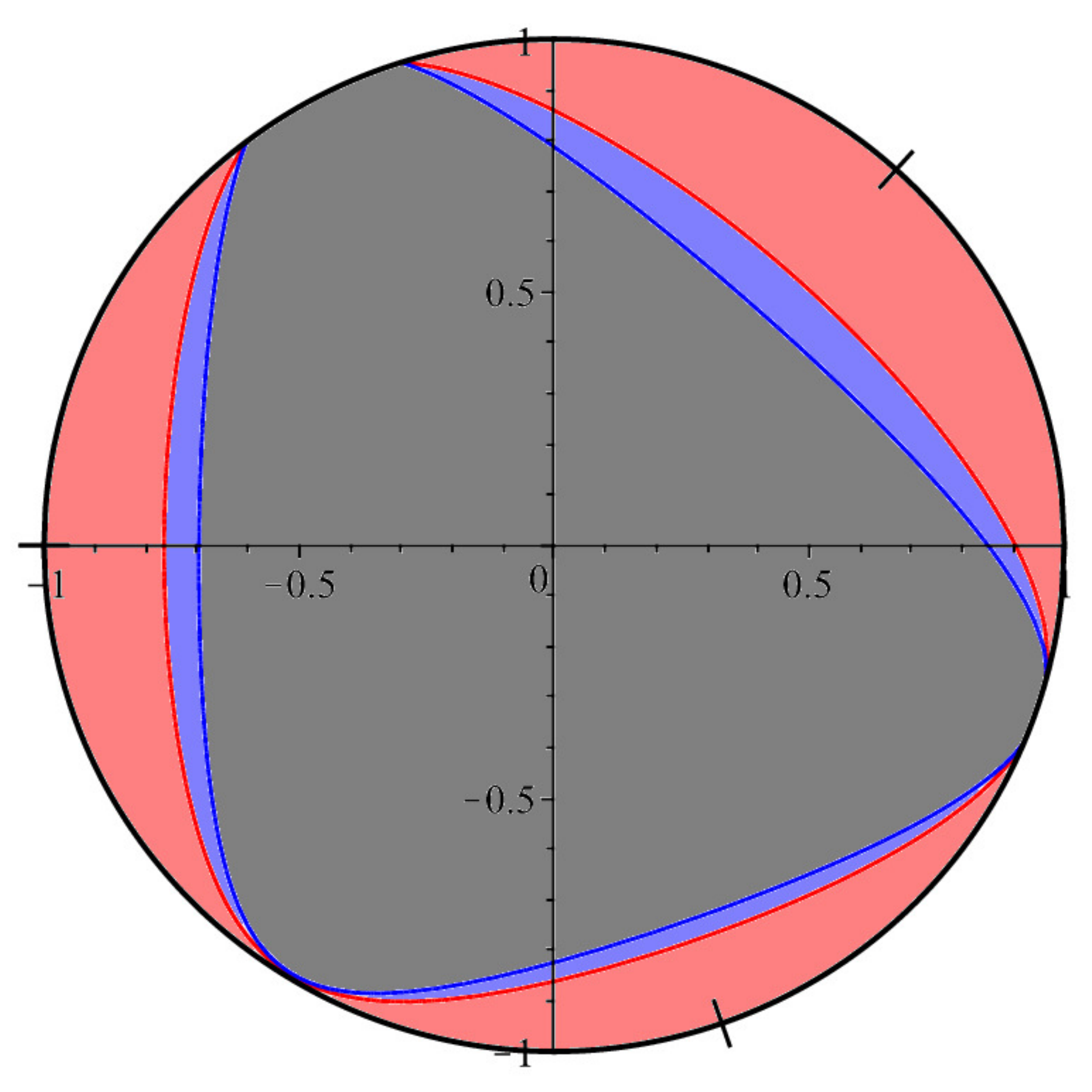}\\ %
\raisebox{4cm}{d)}
\includegraphics[angle=0,width=4cm]{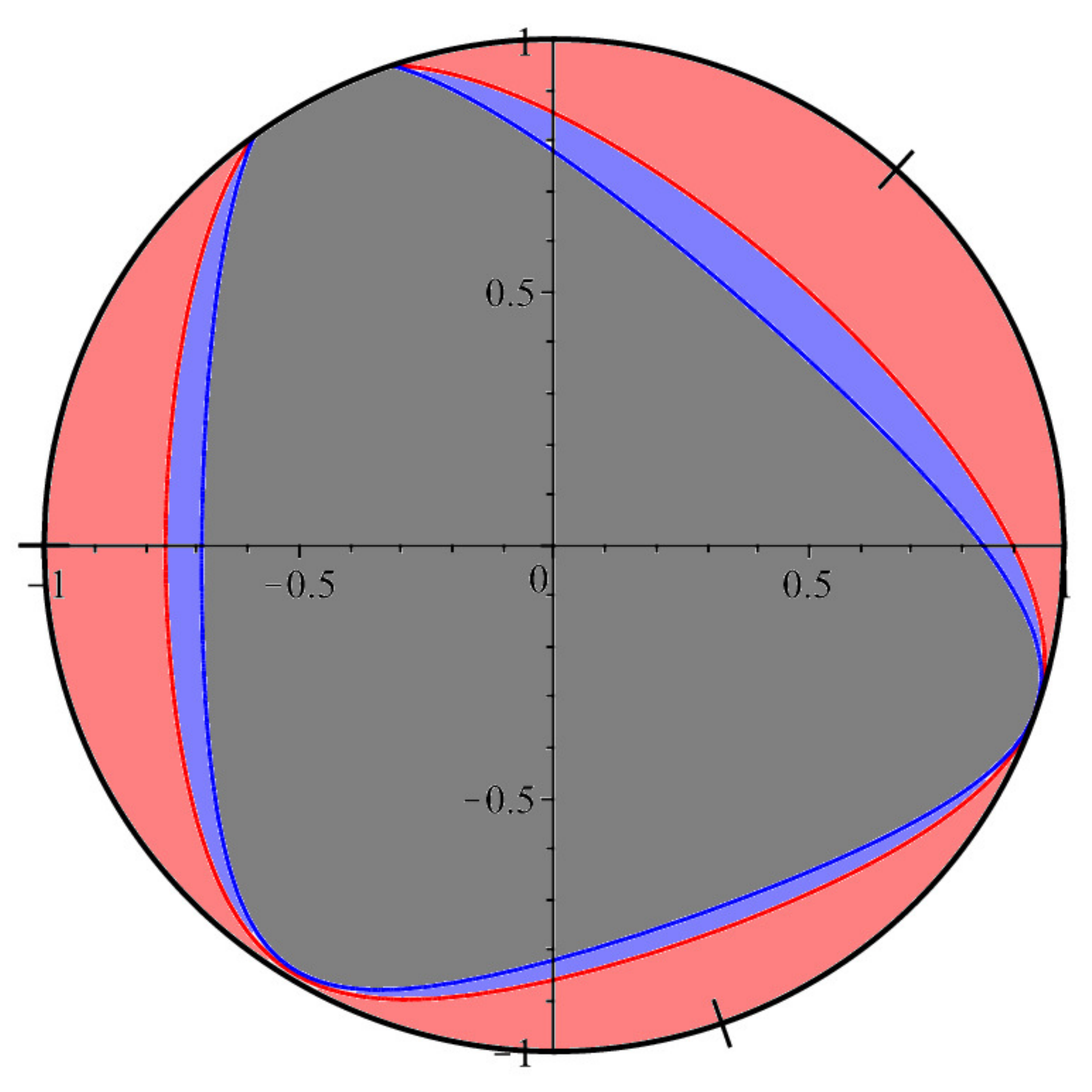} % 
\raisebox{4cm}{e)}
\includegraphics[angle=0,width=4cm]{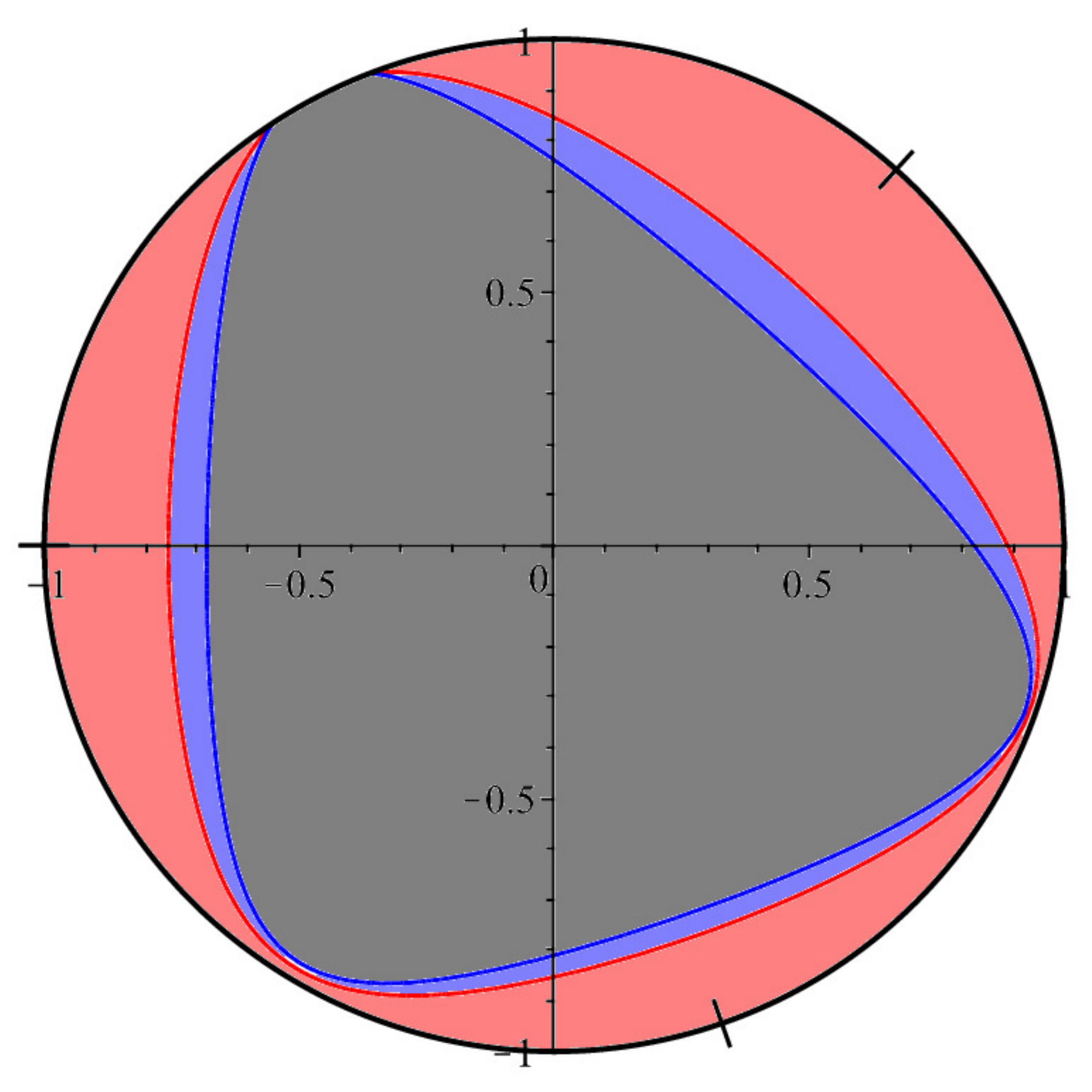}% 
\raisebox{4cm}{f)}
\includegraphics[angle=0,width=4cm]{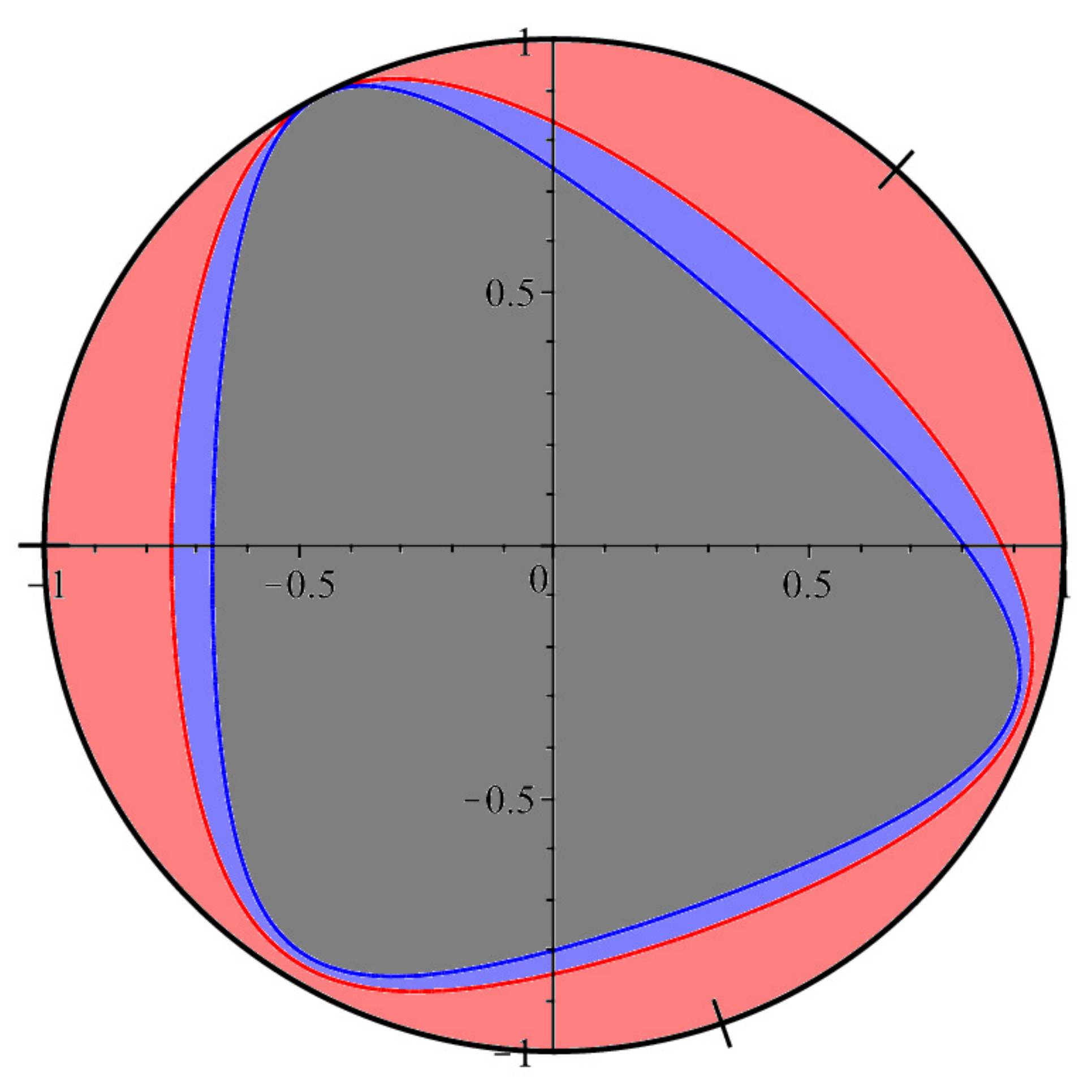}\\ %
\raisebox{4cm}{g)}
\includegraphics[angle=0,width=4cm]{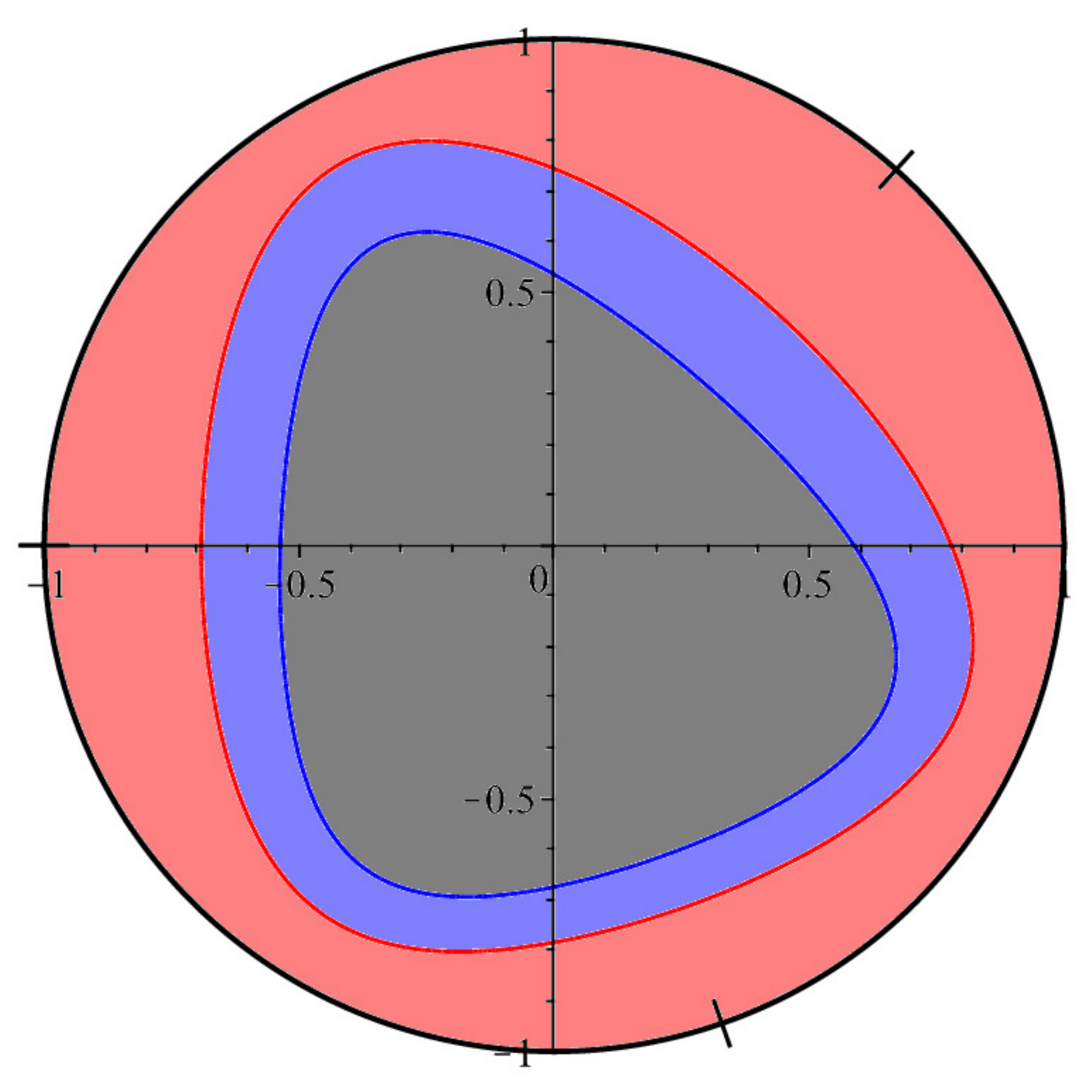} %
\raisebox{4cm}{h)}
\includegraphics[angle=0,width=4cm]{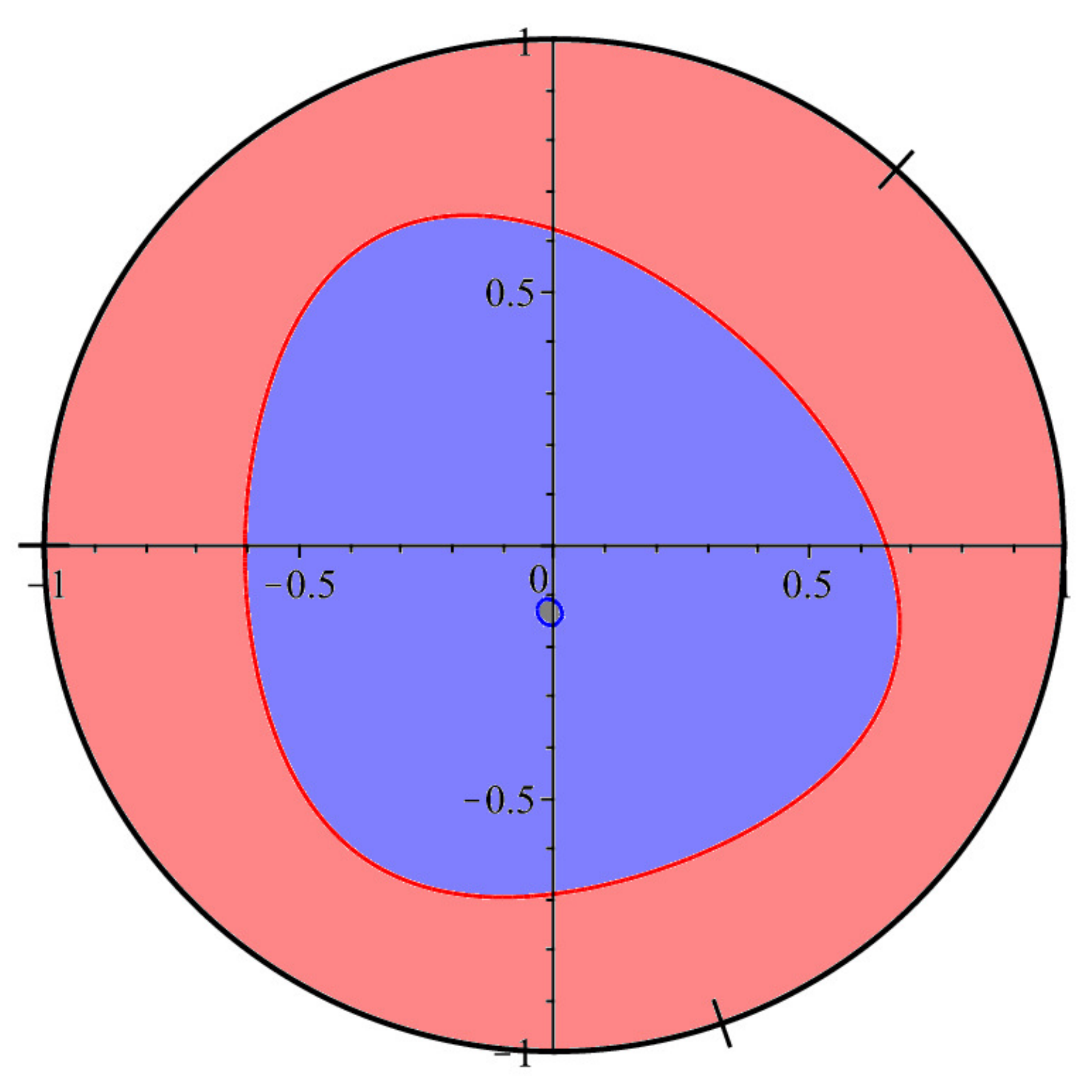}%
\raisebox{4cm}{i)}
\includegraphics[angle=0,width=4cm]{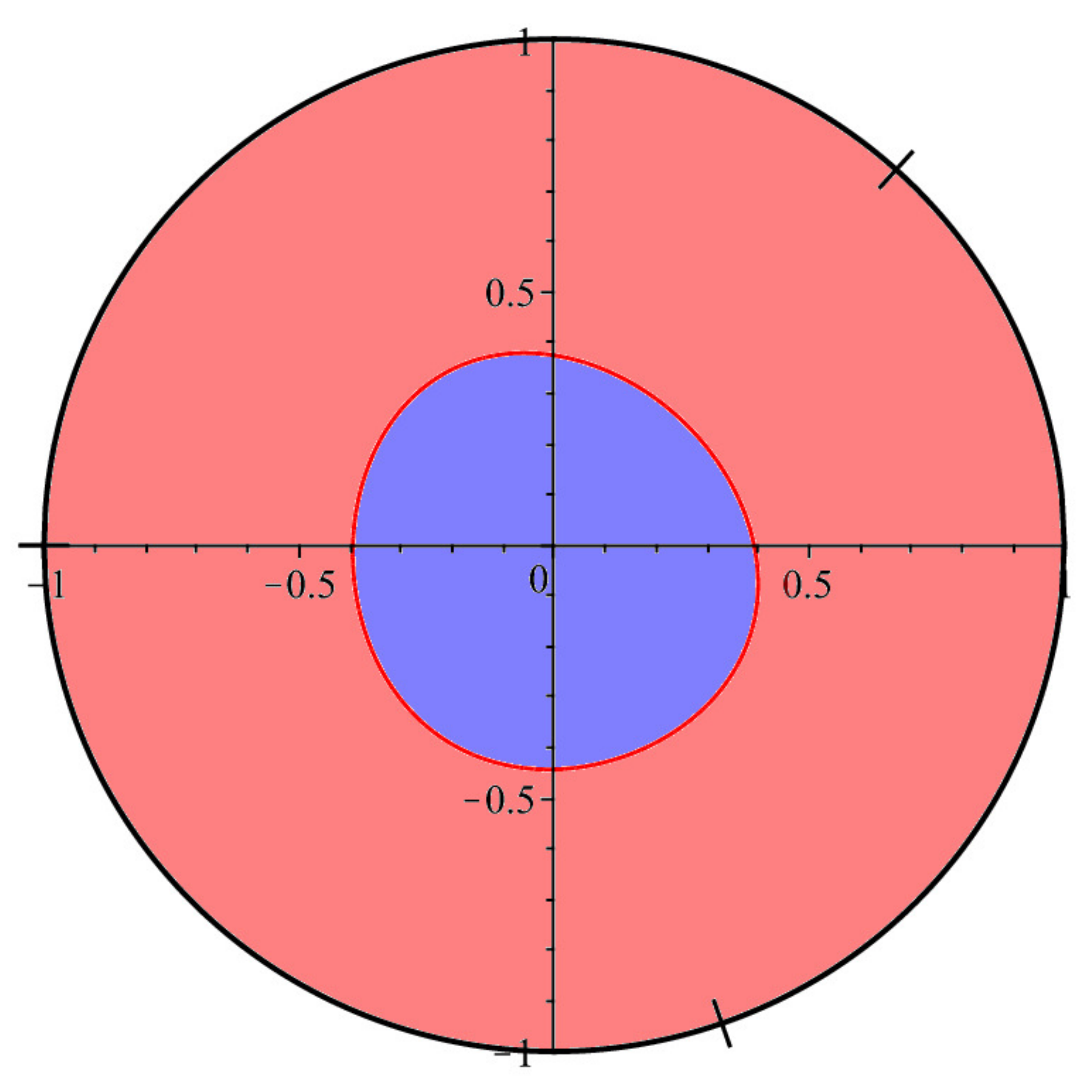}%
\end{center}
\caption{\label{charged_fig:Newton_levelsa}
Gravitational three-body problem:
projections of the Hill regions to the shape space for specific values of $\nu$: 
$  \nu > \nu_{\text{collinear}\,9}    $ (a), 
$ \nu = \nu_{\text{collinear}\,9}  $ (b), 
$ \nu_{\text{collinear}\,9}> \nu >  \nu_{\text{collinear}\,8} $ (c),  
$ \nu = \nu_{\text{collinear}\,8}  $ (d), 
$  \nu_{\text{collinear}\,8} > \nu >  \nu_{\text{collinear}\,7} $ (e),  
$ \nu = \nu_{\text{collinear}\,7}  $ (f), 
$ \nu_{\text{collinear}\,7}  > \nu >   \nu_{\text{Lagrange}\,6}  $ (g),  
$ \nu = \nu_{\text{Lagrange}\,6}   $ (h),  and
$ \nu_{\text{Lagrange}\,6}  >\nu > \nu_{\text{diabolic}\,5}  $ (i).  
The contours are shown on the shape space $\tilde{\Qtr}$ represented in the same way as in Fig.~\ref{charged_fig:Newton_levels}. 
}
\end{figure}

When $\nu$ decreases below the values $\nu_{\text{collinear}\,9} $,  $\nu_{\text{collinear}\,8} $ and $\nu_{\text{collinear}\,7} $ the dark grey shaded region successfully detaches from the boundary of the shape space such that for $\nu_{\text{Lagrange}\,6}  <  \nu <  \nu_{\text{collinear}\,7} $ the dark shaded region is completely contained in the interior of the shape space and the red region has successively become connected. At all instances a blue region is located between the dark grey region and the red region. Recall that on the boundary of the shape space which corresponds to collinear configurations
the principal moments of inertia $\tilde{M}_2$ and $\tilde{M}_3$ are equal (and  $\tilde{M}_1=0$). This is why the red and the blue contours detach from the boundary of the shape space simultaneously in Fig.~\ref{charged_fig:Newton_levels} and why the blue strips in Fig.~\ref{charged_fig:Newton_levelsa} join at $\nu_{\text{collinear}\,9} $,  $\nu_{\text{collinear}\,8} $ and $\nu_{\text{collinear}\,7} $ in the observed way.

 \begin{figure}
\begin{center}
\raisebox{4cm}{j)}
\includegraphics[angle=0,width=4cm]{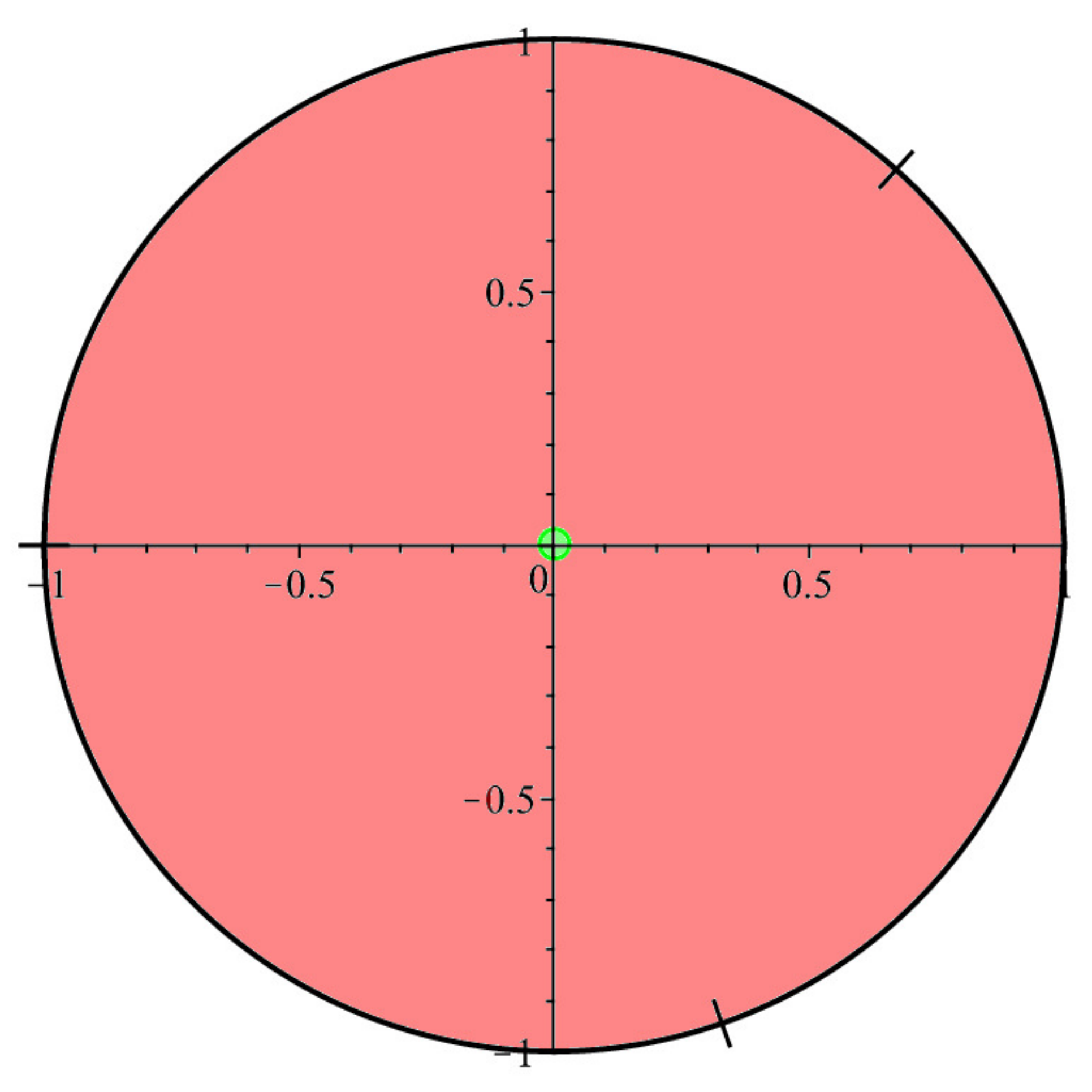}%
\raisebox{4cm}{k)}
\includegraphics[angle=0,width=4cm]{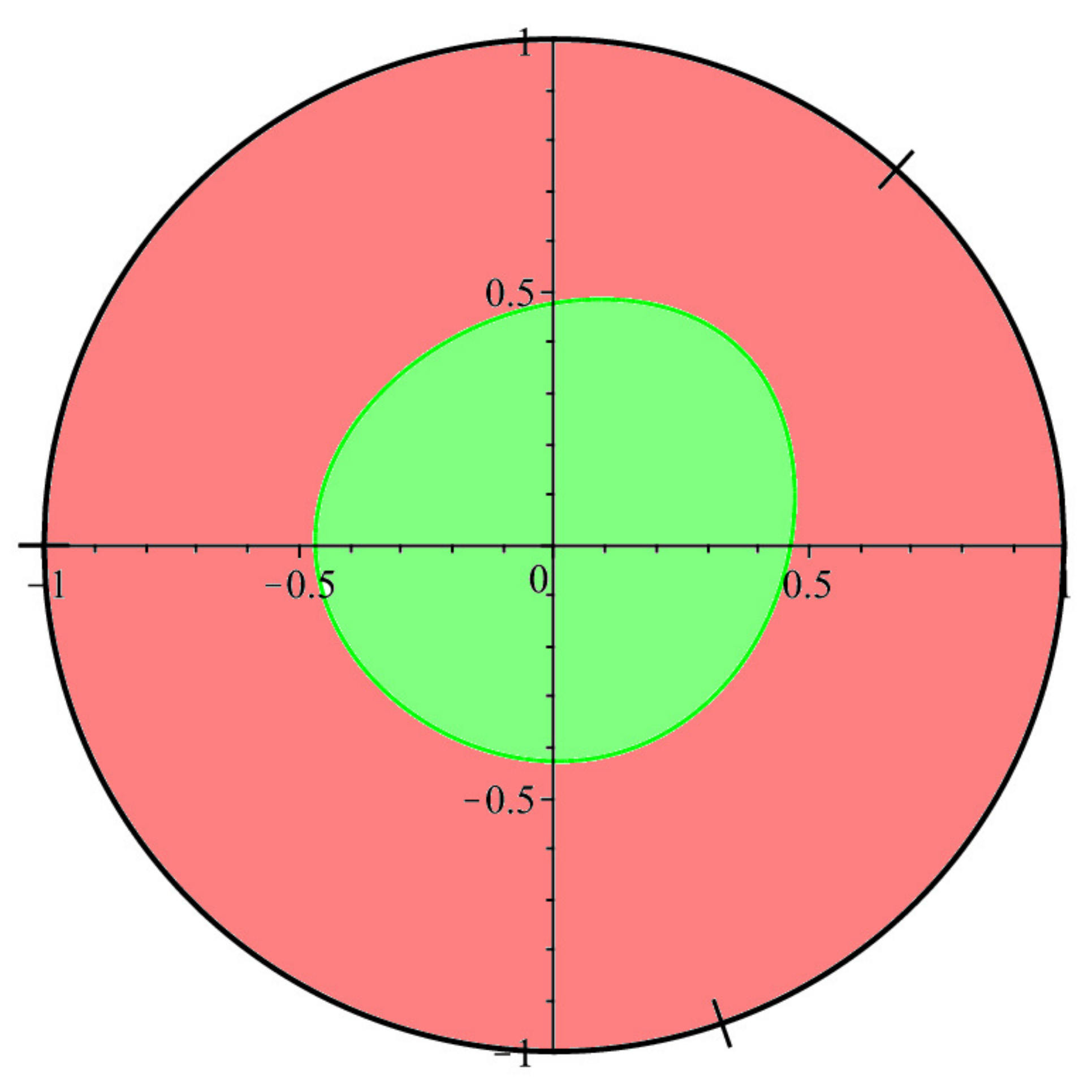}%
\raisebox{4cm}{l)}
\includegraphics[angle=0,width=4cm]{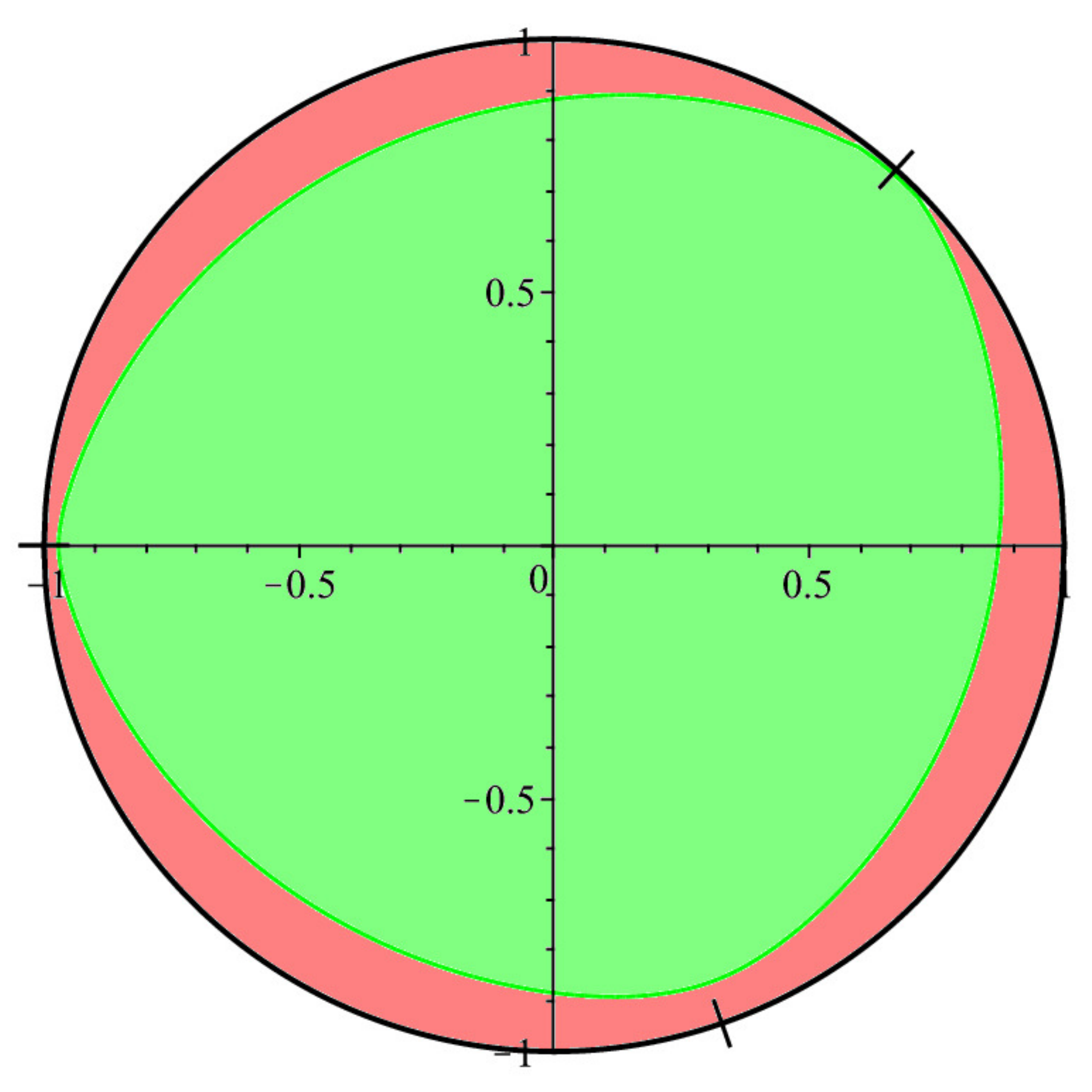}\\ %
\raisebox{4cm}{m)}
\includegraphics[angle=0,width=4cm]{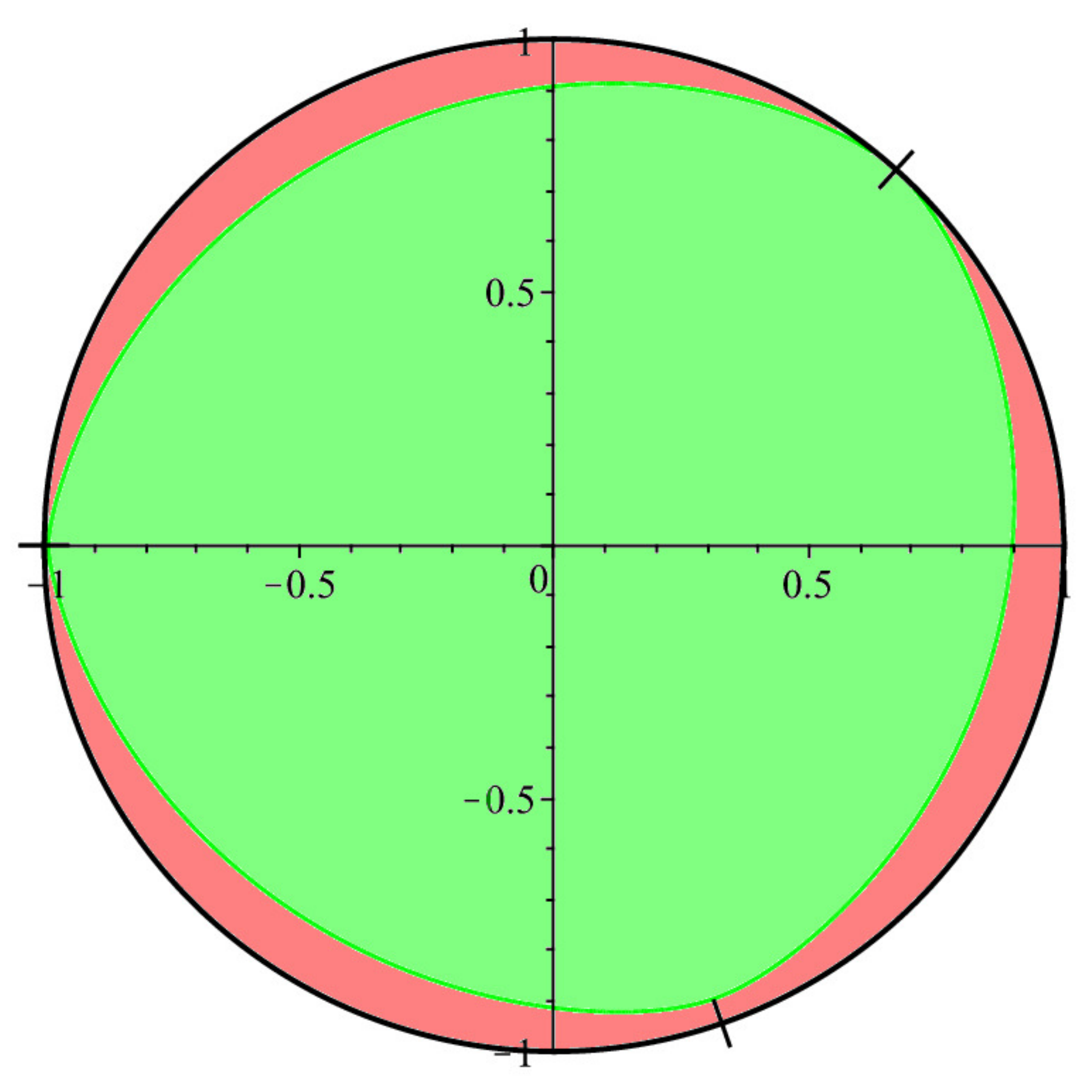} %
\raisebox{4cm}{n)}
\includegraphics[angle=0,width=4cm]{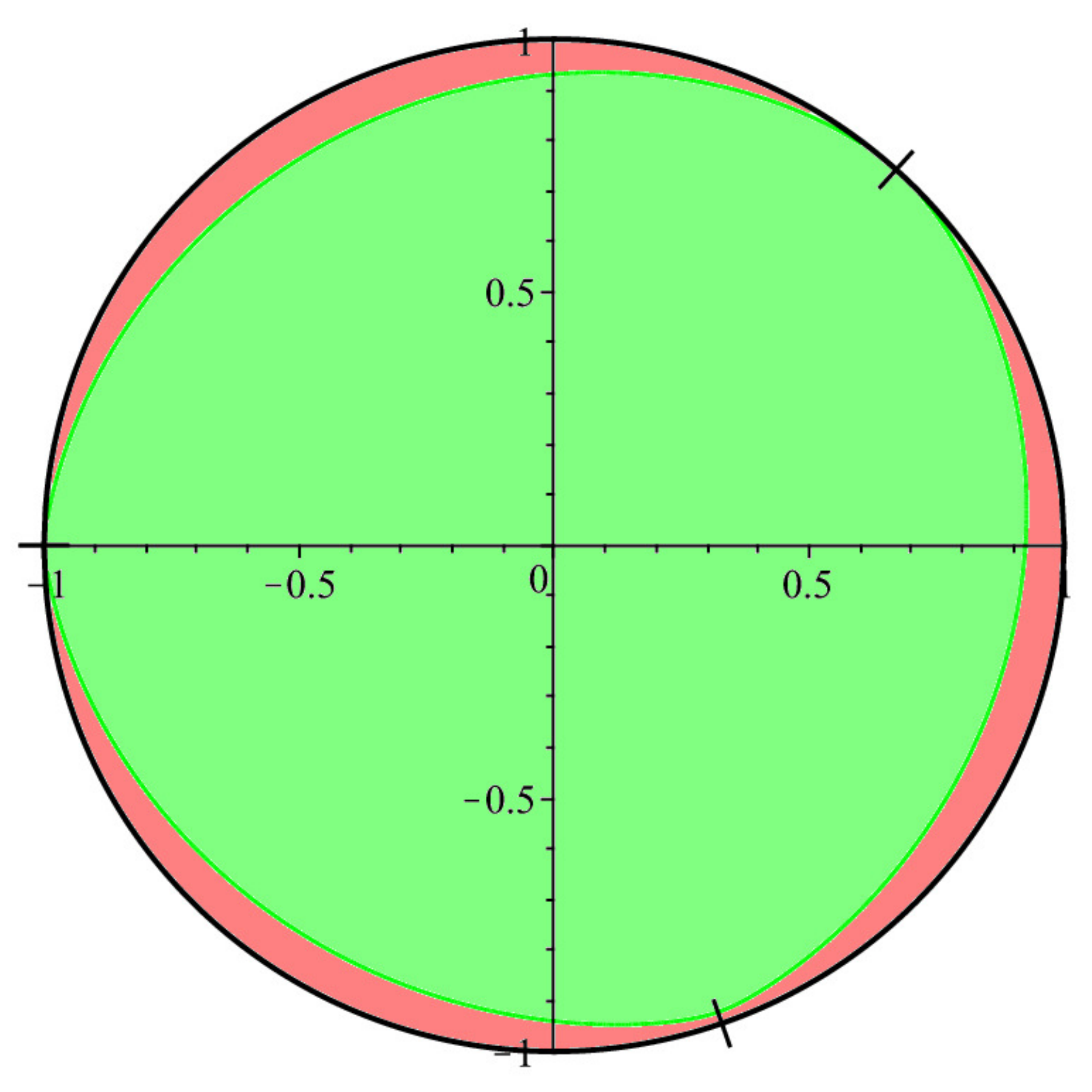}%
\raisebox{4cm}{o)}
\includegraphics[angle=0,width=4cm]{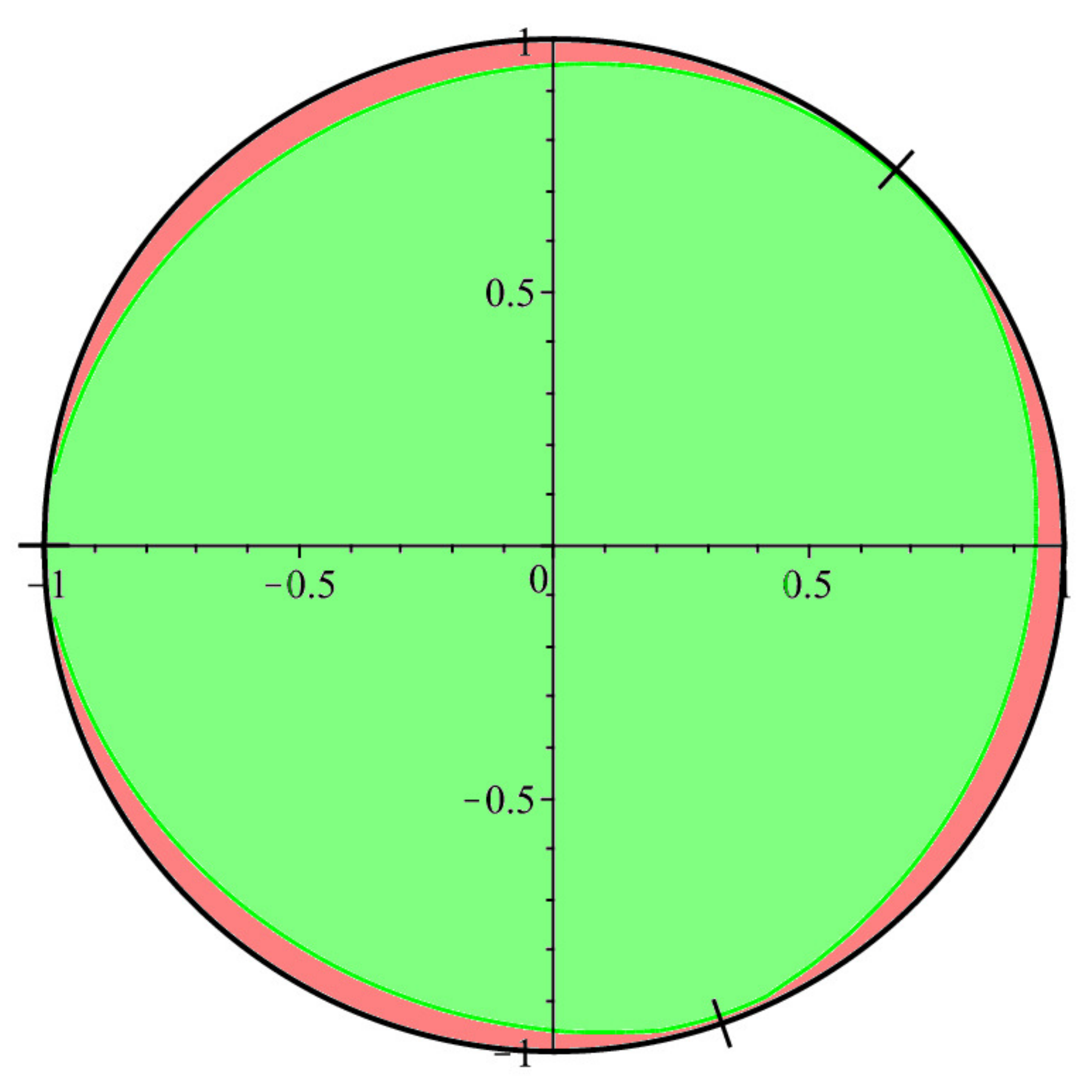}\\ %
\raisebox{4cm}{p)}
\includegraphics[angle=0,width=4cm]{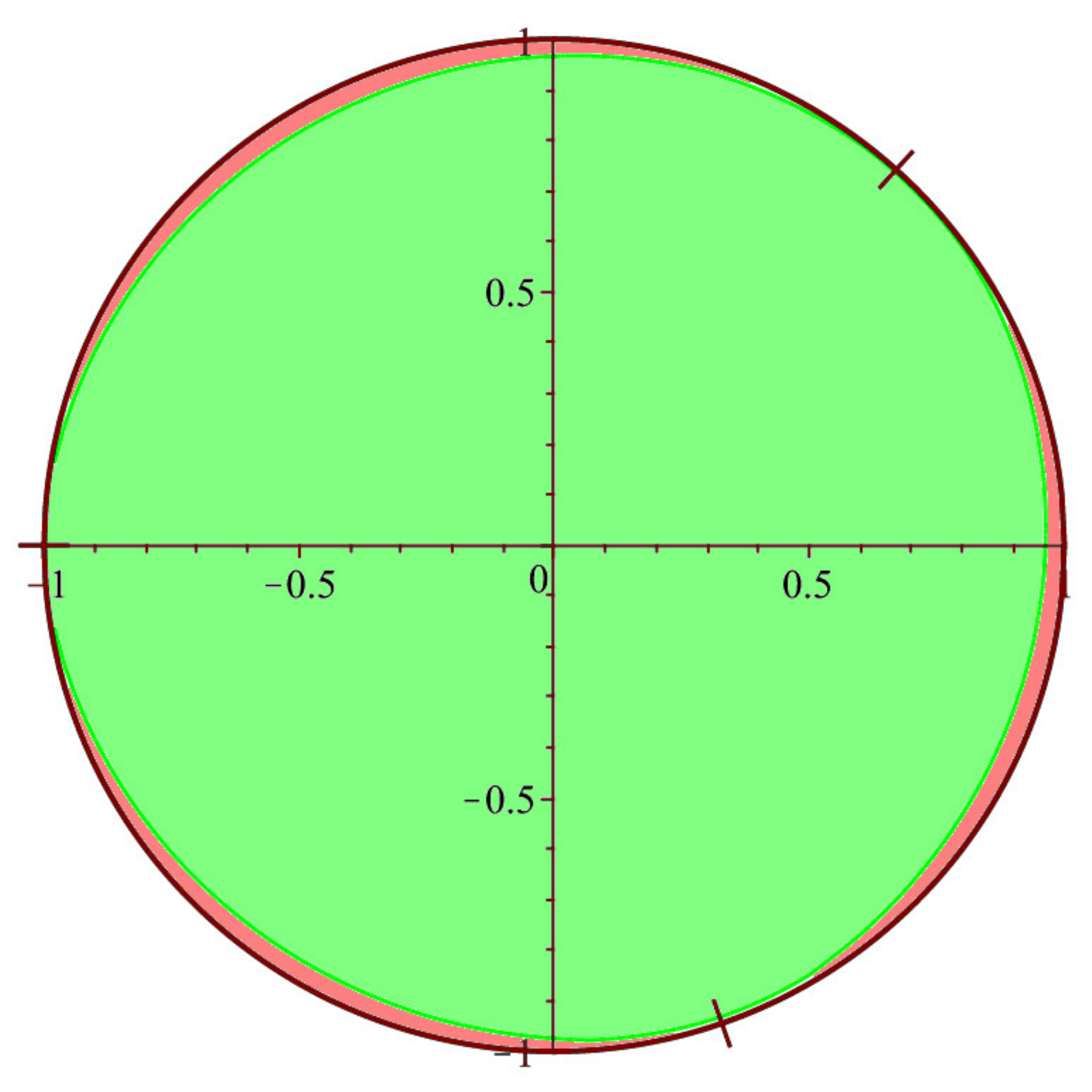}%
\raisebox{4cm}{q)}
\includegraphics[angle=0,width=4cm]{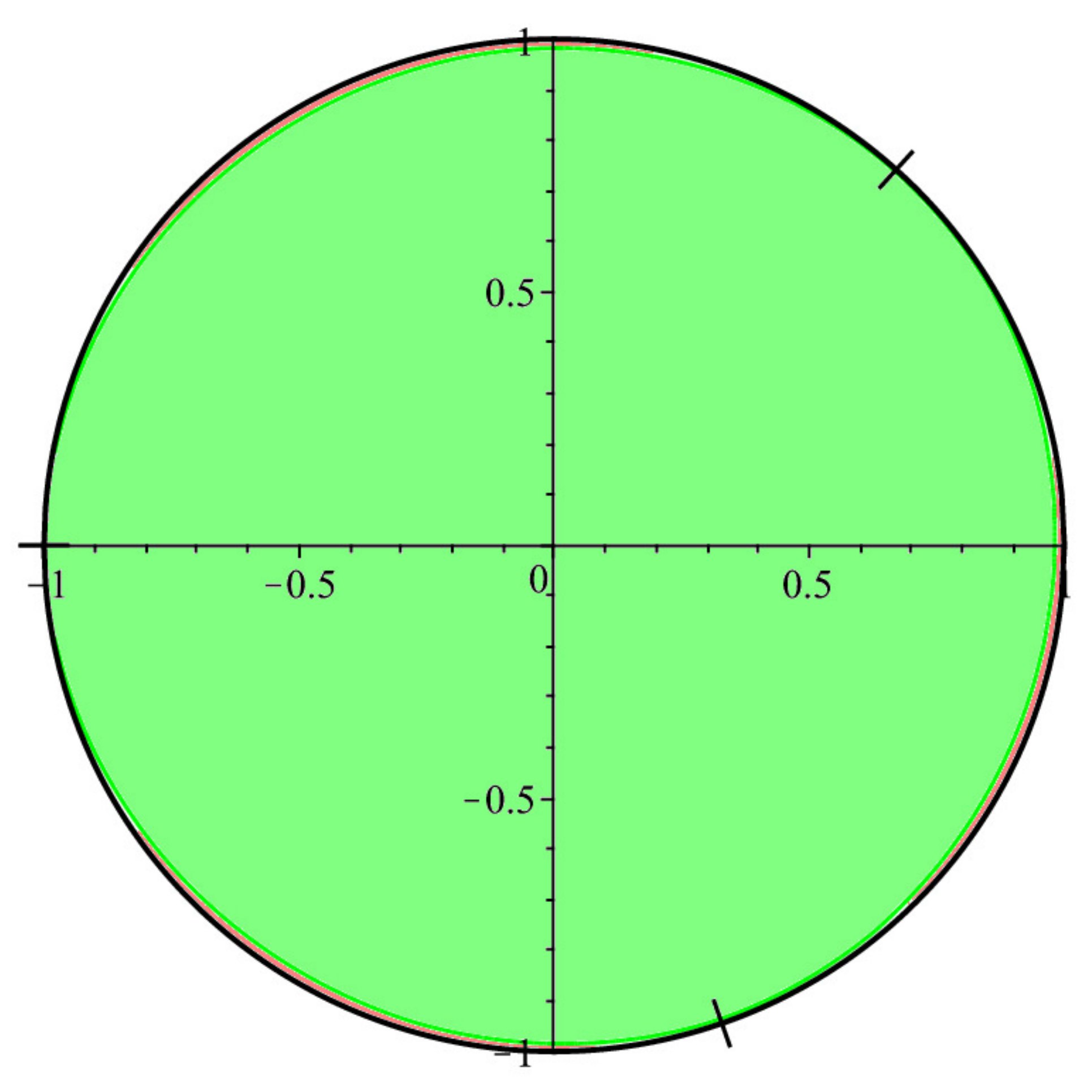}%
\raisebox{4cm}{r)}
\includegraphics[angle=0,width=4cm]{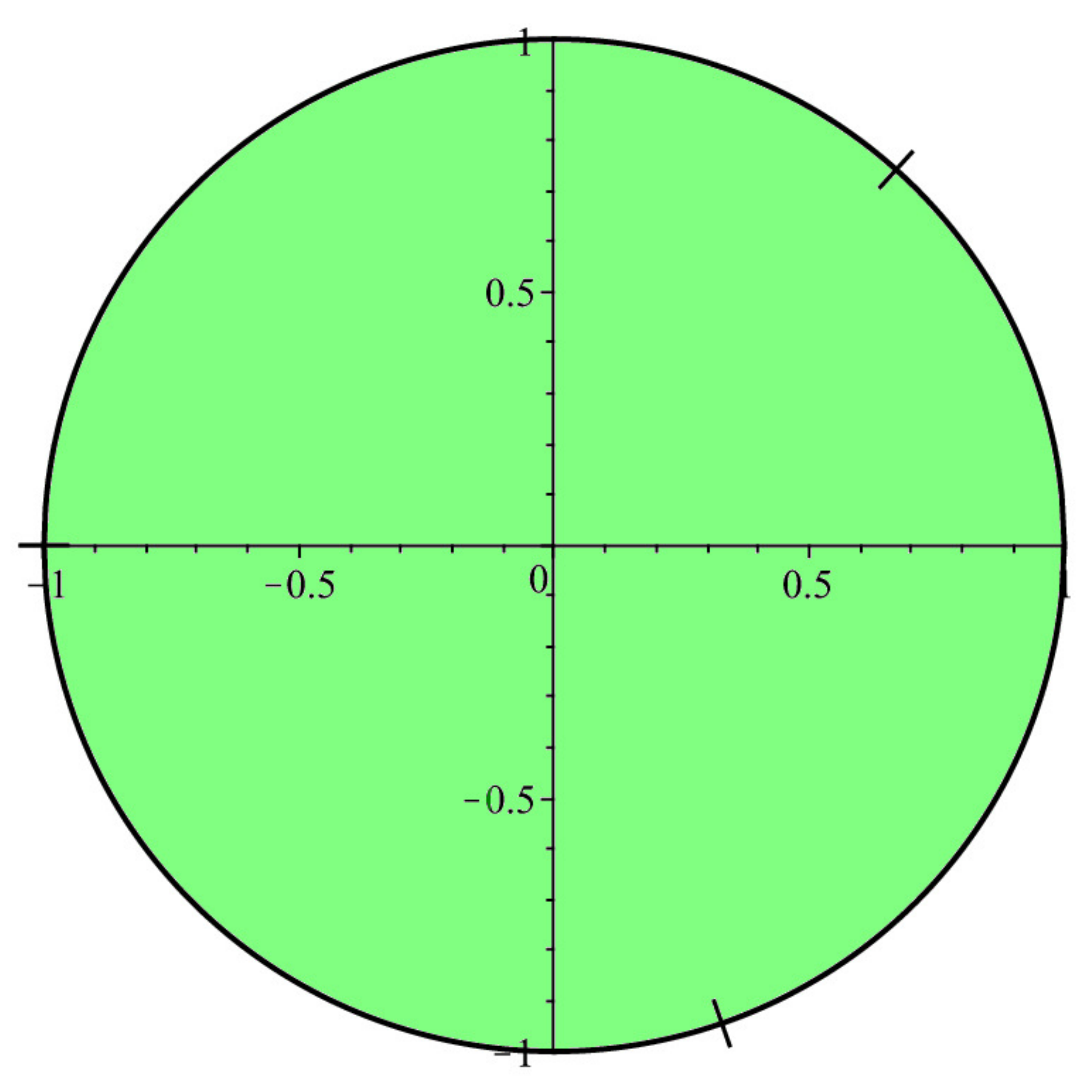} %
\end{center}
\caption{\label{charged_fig:Newton_levelsb}
Gravitational three-body problem:
continuation of Fig.~\ref{charged_fig:Newton_levelsa}  with 
 values of $\nu$ chosen according to 
$  \nu  = \nu_{\text{diabolic}\,5}   $ (j), 
$\nu_{\infty\,4}  >\nu >  \nu_{\text{diabolic}\,5}   $ (k),  
$ \nu = \nu_{\infty\,4}   $ (l), 
$ \nu_{\infty\,3}  >\nu >  \nu_{\infty\,4}   $ (m),  
$ \nu = \nu_{\infty\,3}   $ (n), 
$\nu_{\infty\,2}  >\nu >  \nu_{\infty\,3}   $ (o),  
$ \nu = \nu_{\infty\,2}   $ (p),  
$ \nu_{\infty\,2} > \nu >  \nu_1=0     $ (q), and 
$ \nu_1 = 0 \ge \nu   $ (r). 
}
\end{figure}

When $\nu$ decreases to $\nu_{\text{Lagrange}\,6}$ from above the dark grey region shrinks to a point which for the present choice of masses is close to but not at $ (w_1,w_2,w_3) = (0,0,1) $
and for $\nu<\nu_{\text{Lagrange}\,6}$ it has vanished. The Hill region (projected to the shape space) is hence simply connected for $\nu<\nu_{\text{Lagrange}\,6}$.

The blue region is simply connected for $\nu_{\text{diabolic}\,5}  < \nu < \nu_{\text{Lagrange}\,6}$. 
When $\nu$ decreases to $\nu_{\text{diabolic}\,5} $ the blue region shrinks to a point at $ (w_1,w_2,w_3) = (0,0,1) $, % more to say on this?
and when $\nu$ decreases below $\nu_{\text{diabolic}\,5} $ a simply connected green region grows out of this point. 
For every point in the green region any point on the orientation sphere is accessible. The unusual bifurcation at $\nu=\nu_{\text{diabolic}\,5}$ observed in the projection to the shape space can be understood from noticing that the graphs of the functions \eqref{charged_eq:def_VtildesqrtMk} over the shape space give for  $k=1$ and $k=2$ a continuously distorted version of the double cone in Fig.~\ref{charged_fig:principal_moments}. When the value of $\nu$ passes through $\nu_{\text{diabolic}\,5}$ the sublevel sets of the functions  \eqref{charged_eq:def_VtildesqrtMk} shown in Figs.~\ref{charged_fig:Newton_levelsa}i-\ref{charged_fig:Newton_levelsb}k are continous deformations of the sublevel sets of a height function in Fig.~\ref{charged_fig:principal_moments}.

\rem{
Recall that at $ (w_1,w_2,w_3) = (0,0,1) $, the moments of inertia $\tilde{M}_1$ and $\tilde{M}_2$ are equal (see Remark~\ref{charged_remark:degenerate_moments_inertia}). In this case  the full circle $\mathbf{\tilde{J}}_1^2+ \mathbf{\tilde{J}}_2^2=1$, $\mathbf{\tilde{J}}_3=0$ consists of
critical points of the function \eqref{charged_eq:defGEuler}. The critical value $\nu_{\text{diabolic}\,5} $ is degenerate in this sense.
} % end rem

For $\nu_{\infty\,4}  < \nu < \nu_{\text{diabolic}\,5} $, the green region is completely contained in the interior of the shape space. 
When $\nu$ takes the values $\nu_{\infty\,4} $, $\nu_{\infty\,3} $ and $\nu_{\infty\,2} $ the green region successively starts to touch the boundary of the shape space at the double collision points. The manifestation of the bifurcations due to critical points at infinity in terms of the sublevel sets touching the boundary at the double collision points can be understood from noticing that a critical point at infinity involves one co-rotating pair of two bodies with the third body at rest at infinity. After scaling  such a configuration by the dilation transformation  such that the polar moment of inertia $I$ becomes $1$ the co-rotating pair appears to collide.

At $\nu=\nu_1=0$ the green region fills the full shape space. This remains to be the case for any $\nu\le0$.  

As  the bifurcation scenario is partly difficult to see in Fig.~\ref{charged_fig:Newton_levelsb} we show in Fig.~\ref{charged_fig:Newton_levels_psi_chi} the corresponding contours of the function 
$  \sqrt{\tilde{M}_1  } \, \tilde{V} $ in the $( \chi , \psi )$-plane, $0\le \chi \le 2\pi$, $0\le\psi \le \pi / 2$.

  \begin{figure}
\begin{center}
\raisebox{4cm}{j)}
\includegraphics[angle=0,width=4cm]{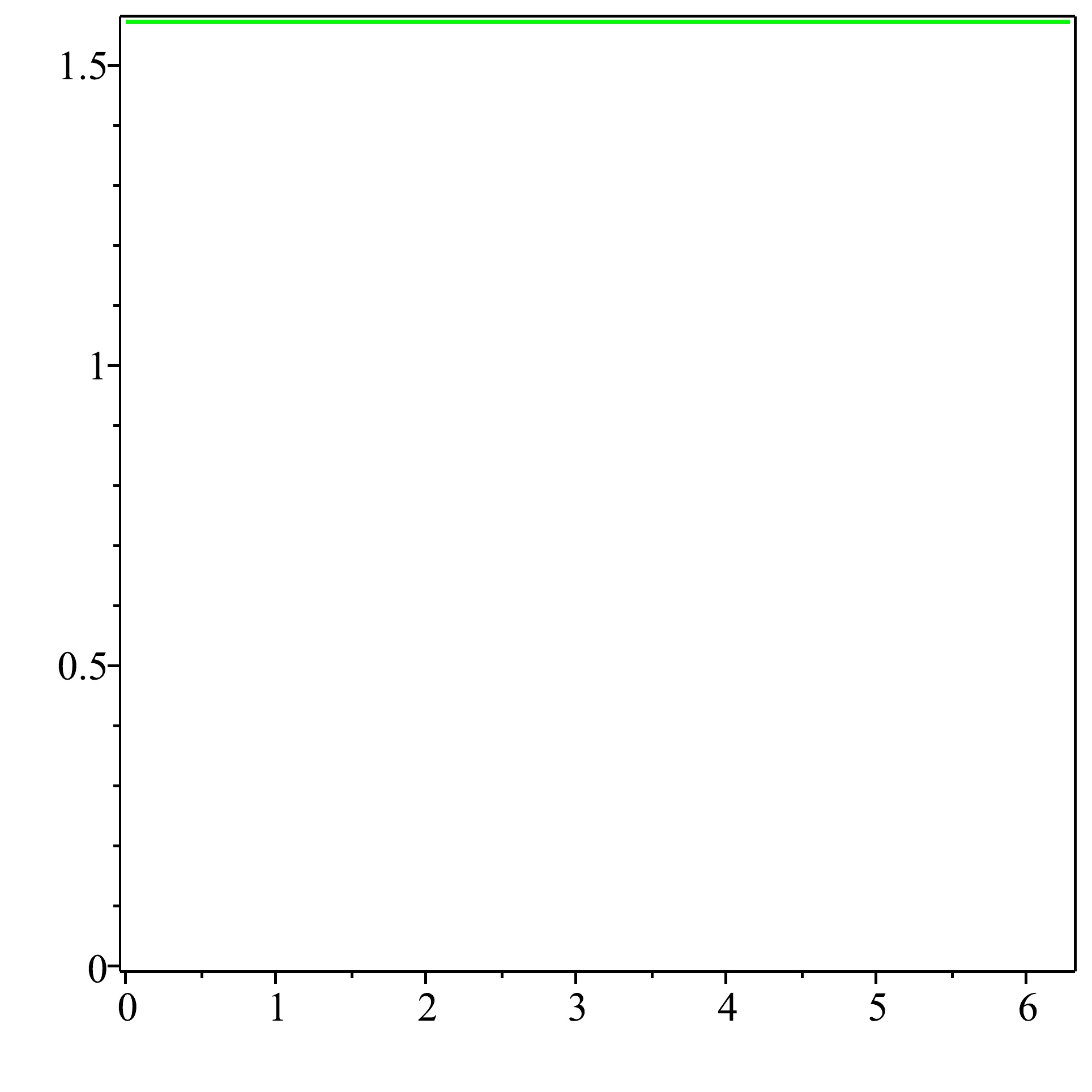} %
\raisebox{4cm}{k)}
\includegraphics[angle=0,width=4cm]{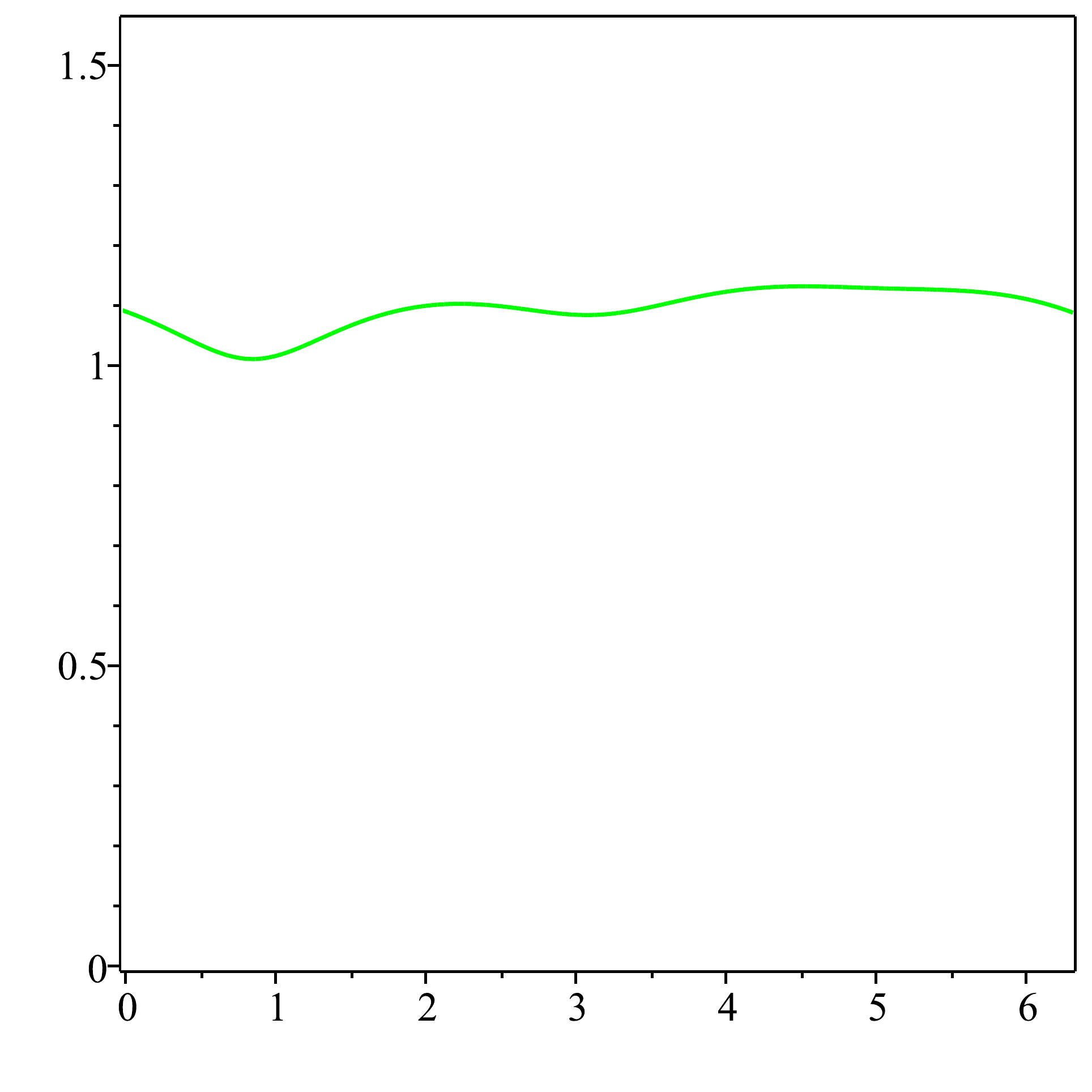} %
\raisebox{4cm}{l)}
\includegraphics[angle=0,width=4cm]{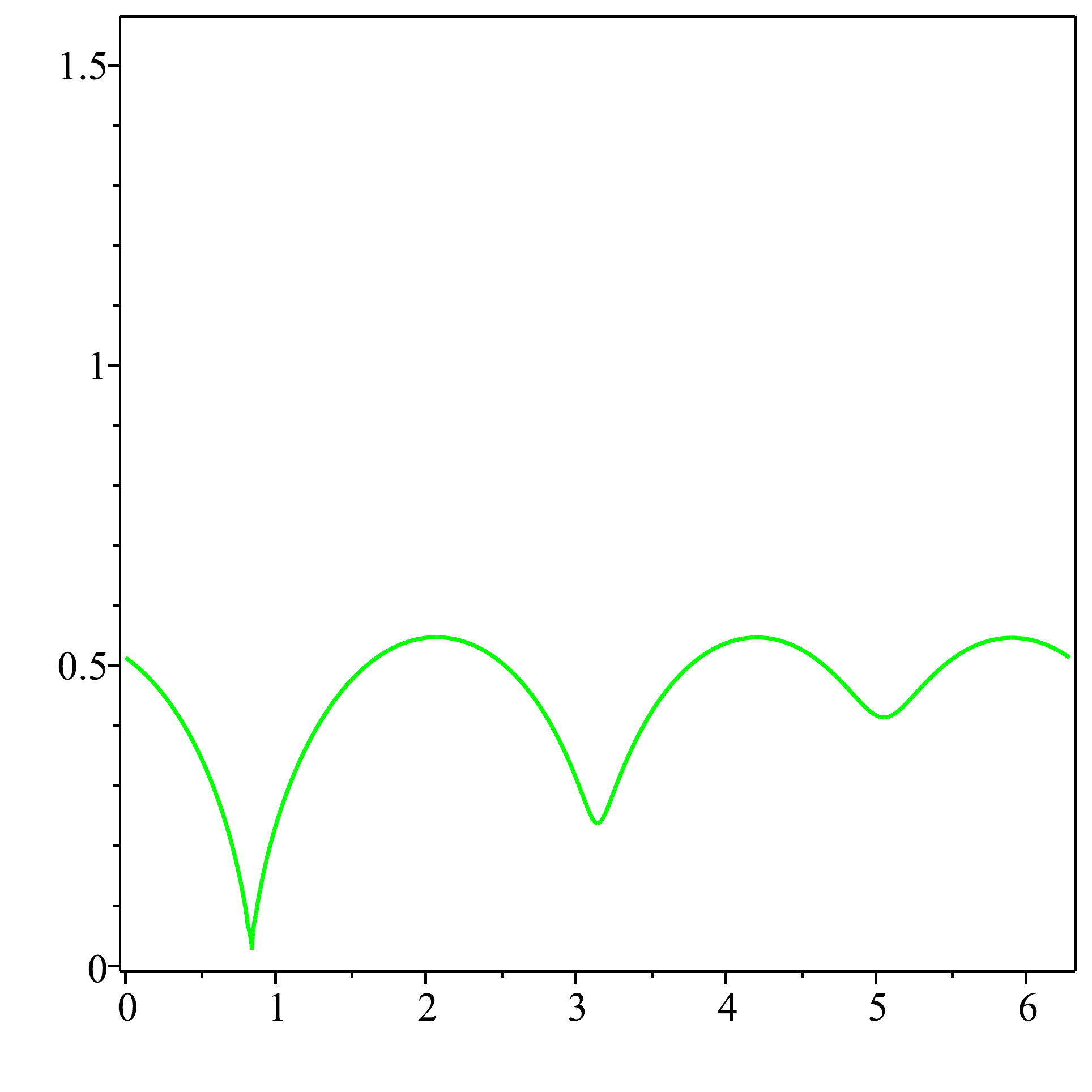}\\ %
\raisebox{4cm}{m)}
\includegraphics[angle=0,width=4cm]{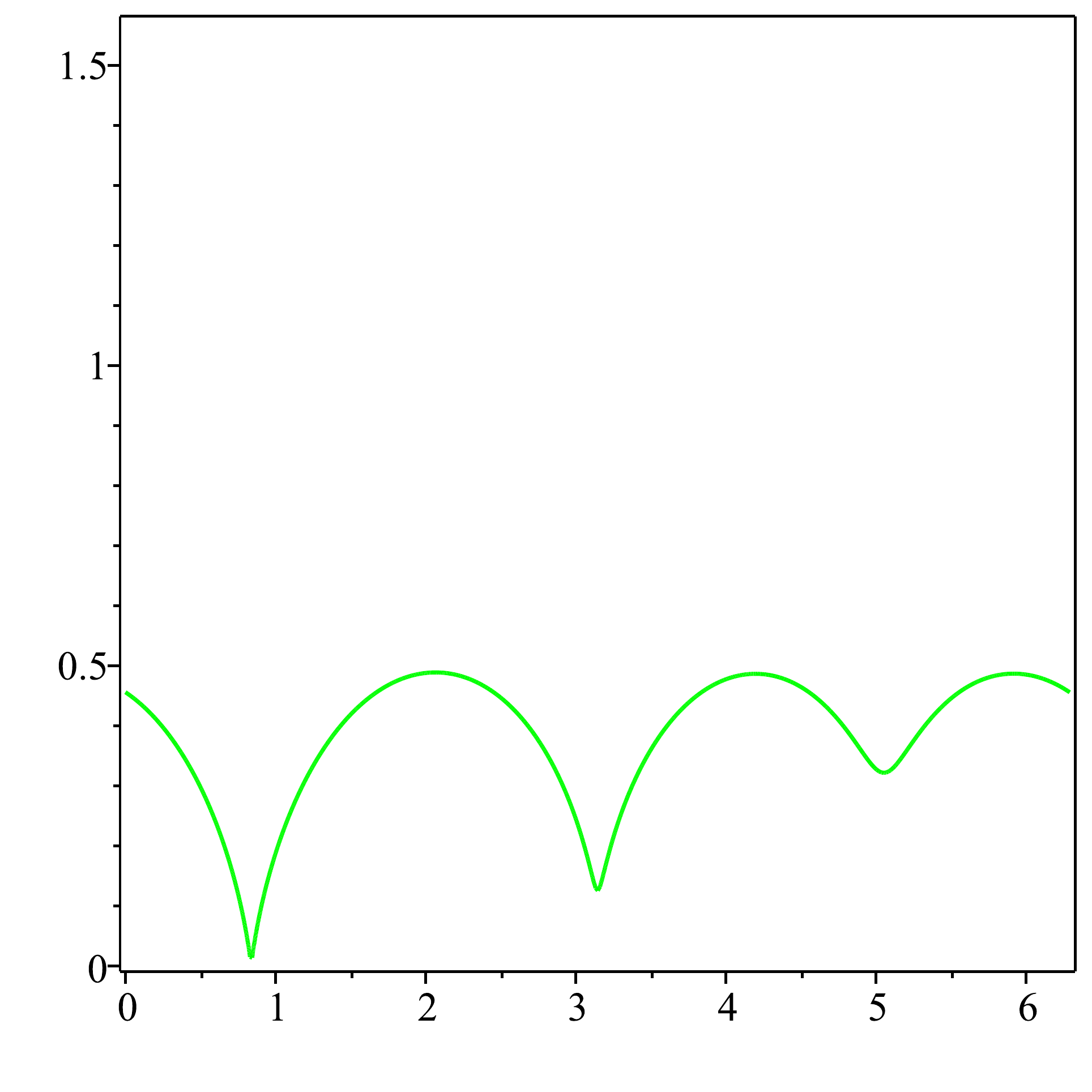}%
\raisebox{4cm}{n)}
\includegraphics[angle=0,width=4cm]{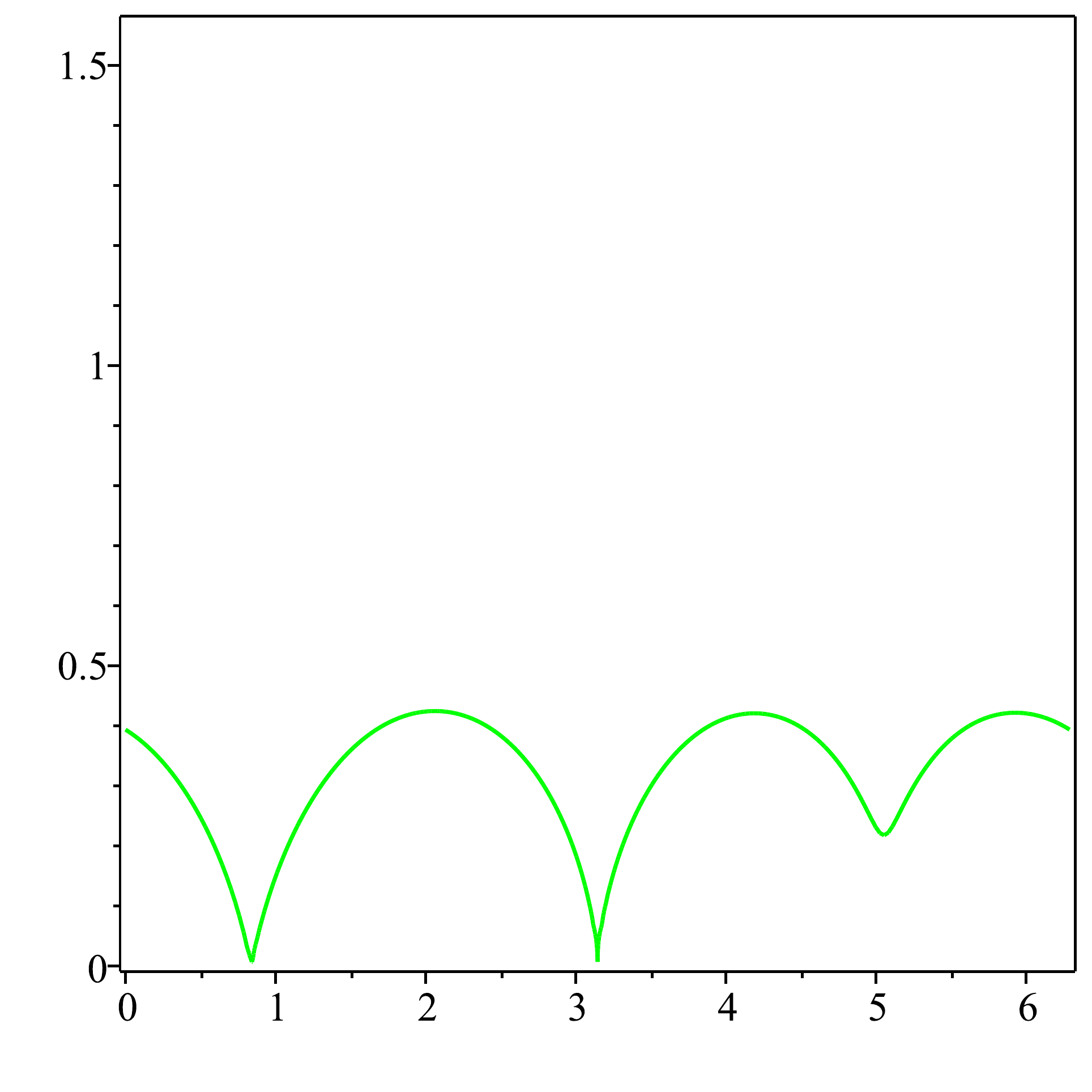} %
\raisebox{4cm}{o)}
\includegraphics[angle=0,width=4cm]{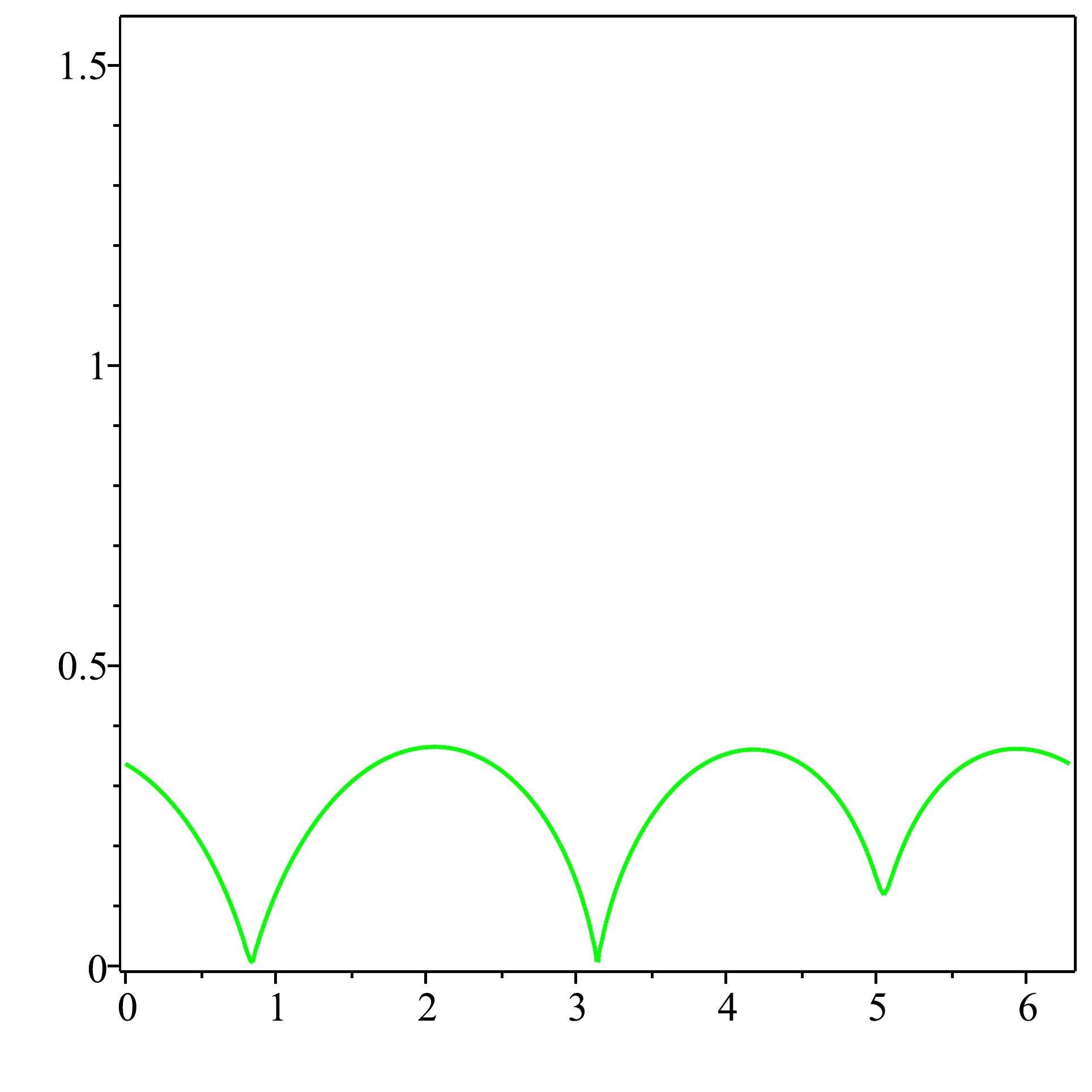}\\ %
\raisebox{4cm}{p)}
\includegraphics[angle=0,width=4cm]{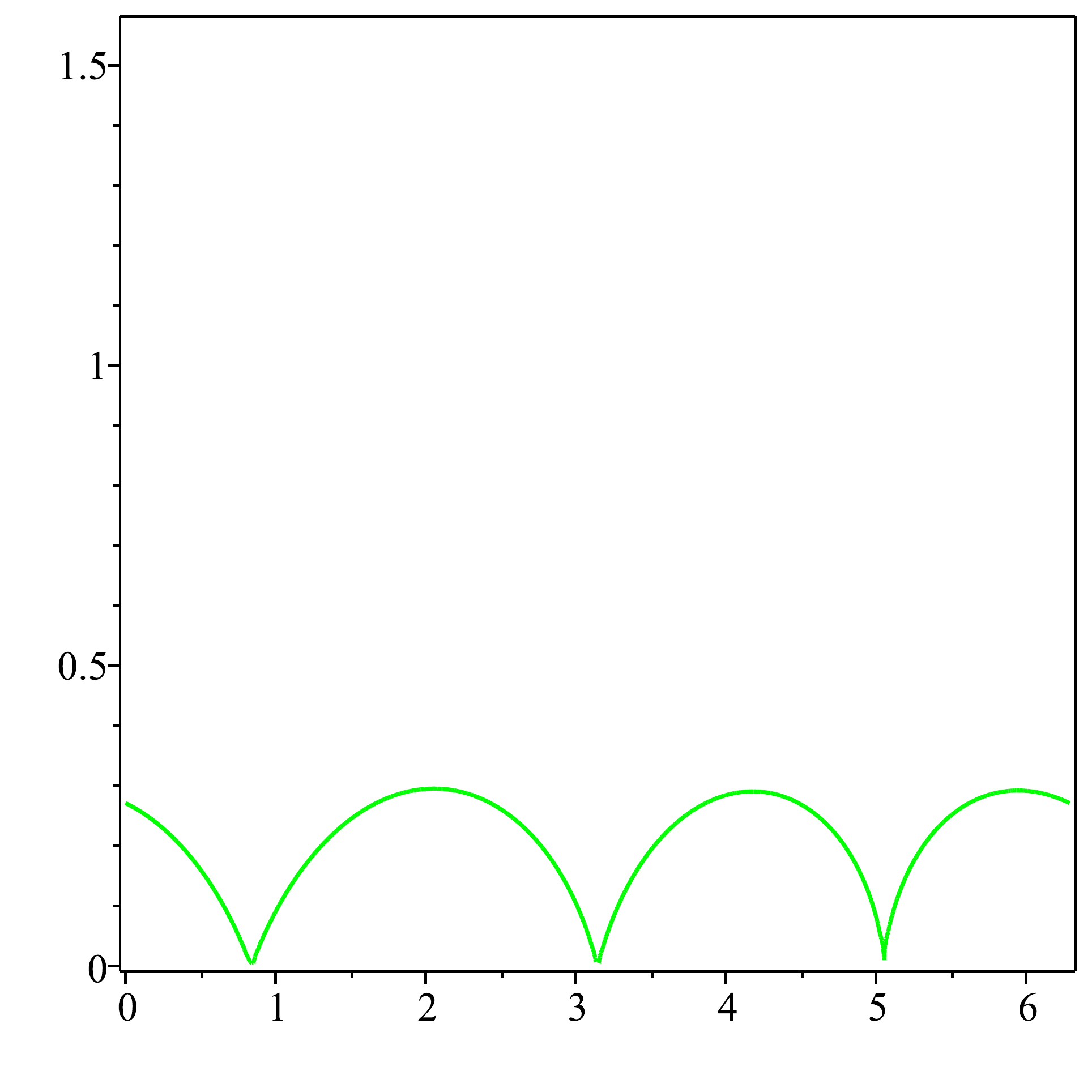}%
\raisebox{4cm}{q)}
\includegraphics[angle=0,width=4cm]{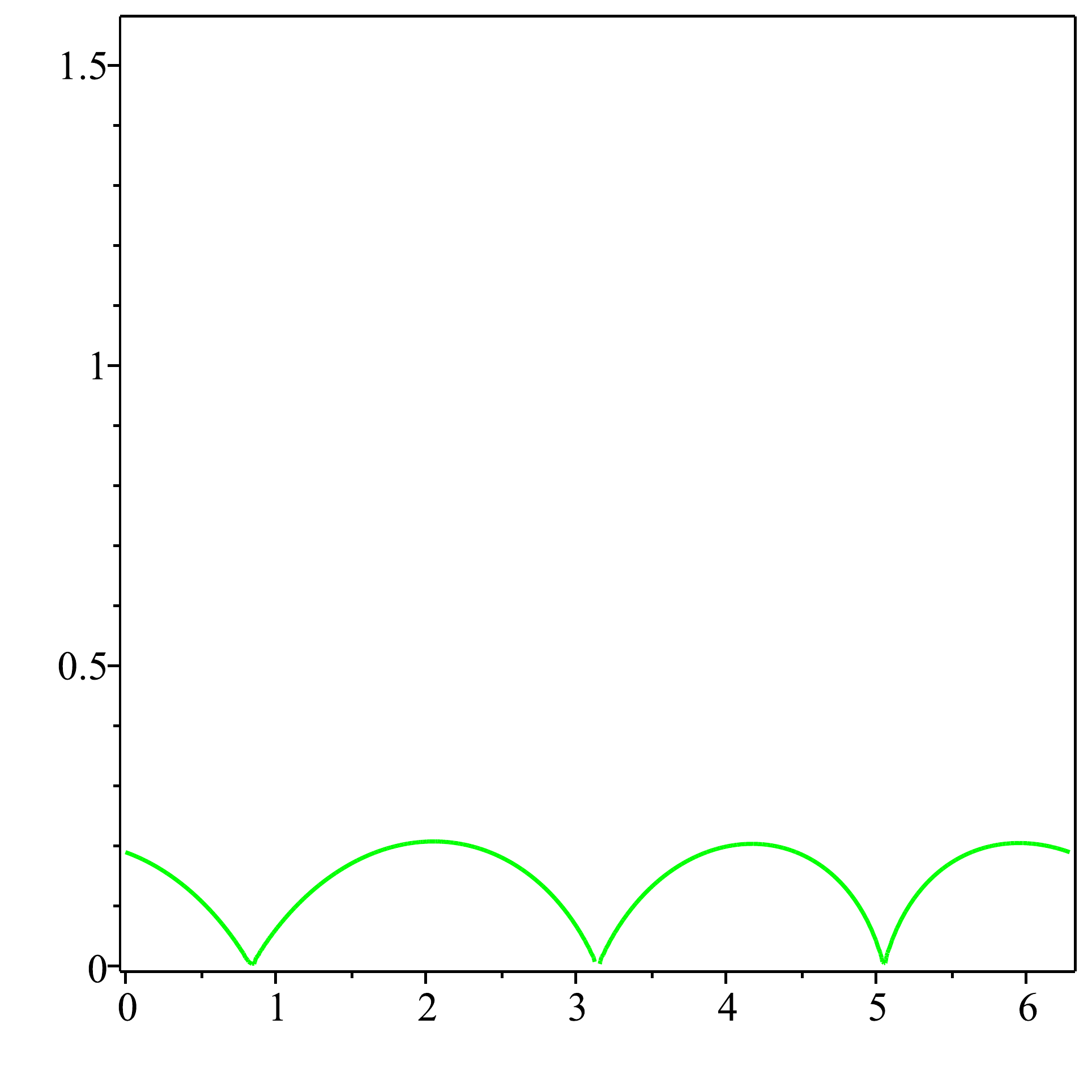} %
\raisebox{4cm}{r)}
\includegraphics[angle=0,width=4cm]{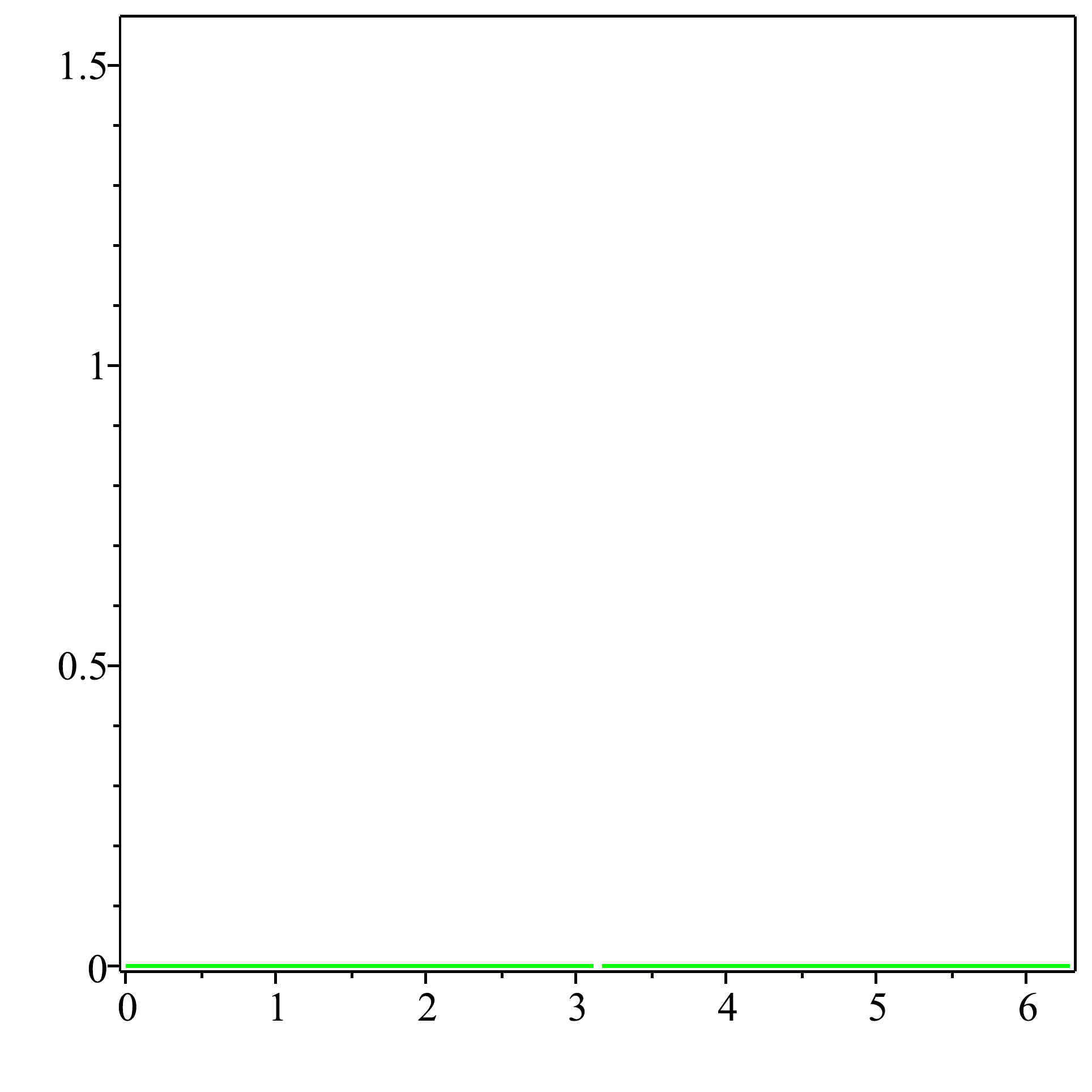}%
\end{center}
\caption{\label{charged_fig:Newton_levels_psi_chi}
Gravitational three-body problem:
contours of the function  $  \sqrt{\tilde{M}_1  } \, \tilde{V} $ in the $( \chi , \psi )$-plane, $0\le \chi \le 2\pi$, $0\le\psi \le \pi / 2$,
 for the same values of $\nu$ as in the corresponding panels in Fig.~\ref{charged_fig:Newton_levelsb}.
}
\end{figure}

We note that bifurcations observed in  Figs.~\ref{charged_fig:Newton_levels} and \ref{charged_fig:Newton_levelsb} agree with
similar figures shown in \cite{mccord1998integral}.

%%%%%%%%%%%%%%%%%%%%%%

\subsection{Helium}
\label{charged_sec:Helium}

We here consider the helium atom as a charged three-body system consisting of two electrons and a nucleus that interact via Coulomb forces. Using atomic units the electrons have masses $m_1=m_2=1$, the nucleus has mass 
$m_3 = 7\,289.56$. The electron both have charge -1. The nucleus has charge $+2$. The coefficients in the potential \eqref{eq:potential3b} are then  $\alpha_1=\alpha_2=2$ and
$\alpha_3=-1$.

In Fig.~\ref{charged_fig:Helium_levels_general} we show again the contours of the functions $ \sqrt{\tilde{M}_k  } \, \tilde{V} $, $k=1,2,3$, defined in \eqref{charged_eq:def_VtildesqrtMk}  analogously to Fig.~\ref{charged_fig:Newton_levels} for the gravitational case.
The collision of particles 1 and 2 (two electrons) occurs at a polar angle of about $89.99214109^\circ$ 
and of particles 2 and 3 (electron and nucleus) at a polar angle of about $-0.01571780034^\circ$ (see \eqref{charged_eq:angle_collision_12} and \eqref{charged_eq:angle_collision_23}). The collision of particles 1 and 3 (again electron and nucleus) is located  at $180^\circ$. As the double collision are close to the coordinate axes in Fig.~\ref{charged_fig:Helium_levels_general} we do not mark them by special ticks.
The potential $\tilde{V}$ is $-\infty$ at the collisions of either of  the  electrons with the nucleus (polar angles $-0.01571780034^\circ$ and $180^\circ$) and  
$+\infty$ at the collisions of the two electrons  (polar angle $89.99214109$).

\begin{figure}
\begin{center}
\raisebox{4cm}{a)}
\includegraphics[angle=0,width=4cm]{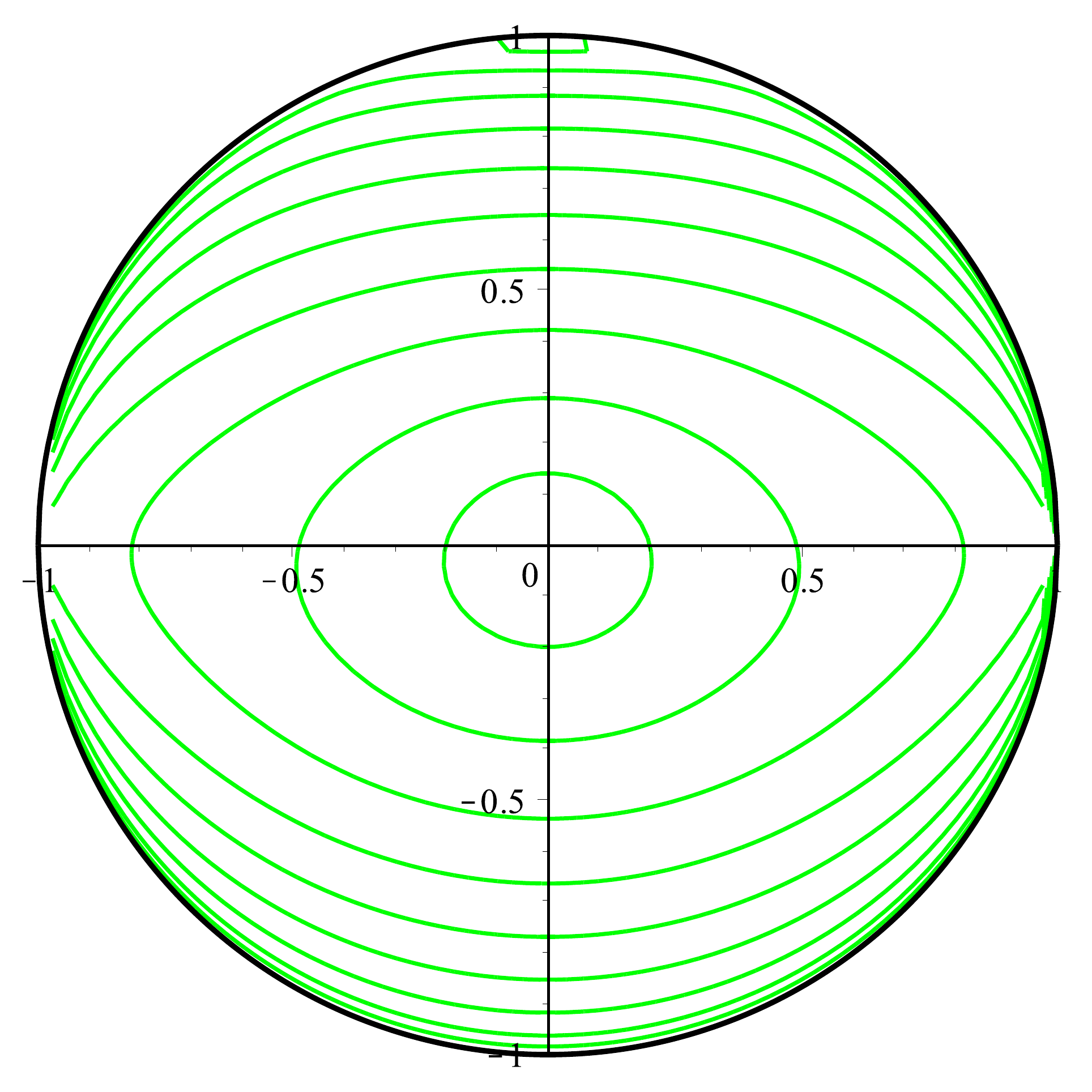}
\raisebox{4cm}{b)}
\includegraphics[angle=0,width=4cm]{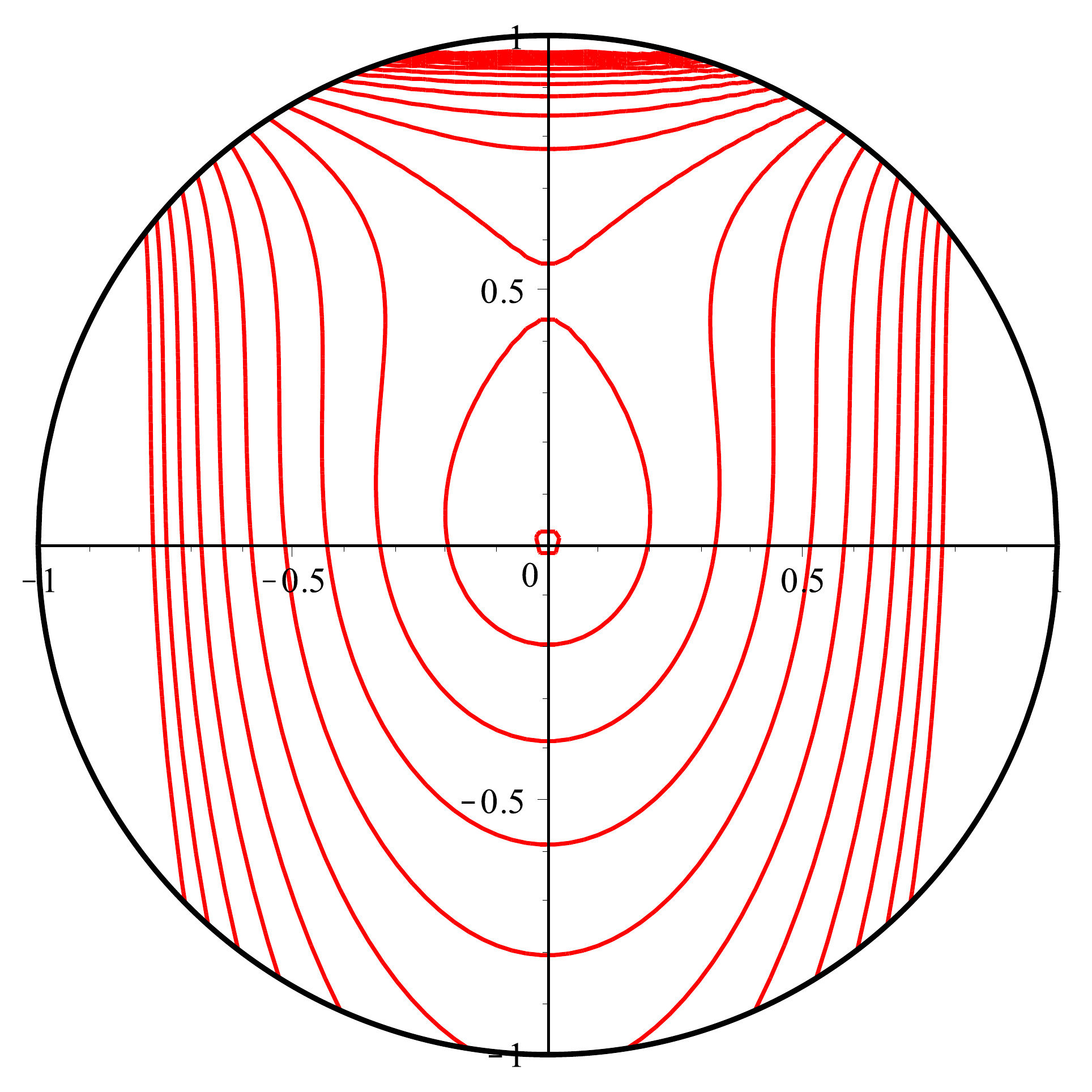}
\raisebox{4cm}{c)}
\includegraphics[angle=0,width=4cm]{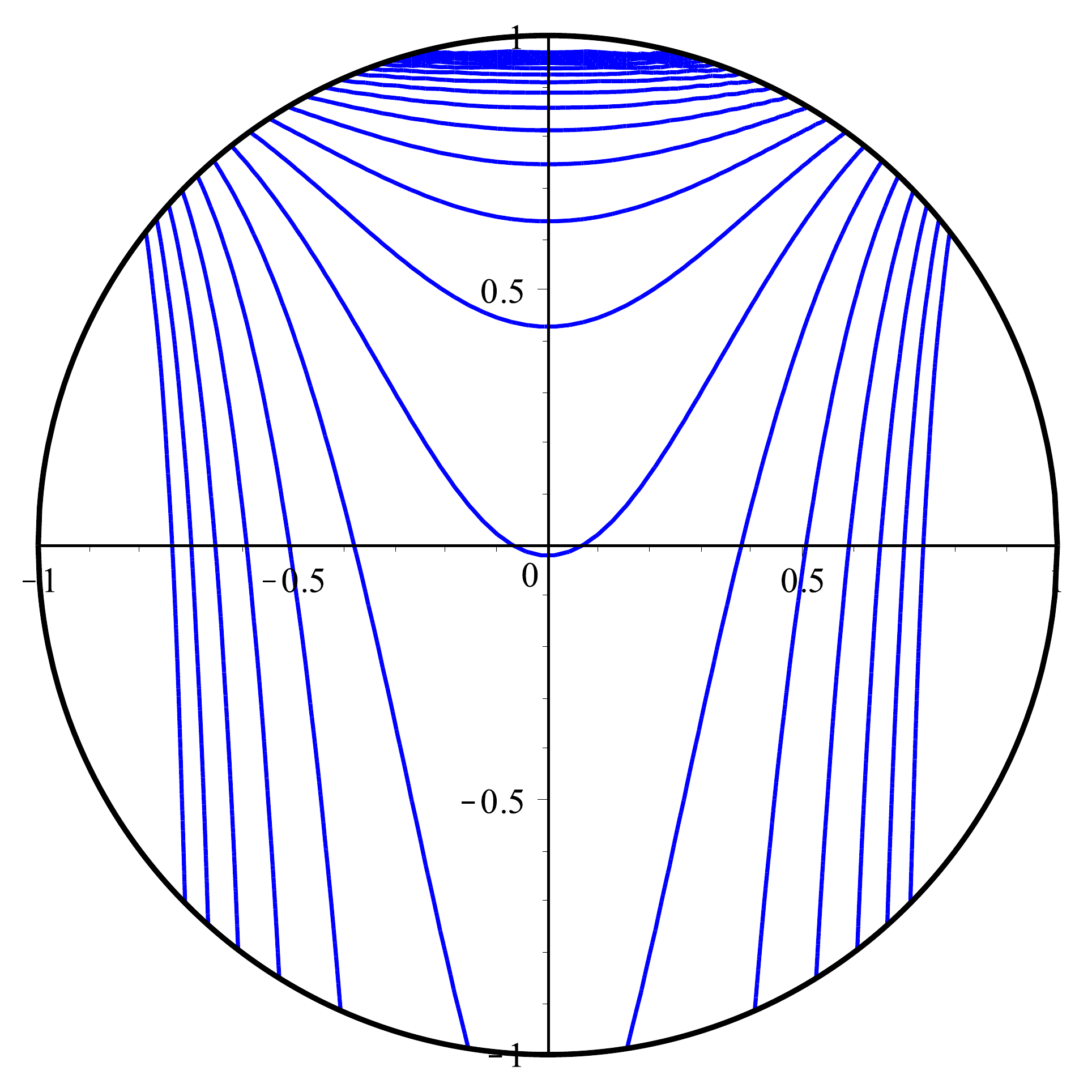}
\end{center}
\caption{\label{charged_fig:Helium_levels_general}
Helium:  
Contours  $-n/3$ for $n=1,\ldots,20$, of the functions
$  \sqrt{\tilde{M}_k  } \, \tilde{V} $ 
defined in \eqref{charged_eq:def_VtildesqrtMk} with $k=1$ (a),  $k=2$ (b) and $k=3$ (c). 
The presentation is analogously to Fig.~\ref{charged_fig:Newton_levels}.  
}
\end{figure}

From the contours in Fig.~\ref{charged_fig:Helium_levels_general} we conclude that there are together with the critical value $\nu_1=0$  (where $\nu_1=0$ is further commented on in the discussion of Fig.~\ref{charged_fig:Helium_contours} below) in total 5 critical values of $\nu$ resulting from the following events.
\begin{itemize}
\item[(i)]  The contours of $\sqrt{\tilde{M}_1}\tilde{V}$ touch/detach from the boundary of the shape space (see Fig.~\ref{charged_fig:Helium_levels_general}(a)). This happens simultaneously at both of the double collision points of the electrons with the nucleus.

\item[(ii)] $\sqrt{\tilde{M}_1}\tilde{V}$ and $\sqrt{\tilde{M}_1}\tilde{V}$ simultaneously have a critical point at the center of the unit disk in the $(w_1,w_2)$-plane (see Fig.~\ref{charged_fig:Helium_levels_general}(a) and (b)).

\item[(iii)]  $\sqrt{\tilde{M}_1}\tilde{V}$ has a critical point close to the positive $w_2$-axis away from the center of the unit disk in the $(w_1,w_2)$-plane (see Fig.~\ref{charged_fig:Helium_levels_general}(b)).

\item[(iv)] The contours of $\sqrt{\tilde{M}_2}\tilde{V}$ and $\sqrt{\tilde{M}_3}\tilde{V}$ touch/detach from the boundary of the shape space  at polar angle $-120^\circ$ (see Figs.~\ref{charged_fig:ElectronsPositron_levels_general}(b) and (c)). This again happens for the same value of $\nu$ as $\sqrt{\tilde{M}_2}\tilde{V}$ and $\sqrt{\tilde{M}_3}\tilde{V}$ agree on the boundary of the shape space (i.e. for collinear configurations).

\end{itemize}

These events can be related to the following   nontrivial critical values of $\nu$. 
Event (i) is a result of the two symmetry related critical points at infinity where either of the electrons is co-rotating with the nucleus and the other electron is at rest with an infinite distance in between. The corresponding value of $\nu$ can be computed from  \eqref{charged_eq:nu_for_corotating_charges}. For the helium nucleus and one electron, this gives

\begin{equation}
\nu_{\infty\,2} \approx 1.999725672\,.
\end{equation}
Event (ii) results from the diabolic point of the moment of inertia tensor.
 For the helium system, the corresponding value of $\nu$ can be computed from \eqref{eq:nu_diabolic} to be

\begin{equation}
\nu_{\text{diabolic}\,3} \approx 5.420669550\,.
\end{equation}

Event (iii) is caused by a non-collinear relative equilibrium involving rotation about the middle principal axis. This is the Langmuir orbit discussed in Sec.~\ref{sec:Langmuir}. 
For systems like the helium atom where  $\alpha_2 / \alpha_3 =-2$, \eqref{eq:app_nu_general} reduces to 
\begin{equation}\label{eq:app_nu_special}
\nu_{\text{Langmuir}} = \left(\frac{3}{2} \right)^{3} \mu \alpha_3^2 %Q_1^4.
\end{equation}
giving
for helium
\begin{equation}
\nu_{\text{Langmuir}\,4}\approx 6.748148600\,.
\end{equation}
Event (iv) results from the relative equilibrium associated with the collinear central configuration with the nucleus located in the middle  between the two electrons. 
For the symmetric case with particle  $m_3$ in the middle, $m_1=m_2$ and $\alpha_1=\alpha_2$, the value of $\nu$ in \eqref{eq:nu_collinear} reduces to

\begin{equation} \label{eq:nu_collinear_symmetric}
\nu_{\text{collinear}}=\frac{1}{4}m_1 (4\alpha_1+\alpha_3)^2\,.
\end{equation}
Filling in the values of helium gives
\begin{equation}
\nu_{\text{collinear}\,5}=\frac{49}{4}=12.25\,.
\end{equation}

\begin{figure}
\centerline{
\raisebox{4cm}{a)}\includegraphics[angle=0,width=4cm]{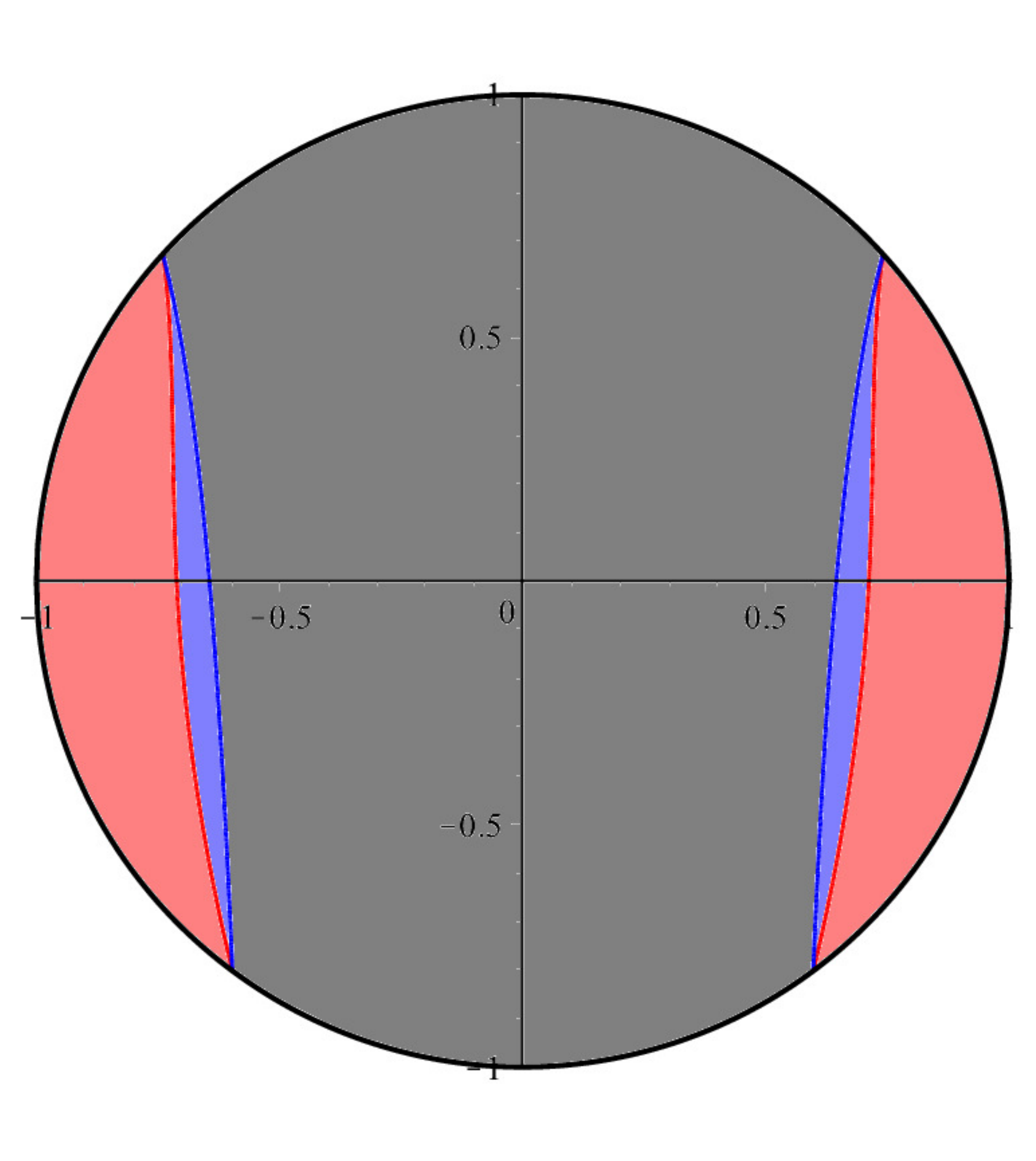} %
\raisebox{4cm}{b)}\includegraphics[angle=0,width=4cm]{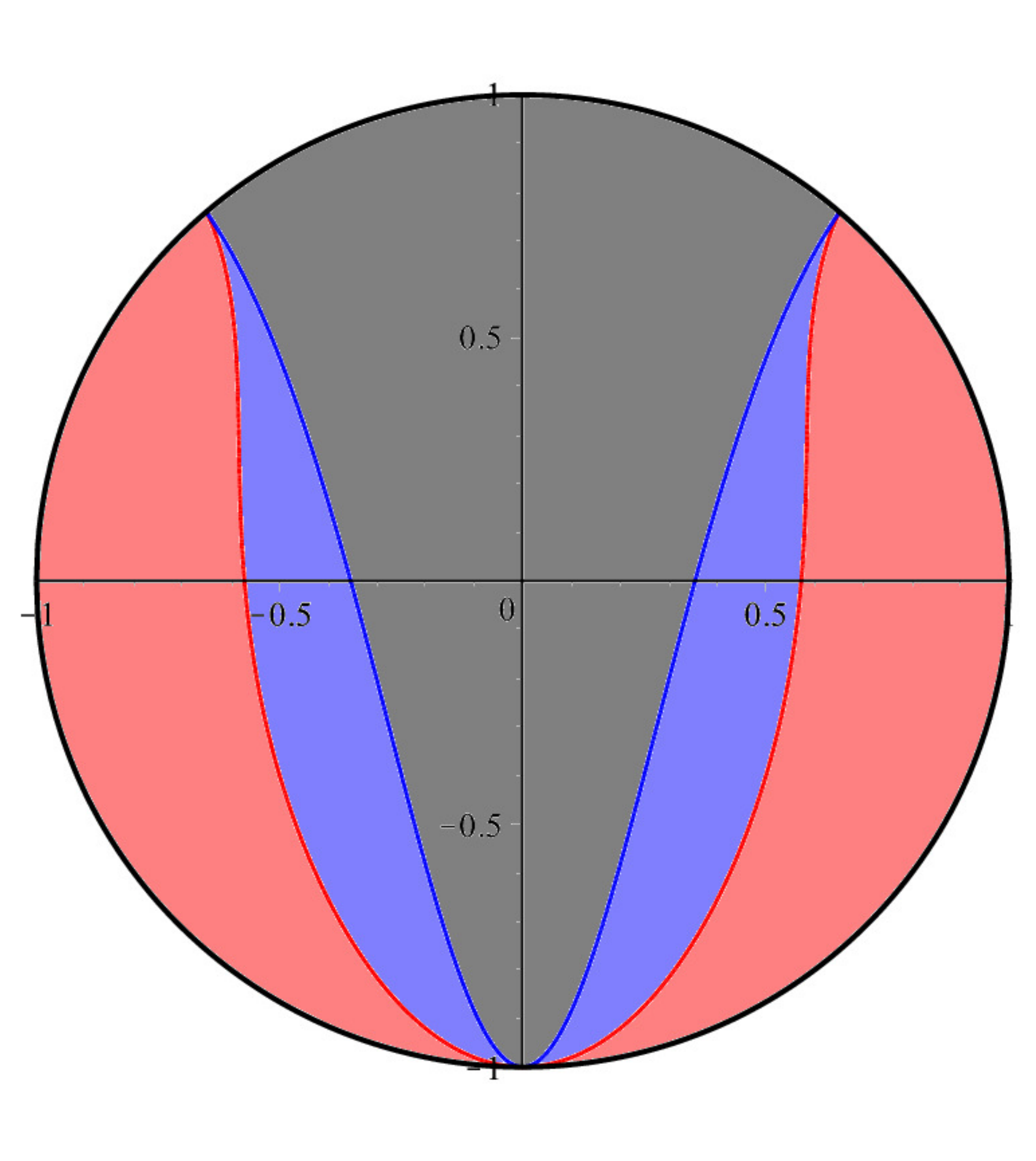} %
\raisebox{4cm}{c)}\includegraphics[angle=0,width=4cm]{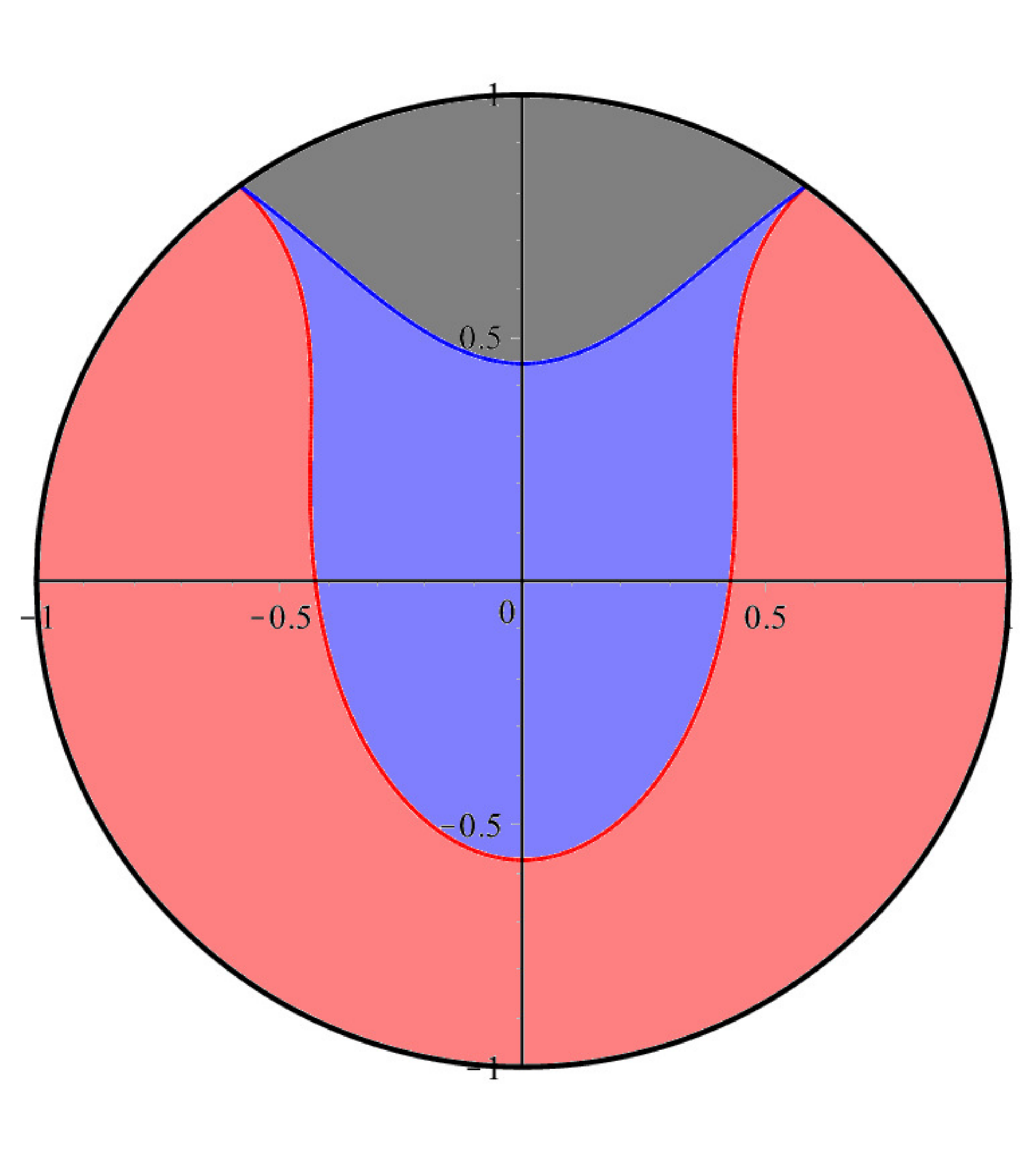} %
}
\centerline{
\raisebox{4cm}{d)}\includegraphics[angle=0,width=4cm]{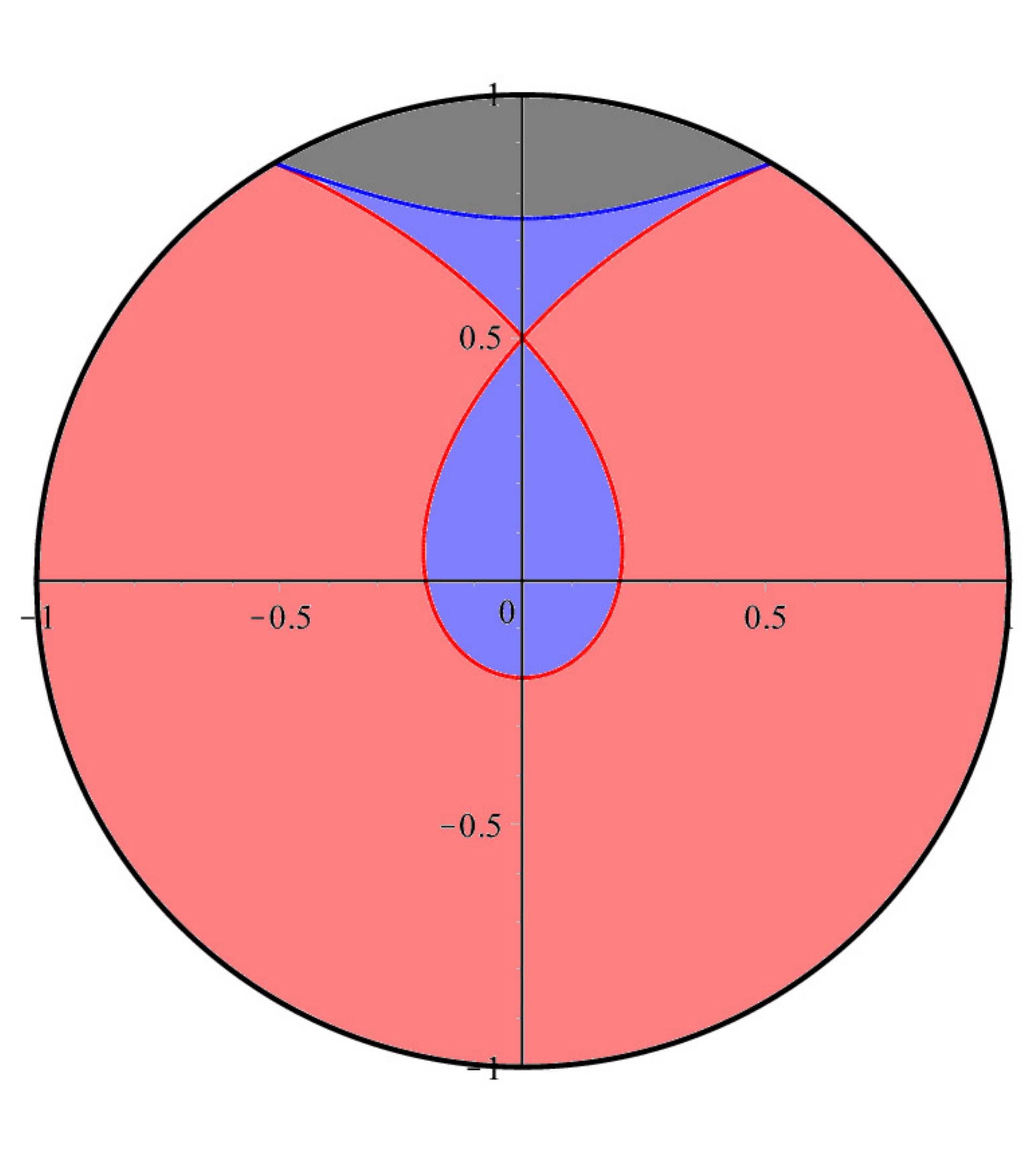} % 
\raisebox{4cm}{e)}\includegraphics[angle=0,width=4cm]{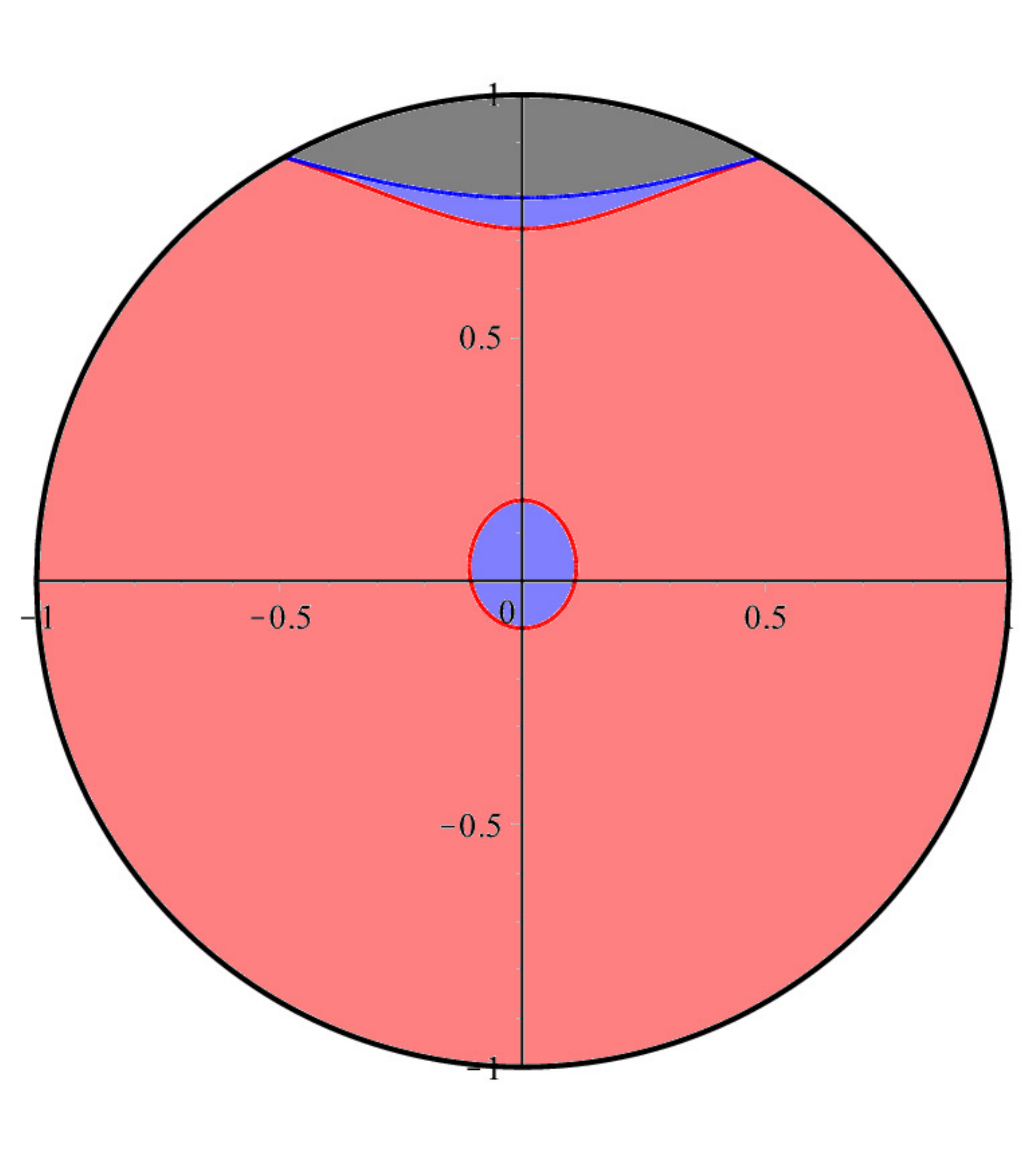} %
\raisebox{4cm}{f)}\includegraphics[angle=0,width=4cm]{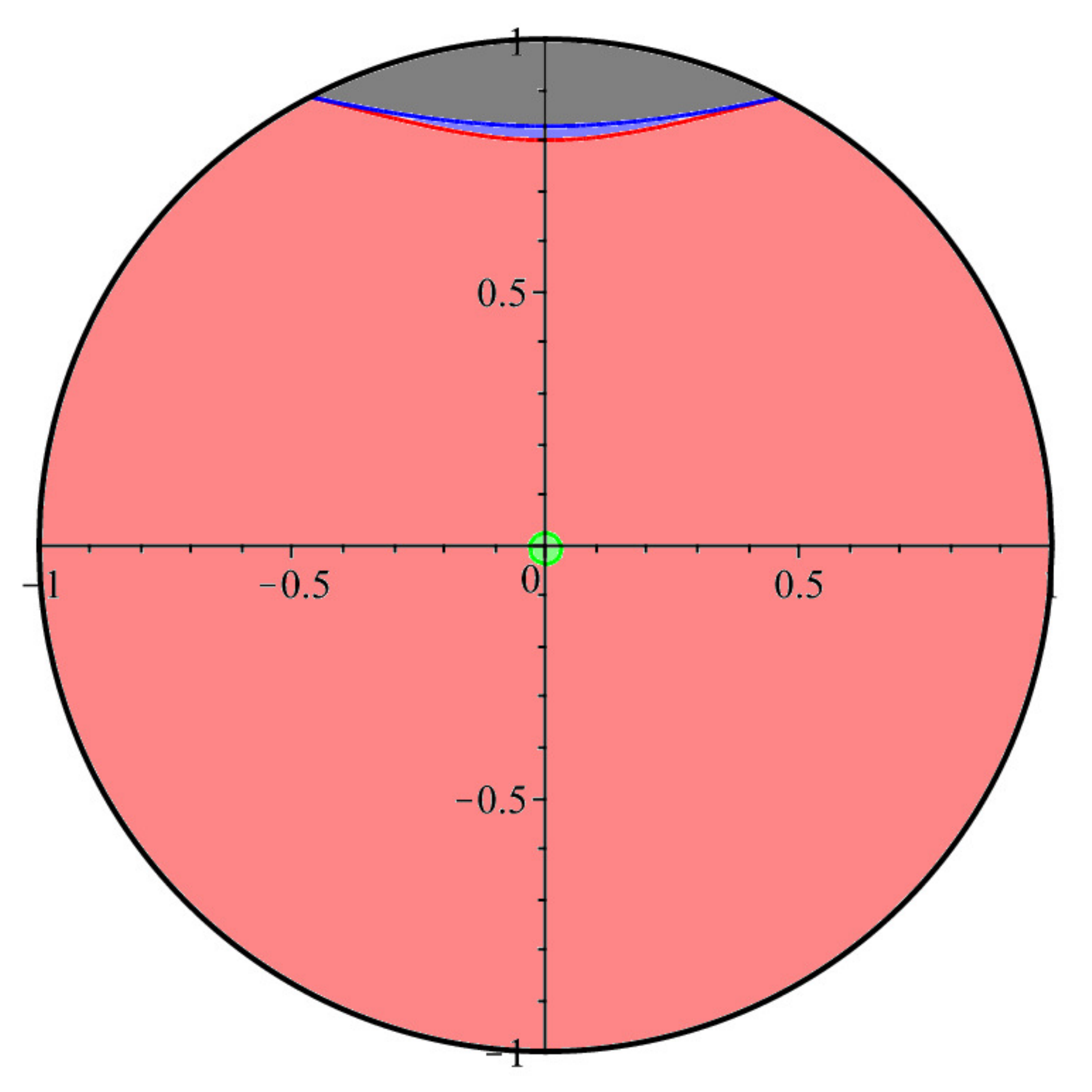}
}
\centerline{
\raisebox{4cm}{g)}\includegraphics[angle=0,width=4cm]{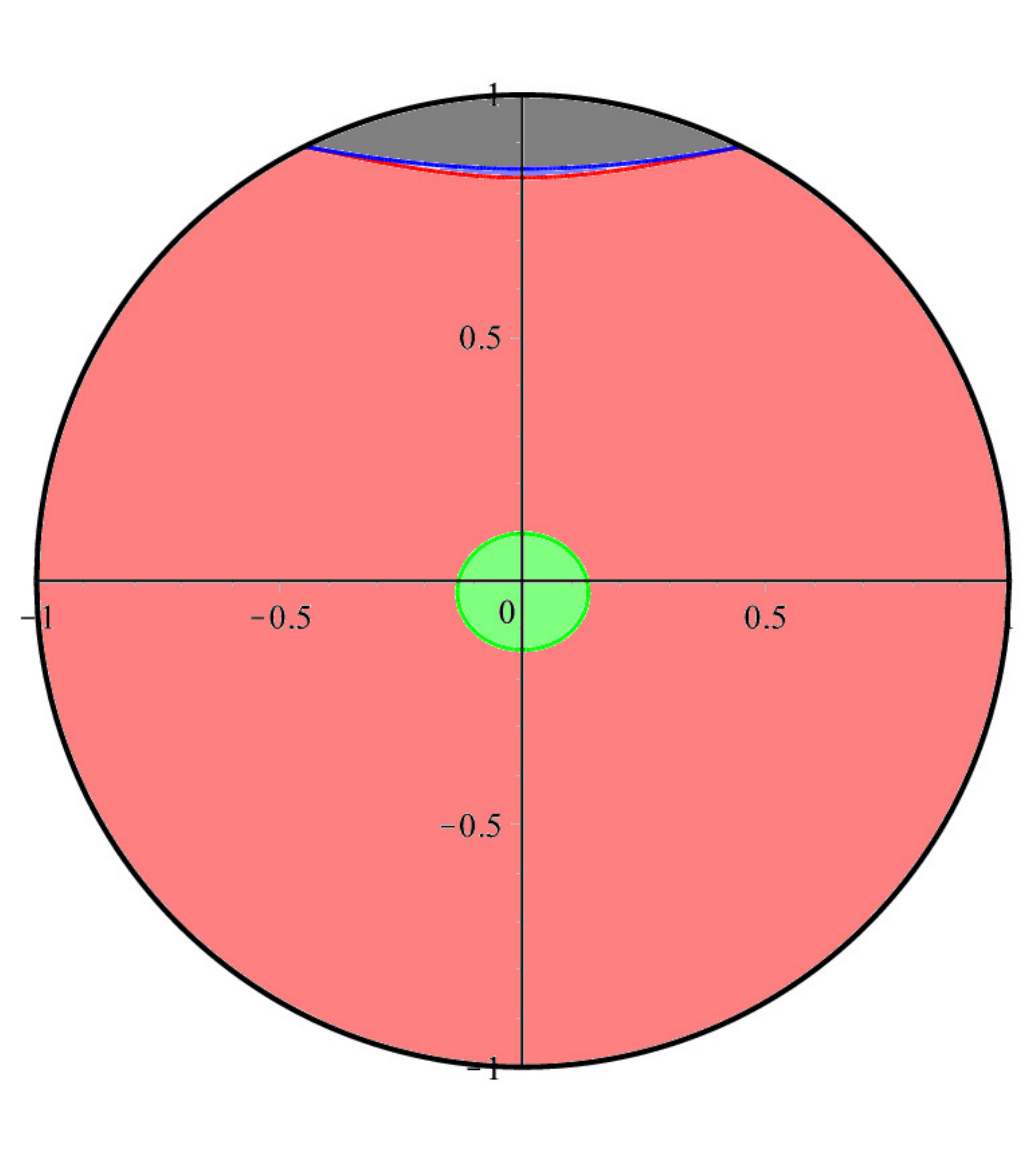}
\raisebox{4cm}{h)}\includegraphics[angle=0,width=4cm]{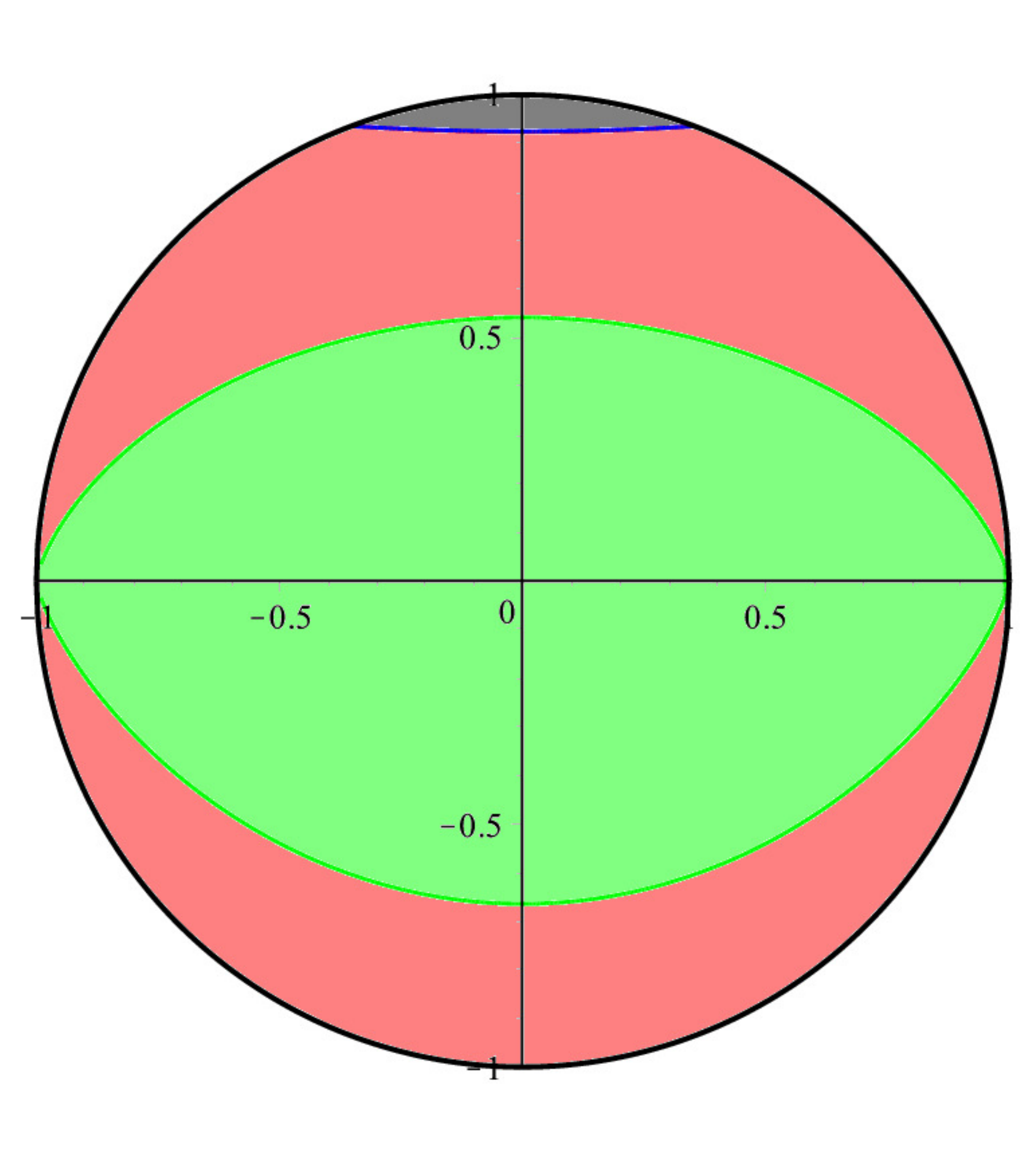} %
\raisebox{4cm}{i)}\includegraphics[angle=0,width=4cm]{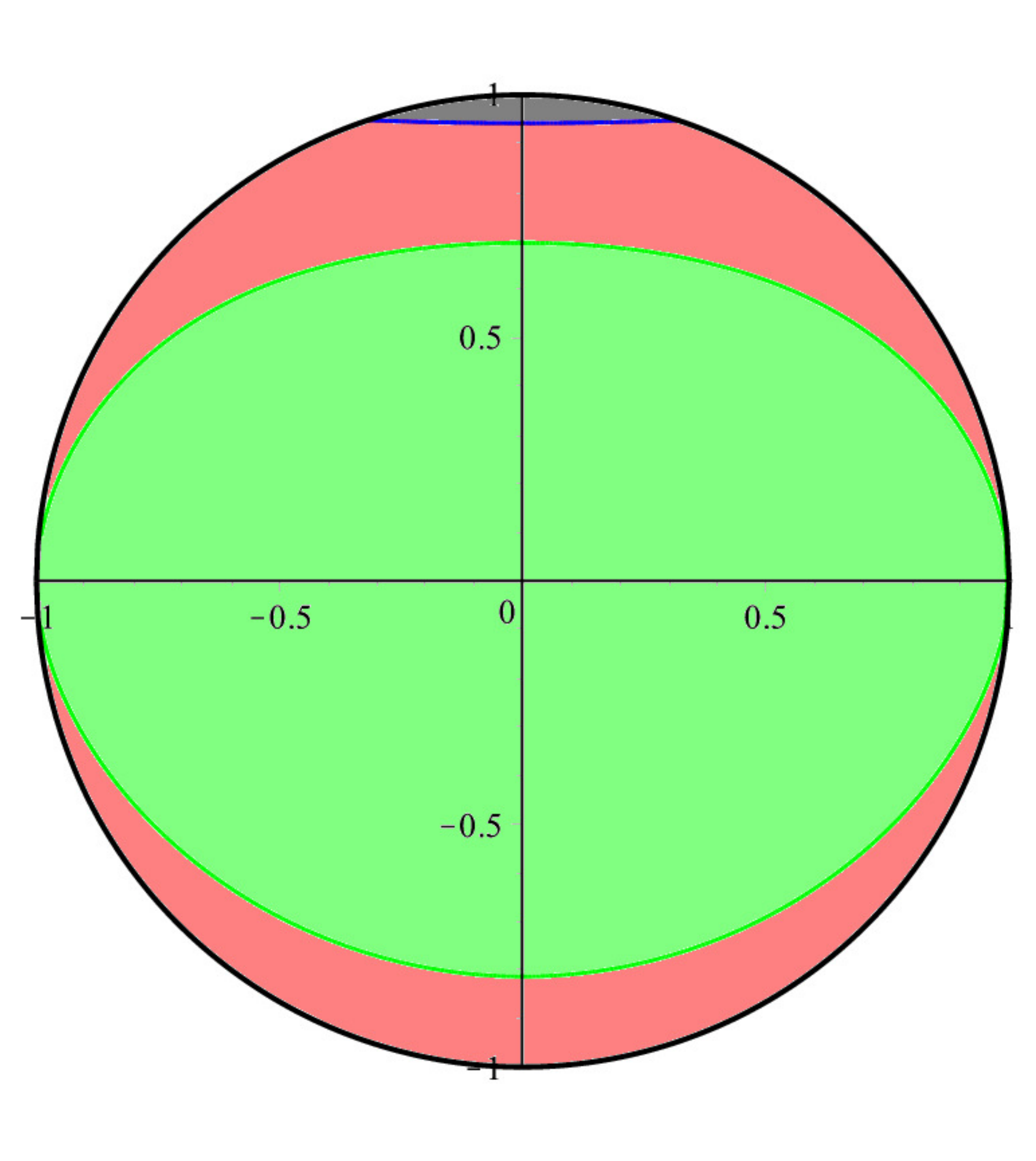} %
}
\centerline{
\raisebox{4cm}{j)}\includegraphics[angle=0,width=4cm]{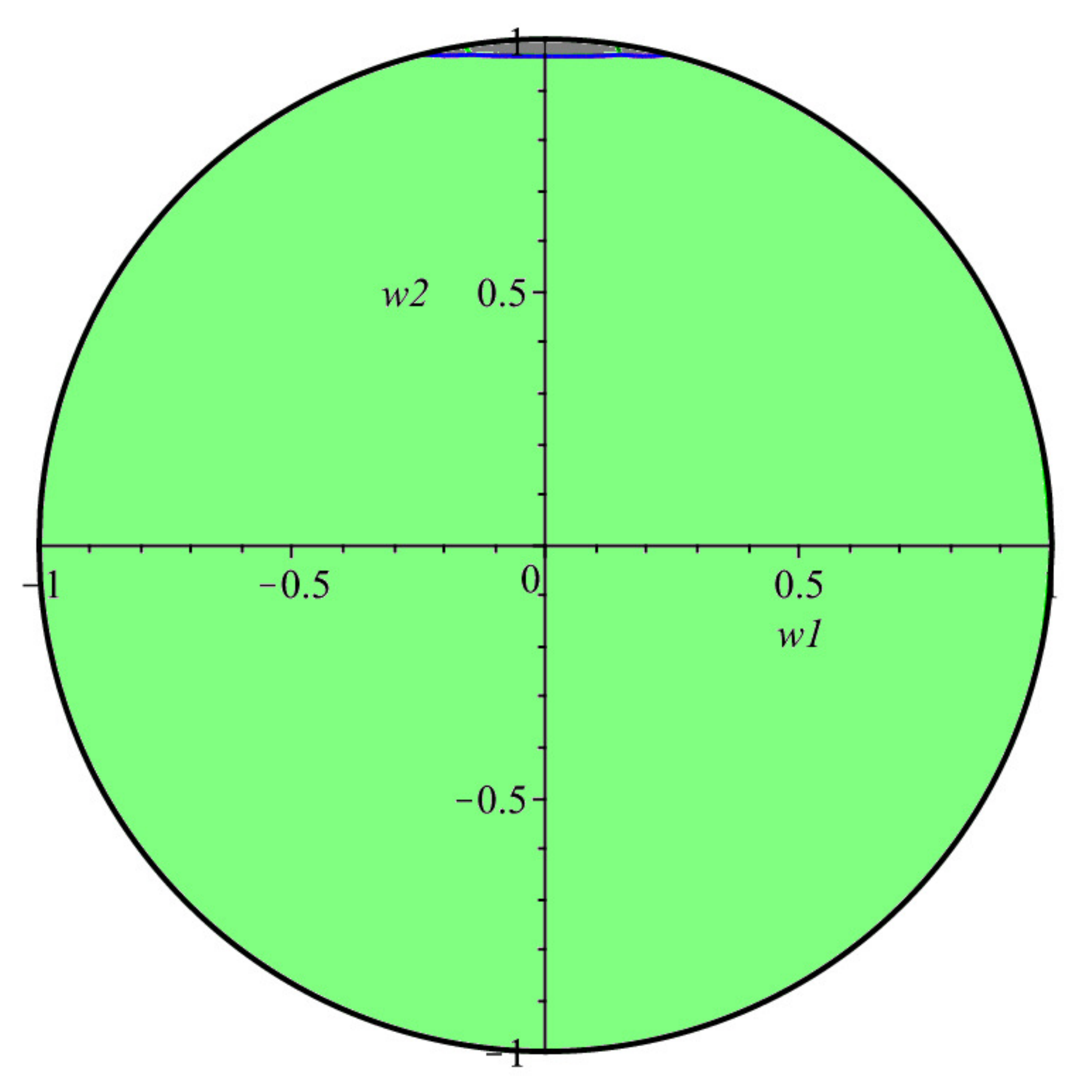} %
\raisebox{4cm}{k)}\includegraphics[angle=0,width=4cm]{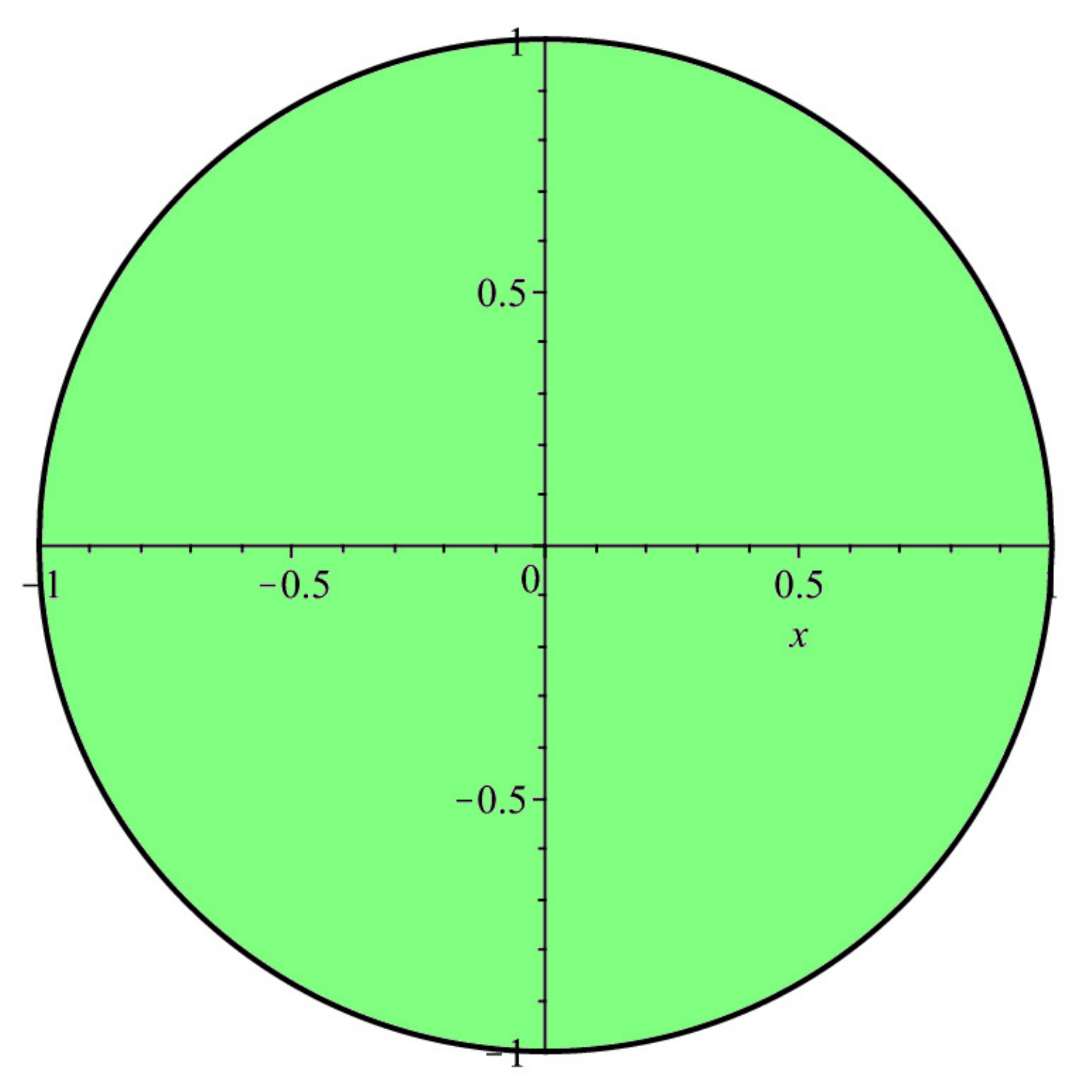}%
}

\caption{\label{charged_fig:Helium_contours}
Helium: projections of the Hill regions to the shape space
 for specific values of $\nu$: 
$\nu >  \nu_{\text{collinear}\,5}   $ (a), 
$ \nu = \nu_{\text{collinear}\,5}  $ (b), 
$\nu_{\text{collinear}\,5} >\nu> \nu_{\text{Langmuir}\,4} $ (c),  
$ \nu = \nu_{\text{Langmuir}\,4}  $ (d), 
$\nu_{\text{Langmuir}\,4}  >\nu>  \nu_{\text{diabolic}\,3} $ (e),  
$ \nu = \nu_{\text{diabolic}\,3}  $ (f), 
$\nu_{\text{diabolic}\,3}   >\nu > \nu_{\infty\,2}   < $ (g),  
$ \nu = \nu_{\infty\,2}  $ (h), 
$ \nu_{\infty\,2} > \nu >  \nu_1 =0          $ (i), 
$ \nu  = \nu_1 = 0 $ (j), and 
$\nu_1 = 0\ge  \nu   $.
The contours are shown on the shape space $\tilde{\Qtr}$ represented in the same way as in Fig.~\ref{charged_fig:ElectronsPositron_levels_general}. 
}
\end{figure}

In the different panels in Fig.~\ref{charged_fig:Helium_contours} we again show  %superpositions of the contours  of the functions $\sqrt{\tilde{M} _k} \tilde{V}$, $k=1,2,3$,  for a representative value of $\nu$ fixed in each panel. 
the projection of the Hill regions to the shape space.
The presentation and coulor code is the same as in Figs.~\ref{charged_fig:Newton_levelsa} %, \ref{charged_fig:Newton_levels_psi_chi} 
and \ref{charged_fig:Newton_levelsb} for the gravitational case. 
We again start with large values of $\nu$. 
When $\nu$ crosses the value $\nu_{\text{Langmuir}\,4}$ of the Langmuir relative equilibrium a blob detaches from the blue region.
At $\nu_{\text{diabolic}\,3}$ the blue blob shrinks to a point and similar to the analogous scenario in the gravitational case a green region grows out of this region when $\nu$ decreases below $\nu_{\text{diabolic}\,3}$. The green region then grows and  reaches the boundary of the shape space for the first time at the two double collision points of the nucleus with either of the electrons when $\nu=\nu_{\infty\,2} $.
When $\nu$ decreases further to $\nu_1=0$ the green region grows until the red and blue regions have vanished at $\nu_1=0$. 
A grey forbidden region near the double collision of the electrons  remains at $\nu_1=0$. This is due to the potential energy going to plus infinity at the double collision of the two electrons. This is different from the gravitational case where the forces between the bodies is always attractive. 
However, like in any charged three-body system for every shape, any orientation is possible for $\nu<\nu_1=0$ (i.e. for positive total energies $E$), see Theorem~\ref{charged_thm:Hillregionfirst}.

%%%%%%%%%%%%%%%%%%%%%%

%%%%%%%%%%%%%%%%%%%%%%

\subsection{Compound of two electrons and one positron}
\label{charged_sec:electronpositronelectron}

In the following we consider a system of two electrons e$^-$  and one positron e$^+$ interacting via Coulomb forces. In atomic units, 
 e$^-$ has charge -1 and mass 1. The positron e$^+$  has the same mass but opposite charge. The two electrons are labeled 1 and 2. The positron is assigned the label 3. The coefficients in the potential \eqref{eq:potential3b} are  then $\alpha_1=\alpha_2=1$ and $\alpha_3=-1$.
 
 \begin{figure}
\begin{center}
\raisebox{4cm}{a)}
\includegraphics[angle=0,width=4cm]{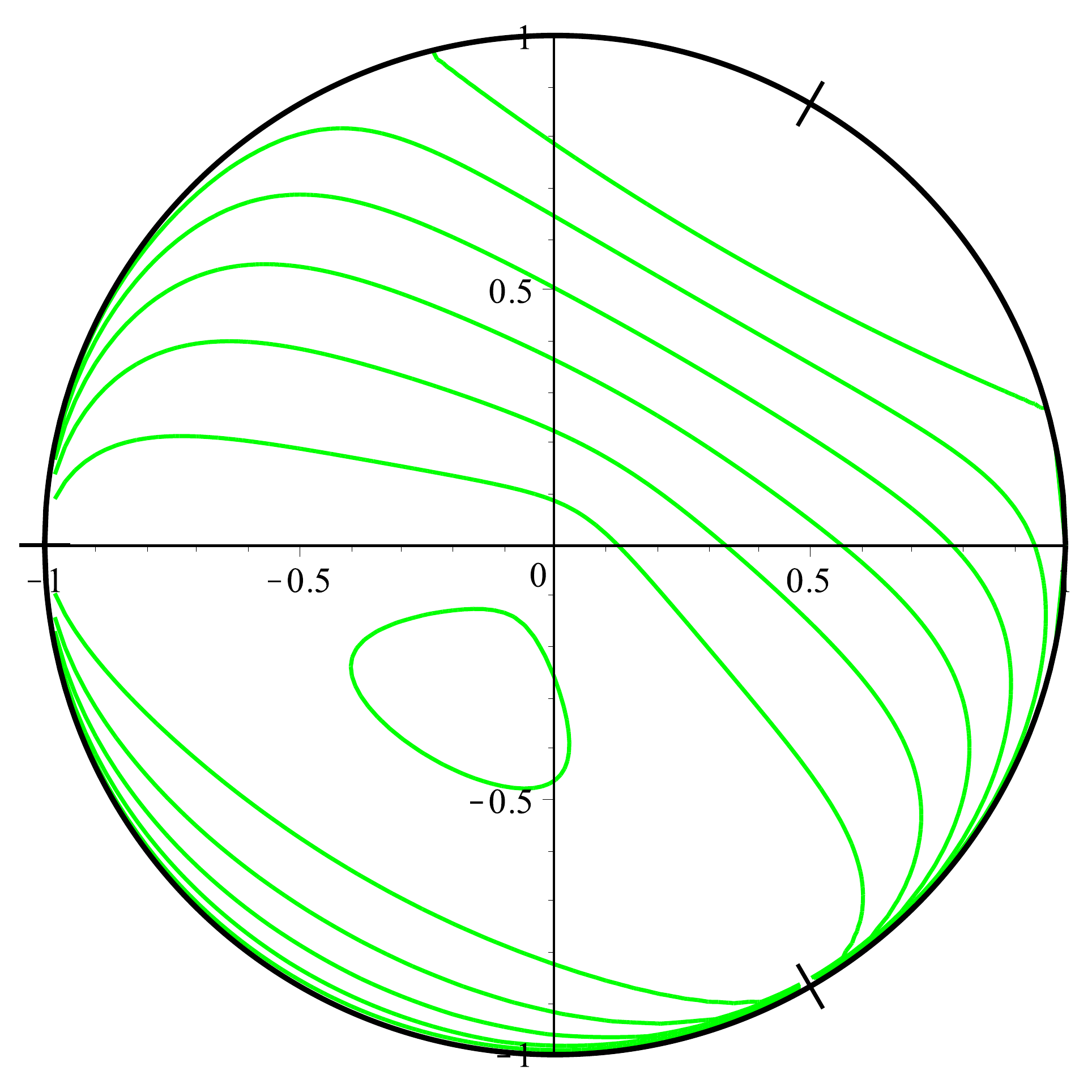}
\raisebox{4cm}{b)}
\includegraphics[angle=0,width=4cm]{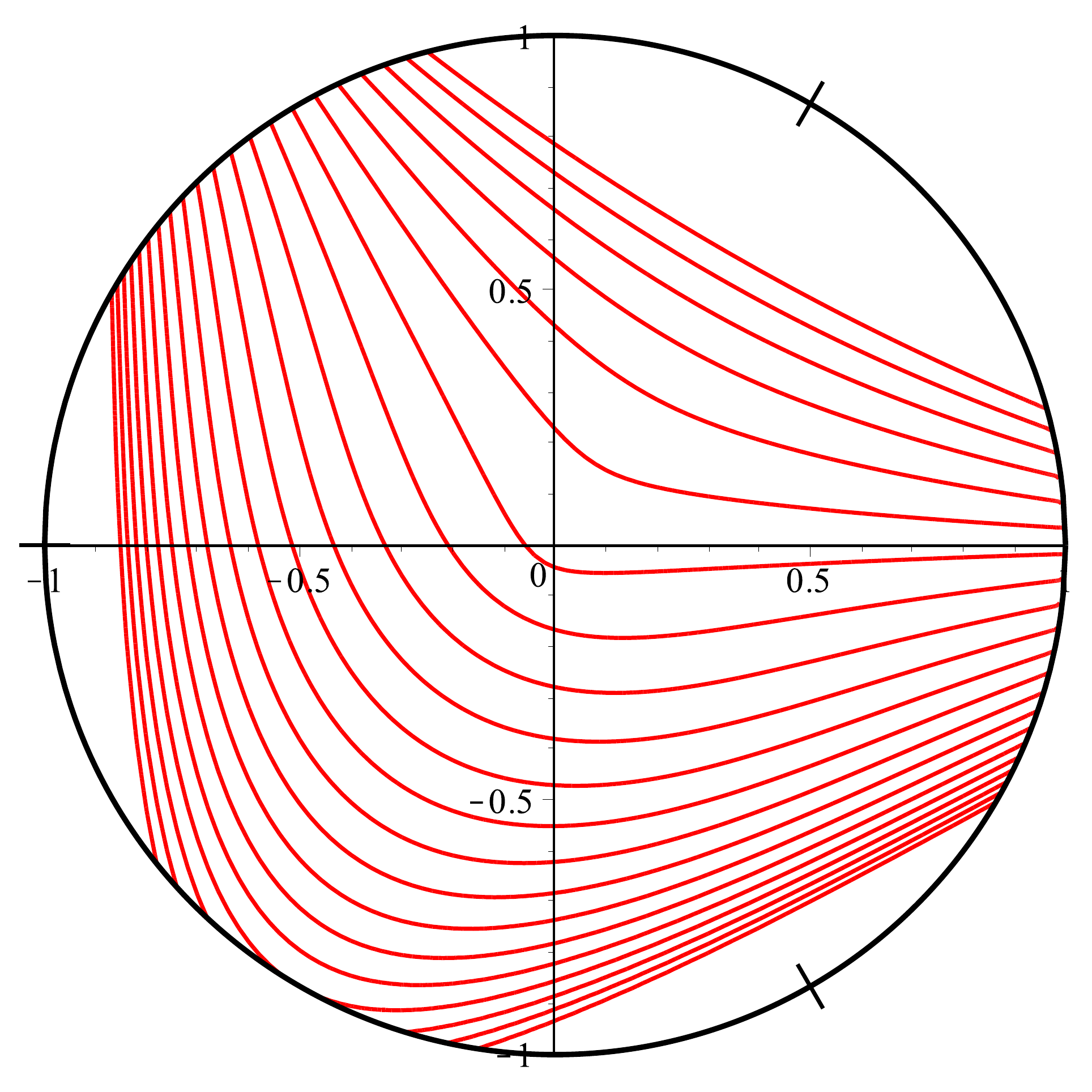}
\raisebox{4cm}{c)}
\includegraphics[angle=0,width=4cm]{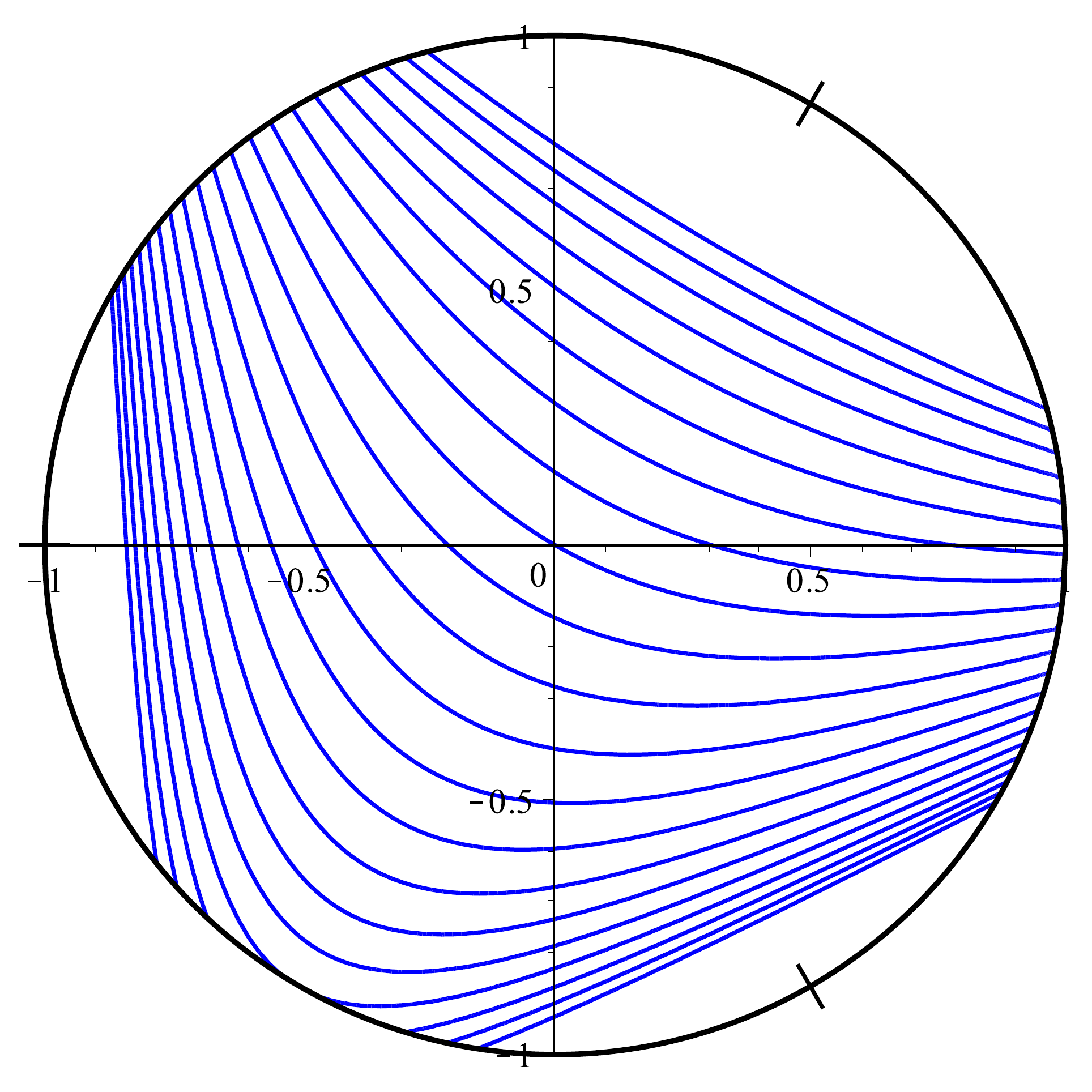}
\end{center}
\caption{\label{charged_fig:ElectronsPositron_levels_general}
Compound of two electrons and one positron: contours  $-n/8$,  $n=1,\ldots,20$, of the functions 
$  \sqrt{\tilde{M}_k  } \, \tilde{V} $ 
defined in \eqref{charged_eq:def_VtildesqrtMk} with $k=1$ (a),  $k=2$ (b) and $k=3$ (c). The presentation is analogously to Fig.~\ref{charged_fig:Newton_levels}.
}
\end{figure}

In Fig.~\ref{charged_fig:ElectronsPositron_levels_general} we show the contours of the functions  $ \sqrt{\tilde{M}_k  } \, \tilde{V} $, $k=1,2,3$, defined in \eqref{charged_eq:def_VtildesqrtMk}  analogously to Fig.~\ref{charged_fig:Newton_levels} for the gravitational case.
For equal masses, the double collision of particles 1 and 2 (two electrons) occurs at a polar angle of $60^\circ$ and of particles 2 and 3 (electron and positron) at angle $-60^\circ$ (see \eqref{charged_eq:angle_collision_12} and \eqref{charged_eq:angle_collision_23}). The collision of particles 1 and 3 (again electron and positron) is located  at $180^\circ$.
The potential $\tilde{V}$ is $-\infty$ at the collisions of either of  the  electrons with the positron (polar angles $-60^\circ$ and $180^\circ$) and  
$+\infty$ at the collisions of the two electrons  (polar angle $60^\circ$).  

From the contours in Fig.~\ref{charged_fig:ElectronsPositron_levels_general}  we conclude that together with the critical value $\nu_1=0$
there are four critical values for $\nu$ caused by the following events.

\begin{itemize}
\item[(i)]  The contours of $\sqrt{\tilde{M}_1}\tilde{V}$ touch/detach from the boundary of the shape space (see Fig.~\ref{charged_fig:ElectronsPositron_levels_general}(a)). This happens simultaneously at both of the symmetric double collisions points of the electrons with the positron.

\item[(ii)] $\sqrt{\tilde{M}_1}\tilde{V}$ has a critical point in the interior of the the shape space (see Fig.~\ref{charged_fig:ElectronsPositron_levels_general}(a)). 

\item[(iii)] The contours of $\sqrt{\tilde{M}_2}\tilde{V}$ and $\sqrt{\tilde{M}_3}\tilde{V}$ touch/detach from the boundary of the shape space  at polar angle $-120^\circ$ (see Figs.~\ref{charged_fig:ElectronsPositron_levels_general}(b) and (c)). This happens for the same value of $\nu$ as $\sqrt{\tilde{M}_2}\tilde{V}$ and $\sqrt{\tilde{M}_3}\tilde{V}$ agree on the boundary of the shape space (i.e. for collinear configurations).

\end{itemize}

As discussed for the previous two examples % the gravitational case we expect 
event (i) can be related to critical points at infinity. Like in the case of helium there are two symmetry related critical points at infinity corresponding to a co-rotating electron positron pair and a resting second electron with an infinite distance in between. The corresponding value of the bifurcation parameter is

\rem{
 The critical points at infinity result from two charges  $Q_1$ and  $Q_2$ of opposite sign and masses $m_1$ and $m_2$ co-rotating infinitely far apart from the third particle. The resulting value for $\nu$ is easily calculated to give

\begin{equation} \label{charged_eq:nu_for_corotating_charges}
\nu=\frac12 \mu Q_1^2 Q_2^2, 
\end{equation}
where $\mu=m_1m_2/(m_1+m_2)$ is the reduced mass. 
Filling in $Q_1=1$ and $Q_2=+1$ and $m_1 = m_2 =1$ gives
} % end rem

%{\bf critical point at infinity and diabolic point}
\begin{equation}
\nu_{\infty \, 2} =  \frac{1}{4} =0.25\,.
\end{equation}

As a result of the equal masses and absolute values of the charges in  the electron-electron-positron compound, it happens that the value of the bifurcation parameter corresponding to the diabolic point of the moment of inertia tensor gives the same value, i.e. 
\begin{equation}
 \nu_{\text{diabolic}\, 2} = \nu_{\infty \, 2} .
\end{equation}

The event (ii) results from a non-collinear relative equilibrium involving rotation about the first principal axis. This relative equilibrium is again a Langmuir orbit and the  corresponding value of $\nu$ can be calculated from \eqref{eq:app_nu_general} to be

\begin{equation}
\nu_{\text{Langmuir}\,3} \approx 0.2925594730\,.
\end{equation}

Similarly to the helium case, event (iii)  can be related to the collinear central configuration where the positron is located right between the two electrons on a line. 
The corresponding value of $\nu$ can be calculated from the symmetric version of \eqref{eq:nu_collinear} in \eqref{eq:nu_collinear_symmetric} to give

\begin{equation}
\nu_{\text{collinear}\,4}=\frac{9}{4}=2.25.
\end{equation}

 \begin{figure}
\begin{center}
\raisebox{4cm}{a)}
\includegraphics[angle=0,width=4cm]{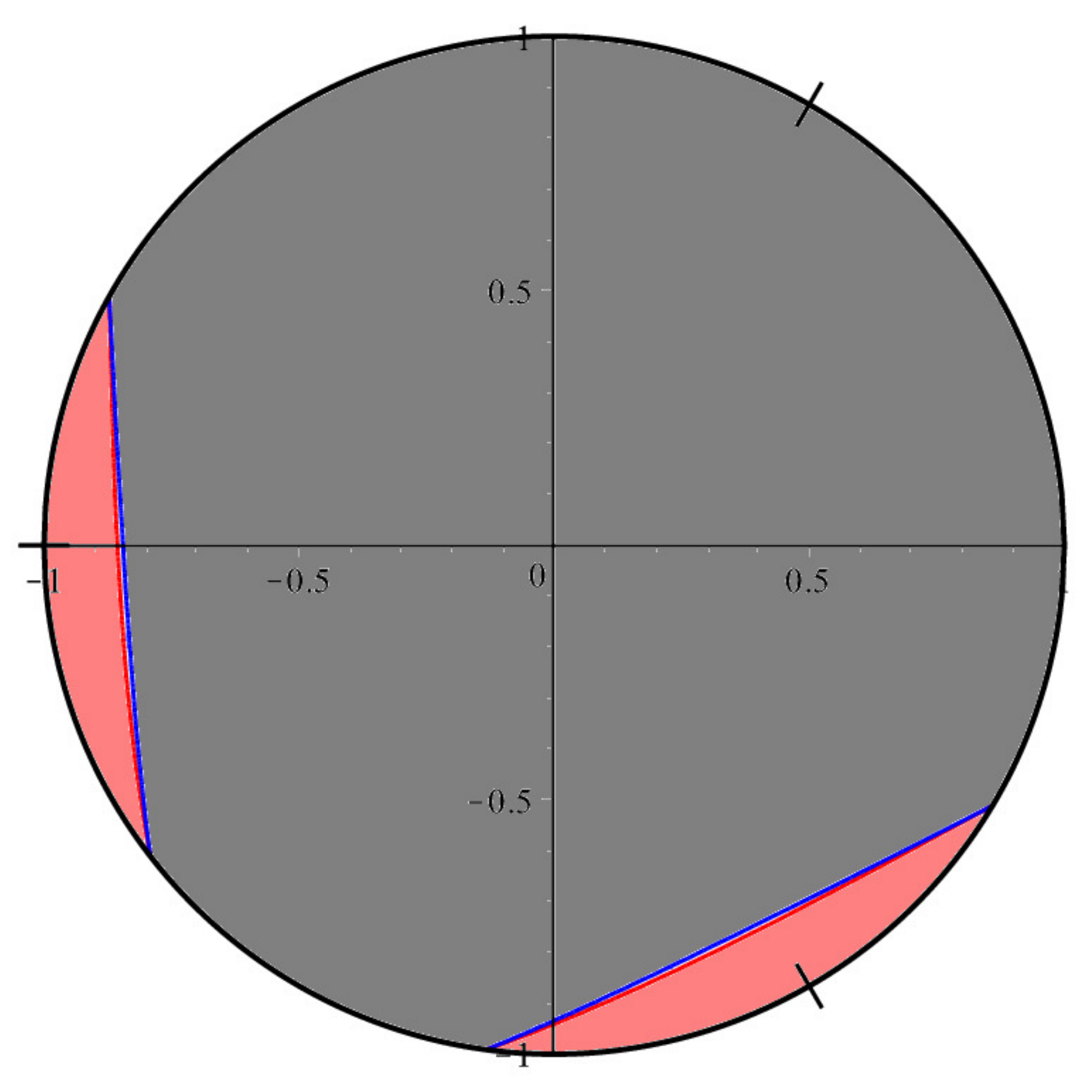}
\raisebox{4cm}{b)}
\includegraphics[angle=0,width=4cm]{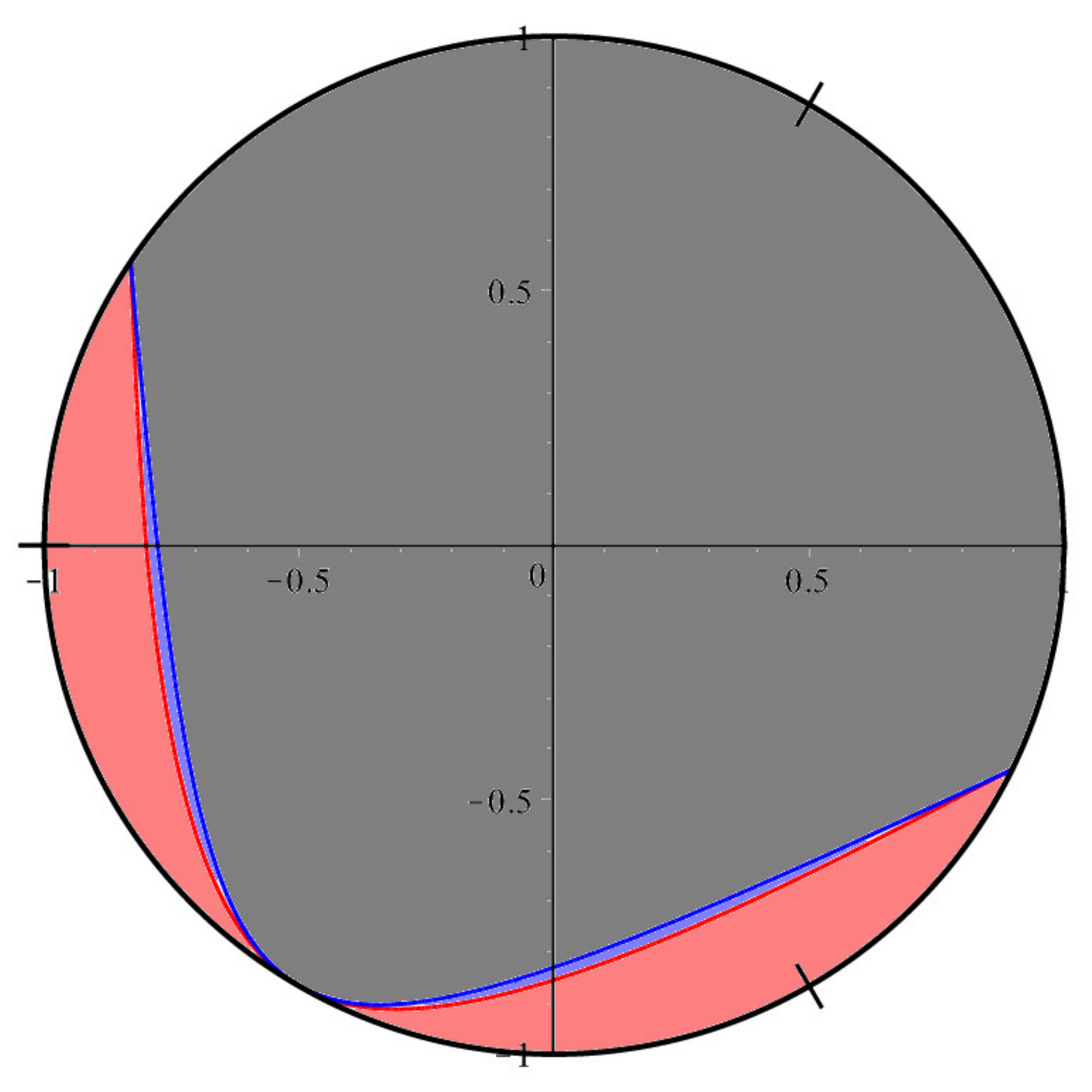} 
\raisebox{4cm}{c)}
\includegraphics[angle=0,width=4cm]{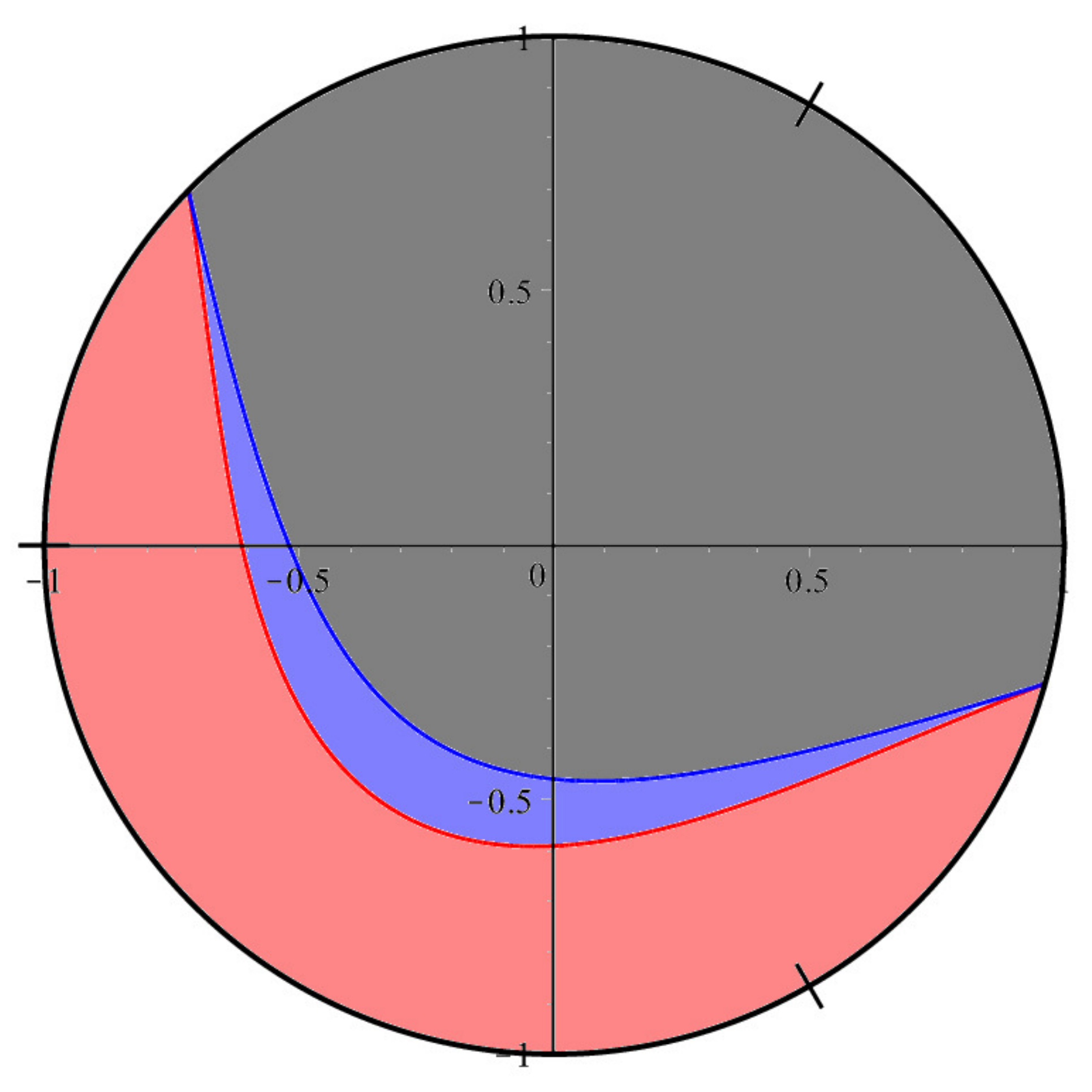}\\
\raisebox{4cm}{d)}
\includegraphics[angle=0,width=4cm]{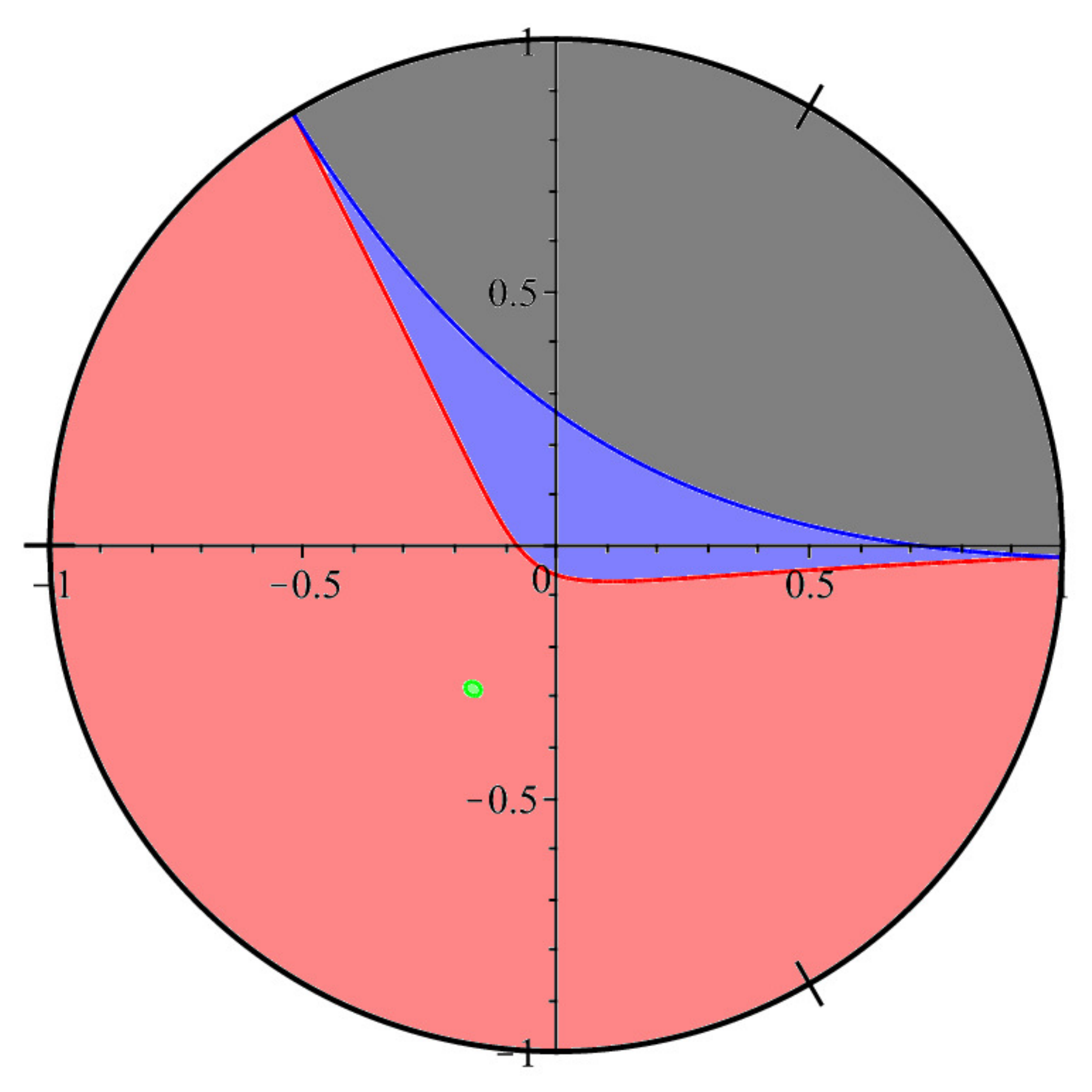}
\raisebox{4cm}{e)}
\includegraphics[angle=0,width=4cm]{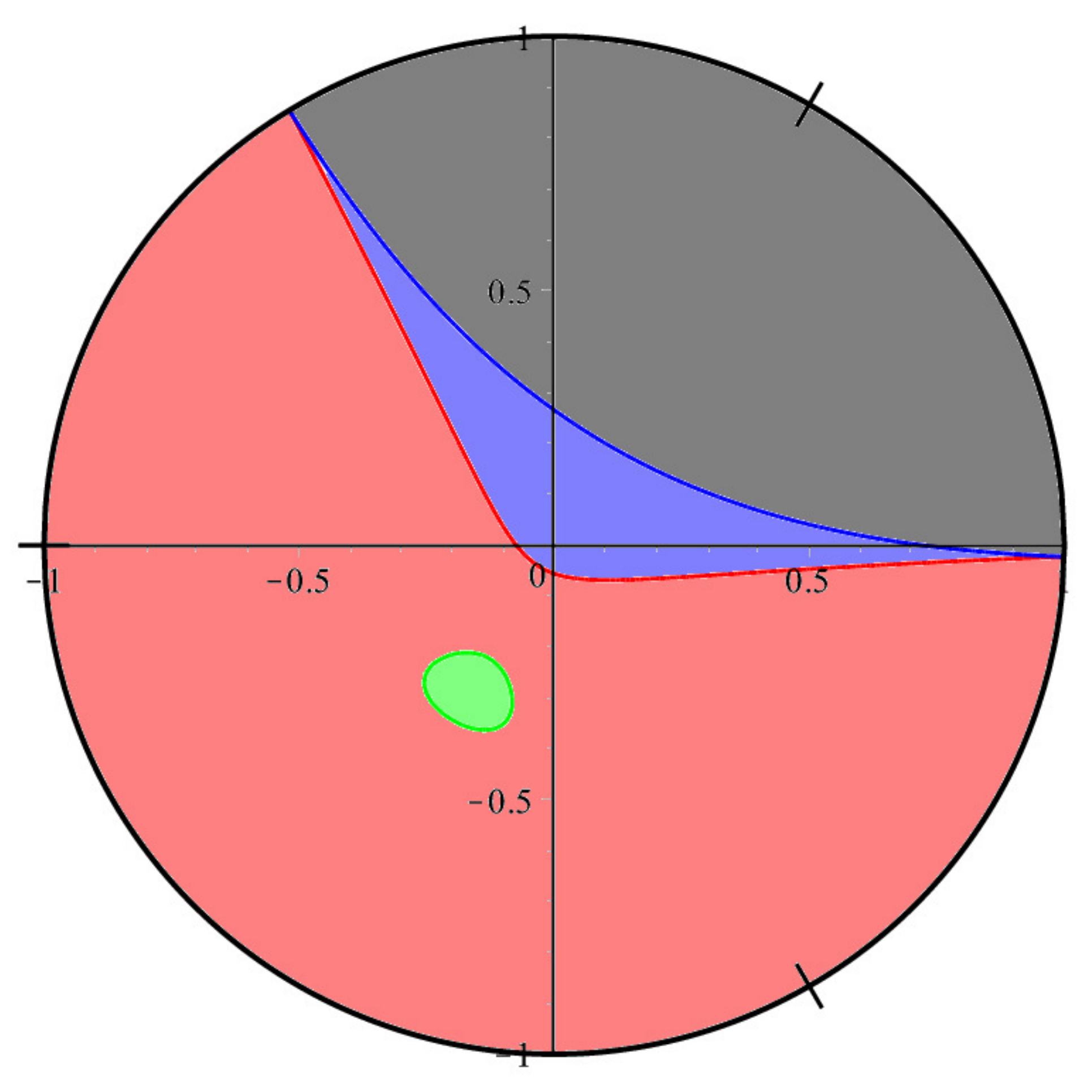}
\raisebox{4cm}{f)}
\includegraphics[angle=0,width=4cm]{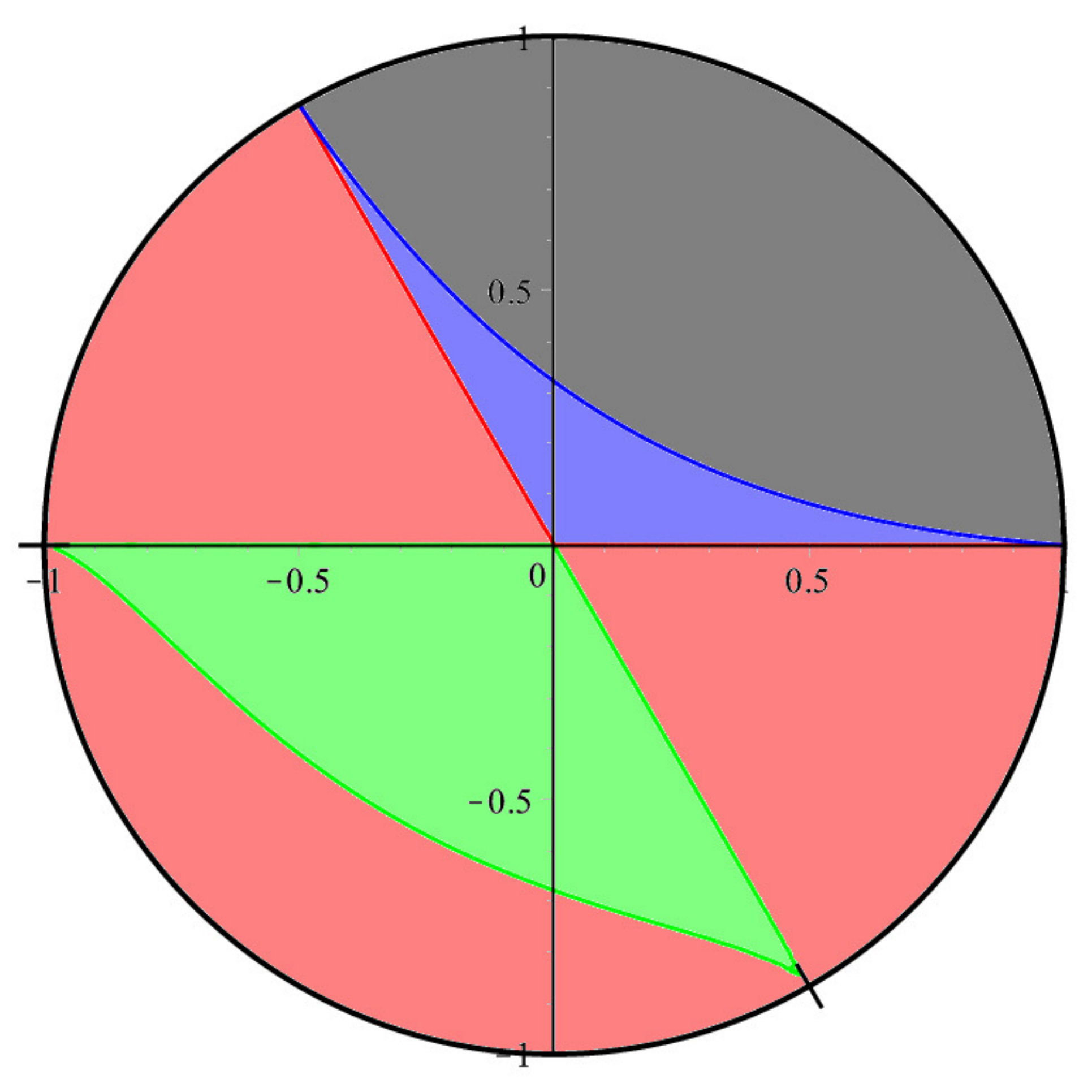}\\
\raisebox{4cm}{g)}
\includegraphics[angle=0,width=4cm]{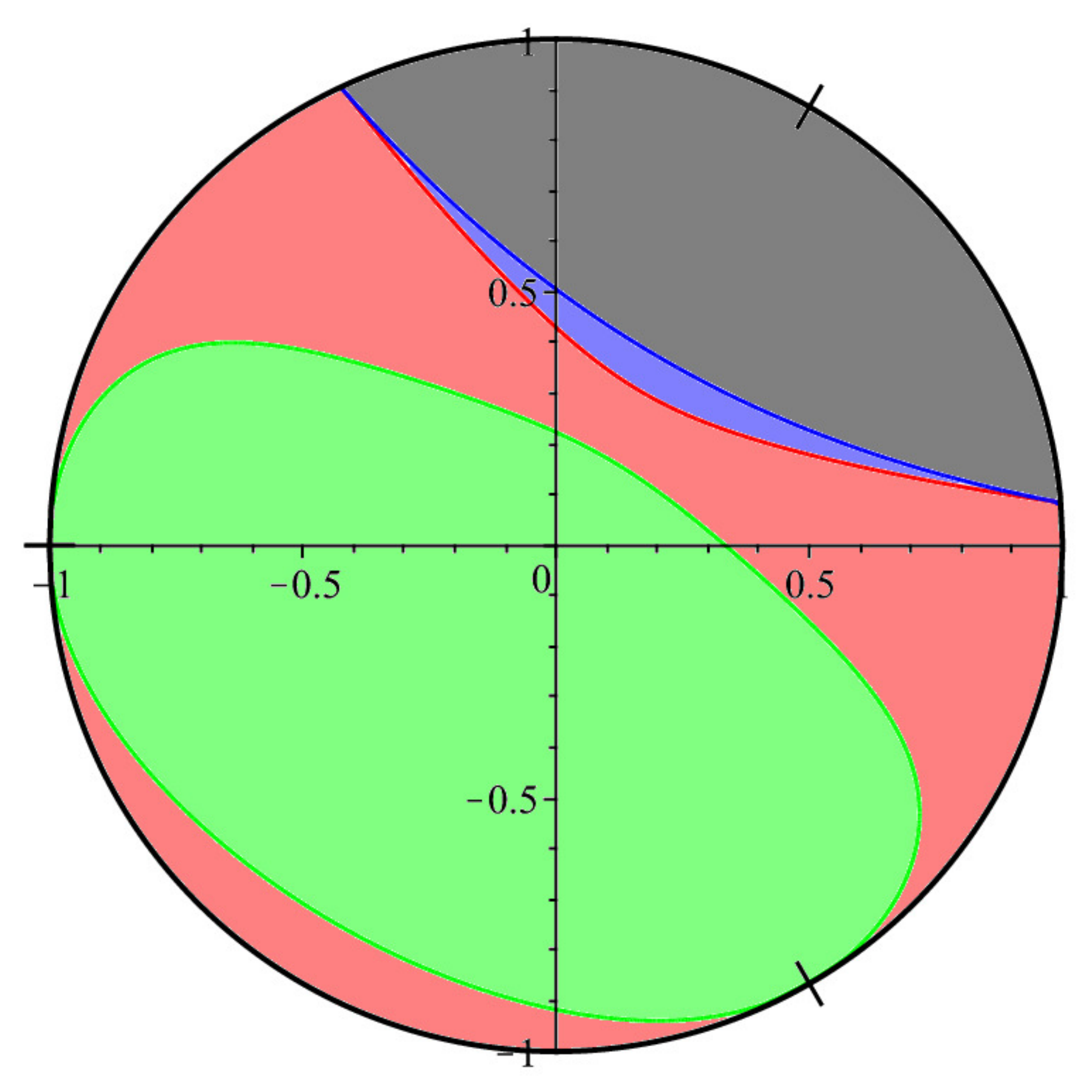}
\raisebox{4cm}{h)}
\includegraphics[angle=0,width=4cm]{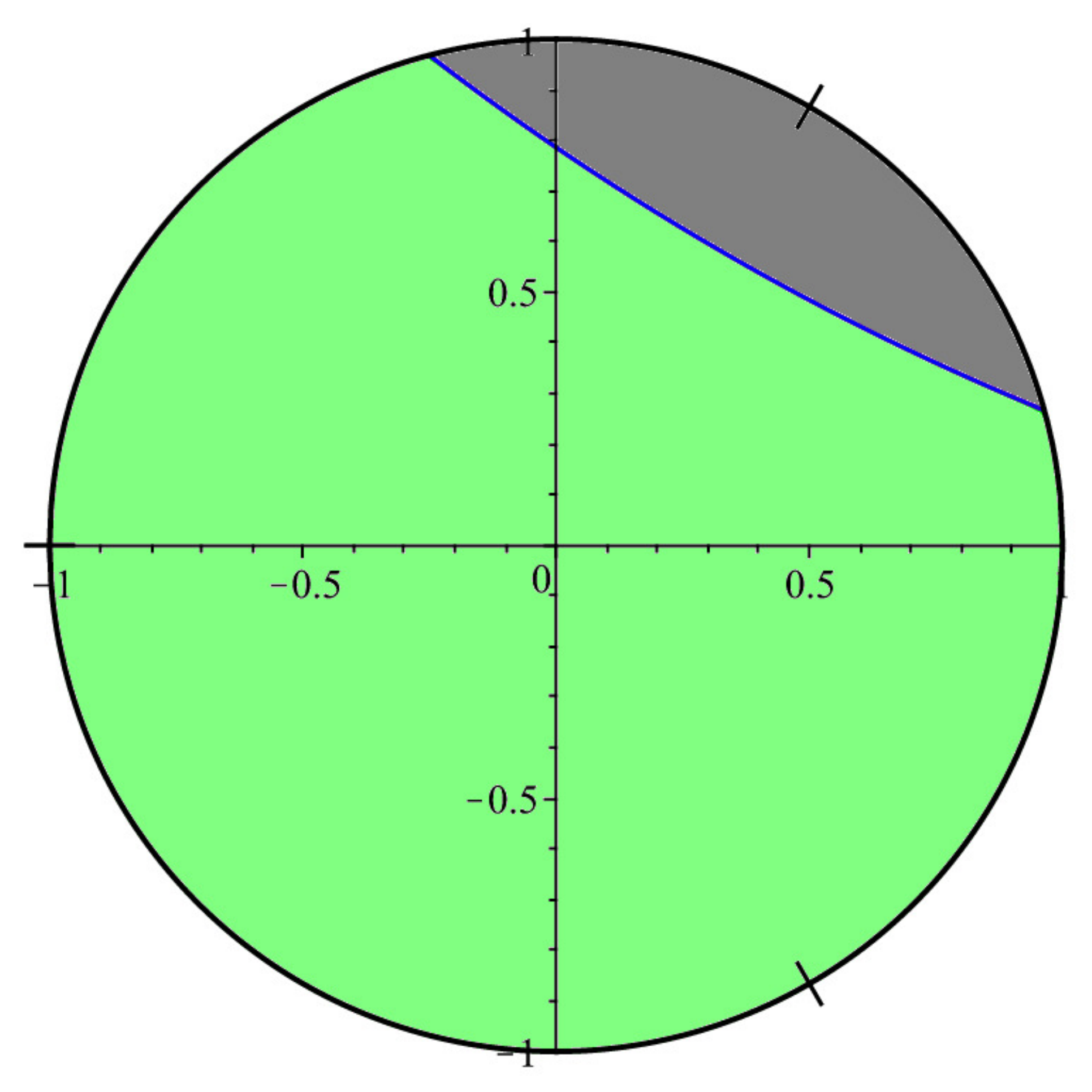} 
\raisebox{4cm}{i)}
\includegraphics[angle=0,width=4cm]{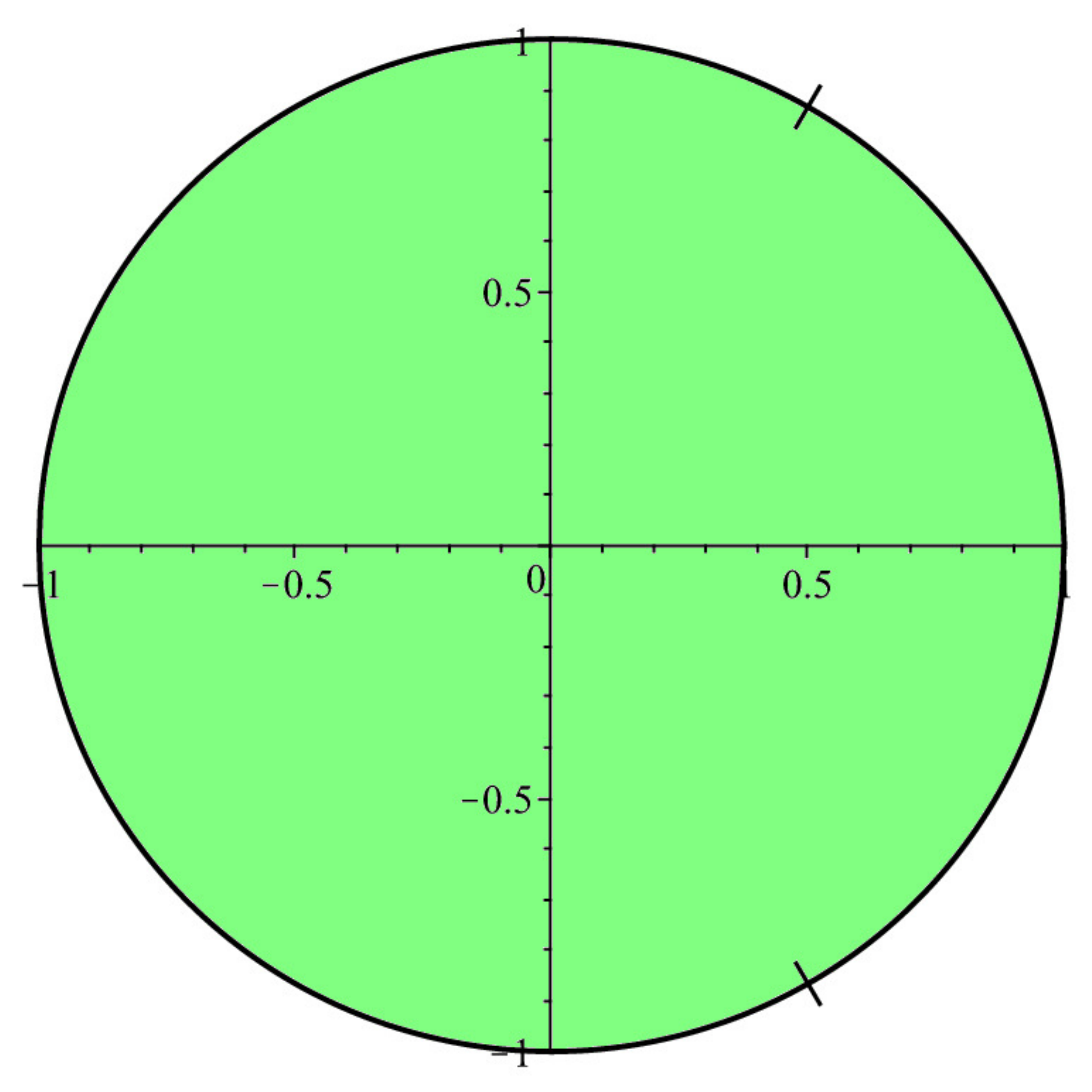}
\end{center}
\caption{\label{charged_fig:ElectronsPositron_levels}
Compound of two electrons and one positron: projections of the Hill regions to the shape space for specific values of $\nu$: 
$ \nu  >  \nu_{\text{collinear}\,4}   $ (a), 
$ \nu = \nu_{\text{collinear}\,4}  $ (b), 
$ \nu_{\text{collinear}\,4} > \nu >  \nu_{\text{Langmuir}\,3}   $ (c),  
$ \nu = \nu_{\text{Langmuir}\,3}  $ (d), 
$ \nu_{\text{Langmuir}\,3}  > \nu >  \nu_{\infty \, 2} = \nu_{\text{diabolic}\, 2}   $ (e),  
$ \nu = \nu_{\infty \, 2} = \nu_{\text{diabolic}\, 2}   $ (f), 
$ \nu_{\infty \, 2} =\nu_{\text{diabolic}\, 2}  > \nu >  \nu_1 =0$ (g),  
$ \nu = \nu_1  $ (h),  and
$ \nu_1 = 0>\nu $ (i).  
The contours are shown on the shape space $\tilde{\Qtr}$ represented in the same way as in Fig.~\ref{charged_fig:ElectronsPositron_levels_general}. 
}
\end{figure}

In the different panels in Fig.~\ref{charged_fig:ElectronsPositron_levels} we again show the projections of the Hill regions to the shape space
 for representative values of $\nu$.
% The regions enclosed 
%are coloured according to what the accessible region on the orientation sphere is.  
The presentation and colour code is again the same as in Figs.~\ref{charged_fig:Newton_levelsa} %, \ref{charged_fig:Newton_levels_psi_chi} 
and \ref{charged_fig:Newton_levelsb} for the gravitational case. 
% h
Starting with large values of $\nu$ there is for $\nu> \nu_{\text{collinear}\,4}$  a grey region not belonging to the Hill region that separates the accessible part of the shape space into two disconnected components. For points in the two red components,  the accessible region on the orientation sphere is a ring. These two components contain on their boundary the double collisions where one of the electrons collides with the positron. The forbidden region has the double collision of the electrons as a boundary point. Between the grey region and the red regions there are blue strips where the accessible region on the orientation sphere consists of two caps. At $\nu= \nu_{\text{collinear}\,4}$ the boundary of the grey regions and the blue region are tangental to the boundary of the shape space at the point corresponding to the collinear central configuration which has polar angle $-120^\circ$ in the $(w_1,w_2)$-plane. For $\nu<\nu_{\text{collinear}\,4}$ the Hill region is simply connected. For $\nu_{\text{Langmuir}\,3}<\nu<\nu_{\text{collinear}\,4}$, the blue and the red regions are both simply connected. At $\nu=\nu_3$ a green dot emerges in the red region because of which the red region is no longer simply connected for $\nu<\nu_{\text{Langmuir}\,3}$. This green dot results from the 
non-collinear relative equilibrium given by the Langmuir orbit which corresponds to rotation about the first principal axis. When $\nu$ decreases from $\nu_{\text{Langmuir}\,3}$ to $  \nu_{\text{diabolic}\, 2} =\nu_{\infty \, 2}$ the green region grows until it touches the boundary of the shape space for the first time at the two electron-electron double collision points
for $\nu= \nu_{\text{diabolic}\, 2} = \nu_{\infty \, 2} $. Whereas this is due to the two symmetry related critical points at infinity, simultaneously for the same value of $\nu$, the green and the blue region touch at the center of the unit disk in the $(w_1,w_2)$-plane where the smallest and the middle principal moments of inertia are equal. At the point where they touch the boundaries of the green and blue regions are not smooth.
When $\nu$ decreases below $ \nu_{\text{diabolic}\, 2} = \nu_{\infty \, 2} $ the green and the blue region detach again and the corners on their boundaries vanish again.
This scenario is different from the scenarios resulting from the diabolic point in the previous two examples. In the previous two examples we explained the scenarios from noticing that the graphs of the functions \eqref{charged_eq:def_VtildesqrtMk} over the shape space give for  $k=1$ and $k=2$ a distorted version of the double cone in Fig.~\ref{charged_fig:principal_moments}.  
In the present case the double cone is distorted in such a way that the boundaries of the sublevel sets have the topology of the  intersections of the double cone with vertical planes in Fig.~\ref{charged_fig:principal_moments} (as opposed to horizontal planes like in the gravitational case and in the helium atom). 
When the value of $\nu$ passes through $\nu_{\text{diabolic}\,5}$ the sublevel sets of the functions  \eqref{charged_eq:def_VtildesqrtMk} then lead to the bifurcation shown in Figs.~\ref{charged_fig:ElectronsPositron_levels}e-\ref{charged_fig:ElectronsPositron_levels}g.
Upon $\nu$ approaching $\nu_1=0$ from above
the red and the blue regions are shrinking due to  the growing green region until at $\nu=\nu_1=0$ both the red region and the blue regions have vanished. Similar to the case of the helium atom and unlike the gravitational case there remains a forbidden region near the point of the electron-electron collision at polar angle $60^\circ$. For $\nu<\nu_1=0$, the full shape space belongs to the Hill region and, as in every charged three-body system, for every shape, every orientation is possible (see Theorem~\ref{charged_thm:Hillregionfirst}).

%%%%%%%%%%%%%%%%%%%%%

\section{Discussion and outlook}
\label{sec:conclusions}

The study of the Hill region of the gravitational three-body problem has a long history. It has been, e.g.,  the subject of the famous and  by now disproved  Birkhoff conjecture stating that all bifurcations of Hill regions are related to central configurations (see the discussion in \cite{mccord1998integral}). In the present paper we studied the more general case of charged three-body systems which include the gravitational system  as a special case.
As shown in \cite{HoveijnWaalkensZaman2019}, charged three-body systems can have relative equilibria which do not project to central configurations. It is therefore even more elusive to expect the Birkhoff conjecture  to hold in the case of charged three-body problems. In this paper we provided two examples of systems which have relative equilibria that do not project to central configurations. In both examples the relative equilibrium is given by the Langmuir orbit. In fact we conjecture that for charged three-body systems the only possible non-collinear relative equilibria are the Lagrange type relative equilibria familiar from the gravitational case which, as stated in this paper, are expected to exist more generally  for purely attractive inter-body forces and the Langmuir type relative equilibria for systems with coexisting attractive and repulsive inter-body forces. In a charged three-body system the Lagrange and the Langmuir type relative equilibria cannot coexist. It is also possible that a charged-three body problem has no non-collinear relative equilibria at all (e.g. if all inter-body forces are repulsive).

In \cite{mccord1998integral} the Hill regions are also used to study the topology of the integral manifolds. It would be interesting to generalise such results to the charged case. The present paper provides first steps in this direction. We  however point out again that 
the symplectic reduction used in this paper to define the Hill regions is singular for collinear configurations. A study of the topology of the integral manifolds for the charged case in a similar fashion as in \cite{mccord1998integral}  would hence require a more careful analysis of the boundary of the shape space to which the collinear configurations project. In the present paper we have given an explanation of the pseudo critical point which 
leads to the seeming bifurcation of the Hill regions observed when projecting these  to the shape space by relating this seeming bifurcation to the diabolic point of the moment of inertia tensor.

Our future studies motivated by the present paper and the preceding two papers \cite{HoveijnWaalkensZaman2019,CPI2021} concern
the stability of relative equilibria and the existence of the Langmuir type relative equilibria also for less symmetric systems than the helium atom and the electron-electron-positron compound studied in the present paper. Both of these items are of importance for the final goal  to determine the
topology of the integral manifolds for charged three-body systems in full generality.

%%%%%%%%%%%%%%%%%%%%%
%%%%%%%%%%%%%%%%%%%%%
%%%%%%%%%%%%%%%%%%%%%

%%%%%%%%%%%%%%%%%%%%%%

%\bibliographystyle{unsrt}
%\bibliography{redbib}

\end{document}